\numberwithin{equation}{section}
\def\hangbox to #1 #2{\vskip3pt\hangindent #1\noindent \hbox to #1{#2}$\!\!$}
\theoremstyle{plain}
\newtheorem{theorem}{Theorem}[section]
\newtheorem{proposition}[theorem]{Proposition}
\newtheorem{corollary}[theorem]{Corollary}
\newtheorem{lemma}[theorem]{Lemma}
\theoremstyle{definition}
\newtheorem{remark}[theorem]{Remark}
\newcommand{\biggnorm}[1]{\biggl\lVert#1\biggr\rVert}
\DeclareSymbolFont{bbold}{U}{bbold}{m}{n}
\DeclareSymbolFontAlphabet{\mathbbold}{bbold}
\def\sfrac#1#2{\kern.1em\raise.5ex\hbox{$#1$}
        \kern-.1em/\kern-.05em\lower.25ex\hbox{$#2$}}
\newcommand{\RE}{\mathrm{Re}}
\newcommand{\fw}{\text{\fw}}
\title[Global well-posedness for the gDNLS]{Global well-posedness for the generalized derivative nonlinear Schr\"odinger equation}
\author{Ben Pineau}
\address{Courant Institute for Mathematical Sciences\\
New York University
} \email{brp305@nyu.edu}
\author{Mitchell~A.\ Taylor}
\address{Department of Mathematics\\
ETH Z\"urich, Ramistrasse 101, 8092 Z\"urich, Switzerland.
} \email{mitchell.taylor@math.ethz.ch}
\begin{document}

\maketitle
\allowdisplaybreaks

 \begin{abstract}
In this article we study the well-posedness of the generalized derivative nonlinear Schr\"odinger equation (gDNLS)
$$iu_t+u_{xx}=i|u|^{2\sigma}u_x,$$
for small powers $\sigma$. We analyze this equation at both low and high regularity, and are able to establish global well-posedness in $H^s$ when $s\in [1,4\sigma)$ and $\sigma \in (\frac{\sqrt{3}}{2},1)$.  Our result when $s=1$ is particularly relevant because it corresponds to the regularity of the energy for this problem. Moreover, a theorem of Liu, Simpson and Sulem (\textit{J.~Nonlinear Sci.}~2013) establishes the orbital stability of the gDNLS solitons, provided that there is a suitable $H^1$ well-posedness theory. 

To our knowledge, this is the first low regularity well-posedness result for a quasilinear dispersive model where the nonlinearity is both rough and lacks the decay necessary for global smoothing type estimates. These two features pose considerable difficulty when trying to apply standard tools for closing low-regularity estimates. While the tools developed in this article are used to study gDNLS, we believe that they should be applicable in the study of local well-posedness for other dispersive equations of a similar character. It should also be noted that the high regularity well-posedness presents a novel issue, as the roughness of the nonlinearity limits the potential regularity of solutions. Our high regularity well-posedness threshold $s<4\sigma$ is twice as high as one might na\"ively expect, given that the function $z\mapsto |z|^{2\sigma}$ is only $C^{1,2\sigma-1}$ H\"older continuous. Moreover, although we cannot prove $H^1$ well-posedness when $\sigma\leq \frac{\sqrt{3}}{2}$, we are able to establish $H^s$ well-posedness in the high regularity regime $s\in (2-\sigma,4\sigma)$ for the full range of $\sigma\in (\frac{1}{2},1)$. This considerably improves the known local results, which had only been established in either $H^2$ or in weighted Sobolev spaces.

\end{abstract} 
\footnotetext{Mitchell A.~Taylor is the corresponding author.}

 \tableofcontents

 %\begin{enumerate}
  \section{Introduction}
  In this article we consider the \textit{generalized derivative nonlinear Schr\"odinger equation}:
  \begin{equation}\label{gDNLS}\tag{gDNLS}
  \begin{cases}
    &i\partial_t u+\partial_x^2u=i|u|^{2\sigma}\partial_xu,
    \\  
    &u(0)=u_0,
    \end{cases}
\end{equation}
  where $u:\mathbb{R}\times \mathbb{R}\to \mathbb{C}$ and $\sigma>0$. We will be particularly interested in the case $\sigma<1$, as this is where  $H^s$ local well-posedness is most difficult. We begin with a brief history of this family of equations, and some of its closely related analogues.\\
 
The \eqref{gDNLS} equations originate from the study of the so-called  \textit{derivative nonlinear Schr\"odinger equation}:
 \begin{equation}\label{DNLS}\tag{DNLS}
 \begin{cases}
    &i\partial_t u+\partial_x^2u=i|u|^{2}\partial_xu,
\\  
&u(0)=u_0,
    \end{cases}
\end{equation}
which corresponds to \eqref{gDNLS} with $\sigma=1$. Physically, \eqref{DNLS} derives from the one-dimensional compressible magneto-hydrodynamic equation in the presence of the Hall effect, and the propagation of circular polarized nonlinear Alfv\'en waves in magnetized plasmas \cite{mio1976modified, mjolhus1976modulational, passot1993multidimensional}.  It also appears as a model for ultrashort optical pulses  \cite{agrawal2000nonlinear, moses2007self}, as well as in various other physical scenarios \cite{champeaux1999remarks, jenkins2019derivative, sanchez2010quasicollapse}. Mathematically, \eqref{DNLS} also has many interesting features. For example, like the 1D cubic NLS, it is completely integrable \cite{kaup1978exact}. However, it scales like the 1D quintic NLS, which makes it $L^2$ critical. Moreover, although at first glance \eqref{DNLS} looks to be semilinear, it is known that uniform continuity of the solution map fails in $H^s$ as long as $s<\frac{1}{2}$ (see \cite{MR1837253, MR1836810}). Therefore, this PDE has a clear quasilinear flavour. 
  \\
  
In recent years, the \eqref{gDNLS} family of equations has seen increasing interest, stemming from the 2013 article of Liu, Simpson and Sulem \cite{MR3079669}. One of the original motivations of \cite{MR3079669} was to shed light on the global well-posedness of \eqref{DNLS} in the energy space $H^1$, which was an important open problem. However, in an interesting turn of events, Bahouri and Perelman \cite{GWPDNLS} managed to prove global well-posedness for the \eqref{DNLS} equation \textit{before} the global well-posedness of \eqref{gDNLS} could be established for any $\sigma\neq 1$. In this article we make progress towards resolving one half of the program of Liu, Simpson and Sulem by proving that \eqref{gDNLS} is globally well-posed in $H^1$ for $\sigma\in (\frac{\sqrt{3}}{2},1)$. Note that, although completed shortly after each other, our result for $\sigma<1$ and the $\sigma=1$ result of \cite{GWPDNLS} are completely independent, and the methods used differ quite dramatically. Indeed, for $\sigma=1$, local well-posedness in $H^1$ has been known for a long time \cite{hayashi1993initial}, and can be established by employing a suitable gauge transformation, and standard Strichartz estimates. In fact, the smoothing properties of the equation are suitable to lower the well-posedness threshold to $H^{\frac{1}{2}}$ as in \cite{takaoka1999well}. Global well-posedness, however, is considerably harder, as the problem is $L^2$ critical. For this reason, Bahouri and Perelman (as well as Harrop-Griffiths, Killip, Ntekoume and Vi\c san \cite{harrop2022global,H16,killip2021well} in their subsequent work) crucially rely on the complete integrability of \eqref{DNLS}. In the case $\sigma<1$, the main difficulties are reversed. Establishing local well-posedness is difficult because of the lack of decay and roughness of the nonlinearity. On the other hand, one expects to be able to easily propagate any reasonable $H^1$ local well-posedness theory in time to obtain a global result. This is because when $\sigma<1$ the problem becomes $L^2$ subcritical, and one expects to be able to use the conserved energy and mass of the problem to control the $H^1$ norm of a solution. 
  \\
  
  Another motivation for \eqref{gDNLS} is the rich family of soliton solutions, which is actually where the majority of \cite{MR3079669}'s efforts were focused. Assuming a suitable $H^1$ well-posedness theory, the authors of \cite{MR3079669} were able to use the abstract theory of Grillakis, Shatah and Strauss \cite{grillakis1987stability, grillakis1990stability} to investigate the orbital stability of the solitons. However, an $H^1$ well-posedness theory for $\sigma<1$ had not been known until now.
  \\
 
 When $\sigma<1$, one can view \eqref{gDNLS} as a prototypical model for a quasilinear dispersive equation with a rough, low power nonlinearity (see \cite{linares2019class} for a KdV analogue). Such nonlinearities in the context of semilinear NLS type equations are becoming increasingly well-understood \cite{cazenave2017non, uchizono2012well}, and at modest regularity  local well-posedness can usually be proven by a combination of regularization and perturbative arguments. However, the combination of derivative and low power coefficient in the nonlinearity of \eqref{gDNLS} causes many interesting technical issues, several of which are yet to be fully understood. One issue for low regularity well-posedness is that the coefficient $|u|^{2\sigma}$ in the nonlinearity is less than quadratic in order. Because of this, the smoothing properties of the linear part of the Schr\"odinger equation are seemingly not strong enough to directly compensate for the apparent derivative loss which occurs because of the $u_x$ term in the nonlinearity. Another tool to avoid derivative loss - which has been successfully employed in the case $\sigma>1$ in  \cite{hao2008well, hayashi2016well} - is a gauge transformation. This technique allows one to re-normalize the equation to effectively remove the worst interactions in the derivative nonlinearity. However, again, it seems one can only directly apply this method when $\sigma\geq 1$ (i.e. $|u|^{2\sigma}$ is of quadratic order or higher), as in the case $\sigma<1$ negative powers of $|u|$ eventually appear in the analysis. This is related to the roughness of the nonlinearity, and will be elaborated on further when we outline the proof of our results. 
 \\
 \\
 To contrast, the Benjamin-Ono equation,
 \begin{equation}\label{BO}
 \begin{cases}
 &u_t+Hu_{xx}=uu_x,
 \\
 &u(0)=u_0,
 \end{cases}
 \end{equation}
 has a similar low power derivative nonlinearity $uu_x$, and as with \eqref{gDNLS}, the linear part of the equation does not have strong enough smoothing properties to directly compensate for the derivative loss in the nonlinearity. Nevertheless, $H^1$ well-posedness for this equation was established several years ago in \cite{tao2004global}. One should note, however, that the Benjamin-Ono nonlinearity has a much nicer algebraic structure than that of \eqref{gDNLS} (it is smooth and multilinear), which makes the equation more amenable to normal form type techniques (such as cubic corrections or a gauge transformation). Moreover, Christ \cite{christ2003illposedness} showed that Schr\"odinger's equation with Benjamin-Ono's nonlinearity is ill-posed in any reasonable sense, so the analogies between these equations are at best heuristic. For \eqref{gDNLS}, our solution to the above difficulties will be to introduce a family of \emph{partial} gauge transformation adapted to each dyadic frequency scale and the corresponding paradifferential flow - which  removes the portion of the nonlinearity which is large in a pointwise sense, on a scale which is balanced against the corresponding frequency localization scale of the nonlinearity. This will then be combined with smoothing and maximal function type arguments to attain the $H_x^1$ well-posedness threshold.
  \\
  
Another novel issue in the study of \eqref{gDNLS} when compared with other standard quasilinear Schr\"odinger models \cite{MR4331023,MR4830552} is that the nonlinearity has only a finite degree of H\"older regularity, and so one does not expect to be able to construct smooth solutions from regular data. In our case, the nonlinearity is only $C^{1,2\sigma-1}$ H\"older continuous. We expect therefore to only be able to differentiate the equation with respect to some parameter ``$2\sigma$ times" to obtain estimates. To maximize the potential regularity of solutions, we note that the scaling of the Schr\"odinger equation suggests that we can convert $L_x^2$ based estimates for one time derivative of a solution to estimates for two spatial derivatives. Therefore, it is advantageous to differentiate \eqref{gDNLS} in time rather than in space, and then convert time derivative estimates into estimates for spatial derivatives of a solution. After a single time differentiation, we are left with $2\sigma-1$ degrees of regularity on the nonlinearity. By working with fractional space derivatives, one expects to be able to prove an energy estimate for the $H_x^{1+2\sigma}$ norm of a solution.  However, working with fractional time derivatives (after suitably localizing in time), one expects to improve this further, and prove well-posedness in $H_x^s$ up to $s=2\cdot 1+2\cdot(2\sigma-1)=4\sigma$. A similar heuristic argument applies to any dispersion generalized equation with rough nonlinearity, where one can convert time derivative estimates into estimates on a certain number of spatial derivatives, perhaps modulo some perturbative terms coming from the nonlinearity. In general, we expect this heuristic to give rather sharp results, but this is not even known for semilinear NLS equations with rough nonlinearities \cite{cazenave2017non, uchizono2012well}, and is essentially unexplored in the quasilinear setting. 
\\

 Finally, let us  recall some basic symmetry properties of \eqref{gDNLS} as well as some conservation laws, which we will use to propagate our local well-posedness result to a global one. First, we have the scaling transformation
  $$u(t,x)\mapsto u_\lambda (t,x):=\lambda^\frac{1}{2\sigma}u(\lambda^2 t,\lambda x), \ \ \ \lambda>0,$$
  which makes the critical Sobolev index $s_c=\frac{1}{2}-\frac{1}{2\sigma}$. In particular, the problem is $L^2$ subcritical when $\sigma<1$. Moreover, \eqref{gDNLS} admits the following conserved quantities:
  
 \begin{equation}
     M(u)=\frac{1}{2}\int_\mathbb{R} |u|^2dx,
 \end{equation}
 \begin{equation}
     P(u)=\frac{1}{2}Re\int_\mathbb{R} i\overline{u}u_xdx,
 \end{equation}
  \begin{equation}
      E(u)=\frac{1}{2}\int_\mathbb{R} |u_x|^2dx+\frac{1}{2(\sigma+1)}Re\int_\mathbb{R}i|u|^{2\sigma}\overline{u}u_xdx,
  \end{equation}
which are the mass, momentum and energy, respectively. Unlike the standard NLS, \eqref{DNLS} doesn't enjoy the Galilean invariance nor the pseudo-conformal invariance symmetries, the latter being relevant for avoiding blowup.  We also note that a simple change of variables allows us to change the sign of the nonlinearity in \eqref{gDNLS} and arrive at
\begin{equation}
    i\partial_tu+\partial_x^2u+i|u|^{2\sigma}\partial_x u=0.
\end{equation}
This latter equation is more common in the study of the solitary waves of \eqref{gDNLS}.

   \subsection{Results}   %The main result of this article is well-posedness of \eqref{gDNLS} in $H^s(\mathbb{R})$ when $\frac{\sqrt{3}}{2}<\sigma<1$ and $s\in [1,4\sigma)$. However, we divide this theorem into a ``low-regularity" part and a ``high-regularity" part, as this is how the proof is structured. The low-regularity result is as follows:
% \begin{theorem}\label{low reg theorem}
% Let $\frac{\sqrt{3}}{2}<\sigma<1$ and let $1\leq s<2\sigma$. Then \eqref{gDNLS} is globally well-posed in $H^s(\mathbb{R})$. 
%\end{theorem}
%\begin{remark}
%In \Cref{low reg theorem}, local well-posedness is to be interpreted in the usual quasilinear fashion, including existence, uniqueness and continuous dependence on the data. The  global result is obtained from the local theory by using the scaling and conservation laws. 
%\end{remark}

%Our high-regularity result reads as follows:
%\begin{theorem}\label{high reg theorem}
%Let $?<\sigma<1$ and let $?<s<4\sigma$. Then \eqref{gDNLS} is globally %well-posed in $H^s(\mathbb{R})$. Will fill in details of this theorem when it's more clear what they are. {\color{red} To be added shortly}
%\end{theorem}
The main result of this article is global well-posedness of \eqref{gDNLS} in $H^s(\mathbb{R})$ when $\frac{\sqrt{3}}{2}<\sigma<1$ and $s\in [1,4\sigma)$. However, we divide this theorem into a ``low-regularity" part and a ``high-regularity" part, to maximize the range of $\sigma$. The high-regularity result is as follows:
\begin{theorem}(High-Regularity)\label{high reg theorem}
Let $\frac{1}{2}<\sigma<1$ and let $2-\sigma<s<4\sigma$. Then \eqref{gDNLS} is locally well-posed in $H^s(\mathbb{R})$.
\end{theorem}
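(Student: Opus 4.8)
The plan is to prove local well-posedness in $H^s$, $2-\sigma < s < 4\sigma$, via a frequency-localized energy method combined with the paradifferential calculus, working with \emph{time} derivatives rather than spatial ones. First I would set up the basic a priori estimate. Differentiating \eqref{gDNLS} once in time, write $v = \partial_t u$; then $v$ solves a linear Schr\"odinger equation with the quasilinear principal term $i|u|^{2\sigma}\partial_x v$ plus a semilinear remainder of the form $i\,(2\sigma)|u|^{2\sigma-2}\operatorname{Re}(\bar u v)\,\partial_x u$. The scaling heuristic in the introduction tells us that control of $v$ in $C_t L^2_x$ (together with the equation, which converts $\partial_t u$ into $\partial_x^2 u$ modulo the nonlinear term $|u|^{2\sigma}\partial_x u$, which is lower order at these regularities) yields control of $u$ in $C_t H^2_x$, and more generally fractional powers of $\partial_t$ give $H^s_x$ for $s < 2 + 2(2\sigma-1) = 4\sigma$. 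So the crux is to establish an energy estimate for the linearized equation. Because the coefficient $|u|^{2\sigma}$ is rough and the nonlinearity genuinely loses a derivative, I would regularize the problem (e.g.\ by frequency truncation or adding $\vp \partial_x^2$-type parabolic regularization) to get smooth solutions, prove a priori estimates uniform in the regularization parameter, and then pass to the limit; uniqueness and continuous dependence would be handled by a similar estimate for the difference of two solutions (using a weaker norm, since the map $z \mapsto |z|^{2\sigma}$ is only H\"older, so one cannot linearize at full regularity — this forces the $2-\sigma < s$ lower threshold, as the difference estimate needs $s-1 > 1-\sigma$ worth of room).

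The core energy estimate is where the real work lies. I would paralinearize: replace $|u|^{2\sigma}\partial_x v$ by the paraproduct $T_{|u|^{2\sigma}}\partial_x v$, with the error being a sum of a smoother paraproduct $T_{\partial_x v}|u|^{2\sigma}$ and a resonant term $\Pi(|u|^{2\sigma}, \partial_x v)$, both of which should be controllable at regularity $s$ given $s > 2-\sigma$ (this is exactly the kind of place where the finite H\"older regularity $C^{1,2\sigma-1}$ of the nonlinearity enters quantitatively, and one must be careful not to ask for more smoothness of $|u|^{2\sigma}$ than is available — roughly $1+(2\sigma-1) = 2\sigma$ derivatives measured in $L^\infty$-based spaces). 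For the principal paradifferential term $i\,T_{|u|^{2\sigma}}\partial_x v$, the usual trick of a full gauge transformation $e^{-i\int |u|^{2\sigma}}$ is unavailable because when $\sigma<1$ integrating or differentiating $|u|^{2\sigma}$ produces negative powers of $|u|$ near the zero set of $u$. Instead I would use the \emph{partial/frequency-adapted gauge transformation} advertised in the introduction: at each dyadic scale $2^k$, conjugate the frequency-$2^k$ piece of $v$ by a truncated gauge $e^{-i \psi_k}$ where $\psi_k$ is built from a regularized version of $|u|^{2\sigma}$ at scale $2^{k}$ (or some fractional power thereof), chosen so that the commutator $[\partial_x, e^{-i\psi_k}] + i(\partial_x \psi_k) e^{-i\psi_k}$ against $T_{|u|^{2\sigma}}\partial_x$ cancels the leading antisymmetric-looking but sign-indefinite piece of the energy flux, while the error from the truncation is balanced against the frequency-localization scale and absorbed by smoothing/maximal-function estimates for the Schr\"odinger group. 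Summing the localized energies over $k$ with $2^{2ks}$ weights then gives the $H^s$ bound. (At this regularity one can likely avoid the full strength of the $L^4_{t,x}$-type local smoothing/maximal estimates needed for the $H^1$ result, since there is extra room; a cruder Strichartz- or energy-based treatment of the perturbative terms should suffice.)

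The main obstacle, I expect, is precisely the construction and error analysis of the partial gauge transformation and the verification that all the resulting commutator and truncation errors are perturbative at the claimed regularity. One has to simultaneously (i) keep the gauge bounded and its derivatives controlled despite $|u|^{2\sigma}$ being only $C^{1,2\sigma-1}$ and possibly vanishing, (ii) ensure the cancellation of the derivative-losing term is exact at leading order after conjugation, and (iii) show the remainder lies in a space that closes the estimate — this is a balancing act between the frequency scale $2^k$, the number $2\sigma-1$ of "spare" H\"older derivatives, and the $H^s$ exponent, which is exactly what pins $s$ below $4\sigma$ (after the single time differentiation one has $2\sigma-1$ H\"older derivatives to spend, times $2$ for the time-to-space conversion, on top of the baseline $H^2$). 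A secondary technical point is organizing the bootstrap/continuity argument: since the solution map is only continuous (not smooth) and one works with $\partial_t u$, one must carefully set up the iteration space — presumably $u \in C_t H^s_x$ with $\partial_t u \in C_t H^{s-2}_x$ and auxiliary smoothing-norm bounds — and show the nonlinear estimates close there, handling the low-frequency part of the equation (where $|u|^{2\sigma}$ near the zero set of $u$ is delicate) separately from the high-frequency paradifferential part.
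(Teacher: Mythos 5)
Your skeleton (differentiate in time rather than space, regularize, prove uniform a priori bounds, pass to the limit, handle uniqueness/continuity by difference estimates in a weaker topology) agrees with the paper, and your count $s<4\sigma$ is the right heuristic; but the mechanism you propose for the core energy estimate has a genuine gap, and it is aimed at the wrong term. First, fractional time derivatives are nonlocal in time, so ``use the equation to convert $\partial_t u$ into $\partial_x^2u$'' only works for integer orders. To compare $D_t^{s/2}u$ with $D_x^{s}u$ the paper must truncate the nonlinearity in time, work with global-in-time solutions of the truncated flow, and run a modulation analysis: an elliptic space--time estimate (\Cref{spacetimeconv}) controls the high-modulation region, and the energy estimate is carried out only on the low-modulation piece $\tilde S_{2j}P_ju$ (\Cref{Etime1}), with a paradifferential expansion in \emph{both} spatial and temporal frequencies. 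None of this apparatus (or a substitute for it) appears in your plan, and without it ``fractional powers of $\partial_t$'' cannot be converted into $H^s_x$ control.

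Second, the partial gauge transformation is not what saves the high-regularity estimate. The low--high paradifferential term $P_{<j-4}|u|^{2\sigma}\partial_xP_ju$ is harmless here: after integration by parts it is controlled by $\|\partial_x|u|^{2\sigma}\|_{L_T^1L_x^{\infty}}$, which is exactly where the Strichartz-type bound enters — and that bound is where the threshold $s>2-\sigma$ comes from (for $2-\sigma<s\le\frac32$ it is supplied by the low-regularity machinery of \Cref{modifiedlemma}; for $s>\frac32$ Sobolev embedding suffices), not from the difference estimates as you suggest. The genuinely dangerous interaction is the one you wave off as ``controllable at regularity $s$ given $s>2-\sigma$'': the high--low/resonant term in which $|u|^{2\sigma}$ sits at high spatial frequency against $\partial_xu$ at low frequency. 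Closing an $H^s$ estimate there by purely spatial means would require placing close to $s$ (hence, near the endpoint, close to $4\sigma$) spatial derivatives on $|u|^{2\sigma}$, while the $C^{1,2\sigma-1}$ composition bounds (\Cref{Holder}) supply only about $2\sigma$; no choice of gauge fixes this, since the term is not of the form removed by conjugation. The paper escapes precisely by arranging, via the space--time paraproduct and the modulation localization, that only $D_t^{s/2}$ with $s/2<2\sigma$ ever falls on the rough coefficient, and by proving a dedicated low-modulation Moser bound for $D_t^{s/2}|u|^{2\sigma}$ (\Cref{timemoser2}, \Cref{timemoserest}). Supplying this step — or an equivalent device for the high--low term and for the rigorous handling of fractional time derivatives — is the substance of the theorem and is missing from your proposal.
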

As mentioned, for a restricted range of $\sigma$, we can lower the well-posedness threshold down to $H^1$, where the conserved energy also gives global well-posedness:
 \begin{theorem}\label{low reg theorem}
 Let $\frac{\sqrt{3}}{2}<\sigma<1$ and let $1\leq s<4\sigma$. Then \eqref{gDNLS} is globally well-posed in $H^s(\mathbb{R})$. 
\end{theorem}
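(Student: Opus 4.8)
The plan is to combine the local well-posedness theory with the conservation laws of \eqref{gDNLS}. Concretely, the proof splits into three parts: (i) local well-posedness in $H^1$, with a lifespan bounded below in terms of $\|u_0\|_{H^1}$ and a continuation criterion controlled by the $H^1$ norm; (ii) an a priori global bound on $\|u(t)\|_{H^1}$ obtained from the conserved mass $M(u)$ and energy $E(u)$; and (iii) persistence of regularity for $1<s<4\sigma$. Note that \Cref{high reg theorem} already supplies local well-posedness for $s\in(2-\sigma,4\sigma)$, and that $2-\sigma>1$ throughout the range $\sigma\in(\tfrac{\sqrt3}{2},1)$, so the genuinely new analytic input is the low-regularity local theory at $s=1$ — and this is exactly where the restriction $\sigma>\tfrac{\sqrt3}{2}$ enters. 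Parts (ii) and (iii) are then comparatively soft.

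For part (i) the difficulty is the derivative loss in the nonlinearity $i|u|^{2\sigma}\partial_x u$: because $|u|^{2\sigma}$ is only of order $2\sigma<2$, neither the Schr\"odinger smoothing nor the classical gauge transformation $\exp\!\big(-\tfrac{i}{2}\int_{-\infty}^x|u|^{2\sigma}\big)$ — which generates uncontrollable negative powers of $|u|$ once differentiated — directly removes it. I would instead pass to the paradifferential formulation, localize the equation at each dyadic frequency $2^k$, and conjugate the frequency-$2^k$ paradifferential flow by a \emph{partial} gauge transformation built from a truncation $\min(|u|^{2\sigma},2^{k\theta})$, with $\theta=\theta(\sigma)$ chosen so that the truncation level is balanced against the localization scale. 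The gauge then removes the pointwise-large part of the coefficient, while the residual low part is put into a norm in which it is summable over $k$; the remaining error terms are absorbed using the local smoothing estimate (gaining half a derivative in $L^2_tL^2_{x,\mathrm{loc}}$) together with maximal-function / $L^4_xL^\infty_t$-type bounds on $u$ and $|u|^{2\sigma}$. Matching the half-derivative smoothing gain against the integrability loss caused by the low power $2\sigma<2$ is precisely the balancing that pins the threshold to $\sigma>\tfrac{\sqrt3}{2}$. This yields the a priori $H^1$ estimate; uniqueness follows by comparing two solutions in a weaker topology (such as $L^2$ or $H^{1/2}$), and continuous dependence by a Bona--Smith / frequency-envelope regularization argument, with a lifespan and continuation criterion as stated.

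For part (ii), let $u$ be the $H^1$ solution on its maximal interval. Using the one-dimensional bound $\|u\|_{L^\infty}^2\lesssim\|u\|_{L^2}\|\partial_x u\|_{L^2}$, the potential term in the energy obeys
\[
\left|\frac{1}{2(\sigma+1)}\mathrm{Re}\!\int_{\mathbb R} i|u|^{2\sigma}\overline{u}\,\partial_x u\,dx\right|\ \lesssim\ \|u\|_{L^\infty}^{2\sigma}\|u\|_{L^2}\|\partial_x u\|_{L^2}\ \lesssim\ \|u\|_{L^2}^{\sigma+1}\|\partial_x u\|_{L^2}^{\sigma+1}.
\]
Since $\sigma<1$ we have $\sigma+1<2$, so Young's inequality absorbs this into $\tfrac14\|\partial_x u\|_{L^2}^2$ plus a power of $\|u\|_{L^2}^2$; using that $E$ and $M$ are conserved gives
\[
\|\partial_x u(t)\|_{L^2}^2\ \le\ 4E(u_0)+C\,M(u_0)^{\frac{\sigma+1}{1-\sigma}},
\]
and hence $\sup_t\|u(t)\|_{H^1}\le C(\|u_0\|_{H^1})$. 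This is exactly where $L^2$-subcriticality ($\sigma<1$) is used; at $\sigma=1$ the same estimate would only be available for small mass. By the continuation criterion from part (i), the $H^1$ solution is global. For part (iii), fix $1<s<4\sigma$ and $u_0\in H^s$; local well-posedness in $H^s$ holds either by \Cref{high reg theorem} (if $s>2-\sigma$) or by the low-regularity theory of part (i) (if $1\le s\le 2-\sigma$), with a blow-up criterion again no stronger than $H^1$. Since the $H^s$ solution agrees with the $H^1$ one by uniqueness and $\|u(t)\|_{H^1}$ is globally bounded, the $H^s$ solution cannot break down in finite time, and iterating the local theory keeps it in $H^s$ for all time. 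This gives global well-posedness in $H^s$ for every $s\in[1,4\sigma)$.

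The main obstacle is entirely in part (i): designing the scale-dependent partial gauge transformations and closing the paradifferential $H^1$ energy estimate against the competing smoothing gain and integrability loss, which is what the bulk of the paper must be devoted to. By contrast, parts (ii) and (iii) — the conservation-law bound and the propagation of higher regularity — are routine once the local theory and its continuation criterion are established.
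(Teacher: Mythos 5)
Your plan coincides with the paper's proof: a dyadic-frequency-localized \emph{partial} gauge transformation combined with smoothing and maximal function estimates yields the $H^1$ local theory (with the threshold $\sigma>\frac{\sqrt{3}}{2}$ arising from exactly the balancing you describe), conservation of mass and energy together with Gagliardo--Nirenberg and $L^2$-subcriticality give the uniform $H^1$ bound, and higher regularity is propagated because all the a priori estimates are controlled by norms at (essentially) the $H^1$ level, which is what the paper's Sections 3--7 carry out. One correction of detail: the gauge at frequency $2^j$ must be built from the portion of the coefficient supported where $|u|$ is bounded \emph{below} by the frequency-dependent threshold (the paper uses $\chi(2^j|u|^2)\,|u|^{2\sigma}$ in $\Phi_j$), not from the capped coefficient $\min(|u|^{2\sigma},2^{k\theta})$ as your formula suggests --- gauging by the small part would reintroduce the uncontrollable negative powers of $|u|$ when $\partial_t\Phi_j$ is expanded, whereas your surrounding prose (gauge away the pointwise-large part, treat the small residual perturbatively) is the correct mechanism.
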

\begin{remark}
As a special case, \Cref{high reg theorem} shows in particular that we have local well-posedness in $H^{s}$ for $\frac{3}{2}\leq s\leq 2$. Therefore, we recover the only previously known local well-posedness results for \eqref{gDNLS} when $\sigma<1$; namely, we recover the $H^2$ result of \cite{hayashi2016well} and improve the result of \cite{santos2015existence}, which used weighted Sobolev spaces. 
\end{remark}
\begin{remark}
In both \Cref{high reg theorem} and \Cref{low reg theorem},  well-posedness is to be interpreted in the usual quasilinear fashion, including existence, uniqueness and continuous dependence on the data. More specifically, given an appropriate Sobolev index $s$ and time $T>0$, we first build a function space $X^s_T$ that continuously embeds into $C([-T,T];H^s_x)$. We then show that for each $u_0\in H^s_x$ there exists a unique solution $u$ to \eqref{gDNLS} that lies in $X^s_T$ and satisfies $u(t=0)=u_0$. Finally, we show that the data to solution map is continuous, even  as a map from $H^s_x$ to the stronger topology $X^s_T$.
\end{remark}

\begin{remark}
Since \eqref{DNLS} is known to be globally well-posed in $H^\frac{1}{2}$, one may wonder why we only consider $H^s$ well-posedness when $s\geq 1$. This is, in fact, not necessary. For each $\sigma \in (\frac{\sqrt{3}}{2},1)$, we expect that technical modifications of our proof should establish $H^s$ well-posedness of \eqref{gDNLS} in a range $s\in [l(\sigma),4\sigma)$ with $l(\sigma)<1$ and $l(\sigma)\to \frac{1}{2}$ as $\sigma\to 1$. We avoid doing this for the sake of simplicity. It remains an open problem to prove well/ill-posedness in $H^\frac{1}{2}$ for any $\frac{1}{2}<\sigma<1$, and to find the smallest $\sigma\in (0,1)$ such that \eqref{gDNLS} is well-posed in $H^1.$
\end{remark}

  \subsection{History on well-posedness and solitons}  There is a vast literature devoted to the well-posedness of \eqref{DNLS}, as it took several decades for the regularity to approach current thresholds, and for global results to emerge. We begin our review with the work of Tsutsumi and Fukuda \cite{tsutsumi1980solutions, tsutsumi1981solutions} who studied the well-posedness in $H^s(\mathbb{R})$ for $s>\frac{3}{2}$ by classical energy methods and parabolic regularization. The well-posedness in $H^1(\mathbb{R})$ was reached by Hayashi \cite{hayashi1993initial} by applying a gauge transformation to overcome the derivative loss, and Strichartz estimates to close a-priori estimates. The $H^1(\mathbb{R})$-solution was shown to be global by Hayashi and Ozawa \cite{hayashi1992derivative}, as long as the initial data satisfies $\|u_0\|^2_{L^2}<2\pi.$ Later, Wu \cite{wu2015global} improved this global result by relaxing the smallness condition to $\|u_0\|^2_{L^2}<4\pi$, which is natural in view of the soliton structure.
  \\
  
  Below the energy space, there are also many results for \eqref{DNLS}. Takaoka \cite{takaoka1999well} proved local well-posedness in $H^s(\mathbb{R})$ when $s\geq \frac{1}{2}$ by the Fourier restriction method. This was complemented by a result of Biagioni and Linares \cite{MR1837253} which notes  that the solution map from $H^s(\mathbb{R})$ to $C([-T,T];H^s(\mathbb{R}))$ cannot be locally uniformly continuous when $s<\frac{1}{2}$. By using the I-method, Colliander, Keel, Staffilani, Takaoka and Tao \cite{colliander2001global, colliander2002refined} proved that the $H^s(\mathbb{R})$-solution is global if $s>\frac{1}{2}$ and   $\|u_0\|^2_{L^2}<2\pi.$ Guo and Wu \cite{guo2017global} were later able to strengthen this result by proving that $H^{\frac{1}{2}}(\mathbb{R})$-solutions are global if $\|u_0\|^2_{L^2}<4\pi.$ For an incomplete list of well-posedness results for \eqref{DNLS} on the torus, see \cite{hayashi2019studies, nahmod2012invariant} and references therein.
  \\
  
  There are also many works that use the complete integrability of the \eqref{DNLS} equation. The breakthrough result is \cite{GWPDNLS}, which establishes global well-posedness in $H^\frac{1}{2}(\mathbb{R})$. However, \cite{GWPDNLS} was preceded by many results - see, e.g., \cite{jenkins2020global, shimabukuro2017derivative, pelinovsky2018existence} - highlights of which include a global well-posedness result in the weighted Sobolev space $H^{2,2}(\mathbb{R})$, and progress towards the soliton resolution conjecture. Moreover, although $H^\frac{1}{2}$ regularity is necessary for uniform continuity of the solution map, \cite{harrop2022global,H16,killip2021well} are able to lower the global well-posedness threshold all the way to the critical Sobolev space $L^2$, definitively resolving the well-posedness theory for  \eqref{DNLS} on the line. On the other hand, blowup for \eqref{DNLS} on non-standard domains (for example, the half-line with the Dirichlet boundary condition) is known to be possible \cite{tan2004blow, wu2014global}.
  \\

For \eqref{gDNLS}, the literature on well-posedness is also quite large, though the results are far less definitive. As mentioned, \eqref{gDNLS} was popularized by \cite{MR3079669}, though well-posedness was not considered in that article. Possibly the first well-posedness result was by Hao, who in \cite{hao2008well} was able to prove local well-posedness in $H^\frac{1}{2}(\mathbb{R})$ intersected with an appropriate Strichartz space for $\sigma\geq \frac{5}{2}$. Ambrose and Simpson \cite{ambrose2015local} proved the existence and uniqueness of solutions $u\in C([0,T);H^2(\mathbb{T}))$ and the existence of solutions $u\in L^\infty([0,T),H^1(\mathbb{T}))$ for $\sigma\geq 1$. The uniqueness of $H^1(\mathbb{T})$-solutions was left unresolved, as the proof uses a compactness argument. For an up-to-date study of \eqref{gDNLS} on the torus we refer the reader to \cite{HOV2025}. On the real line, existence and uniqueness in $H^\frac{1}{2}(\mathbb{R})$ was proven by Santos in \cite{santos2015existence} for $\sigma>1$, by utilizing global smoothing and maximal function estimates. A result in weighted Sobolev spaces was also proved in \cite{santos2015existence} for the case $\frac{1}{2}<\sigma<1$, as adding weights helps compensate for the low power in the nonlinearity. In terms of $H^s(\mathbb{R})$ spaces, \cite{hayashi2016well} proves local well-posedness in $H^2$ when $\sigma\geq \frac{1}{2}$, local well-posedness and small data global existence in $H^1$ when $\sigma\geq 1$, existence of weak solutions when $\sigma<1$, and certain unconditional uniqueness results at high regularity. See \cite{mosincat2018unconditional} for more on unconditional uniqueness. The \eqref{gDNLS} equation with extremely rough nonlinearities $0<\sigma<\frac{1}{2}$ is studied in \cite{linares2019classII, MR3952703}, but not in standard Sobolev spaces $H^s$.
\\

 We now turn to the history on stability of solitons. This is also a vast subject, and \eqref{gDNLS} is not the only generalization of \eqref{DNLS} whose solitons have been considered. For the sake of unification, therefore, let us  consider the equation
\begin{equation}\label{DNLSb}
    i\partial_t u+\partial_x^2u+i|u|^{2\sigma}\partial_xu+b|u|^{4\sigma}u=0,\  x\in \mathbb{R},
\end{equation}
which is just a Schr\"odinger equation with a scale-invariant combination of derivative and power nonlinearities. Direct calculation verifies that the soliton solutions of \eqref{DNLSb} are given by
\begin{equation*}
    u_{\omega,c}(t,x)=e^{i\omega t}\phi_{\omega,c}(x-ct)
\end{equation*}
where
\begin{equation*}
 \phi_{\omega,c}(x)=\Phi_{\omega,c}(x)e^{i\theta_{\omega,c}(x)}, \ \ \ \ \ \ 
\theta_{\omega,c}(x)=\frac{c}{2}x-\frac{1}{2\sigma+2}\int_{-\infty}^x\Phi_{\omega,c}(y)^{2\sigma}dy,
\end{equation*}
and, using the notation $\gamma=1+\frac{(2\sigma+2)^2}{2\sigma+1}b$, the real valued function $\Phi_{\omega,c}$ is given by

\begin{equation*}
\Phi_{\omega,c}(x)^{2\sigma}=\left\{
\begin{aligned}
& \frac{(\sigma+1)(4\omega-c^2)}{\sqrt{c^2+\gamma(4\omega-c^2)}\cosh{(\sigma \sqrt{4\omega-c^2}x)-c}} \ \ \ \ \ \gamma>0, \ \ -2\sqrt{\omega}<c<2\sqrt{\omega},\\
	& \frac{2(\sigma+1)c}{(\sigma cx)^2+\gamma }\ \ \ \ \ \ \  \ \ \ \ \ \ \ \ \ \ \ \ \ \ \ \ \ 
	 \ \ \ \ \ \ \ \ \ \ \ \ \ \ \ \ \ \ \ \ \  \gamma>0, \ \ \ \ \   c=2\sqrt{\omega}, \\
	& \frac{(\sigma+1)(4\omega-c^2)}{\sqrt{c^2+\gamma(4\omega-c^2)}\cosh{(\sigma \sqrt{4\omega-c^2}x)-c}} \ \ \ \ \ \ \gamma\leq 0,\ \  -2\sqrt{\omega}<c<-2\sqrt{-\gamma/(1-\gamma)}\sqrt{\omega}.\\
\end{aligned}	\right.
\end{equation*}
These solitons are, of course, related to the Hamiltonian structure of \eqref{DNLSb}, as well as to the conservation of mass, energy and momentum, which we leave to the reader to compute.
\\
  
As expected, the story on soliton stability for \eqref{DNLSb} begins with the \eqref{DNLS} equation. Indeed, in \cite{guo1995orbital}, Guo and Wu proved that the soliton solutions of \eqref{DNLS} are orbitally stable when $\omega>\frac{c^2}{4}$ and $c<0$ by applying the abstract theory of Grillakis, Shatah, and Strauss \cite{grillakis1987stability, grillakis1990stability}. Colin and Ohta \cite{colin2006stability} removed the condition $c<0$ and proved that $u_{\omega,c}$ is orbitally stable when $\omega>\frac{c^2}{4}$ by applying the variational characterization of solitons as in Shatah \cite{shatah1983stable}. The endpoint case $c=2\sqrt{\omega}$ is only partially resolved; progress was made by Kwon and Wu in \cite{kwon2018orbital}, but with certain caveats, such as a non-standard definition of orbital stability. For the study of periodic travelling waves, we refer to \cite{chen2020modulational, hakkaev2020all,  hayashi2019long, hayashi2019studies} and references therein. 
\\

For \eqref{gDNLS}, the story on soliton stability is much richer. In \cite{MR3079669} it was shown that the solitary waves $u_{\omega,c}$ are orbitally stable if $-2\sqrt{\omega}<c<2z_0\sqrt{\omega}$, and orbitally unstable if $2z_0\sqrt{\omega}<c<2\sqrt{\omega}$ when $1<\sigma<2$. Here the constant $z_0=z_0(\sigma)\in (-1,1)$ is the solution to
$$F_\sigma(z):=(\sigma-1)^2\left(\int_0^\infty (\cosh y-z)^{-\frac{1}{\sigma}}dy\right)^2-\left(\int_0^\infty (\cosh y-z)^{-\frac{1}{\sigma}-1}(z\cosh y-1)dy\right)^2=0.$$
Moreover, \cite{MR3079669} proves that all solitary waves with $\omega>\frac{c^2}{4}$ are orbitally unstable when $\sigma\geq 2$ and orbitally stable when $0<\sigma<1$. As mentioned previously, these results are conditional on an appropriate well-posedness theory; there is also a minor numerical portion to the proof. In the borderline case when $c=2z_0\sqrt{\omega}$ and $1<\sigma<2$, Fukaya (\cite{fukaya2017instability}, see also \cite{guo2018instability}) proved orbital instability of the solitons. This completes the study of orbital stability of the solitons of \eqref{gDNLS}, except in the case of the algebraic soliton, which requires special attention \cite{guo2017orbital, li2018instability}.
  \\
  
  In the case $\sigma=1$, $b\neq 0$, there are also many works on soliton stability for \eqref{DNLSb}, e.g. \cite{colin2006stability, fukaya2021instability, hayashi2019studies, hayashi2020stability,  hayashi2020potential, ning2020instability, ning2017instability, ohta2014instability}. On the other hand, there are no results in the case $\sigma\neq 1$, $b\neq 0$, as it seems the explicit formulas for the solitons were not previously known.  We also mention that from the point of view of low regularity well-posedness, the additional term $b|u|^{4\sigma}u$ in \eqref{DNLSb} is both perturbative and maintains scaling, so in our usual range $\frac{\sqrt{3}}{2}< \sigma<1$ our proof can easily be modified to establish global well-posedness in $H^1$, regardless of the size or sign of $b$. 
  To contrast, recall that the known proof of global well-posedness in the case $\sigma=1$, $b=0$ is rather delicate;  global well-posedness could, in principle, fail to persist once the effect of the focusing NLS is added. For state of the art global results when $\sigma=1$, $b\neq 0$  we mention \cite{hayashi2019studies}, which establishes global well-posedness below the soliton thresholds. In particular, \eqref{DNLSb} in the case $\sigma=1$, $b\leq -\frac{3}{16}$ has been known to be globally well-posed for some time now, as at this point the energy becomes coercive, after a suitable gauge transformation.

\subsection{Outline of the proofs}
Here, we outline the key ideas in the proof of \Cref{high reg theorem} and \Cref{low reg theorem}.  We begin with a discussion of the low-regularity argument. Before describing the proof, however, it is instructive to discuss why the gauge transformation used in \cite{hayashi2016well} combined with standard Strichartz estimates will not work. The following discussion is mostly heuristic and for the purpose of motivation only.
\\
\\
Firstly, by a standard energy estimate, one obtains for (regular enough) solutions to \eqref{gDNLS},
\begin{equation}\label{Basic energy estimate}
\|u\|_{L^{\infty}_TH_x^1}\lesssim \|u_0\|_{H_x^1}\exp\left(\int_{0}^{T}\|u\|_{L_x^{\infty}}^{2\sigma-1}\|u_x\|_{L_x^{\infty}}\right).   
\end{equation}
Therefore, one expects to be able to prove suitable $H^1$ bounds for solutions to \eqref{gDNLS} as long as one can estimate the Strichartz norm, $\|u_x\|_{L_T^1L_x^{\infty}}$. However, applying Strichartz estimates directly to \eqref{gDNLS} leads to a loss of a derivative. Therefore, one might na\"ively try to do some sort of gauge transformation to remove the $|u|^{2\sigma}u_x$ term in the equation, which is responsible for this loss. Indeed, if one (formally) defines
\begin{equation}
\begin{split}
\Phi(t,x)=-\frac{1}{2}\int_{-\infty}^{x}|u|^{2\sigma}dy
\end{split}
 \end{equation}
 and then
 \begin{equation}
 w=e^{i\Phi}u,
 \end{equation}
this leads to an equation for $w$ of the form
\begin{equation}
iw_t+\partial_x^2w=(-\partial_t\Phi+i\partial_x^2\Phi-(\partial_x\Phi)^2)w. 
\end{equation}
At first glance, it looks like one can prove Strichartz estimates for $w_x$ without losing derivatives, to obtain the corresponding bound for $\|u_x\|_{L_T^1L_x^{\infty}}$. Unfortunately, if we expand $\partial_t\Phi$, we get
\begin{equation}
\begin{split}
\partial_t\Phi&=-\sigma\int_{-\infty}^{x}\RE(|u|^{2\sigma-2}\overline{u}u_t)dy
\\
&=-\sigma\int_{-\infty}^{x}\RE(|u|^{2\sigma-2}\overline{u}i\partial_x^2u)dy-\sigma\int_{-\infty}^{x}\RE(|u|^{4\sigma-2}\overline{u}u_x)dy.
\end{split}
\end{equation}
The first term above is problematic. To avoid losing derivatives, we are forced to integrate by parts off one derivative. However, since $|u|^{2\sigma-2}\overline{u}$ is not $C^1$ when $\sigma<1$, this will inevitably introduce negative powers of $u$, so this approach will not work.
\\

While the above calculations are not particularly useful for closing low-regularity estimates, they do clearly identify the main enemies in trying to close Strichartz estimates for the gauge transformed equation. That is, the portion of $u$ which is small or vanishes will prevent us from closing Strichartz estimates for $w$. Therefore, it is natural to try to somehow perform a gauge transformation which only removes some portion of the derivative nonlinearity $|u|^{2\sigma}u_x$, which corresponds to a part of $u$ for which $u$ is  bounded away from zero. Doing this is somewhat subtle. We can't simply fix a universal constant $\epsilon>0$, and remove the portion of the nonlinearity for which $|u|>\epsilon$. This is because when the $u_x$ factor in $|u|^{2\sigma}u_x$ is at very high frequency (compared to $\epsilon$), we will still lose derivatives in the Strichartz estimate. To work around this issue, we perform a paradifferential expansion of the equation. That is, for each $j>0$, we project onto frequencies of size $\sim 2^j$ and obtain
\begin{equation}
(i\partial_t+\partial_x^2)P_ju=iP_{<j-4}|u|^{2\sigma}P_ju_x+g_j
\end{equation}
where $g_j$ is a perturbative term. The idea now is to split the coefficient $P_{<j-4}|u|^{2\sigma}=P_{<j-4}|u_{s}|^{2\sigma}+P_{<j-4}|u_{l}|^{2\sigma}$, where $u_{l}$ corresponds to the portion of $u$ which is bounded away from zero (where the lower bound depends on the frequency parameter $j$), and $u_{s}$ is the remaining portion of $u$ which is bounded above by some small $j$ dependent parameter. We then try to do a gauge transformation by defining
\begin{equation}
\Phi_j=-\frac{1}{2}\int_{-\infty}^{x}P_{<j-4}|u_l|^{2\sigma}dy
\end{equation}
and 
\begin{equation}
w_j=e^{i\Phi_j}P_ju.
\end{equation}
This leads to an equation for $w_j$ of the form,
\begin{equation}
(i\partial_t+\partial_x^2)w_j=(-\partial_t\Phi_j+i\partial_x^2\Phi_j-(\partial_x\Phi_j)^2)w_j+e^{i\Phi_j}g_j+ie^{i\Phi_j}P_{<j-4}|u_{s}|^{2\sigma}P_ju_x.
\end{equation}
The point now is that the negative powers of $u$ that arise in the $\partial_t\Phi_j$ term are bounded above by some parameter depending on the frequency scale $2^j$. To avoid derivative loss, we would like this parameter to be as small as possible (i.e. $u_l$ should be bounded below by a ($j$ dependent) constant which is as large as possible). However, we still have to contend with the remainder of the original derivative nonlinearity, $ie^{i\Phi_j}P_{<j-4}|u_{s}|^{2\sigma}P_ju_x$, which is expected to cause derivative loss unless $u_s$ is sufficiently small (depending on $j$). Therefore, we have to compromise between potential losses incurred by the $\partial_t\Phi_j$ term, and the remaining derivative nonlinearity. Unfortunately, by optimizing the appropriate splitting of $u$, it turns out that we will still lose $1-\sigma$ derivatives in estimating the Strichartz norm $\|u_x\|_{L_T^1L_x^{\infty}}$, and therefore, one only expects to be able to control $\|u_x\|_{L_T^1L_x^{\infty}}$ by $\|u\|_{L_T^{\infty}H_x^{2-\sigma}}$. As mentioned, while this is certainly an improvement over previous results  \cite{hayashi2016well,santos2015existence}, this method is not quite robust enough to get well-posedness down to the energy space.  
\\
\\
To get $H^1$ well-posedness, we combine this modified gauge transformation (and Strichartz estimates) with smoothing and maximal function type estimates, as in \Cref{Hom max,Inhom Max}. However, we modify these Strichartz and maximal function norms (see the definition of $Y^s_T$ below) to reflect the loss of $1-\sigma$ derivatives compared to the $L_T^{\infty}H_x^1$ norm, as mentioned above. That is, we build this deficiency into the function spaces where we construct solutions. In particular, the Strichartz $(L_T^1L_x^{\infty})$ component of the norm involves no more than $\sigma$ derivatives. Therefore, the energy estimate \eqref{Basic energy estimate} described above is no longer appropriate to close a priori estimates in $H^1$. Hence, the energy estimate has to be modified accordingly so that the control parameter (i.e.~the Strichartz component) does not lead to a loss of derivatives (in excess of the $H^1$ norm) in the Strichartz/maximal function component of the estimate. It is actually this part of the argument that leads to the restriction on $\sigma$, which we will elaborate on later.
\\
 
Next, we outline the proof of the high regularity well-posedness. As mentioned previously, the $C^{1,2\sigma-1}$ H\"older regularity of the function $z\mapsto |z|^{2\sigma}$ effectively limits the number of times one can differentiate the equation to obtain $H^s$ estimates. A direct energy estimate, which involves differentiating the equation $s$ times in the spatial variable (i.e. applying $D_x^s$ to the equation) limits the range for which one can obtain estimates to $s\leq 2\sigma$. In \cite{hayashi2016well}, the authors managed to bypass this issue in the case $s=2$ by instead obtaining an $L_x^2$ energy estimate for the time derivative $\partial_tu$. The point is that doing this only requires one to differentiate the nonlinearity a single time. Once an appropriate $L_x^2$ estimate is obtained, $H_x^2$ energy estimates for the solution can then be obtained by observing that up to an error of size $O(\|u\|_{L_T^{\infty}H_x^1}^{2\sigma+1})$, the equation gives,  
\begin{equation}
\|(\partial_x^2u)(t)\|_{L_x^2}\sim\|(\partial_tu)(t)\|_{L_x^2}.
\end{equation}
In this article, we generalize this approach to derivatives of fractional order. It turns out that (after suitably localizing a solution in time), one can morally obtain an estimate (up to a suitable error term) essentially of the form 
\begin{equation}
\|D_t^{\frac{s}{2}}u\|_{L_T^{\infty}L_x^2}\sim\|D_x^{s}u\|_{L_T^{\infty}L_x^2}
\end{equation}
 where $1<s<4\sigma$. The main idea for proving this estimate is a modulation type analysis. Namely, when the space-time Fourier transform of a solution $u$ (after suitably localizing in time) is supported close to the characteristic hypersurface (or in the low modulation region), $\tau=-\xi^2$, one expects to be able to directly compare $D_t^{\frac{s}{2}}u$ and $D_x^{s}u$. On the other hand, when the space-time Fourier transform is supported far away from the hypersurface (or in the high modulation region), one expects to be able to control $D_t^{\frac{s}{2}}u$ and $D_x^{s}u$ in $L_x^2$ by a lower order error term stemming from the nonlinearity of the equation. This latter high modulation control can be loosely thought of as a space-time elliptic estimate. 
\\
\\
With a method for suitably comparing space and time derivatives of a solution in hand, it then essentially suffices to obtain an energy estimate for $D_t^{\frac{s}{2}}u$ when $u$ is localized near the characteristic hypersurface (which is precisely where one expects to be able to compare $D_t^{\frac{s}{2}}u$ to $D_x^su$). Therefore, in light of the  $C^{1,2\sigma-1}$ regularity of the nonlinearity, we should be able to obtain $H_x^{s}$ estimates for a solution as long as $\frac{s}{2}<2\sigma$. This explains the upper threshold of $4\sigma$ for our result. As hinted at earlier, the lower threshold of $2-\sigma$ is explained by the fact that such an energy estimate closes as long as one can control $\|u_x\|_{L_T^1L_x^{\infty}}$. Our low regularity estimates allow us to control this term by the $L_T^{\infty}H_x^{s}$ norm of $u$, as long as $s>2-\sigma$, where $\sigma$ lies in the full range $(\frac{1}{2},1)$. This should be contrasted with the $H^1$ case where we employ a more complicated functional setting and  only deal with a  restricted range of $\sigma$. For clarity, we have chosen to present our high regularity results in the simplest possible  functional setting, which is why the lower bound of $2-\sigma$ appears in \Cref{high reg theorem}, as it comes naturally from our previous estimates. Since $2-\sigma<\frac{3}{2}$ when $\sigma>\frac{1}{2},$ this is a reasonable lower threshold for the high regularity result (as it encompasses the range for which $\|u_x\|_{L_T^{1}L_x^{\infty}}$ can be controlled by Sobolev embedding). Nonetheless, we emphasize that the main novelty in \Cref{high reg theorem} is the upper threshold $s<4\sigma$.

\subsection{Acknowledgements}
We thank Daniel Tataru for several useful suggestions. This material is based upon work supported by the National Science Foundation under Grant No. DMS-1928930 while the authors participated in the program \textit{Mathematical problems in fluid dynamics} hosted by the Mathematical Sciences Research Institute in Berkeley, California, during
the Spring 2021 semester and appears as part of the authors' theses \cite{MR4820290,MR4675424}.

\section{Preliminaries} In this section we settle notation and recall some standard tools. 
\subsection{Littlewood-Paley decomposition}\label{LWP}
First, we recall the standard Littlewood-Paley decomposition. For this, let $\phi_0$ be a radial function in $C_0^{\infty}(\mathbb{R})$ that satisfies
$$0\leq \phi_0\leq 1, \ \ \phi_0(\xi)=1 \ \text{for}\  |\xi|\leq 1, \ \ \phi_0(\xi)=0 \ \text{for}\ |\xi|\geq \frac{7}{6}.$$
Let $\phi(\xi):=\phi_0(\xi)-\phi_0(2\xi)$. For $j\in \mathbb{Z}$, define
$$\widehat{P_{\leq j}f}(\xi)=\phi_0(2^{-j}\xi)\widehat{f}(\xi),$$
$$\widehat{P_jf}(\xi)=\phi(2^{-j}\xi)\widehat{f}(\xi).$$
We will denote $P_{>j}=I-P_{\leq j}$, where $I$ is the identity. Similarly, we define $P_{[a,b]}=\sum_{a\leq j\leq b}P_j$. We will also use the notation $\tilde{P}_j$, $\tilde{P}_{<j}$, $\tilde{P}_{>j}$ to denote a slightly enlarged or shrunken frequency localization. For example, we may denote $P_{[j-3,j+3]}$ by $\tilde{P_j}$. 
\\
\\
Next, we recall a useful bookkeeping device. Following \cite{ifrim2017well,tao2001global}, we denote by $L(\phi_1,\dots,\phi_n)$ a translation invariant expression of the form
\begin{equation*}
    L(\phi_1,\dots,\phi_n)(x)=\int K(y)\phi_1(x+y_1)\cdots \phi_n(x+y_n)dy,
\end{equation*}
where $K\in L^1$. Of interest is the following Leibniz type rule from \cite{ifrim2017well,tao2001global} which will make certain commutator expressions simpler to estimate: 
\begin{lemma}\label{Commutator leibniz}
(Leibniz rule for $P_j$). We have the commutator identity
\begin{equation}
    [P_j,f]g=L(\partial_xf,2^{-j}g).
\end{equation}
\end{lemma}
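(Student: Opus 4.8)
The plan is to prove the commutator identity $[P_j, f]g = L(\partial_x f, 2^{-j}g)$ by writing out both sides explicitly on the Fourier side and recognizing the kernel of the resulting operator as an $L^1$ function after extracting one derivative from $f$ and one factor of $2^{-j}$ from $g$.

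First I would recall that $P_j$ is convolution with the function $\psi_j$ whose Fourier transform is $\phi(2^{-j}\xi)$, i.e. $\psi_j(x) = 2^j \psi(2^j x)$ where $\widehat{\psi} = \phi$ and $\psi \in \mathcal{S}(\mathbb{R})$ since $\phi \in C_0^\infty$. Then I would expand the commutator pointwise:
\begin{equation*}
([P_j,f]g)(x) = \int \psi_j(y)\bigl(f(x-y) - f(x)\bigr)g(x-y)\,dy.
\end{equation*}
The key observation is the fundamental theorem of calculus: $f(x-y) - f(x) = -\int_0^1 y\,(\partial_x f)(x - ty)\,dt$. Substituting this in and changing the variable $y \mapsto 2^{-j}z$ to balance the scaling, one gets
\begin{equation*}
([P_j,f]g)(x) = -\int_0^1 \int 2^j z \psi(z)\, (\partial_x f)(x - 2^{-j}tz)\, (2^{-j}g)(x - 2^{-j}z)\,dz\,dt,
\end{equation*}
which is manifestly of the form $L(\partial_x f, 2^{-j}g)(x)$ with kernel $K$ given (after renaming $2^{-j}z$-type shifts as $y_1, y_2$ in the $L(\cdot,\cdot)$ notation and integrating out $t$) by something controlled by $\int_0^1 |z\psi(z)|\,dt$-type expressions. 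Since $z\psi(z) \in L^1$ (indeed Schwartz), the kernel lies in $L^1$ with norm independent of $j$, which is exactly what the definition of $L(\phi_1,\ldots,\phi_n)$ requires.

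The one point requiring a little care is bookkeeping in the $L(\cdot,\cdot)$ formalism: the definition allows a single kernel $K(y)$ with $y = (y_1, y_2)$ and shifts $\phi_1(x+y_1)\phi_2(x+y_2)$, whereas the computation above produces an extra integration in the auxiliary parameter $t \in [0,1]$. I would handle this either by absorbing the $t$-integral into the kernel (defining $K(y_1,y_2) = \int_0^1 (\text{appropriate delta-type/rescaled }\psi)\,dt$, which is still $L^1$ in $(y_1,y_2)$ by Fubini and Minkowski), or simply by noting — as is standard in \cite{ifrim2017well,tao2001global} — that a finite superposition (here a continuous average) of $L$-expressions with uniformly bounded kernels is again an $L$-expression. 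I do not expect any genuine obstacle here; the identity is essentially a repackaging of the standard paraproduct/commutator estimate, and the only thing to be verified is that the extraction of exactly one derivative from $f$ and exactly one power of $2^{-j}$ from $g$ matches the scaling so that the resulting kernel is scale-invariantly in $L^1$. The mild bookkeeping with the parameter $t$ is the only step where one must be slightly careful to stay within the letter of the $L(\cdot,\cdot)$ definition.
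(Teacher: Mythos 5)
Your argument is correct and is essentially the standard proof of this identity; the paper itself gives no proof but simply cites \cite{ifrim2017well,tao2001global}, and the fundamental-theorem-of-calculus-plus-rescaling computation you give is exactly the argument in those sources. Two small points of bookkeeping. First, your displayed intermediate formula has a spurious factor of $2^j$: since $\psi_j(y)\,y\,dy = 2^j\psi(z)\cdot 2^{-j}z\cdot 2^{-j}\,dz = 2^{-j}z\psi(z)\,dz$, factoring $2^{-j}$ onto $g$ leaves the prefactor $z\psi(z)$, not $2^j z\psi(z)$; with the prefactor as written the kernel mass would grow like $2^j$ and the uniformity would fail, so this should be corrected (your surrounding discussion of the scaling makes clear it is only a slip). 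Second, your worry about the auxiliary parameter $t$ resolves even more cleanly than you suggest: integrating out the two delta constraints $y_1=-2^{-j}tz$, $y_2=-2^{-j}z$ against the $t$-average smears the singular direction into a factor $|y_2|^{-1}\mathbf{1}_{\{y_1 \text{ between } 0 \text{ and } y_2\}}$, yielding a genuine kernel $K(y_1,y_2)$ with $\|K\|_{L^1(\mathbb{R}^2)}=\int |w\psi(w)|\,dw$ independent of $j$, so the identity holds with the paper's literal definition of $L(\cdot,\cdot)$ and no extension to measures is needed.
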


\subsection{Frequency envelopes}
One way we will employ the Littlewood-Paley projections is to define frequency envelopes, which are another nice bookkeeping device introduced by Tao \cite{tao2001global}.  To define these, suppose we are given a Sobolev type space $X$ such that
\begin{equation}
\|P_{\leq 0}u\|_X^2+\sum_{j=1}^\infty\|P_ju\|_{X}^2\sim \|u\|_{X}^2.
\end{equation}
A frequency envelope for $u$ in $X$ is a positive sequence $(a_j)_{j\in \mathbb{N}_0}$ such that 
\begin{equation}\label{property1}
  \|P_{\leq 0}u\|_X\lesssim a_0\|u\|_X, \ \ \   \|P_ju\|_X\lesssim a_j\|u\|_X, \ \ \ \sum_{j=0}^\infty a_j^2\lesssim 1.
\end{equation}
We say that a frequency envelope is admissible if $a_0\approx 1$ and it is slowly varying, meaning that
$$a_j\leq 2^{\delta|j-k|}a_k, \ \ \ j,k\geq 0, \ \ \ 0<\delta \ll 1.$$
An admissible frequency envelope always exists, say by 
\begin{equation}\label{env1}
    a_j=2^{-\delta j}+\|u\|_X^{-1}\max_{k\geq 0} 2^{-\delta |j-k|}\|P_ku\|_X.
\end{equation}
In \eqref{env1} - and in the definitions of the $X^s_T$ and $H^s_x$ frequency envelope formulas defined later -  there is a slight notational conflict, and $P_0u$ should really be interpreted as $P_{\leq 0}u.$
\begin{remark}Frequency envelopes will be particularly convenient for expediting the proof of continuous dependence later on. See \cite{primer} for an expository example of the use of frequency envelopes and \cite{Euler,MHD_sharp,ifrim2020compressible} for applications in more diverse settings.  \end{remark}
\subsection{Strichartz and maximal function estimates}
	Next we recall some standard linear estimates for the  Schr\"odinger equation on the line, which will play a key role in our analysis. We begin with the relevant maximal function and Strichartz estimates for the linear Schr\"odinger flow:
	\begin{proposition}\label{Hom max}(Homogeneous Strichartz and maximal function estimates)
		For $v\in \mathcal{S}(\mathbb{R})$, $\theta\in [0,1]$ and $T\in (0,1)$, we have for $j>0$
		\begin{equation}
			\begin{split}
				&\|e^{it\partial_x^2}v\|_{L_T^{\frac{4}{\theta}}L_x^{\frac{2}{1-\theta}}}\lesssim \|v\|_{L^2},
				\\
				&\|e^{it\partial_x^2}P_jv\|_{L_x^{\frac{2}{1-\theta}}L_T^{\frac{2}{\theta}}}\lesssim 2^{j(\frac{1}{2}-\theta)}\|v\|_{L^2}.
			\end{split}
		\end{equation}
	\end{proposition}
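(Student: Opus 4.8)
The plan is to derive both estimates from standard linear Schr\"odinger theory, using a $TT^*$ argument together with stationary phase (for the local smoothing/maximal function part) and the classical Strichartz inequality (for the global-in-space part). I would first dispose of the homogeneous Strichartz estimate $\|e^{it\partial_x^2}v\|_{L_T^{4/\theta}L_x^{2/(1-\theta)}}\lesssim\|v\|_{L^2}$: the admissible pair condition in one dimension is $\frac{2}{q}+\frac{1}{r}=\frac{1}{2}$, and with $q=\frac{4}{\theta}$, $r=\frac{2}{1-\theta}$ one checks $\frac{2}{q}+\frac{1}{r}=\frac{\theta}{2}+\frac{1-\theta}{2}=\frac12$, so $(q,r)$ is admissible for every $\theta\in[0,1]$ (the endpoint $\theta=0$ being the trivial $L_t^\infty L_x^2$ bound, the endpoint $\theta=1$ being $L^4_{t,x}$). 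The bound $\|e^{it\partial_x^2}v\|_{L_t^q L_x^r(\R\times\R)}\lesssim\|v\|_{L^2}$ then follows from the usual $TT^*$ argument applied to the dispersive estimate $\|e^{it\partial_x^2}\|_{L^1\to L^\infty}\lesssim |t|^{-1/2}$ and the Hardy--Littlewood--Sobolev inequality, and restricting to $t\in[0,T]$ only shrinks the norm.

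\smallskip

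For the second estimate I would interpolate between the two endpoints $\theta=0$ and $\theta=1$. At $\theta=0$ the claim is $\|e^{it\partial_x^2}P_jv\|_{L_x^2 L_T^\infty}\lesssim 2^{j/2}\|v\|_{L^2}$; after the substitution $v\mapsto P_jv$ and Plancherel this is exactly the frequency-localized maximal function estimate for the Schr\"odinger group, which is the content of Kenig--Ruiz and can be proved on a single dyadic block by a $TT^*$ argument: the kernel of $(e^{it\partial_x^2}P_j)(e^{is\partial_x^2}P_j)^*$ is $\int e^{i(t-s)\xi^2+i(x-y)\xi}\phi(2^{-j}\xi)^2\,d\xi$, and a stationary phase / non-stationary phase analysis gives a kernel bound $\lesssim 2^j\langle 2^j(x-y)+2^{?}\,\cdots\rangle^{-N}$ whose $L_y^1$-in-the-right-variables norm, after taking $\sup$ in $t,s$, is $O(2^j)$, yielding the $2^{j/2}$ after taking square roots. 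At $\theta=1$ the claim is $\|e^{it\partial_x^2}P_jv\|_{L_x^\infty L_T^2}\lesssim 2^{-j/2}\|v\|_{L^2}$, which is the (frequency-localized) global-in-time local smoothing estimate of Kenig--Ponce--Vega and Sj\"olin; again this is cleanest via $TT^*$, where the relevant kernel is $\int e^{i(t-s)\xi^2+i(x-y)\xi}\phi(2^{-j}\xi)^2\,d\xi$ viewed as a function of $(t-s)$ with $x,y$ fixed, whose $L^1_{t-s}$ norm is $O(2^{-j})$ by the change of variables $\eta=\xi^2$ on the support $|\xi|\sim 2^j$ (where $|d\eta/d\xi|\sim 2^j$). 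Complex interpolation in $\theta$ between the mixed-norm spaces $L_x^2 L_T^\infty$ and $L_x^\infty L_T^2$ (Stein--Weiss interpolation for analytic families, or just interpolation of the vector-valued operators $v\mapsto e^{it\partial_x^2}P_jv$) then produces $\|e^{it\partial_x^2}P_jv\|_{L_x^{2/(1-\theta)}L_T^{2/\theta}}\lesssim 2^{j(1/2-\theta)}\|v\|_{L^2}$ for all $\theta\in[0,1]$, since $2^{j/2\cdot(1-\theta)}\cdot 2^{-j/2\cdot\theta}=2^{j(1/2-\theta)}$.

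\smallskip

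The main obstacle, and the step I would be most careful about, is the $\theta=1$ local smoothing endpoint in the \emph{mixed} norm $L_x^\infty L_T^2$ with the correct $2^{-j/2}$ gain: one has to be sure the $\sup_x$ is genuinely controlled, not merely an $L^\infty_x$-in-an-averaged-sense bound, and that the time interval being finite ($T<1$) does not spoil the smoothing gain (it does not — one extends $T$ to all of $\R$, where the estimate is scaling-critical, and restriction only helps). A clean alternative that avoids interpolating mixed-norm spaces is to prove the second estimate directly for each $\theta$ by a single $TT^*$ computation: the operator is $v\mapsto e^{it\partial_x^2}P_jv$, its adjoint-composition kernel is the oscillatory integral displayed above, and one needs the bound
\begin{equation*}
\Bigl\| \sup_{t}\ \bigl(\,\cdot\,\bigr)\Bigr\|\ \lesssim\ 2^{j(1-2\theta)},
\end{equation*}
which by rescaling $\xi=2^j\zeta$, $x=2^{-j}x'$, $t=2^{-2j}t'$ reduces to the fixed unit-frequency case and a routine stationary-phase kernel estimate. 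Either route works; I would present the interpolation route for brevity and cite Kenig--Ponce--Vega and Kenig--Ruiz for the endpoints. A final remark: the hypothesis $j>0$ is used only to make $2^{j(1/2-\theta)}$ the honest homogeneous weight; for $j\le 0$ one would instead get $\langle 2^j\rangle^{1/2-\theta}$, which is why the statement restricts to high frequencies.
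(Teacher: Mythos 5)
Your proposal is correct: both bounds are exactly the standard frequency-localized linear estimates, and your route — Strichartz admissibility $\frac{2}{q}+\frac{1}{r}=\frac12$ for the first bound, then complex (Benedek--Panzone) interpolation between the dyadic maximal estimate $\|e^{it\partial_x^2}P_jv\|_{L_x^2L_T^\infty}\lesssim 2^{j/2}\|v\|_{L^2}$ at $\theta=0$ and the sharp Kato smoothing bound $\|e^{it\partial_x^2}P_jv\|_{L_x^\infty L_T^2}\lesssim 2^{-j/2}\|v\|_{L^2}$ at $\theta=1$, each obtained by $TT^*$ and kernel/stationary-phase estimates — is precisely the standard proof of the result the paper invokes, since the paper itself gives no argument beyond citing \cite[Lemma 3.1]{kenig2006global}. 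The endpoint exponents and the interpolated constant $(2^{j/2})^{1-\theta}(2^{-j/2})^{\theta}=2^{j(\frac12-\theta)}$ check out, so no further comment is needed.
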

	\begin{proof}
		See \cite[Lemma 3.1]{kenig2006global}.
	\end{proof}
	We will also need the inhomogeneous versions of these estimates. Here $D_x^s:=|\partial_x|^s$, $\langle D_x\rangle^s:=(1+|\partial_x|^2)^{\frac{s}{2}}$, and $|\partial_x|:=H\partial_x$ where $H$ is the Hilbert transform, $\widehat{Hu}=-i\text{sgn}(\xi)\widehat{u}$. We further note that both Propositions \ref{Hom max} and \ref{Inhom Max} hold for $j=0$, with the interpretation $P_0=P_{\leq 0}$.
	\begin{proposition}\label{Inhom Max}(Inhomogeneous Strichartz and maximal function estimates) For $f\in \mathcal{S}(\mathbb{R}^2)$, $\theta\in [0,1]$ and $T\in (0,1)$, we have for $j>0$
		\begin{equation}
			\begin{split}
				&\biggnorm{\int_{0}^{t}e^{i(t-s)\partial_x^2}f(s)ds}_{L_T^{\frac{4}{\theta}}L_x^{\frac{2}{1-\theta}}}\lesssim \|f\|_{L_T^{(\frac{4}{\theta})'}L_x^{(\frac{2}{1-\theta})'}},
				\\
				&\biggnorm{\langle D_x\rangle^{\frac{\theta}{2}}\int_{0}^{t}e^{i(t-s)\partial_x^2}f(s)ds}_{L_T^{\infty}L_x^2}\lesssim \|f\|_{L_x^{p(\theta)}L_T^{q(\theta)}},
				\\
				&\biggnorm{ D_x^{\frac{1+\theta}{2}}\int_{0}^{t}e^{i(t-s)\partial_x^2}f(s)ds}_{L_x^{\infty}L_T^2}\lesssim \|f\|_{L_x^{p(\theta)}L_T^{q(\theta)}},
				\\
				&\biggnorm{ \langle D_x\rangle^{\frac{\theta}{2}}\int_{0}^{t}e^{i(t-s)\partial_x^2}P_jf(s)ds}_{L_x^{2}L_T^\infty}\lesssim 2^{\frac{j}{2}}\|f\|_{L_x^{p(\theta)}L_T^{q(\theta)}},
				\\
				&\biggnorm{\int_{0}^{t}e^{i(t-s)\partial_x^2}P_jf(s)ds}_{L_x^{\frac{2}{1-\theta}}L_T^{\frac{2}{\theta}}}\lesssim 2^{j(\frac{1}{2}-\theta)}\|f\|_{L_T^1L_x^2},
			\end{split}
		\end{equation}
		where
		\begin{equation}
			\begin{split}
				\frac{1}{p(\theta)}=\frac{3+\theta}{4},\hspace{5mm}\frac{1}{q(\theta)}=\frac{3-\theta}{4}.
			\end{split}
		\end{equation}
	\end{proposition}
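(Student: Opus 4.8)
\textbf{Proof proposal for Proposition~\ref{Inhom Max}.}
The plan is to deduce each of the five inhomogeneous estimates from the corresponding homogeneous bounds in Proposition~\ref{Hom max} together with a $TT^*$-type duality argument and the Christ--Kiselev lemma. The starting point is that each homogeneous estimate $\|e^{it\partial_x^2}v\|_Y\lesssim \|v\|_{L^2}$ dualizes to $\|\int_{\R} e^{-is\partial_x^2}f(s)\,ds\|_{L^2}\lesssim \|f\|_{Y'}$, and composing an estimate of the first type in an output space $Y_1$ with one of the dual type coming from an input space $Y_0$ yields
\[
\biggnorm{\int_{\R} e^{i(t-s)\partial_x^2}f(s)\,ds}_{Y_1}\lesssim \|f\|_{Y_0'}.
\]
First I would record the full set of homogeneous estimates I need: the Strichartz pair $\|e^{it\partial_x^2}v\|_{L_T^{4/\theta}L_x^{2/(1-\theta)}}\lesssim\|v\|_{L^2}$; the dual Strichartz/smoothing-type inequalities obtained by transposing the maximal function bound $\|e^{it\partial_x^2}P_jv\|_{L_x^{2/(1-\theta)}L_T^{2/\theta}}\lesssim 2^{j(1/2-\theta)}\|v\|_{L^2}$ and the standard local smoothing bound $\|D_x^{1/2}e^{it\partial_x^2}v\|_{L_x^\infty L_T^2}\lesssim\|v\|_{L^2}$; and the inhomogeneous-to-$L_T^\infty L_x^2$ maximal estimate, which is where the exponents $p(\theta),q(\theta)$ enter. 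The identity $1/p(\theta)=(3+\theta)/4$, $1/q(\theta)=(3-\theta)/4$ is exactly the Hölder-dual (in the mixed $L^p_xL^q_t$ sense) of an interpolated Strichartz/smoothing exponent: interpolating the endpoint local smoothing space $L_x^\infty L_T^2$ (gain of $1/2$ derivative) against the $L^2_{t,x}$-type Strichartz space with appropriate weights produces the admissible input space $L_x^{p(\theta)}L_T^{q(\theta)}$, and the $\langle D_x\rangle^{\theta/2}$ or $D_x^{(1+\theta)/2}$ prefactor on the left is the net derivative gain after this interpolation. So each of the middle three estimates is obtained by: (i) writing the Duhamel operator as $\int_{s<t} e^{i(t-s)\partial_x^2}f(s)\,ds$, (ii) dropping the restriction $s<t$ using the Christ--Kiselev lemma (legitimate since in every case the input Lebesgue exponent is strictly smaller than the output one, which holds because $p(\theta)<2<2/(1-\theta)$ etc.), (iii) factoring $e^{i(t-s)\partial_x^2}=e^{it\partial_x^2}e^{-is\partial_x^2}$ and applying a homogeneous estimate in $t$ and a dual homogeneous estimate in $s$, and (iv) redistributing the derivative weights $\langle D_x\rangle^{\theta/2}$, $D_x^{(1+\theta)/2}$ symmetrically onto the two factors and interpolating in $\theta$ between the $\theta=0$ (pure Strichartz / pure $L^2$) and $\theta=1$ (endpoint smoothing, gain of $1/2$) cases.

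The fourth estimate, with $P_j$ on both the input and the $L_x^2L_T^\infty$ output, and the extra $2^{j/2}$ loss, is handled the same way but now one factor is a maximal function estimate: write $\langle D_x\rangle^{\theta/2}\int_0^t e^{i(t-s)\partial_x^2}P_jf\,ds$, apply Christ--Kiselev, split $e^{i(t-s)\partial_x^2}=e^{it\partial_x^2}e^{-is\partial_x^2}$, bound the $t$-output by the maximal function estimate $\|\langle D_x\rangle^{\theta/2}e^{it\partial_x^2}P_j(\cdot)\|_{L_x^2L_T^\infty}\lesssim 2^{j/2}2^{j(-\theta/2)+\text{(from }\langle D_x\rangle^{\theta/2})}\|\cdot\|_{L^2}$—here one checks the powers of $2^j$ collapse to exactly $2^{j/2}$ after combining with the derivative weight—and bound the $s$-factor by the dual of the Strichartz/smoothing homogeneous estimate to land back in $L_x^{p(\theta)}L_T^{q(\theta)}$. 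Finally, the last estimate, from $L_T^1L_x^2$ input to $L_x^{2/(1-\theta)}L_T^{2/\theta}$ output with the $2^{j(1/2-\theta)}$ weight, is the cleanest: by Minkowski's inequality it suffices to bound $\|e^{i(t-s_0)\partial_x^2}P_j f(s_0)\|_{L_x^{2/(1-\theta)}L_T^{2/\theta}}$ for fixed $s_0$ uniformly, which is precisely the homogeneous maximal function estimate of Proposition~\ref{Hom max} applied to $P_jf(s_0)$, and then integrate $ds_0$; the $j=0$ case is identical with $P_0=P_{\leq 0}$.

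The main obstacle I expect is bookkeeping the derivative powers and the dyadic constants so that the stated weights $\langle D_x\rangle^{\theta/2}$, $D_x^{(1+\theta)/2}$ and $2^{j/2}$, $2^{j(1/2-\theta)}$ come out exactly right rather than off by an $\epsilon$ or by a power of $j$; in particular verifying that the interpolation between the $\theta=0$ and $\theta=1$ endpoints is clean and that no logarithmic loss is incurred when summing the $P_j$ pieces (which is why the estimates are stated frequency-localized, with the loss tracked explicitly). The other point requiring care is checking the strict inequality of Lebesgue exponents needed for Christ--Kiselev in each case: for the smoothing-type estimates one needs $p(\theta)<\infty$ and the time exponents to be ordered correctly, $q(\theta)>2$ versus the $L_T^2$ or $L_T^\infty$ output, which holds throughout $\theta\in[0,1)$ and extends to $\theta=1$ by a limiting argument or a direct endpoint treatment. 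None of this is deep—these are by now standard oscillatory-integral/$TT^*$ manipulations, essentially as in \cite{kenig2006global}—but the precise form of the exponents $p(\theta),q(\theta)$ is dictated entirely by demanding that the two homogeneous estimates one composes have matching $L^2$-based middle space, and that forced matching is the one computation worth doing carefully.
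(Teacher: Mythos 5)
Note first that the paper does not prove this proposition at all: it is quoted verbatim from Kenig--Ponce--Vega (the proof given is the citation to Lemma 3.4 and Remark 3.7 of \cite{kenig2006global}), so you are attempting strictly more than the paper does. Parts of your outline are fine: the first estimate is the standard retarded Strichartz bound, the fifth follows from Minkowski's inequality plus the homogeneous maximal function estimate exactly as you say, and the \emph{non-retarded} versions of the middle three estimates (with $\int_0^t$ replaced by $\int_{\mathbb{R}}$) do follow from the $TT^*$ composition of a homogeneous estimate with a dual homogeneous estimate, together with Stein interpolation for the analytic family $D_x^{z}$ (which is what your ``interpolate in $\theta$'' step really requires and should be stated as such).

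The genuine gap is the passage from the non-retarded to the retarded operator. The Christ--Kiselev lemma, in the form you invoke, applies to norms of the type $L^{q}_t(B)$ with the \emph{time} variable outermost and requires the input time exponent to be strictly smaller than the output one. In the second, third and fourth estimates at least one side carries a mixed norm with the \emph{spatial} variable outermost ($L_x^{p(\theta)}L_T^{q(\theta)}$, $L_x^\infty L_T^2$, $L_x^2 L_T^\infty$), where the usual Whitney/martingale decomposition underlying Christ--Kiselev does not go through, since there is no single scalar time-density of $f$ to decompose by. Worse, even granting a mixed-norm variant with the hypothesis read off the inner time exponents, the third estimate at the endpoint $\theta=1$ has input $L_x^1L_T^2$ and output $L_x^\infty L_T^2$ -- equal time exponents -- which is exactly where Christ--Kiselev fails, and your proposed ``limiting argument in $\theta$'' cannot repair this because the Christ--Kiselev constant blows up as the two exponents merge. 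That endpoint is the retarded Kenig--Ponce--Vega smoothing estimate with a gain of a full derivative, $\bignorm{\partial_x\int_0^t e^{i(t-s)\partial_x^2}f\,ds}_{L_x^\infty L_T^2}\lesssim \|f\|_{L_x^1L_T^2}$, which is precisely the hardest and most used of these bounds; in the literature it is proved by a direct oscillatory-integral/kernel analysis of the retarded propagator, not by abstract $TT^*$ plus Christ--Kiselev. So your scheme proves the non-retarded analogues and some of the retarded ones, but the key retarded smoothing and maximal-function estimates require either that direct kernel argument or the citation the paper actually uses.
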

	\begin{proof}
			See \cite[Lemma 3.4 and Remark 3.7]{kenig2006global}. 
	\end{proof}
	The following fractional Leibniz rules will also be useful for some of the following estimates:
	\begin{proposition}\label{Leib2}
		Let $\alpha\in (0,1)$, $\alpha_1,\alpha_2\in [0,\alpha]$,  $p,p_1,p_2,q,q_1,q_2\in (1,\infty)$ satisfy $\alpha_1+\alpha_2=\alpha$ and $\frac{1}{p}=\frac{1}{p_1}+\frac{1}{p_2}$, $\frac{1}{q}=\frac{1}{q_1}+\frac{1}{q_2}$. Then
		\begin{equation}
			\|D_x^{\alpha}(fg)-D_x^{\alpha}fg-fD_x^{\alpha}g\|_{L_x^pL_T^q}\lesssim \|D_x^{\alpha_1}f\|_{L_x^{p_1}L_T^{q_1}}\|D_x^{\alpha_2}g\|_{L_x^{p_2}L_T^{q_2}}.
		\end{equation}
		The endpoint cases $q_1=\infty, \alpha_1=0$ as well as $(p,q)=(1,2)$ are also allowed.
	\end{proposition}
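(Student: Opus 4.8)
My plan is to view the left-hand side as a bilinear Fourier multiplier acting in $x$ only,
\[
D_x^{\alpha}(fg)-(D_x^{\alpha}f)g-f(D_x^{\alpha}g)=T_{\sigma}(f,g),\qquad \sigma(\xi,\eta):=|\xi+\eta|^{\alpha}-|\xi|^{\alpha}-|\eta|^{\alpha},
\]
and to prove the bound by a paraproduct / Coifman--Meyer decomposition of $\sigma$. Since every operator in sight acts only in $x$, the time variable is a passive Banach-space parameter; and since $L_T^{q}$ is a UMD space for $q\in(1,\infty)$, the vector-valued Littlewood--Paley square-function estimate, the reverse square-function estimate for $x$-frequency-separated pieces, and the (vector-valued) Fefferman--Stein maximal inequality all hold on $L_x^{p}(L_T^{q})$ in the ranges I will use; these are the tools that reassemble the dyadic pieces. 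I would decompose $\sigma$ according to which of $|\xi|$, $|\eta|$, $|\xi+\eta|$ is smallest, noting that in each region $\sigma$ is smooth away from two of its three conical singularities $\{\xi=0\}$, $\{\eta=0\}$, $\{\xi+\eta=0\}$.

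For the unbalanced regions, say $|\eta|\lesssim|\xi|$, I localize $|\xi|\sim2^{j}$, $|\eta|\sim2^{k}$ with $k\le j$ and write $\sigma=(|\xi+\eta|^{\alpha}-|\xi|^{\alpha})-|\eta|^{\alpha}$, both summands being smooth there since $|\xi+\eta|\sim|\xi|\sim2^{j}$. Using $\alpha_1+\alpha_2=\alpha$, $0\le\alpha_i\le\alpha<1$ and $k\le j$ one checks $|\sigma(\xi,\eta)|\lesssim 2^{-c(j-k)}|\xi|^{\alpha_1}|\eta|^{\alpha_2}$ and, after rescaling to unit frequency, that $\sigma/(|\xi|^{\alpha_1}|\eta|^{\alpha_2})$ is smooth with derivative bounds uniform in $j,k$; as its frequency support is essentially a product of two annuli, the corresponding bilinear kernel is absolutely integrable (of product type, with Schwartz tails) with uniform $L^1$-norm. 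Thus this region contributes a superposition over $k\le j$, with geometric weight $2^{-c(j-k)}$, of bilinear operators pointwise dominated by $M_x(\widetilde P_j D_x^{\alpha_1}f)\,M_x(\widetilde P_k D_x^{\alpha_2}g)$ whose output is frequency-localized at $2^{j}$. The $k$-sum is absorbed into $M_x(D_x^{\alpha_2}g)$, the $j$-sum into the $L_x^{p}(L_T^{q})$ square function applied to $D_x^{\alpha_1}f$, and Hölder in $(p_1,p_2)$ and $(q_1,q_2)$ together with the vector-valued maximal and square-function estimates finishes this contribution. The mirror region $|\xi|\lesssim|\eta|$ is symmetric under $(f,\alpha_1)\leftrightarrow(g,\alpha_2)$.

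The balanced region $|\xi|\sim|\eta|\sim2^{j}$ is the main obstacle: all three terms of $\sigma$ are then of size $2^{j\alpha}$, so no derivative can be cleanly split off either factor, and the output frequency is not bounded below. Here I would additionally localize the output, $|\xi+\eta|\sim2^{m}$ with $m\le j+O(1)$; on each $(j,m)$-piece $\sigma$ is smooth away from all its singularities and its frequency support is a slab of dimensions $2^{j}\times2^{m}$, so the associated bilinear kernel is again absolutely integrable with $L^1$-norm uniform in $(j,m)$. The $|\xi+\eta|^{\alpha}$ term carries the extra gain $2^{(m-j)\alpha}$, so the pieces it produces sum in $m$ harmlessly; the remaining $-|\xi|^{\alpha}-|\eta|^{\alpha}$ gives no $m$-decay, but summed over $m\le j$ it reassembles, for each $j$, into $(D_x^{\alpha_1}\widetilde P_jf)(D_x^{\alpha_2}\widetilde P_jg)$-type expressions (one simply regroups the common factor $2^{j\alpha}=2^{j\alpha_1}2^{j\alpha_2}$, legitimate because the two input frequencies are comparable). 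Summed over $j$ this is the weighted balanced paraproduct, which is $\lesssim\|(\sum_j|D_x^{\alpha_1}\widetilde P_jf|^2)^{1/2}\|_{L_x^{p_1}L_T^{q_1}}\,\|(\sum_j|D_x^{\alpha_2}\widetilde P_jg|^2)^{1/2}\|_{L_x^{p_2}L_T^{q_2}}$ by Cauchy--Schwarz in $j$ and the forward square-function estimate. Keeping the output frequency-localized throughout and invoking the reverse square-function and Fefferman--Stein inequalities once more closes the estimate for all $p,q\in(1,\infty)$.

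Finally, the listed endpoints should require only cosmetic changes. When $\alpha_1=0$ (hence $q_1=\infty$, $\alpha_2=\alpha$) one extracts no derivative from $f$: in the region where $f$ carries the high frequency, $D_x^{\alpha}(fg)$ and $(D_x^{\alpha}f)g$ nearly cancel (their difference keeps the factor $2^{j(\alpha-1)}|\eta|$) while $fD_x^{\alpha}g$ is a high--low paraproduct, so the factor $\|f\|_{L_x^{p_1}L_T^{\infty}}$ is always produced by a single maximal function $M_x$ rather than a square function---harmless since the $L_T^{\infty}$-valued Hardy--Littlewood inequality holds---and the square function lands only on $D_x^{\alpha}g\in L_x^{p_2}L_T^{q_2}$. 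The endpoint $(p,q)=(1,2)$, where $L^1$-based Littlewood--Paley theory fails, is the only case needing a genuinely different assembly: one replaces the reverse square-function step by the Hilbert-space-valued $H^1$ theory, using that $\|\sum_m F_m\|_{L_x^1(L_T^2)}\lesssim\|(\sum_m\|F_m\|_{L_T^2}^2)^{1/2}\|_{L_x^1}$ still holds for $x$-frequency-separated $F_m$, which again combines with the uniform $L^1$ kernel bounds and the geometric frequency-gap decay to sum the dyadic pieces. (This estimate is the mixed-norm fractional Leibniz rule of Kenig--Ponce--Vega, and in the interest of brevity one may instead simply cite their argument.)
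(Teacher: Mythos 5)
The paper offers no argument for this proposition at all: its ``proof'' is the citation to \cite[Lemma 2.6]{kenig2003local} and \cite[Lemma 3.8]{kenig2006global}, so your closing remark that one may simply cite Kenig--Ponce--Vega is exactly what the authors do. What you sketch is essentially a reconstruction of the standard proof behind that citation: a Coifman--Meyer/paraproduct decomposition of the symbol $|\xi+\eta|^{\alpha}-|\xi|^{\alpha}-|\eta|^{\alpha}$, uniform $L^1$ kernel bounds for the rescaled dyadic pieces, and vector-valued Littlewood--Paley, reverse square-function and Fefferman--Stein maximal inequalities in $L_x^p(L_T^q)$ for $p,q\in(1,\infty)$, with the two endpoints treated separately; the outline is sound. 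Two points deserve care if you were to write it out. First, in the unbalanced region the summand $-|\eta|^{\alpha}$ only carries the gain $2^{-\alpha_1(j-k)}$, so your claimed uniform decay $2^{-c(j-k)}$ for the whole symbol there requires $\alpha_1>0$; when $\alpha_1=0$ that summand is precisely the high--low paraproduct $\sum_j \tilde P_j f\, P_{<j}D_x^{\alpha}g$ that you treat in the endpoint paragraph (via a maximal function on $f$ and a square function on $D_x^{\alpha}g$, since the square-function characterization fails in $L_T^{\infty}$), so nothing breaks, but the case split should be made explicit rather than folded into a single decay claim. Second, at $(p,q)=(1,2)$ the inequality $\|\sum_m F_m\|_{L_x^1(L_T^2)}\lesssim\|(\sum_m\|F_m\|_{L_T^2}^2)^{1/2}\|_{L_x^1}$ is an $H^1$-type bound rather than a plain $L^1$ reverse square-function estimate (it uses the frequency separation of the $F_m$ and a qualitative Hardy-space membership), and this is indeed the most delicate part of the published argument, so either flesh that step out or defer to the cited proof as you suggest.
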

	\begin{proof}
	See \cite[Lemma 2.6]{kenig2003local} or \cite[Lemma 3.8]{kenig2006global}.
	\end{proof}
	Another variant of the fractional Leibniz rule for $L^p_x$ spaces is as follows: 
	\begin{proposition}\label{Leib1}
		Let $\alpha\in (0,1)$, $\alpha_1,\alpha_2\in (0,\alpha)$ and $p\in [1,\infty)$, $1<p_1,p_2<\infty$ satisfy $\alpha_1+\alpha_2=\alpha$ and $\frac{1}{p}=\frac{1}{p_1}+\frac{1}{p_2}$. Then
		\begin{equation}
			\|D_x^{\alpha}(fg)-D_x^{\alpha}fg-fD_x^{\alpha}g\|_{L_x^p}\lesssim \|D_x^{\alpha_1}f\|_{L_x^{p_1}}\|D_x^{\alpha_2}g\|_{L_x^{p_2}}.
		\end{equation}
		The endpoint case $\alpha_2=0$, $1<p_2\leq \infty$ is also allowed if $p>1$.
	\end{proposition}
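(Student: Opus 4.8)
The plan is to establish this $L^p_x$ fractional Leibniz (commutator) estimate by the standard Coifman--Meyer / Kenig--Ponce--Vega paraproduct strategy, essentially mirroring the proof of the space-time version in \cite{kenig2003local, kenig2006global} but in the slightly simpler setting without the auxiliary $L^q_T$ integrability. First I would perform a Littlewood--Paley (paraproduct) decomposition $fg=\Pi_{\mathrm{low},\mathrm{high}}(f,g)+\Pi_{\mathrm{high},\mathrm{low}}(f,g)+\Pi_{\mathrm{high},\mathrm{high}}(f,g)$, separating the frequency interactions into the two ``unbalanced'' regimes (one factor at frequency $\sim 2^k$, the other at frequency $\lesssim 2^{k-4}$) and the ``balanced/high-high'' regime. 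The two leading terms $D_x^\alpha f\, g$ and $f\, D_x^\alpha g$ on the left-hand side exactly cancel the contributions of the two unbalanced paraproducts in which $D_x^\alpha$ falls entirely on the high-frequency factor; what survives in those regimes is a genuine commutator where $D_x^\alpha$ must be ``redistributed'' across a frequency-localized product.

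The key steps, in order: (1) Reduce to estimating each dyadic piece. In the high-high regime, write the contribution as a sum over $k$ of $D_x^\alpha(P_k f\, \tilde P_k g)$ type terms; using $\alpha=\alpha_1+\alpha_2$ and Bernstein, bound $\|D_x^\alpha P_k(\cdot)\|_{L^p_x}\lesssim 2^{k\alpha}\|P_k f\|\|\tilde P_k g\|$ and distribute $2^{k\alpha}=2^{k\alpha_1}2^{k\alpha_2}$ onto the two factors, then sum in $k$ via Cauchy--Schwarz and the square-function characterization of $L^{p_i}_x$ (here $p>1$, or $p=1$ with the stated endpoint, is where one uses Hölder plus Littlewood--Paley). (2) In the unbalanced regime, after the cancellation, one is left with expressions of the form $\sum_k \big([D_x^\alpha, P_{<k-4}f]\,P_k g\big)$; here I would use the standard commutator/kernel estimate — write $D_x^\alpha(P_{<k-4}f\cdot P_k g)-P_{<k-4}f\cdot D_x^\alpha P_k g$ as an $L$-type expression (in the bookkeeping of \Cref{Commutator leibniz}, or directly via the mean value theorem on the multiplier symbol) that effectively moves $\alpha_1$ derivatives onto $f$ and leaves $\alpha_2$ on $g$ up to an $O(2^{-k})$-type gain that guarantees summability. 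Then apply Hölder in $x$ and sum in $k$. (3) Collect the pieces; the endpoint $\alpha_2=0$, $1<p_2\leq\infty$ is handled by noting $D_x^0 g=g$ and estimating $g$ directly in $L^{p_2}_x$ while the $\alpha$ derivatives all land on $f$ via the commutator structure.

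The main obstacle I anticipate is the careful treatment of the \emph{low-frequency / bounded-frequency} part of each factor and the endpoint cases: one must be honest that the square-function arguments require $1<p_i<\infty$ for the high-high term, so the statement ``$p\in[1,\infty)$'' forces the high-high interaction to actually be estimated by Hölder directly (exploiting $1/p=1/p_1+1/p_2$ with both $p_i<\infty$), and the $\alpha_2=0$ endpoint with $p_2=\infty$ needs the commutator gain to absorb the lack of decay from $g$. A secondary technical point is making the cancellation of the two ``principal'' terms precise at the level of the paraproduct decomposition — i.e. verifying that $D_x^\alpha f\cdot g$ and $f\cdot D_x^\alpha g$ really are, up to acceptable (summable commutator) errors, the two unbalanced paraproducts with the derivative on the high factor — so that no spurious large term is left behind. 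Both of these are routine given \cite{kenig2003local, kenig2006global} and \Cref{Commutator leibniz}, so I would cite those for the kernel estimates and keep the argument brief.
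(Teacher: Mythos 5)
The paper does not actually prove \Cref{Leib1}: it is quoted from the literature, with the proof deferred to \cite[Lemma 2.6]{kenig2003local} (the same source as \Cref{Leib2}). Your paraproduct sketch is, in outline, exactly the standard Kenig--Ponce--Vega-style argument behind that cited lemma, so the strategy is the right one; the comparison is simply that the paper cites while you reconstruct.

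Two places in your outline are thinner than the words suggest and would need repair in a written-out proof. First, in the unbalanced regimes the cancellation does not leave only commutators: subtracting $(D_x^\alpha f)g$ in the low--high regime leaves behind the terms $P_{<k-4}(D_x^{\alpha}f)\,P_k g$ (and symmetrically $P_k f\,P_{<k-4}(D_x^{\alpha}g)$ in the high--low regime), where the full derivative sits on the \emph{low}-frequency factor. These are not commutators and do not carry the $O(2^{-k})$ kernel gain; they are handled by Bernstein ($2^{k\alpha_2}$ on the low factor against $2^{-k\alpha_2}$ on the high factor) together with almost-orthogonality of the outputs, so they are harmless but must appear in the bookkeeping. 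Second, your summation in $k$ for the high--high (and the leftover unbalanced) pieces, as literally described -- norm-level Cauchy--Schwarz followed by ``the square-function characterization of $L^{p_i}$'' -- uses the inequality $(\sum_k\|P_k f\|_{L^{p_1}}^2)^{1/2}\lesssim\|f\|_{L^{p_1}}$, which fails for $p_1<2$; the correct argument performs the Cauchy--Schwarz pointwise, e.g.\ via $|D_x^{\alpha}\tilde P_{\leq k+5}(P_kf\,\tilde P_kg)|\lesssim 2^{k\alpha}M(P_kf\,\tilde P_kg)$, pointwise Cauchy--Schwarz with the weights $2^{k\alpha_1},2^{k\alpha_2}$, H\"older in $x$ (which is what makes $p=1$ admissible), and the Fefferman--Stein vector-valued maximal inequality plus the weighted square-function theorem for each factor (using $1<p_i<\infty$). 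These are precisely the details contained in the cited lemma, so deferring to \cite{kenig2003local,kenig2006global} is legitimate, but the proof as you phrased those two steps would not compile into a correct argument without the adjustments above.
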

	\begin{proof}
			See \cite[Lemma 2.6]{kenig2003local}.
	\end{proof}
Next, we need a vector-valued Moser type estimate which will be convenient when derivatives fall on $|u|^{2\sigma}.$
\begin{proposition}\label{Moservec} Let $F\in C^1(\mathbb{C})$. Let $\alpha\in (0,1)$, $p,q,p_1,p_2,q_2\in (1,\infty)$ and $q_1\in (1,\infty]$ with
\begin{equation}
\frac{1}{p}=\frac{1}{p_1}+\frac{1}{p_2},\hspace{5mm}\frac{1}{q}=\frac{1}{q_1}+\frac{1}{q_2}.
\end{equation}
Then
\begin{equation}
\|D_x^{\alpha}F(u)\|_{L_x^pL_T^q}\lesssim \|F'(u)\|_{L_x^{p_1}L_T^{q_1}}\|D_x^{\alpha}u\|_{L_x^{p_2}L_T^{q_2}}.
\end{equation}
\end{proposition}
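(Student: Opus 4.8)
The plan is to deduce this from the scalar fractional chain rule of Kenig, Ponce and Vega by running that argument with the extra $L^q_T$ variable carried along as a parameter. The key tool is the square-function characterization of the homogeneous fractional derivative: for $1<\rho<\infty$ and $0<\alpha<1$,
\begin{equation*}
\|D_x^{\alpha}g\|_{L^{\rho}_x}\approx\left\|\Big(\int_{\mathbb{R}}\frac{|g(\cdot+y)-g(\cdot)|^2}{|y|^{1+2\alpha}}\,dy\Big)^{1/2}\right\|_{L^{\rho}_x}.
\end{equation*}
Since $L^q_T$ ($1<q<\infty$) is a UMD Banach lattice, this equivalence — together with the Hardy--Littlewood and Fefferman--Stein maximal inequalities used below — remains valid with $\|\cdot\|_{L^{\rho}_x}$ replaced by the mixed norm $\|\cdot\|_{L^p_xL^q_T}$; this is standard vector-valued Littlewood--Paley theory and is the only point at which the mixed-norm structure intervenes, and then only mildly. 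Writing $\mathcal{S}_{\alpha}g(x,t):=\big(\int_{\mathbb{R}}|g(x+y,t)-g(x,t)|^2|y|^{-1-2\alpha}\,dy\big)^{1/2}$, the task reduces to proving
\begin{equation*}
\|\mathcal{S}_{\alpha}(F(u))\|_{L^p_xL^q_T}\lesssim\|F'(u)\|_{L^{p_1}_xL^{q_1}_T}\|\mathcal{S}_{\alpha}u\|_{L^{p_2}_xL^{q_2}_T},
\end{equation*}
after which the same characterization converts $\|\mathcal{S}_{\alpha}u\|_{L^{p_2}_xL^{q_2}_T}$ back into $\|D_x^{\alpha}u\|_{L^{p_2}_xL^{q_2}_T}$.

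For fixed $t$, the fundamental theorem of calculus gives $F(u(x+y,t))-F(u(x,t))=m(x,y,t)\big(u(x+y,t)-u(x,t)\big)$, where $m=\int_0^1 F'\big((1-\theta)u(x,t)+\theta u(x+y,t)\big)\,d\theta$ (here $F'$ is the real differential and the product the associated pairing, so $|F(u(x+y,t))-F(u(x,t))|\le|m|\,|u(x+y,t)-u(x,t)|$). I would then split $m=F'(u(x,t))+R$. The $F'(u(x,t))$ piece contributes exactly $|F'(u(x,t))|\,\mathcal{S}_{\alpha}u(x,t)$ to $\mathcal{S}_{\alpha}(F(u))(x,t)$, and its $L^p_xL^q_T$ norm is controlled, via the mixed-norm H\"older inequality with exponents $(p_1,q_1)$ and $(p_2,q_2)$, by $\|F'(u)\|_{L^{p_1}_xL^{q_1}_T}\|\mathcal{S}_{\alpha}u\|_{L^{p_2}_xL^{q_2}_T}$. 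It remains to bound the remainder $\mathcal{R}(x,t):=\big(\int_{\mathbb{R}}|R(x,y,t)|^2|u(x+y,t)-u(x,t)|^2|y|^{-1-2\alpha}\,dy\big)^{1/2}$ by the same quantity.

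This remainder is the main obstacle, and it is precisely where $F\in C^1$ (and nothing more) is used: the only pointwise information on $R$ is that $|R(x,y,t)|\le\sup_{z}|F'(z)-F'(u(x,t))|$, the supremum over the segment joining $u(x,t)$ and $u(x+y,t)$, which tends to $0$ as $|u(x+y,t)-u(x,t)|\to 0$ but with no rate. The device, following Kenig--Ponce--Vega, is to cut the $y$-integral defining $\mathcal{R}$ into dyadic annuli $|y|\sim 2^k$ and, on each, to separate the points where $|u(x+y,t)-u(x,t)|$ exceeds a fixed multiple of its annular average (an exceptional set of small relative measure, where the crude bound $|R|\le 2|F'|$ suffices) from the main set, on which one simultaneously uses the smallness of $R$ (uniform continuity of $F'$ on the range of $u$) and bounds the residual values of $|F'|$ along the segment by the Hardy--Littlewood maximal function of $F'(u)$; summing in $k$ and invoking boundedness of the vector-valued maximal operators on $L^{p_1}_x(L^{q_1}_T)$ and $L^{p_2}_x(L^{q_2}_T)$ yields $\mathcal{R}\lesssim M_x[F'(u)]\cdot\mathcal{S}_{\alpha}u$ in the relevant mixed norms, hence the claim after a final application of mixed-norm H\"older. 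Apart from this remainder estimate everything is routine; alternatively, one may cite the scalar fractional chain rule of Kenig--Ponce--Vega as a black box and observe that its proof yields exactly the per-slice pointwise bound $\mathcal{S}_{\alpha}(F(u))(x,t)\lesssim M_x[F'(u)](x,t)\,\mathcal{S}_{\alpha}u(x,t)$, which together with mixed-norm H\"older gives the proposition at once.
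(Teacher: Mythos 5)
The paper offers no argument for this proposition at all: its ``proof'' is a citation to Theorem A.6 of Kenig--Ponce--Vega \cite{kenig1993well}, so your blind attempt is necessarily a different route, and its overall architecture (mixed-norm square-function characterization of $D_x^{\alpha}$, mean-value decomposition, maximal functions, mixed-norm H\"older) is indeed the Christ--Weinstein/KPV architecture. The diagonal piece $F'(u(x,t))\,\mathcal{S}_{\alpha}u(x,t)$ is handled correctly.

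However, the remainder step --- which you correctly identify as the heart of the matter --- has a genuine gap, and for $F$ that is merely $C^1$ it cannot be repaired: the proposed bounds are false. The term $R(x,y,t)$ involves $F'$ evaluated on the segment in $\mathbb{C}$ joining $u(x,t)$ and $u(x+y,t)$; these are points of the target that need not be values of $u$ anywhere, so no maximal function of $F'(u)$ controls them, and uniform continuity of $F'$ ``with no rate'' can never produce a multiplicative inequality with an absolute constant. Concretely, let $F(z)=\eta(|z|)$ with $\eta$ smooth, $\eta\equiv 0$ on $[0,\tfrac13]$, $\eta\equiv 1$ on $[\tfrac23,\infty)$, and let $u=\chi_{[0,1]}$ (constant in time), $\alpha=\tfrac12$, $p_2=\tfrac32$, $p_1=6$. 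Then $F\in C^{\infty}(\mathbb{C})$, $F'(u)\equiv 0$, $D_x^{1/2}u\in L_x^{3/2}$, yet $F(u)=\chi_{[0,1]}$, so $\mathcal{S}_{\alpha}(F(u))=\mathcal{S}_{\alpha}(u)\neq 0$ while $M_x[F'(u)]\equiv 0$: both your claimed norm bound $\|\mathcal{R}\|\lesssim\|F'(u)\|\,\|\mathcal{S}_{\alpha}u\|$ and the ``per-slice pointwise bound'' $\mathcal{S}_{\alpha}(F(u))\lesssim M_x[F'(u)]\,\mathcal{S}_{\alpha}u$ that you attribute to the KPV proof fail, and in fact the estimate as stated for arbitrary $C^1$ nonlinearities fails. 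What the Christ--Weinstein/KPV mechanism actually uses is a structural hypothesis of the form $|F'(\theta a+(1-\theta)b)|\lesssim |F'(a)|+|F'(b)|$ uniformly in $\theta\in[0,1]$ (automatic for the power-type functions $|z|^{2\sigma}$ and $|z|^{2\sigma-2}\bar z$ used in this paper, since $0<2\sigma-1\le 1$); with it the off-diagonal contribution is bounded not by $M[F'(u)]\,\mathcal{S}_{\alpha}u$ pointwise but by $\bigl(M[\,|F'(u)|^2\,]\bigr)^{1/2}$ times an averaged (ball-mean) square function of $u$, after which one concludes in norm via H\"older and vector-valued maximal/square-function estimates. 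So to make your argument correct you must either impose this hypothesis on $F$ (harmless for every application in the paper) or restrict to power-type $F$; as written, the exceptional-set/uniform-continuity device cannot close the remainder estimate. (A lesser caveat: Stein's characterization $\|D_x^{\alpha}g\|_{L^{\rho}}\approx\|\mathcal{S}_{\alpha}g\|_{L^{\rho}}$ carries an exponent restriction $\rho>2/(1+2\alpha)$ in one dimension, and its mixed-norm extension is exactly the content of the auxiliary results in the KPV appendix, so it deserves more than an appeal to UMD folklore.)
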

\begin{proof}
See	Theorem A.6 of \cite{kenig1993well}. 
\end{proof}
We also recall the scalar version of the above estimate,
\begin{proposition}\label{scalarMoser}
Let $F\in C^1(\mathbb{C})$, $u\in L^{\infty}(\mathbb{R})$, $\alpha\in (0,1)$, $1<p,q,r<\infty$, and $\frac{1}{r}=\frac{1}{p}+\frac{1}{q}$. Then
\begin{equation}
\|D_x^{\alpha}F(u)\|_{L^r}\lesssim \|F'(u)\|_{L^p}\|D_x^{\alpha}u\|_{L^q}.   
\end{equation}
\end{proposition}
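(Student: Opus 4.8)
The plan is to reduce the inequality to a pointwise estimate by means of the square-function characterization of the fractional derivative, and then to extract the chain-rule structure from the fundamental theorem of calculus, absorbing the loss coming from the mere $C^1$ regularity of $F$ into a Hardy--Littlewood maximal function. Concretely, I would first record the classical Strichartz characterization of the homogeneous fractional Sobolev norm: for $1<r<\infty$ and $0<\alpha<1$,
\[
\|D_x^{\alpha}g\|_{L^r}\sim\left\|\left(\int_{\mathbb{R}}\frac{|g(\cdot+y)-g(\cdot)|^{2}}{|y|^{1+2\alpha}}\,dy\right)^{1/2}\right\|_{L^r}=:\|S_\alpha g\|_{L^r}.
\]
Applying this both to $g=F(u)$ and to $g=u$, it suffices to prove $\|S_\alpha(F(u))\|_{L^r}\lesssim\|F'(u)\|_{L^p}\,\|S_\alpha u\|_{L^q}$, the right-hand factor being comparable to $\|D_x^{\alpha}u\|_{L^q}$ since $1<q<\infty$.

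Next, by the fundamental theorem of calculus, for all $x,y$,
\[
F(u(x+y))-F(u(x))=\left(\int_0^1 F'\bigl(u(x)+t\,(u(x+y)-u(x))\bigr)\,dt\right)\bigl(u(x+y)-u(x)\bigr),
\]
so that $|F(u(x+y))-F(u(x))|\le G(x,y)\,|u(x+y)-u(x)|$ with $G(x,y):=\int_0^1|F'(u(x)+t\,(u(x+y)-u(x)))|\,dt$. If one could bound $G(x,y)$ uniformly in $y$ by a function $H(x)$ with $\|H\|_{L^p}\lesssim\|F'(u)\|_{L^p}$, the proof would finish at once: pull $H(x)$ out of the $y$-integral defining $S_\alpha(F(u))(x)$, which leaves exactly $H(x)\,S_\alpha u(x)$, and apply H\"older with $\tfrac1r=\tfrac1p+\tfrac1q$.

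The main obstacle is precisely that such a uniform bound fails. Since $F$ is only $C^1$, the value of $F'$ at the intermediate point $u(x)+t\,(u(x+y)-u(x))$ is not controlled pointwise by $F'(u(x))$ or $F'(u(x+y))$ when the increment $|u(x+y)-u(x)|$ is large, and replacing $G$ by the crude bound $\sup_{|z|\le\|u\|_{L^\infty}}|F'(z)|$ produces a constant in place of the required $L^p$ factor $F'(u)$. The remedy is to split the $y$-integral defining $S_\alpha(F(u))(x)$ dyadically according to the size of $|u(x+y)-u(x)|$, and on each piece to dominate the contribution of $G$ by $|F'(u(x))|$ together with a Hardy--Littlewood maximal function $M(F'(u))(x)$, the latter absorbing the pieces on which the increment is comparatively large; summing over the dyadic scales and using the $L^p$-boundedness of $M$ followed by H\"older then yields the estimate. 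This dyadic/maximal-function decomposition is the technical heart of the argument and is exactly the content of Theorem~A.6 of \cite{kenig1993well}, which I would invoke directly rather than reproduce.
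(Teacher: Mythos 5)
Your proposal ends by invoking Theorem A.6 of \cite{kenig1993well}, i.e.\ the fractional chain rule itself, which is essentially what the paper does: its entire proof of this proposition is the citation to Proposition 3.1 of \cite{MR1124294} (the same Christ--Weinstein result, with \cite{kenig1993well} cited instead for the vector-valued variant in Proposition \ref{Moservec}), so your route is the same proof-by-reference. Your preceding square-function/FTC sketch is a reasonable account of how those cited proofs go, but note it would not stand alone as written: the claimed equivalence $\|D_x^{\alpha}g\|_{L^r}\sim\|S_\alpha g\|_{L^r}$ holds only in a restricted exponent range in one direction, and the dyadic/maximal-function domination of the segment average of $F'$ is precisely the nontrivial step you defer to the reference.
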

\begin{proof}
See \cite{MR1124294}, Proposition 3.1.
\end{proof}
We will also make use of not only the standard Bernstein estimates (see, for example, \cite[(A.2)-(A.6), page~333]{tao2006nonlinear}) but the following vector-valued version: 
\begin{proposition}
Let $1\leq p,q\leq \infty$, $j>0$ and $s\in \mathbb{R}$. Then we have 
\begin{equation}
    \|D^s_xP_ju\|_{L^p_xL^q_T}\sim 2^{js}\|P_ju\|_{L^p_xL^q_T}.
\end{equation}
\end{proposition}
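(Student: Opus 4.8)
The plan is to prove the vector-valued Bernstein estimate
\[
\|D^s_xP_ju\|_{L^p_xL^q_T}\sim 2^{js}\|P_ju\|_{L^p_xL^q_T}
\]
by reducing it to a convolution statement. Write $\widehat{\tilde P_j}(\xi)=\tilde\phi(2^{-j}\xi)$ for a smooth bump $\tilde\phi$ supported in an annulus and equal to $1$ on the support of $\phi$, so that $P_j=\tilde P_jP_j$. Then $D^s_xP_ju=D^s_x\tilde P_jP_ju=(K_{j,s}*_x(P_ju))$, where $K_{j,s}$ is the (spatial) convolution kernel with Fourier transform $|\xi|^s\tilde\phi(2^{-j}\xi)$. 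A rescaling $\xi\mapsto 2^j\xi$ shows $K_{j,s}(x)=2^{j(1+s)}\check m(2^jx)$ with $m(\xi)=|\xi|^s\tilde\phi(\xi)$ a fixed Schwartz-class function (it is smooth and compactly supported away from $\xi=0$, hence $\check m\in\mathcal S$), so in particular $\|K_{j,s}\|_{L^1_x}=2^{js}\|\check m\|_{L^1_x}=C\,2^{js}$ independently of $j$.

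Next I would invoke Minkowski's integral inequality to handle the mixed norm: since convolution is only in the $x$-variable,
\[
\|K_{j,s}*_x f\|_{L^p_xL^q_T}=\Bigl\|\,\bigl\|{\textstyle\int}K_{j,s}(y)f(\cdot-y,\cdot)\,dy\bigr\|_{L^q_T}\Bigr\|_{L^p_x}
\le \int |K_{j,s}(y)|\,\bigl\|f(\cdot-y,\cdot)\bigr\|_{L^q_T}^{(x)}\,dy\Big|_{L^p_x},
\]
and then Young's (or Minkowski's again in $x$) gives $\|K_{j,s}*_x f\|_{L^p_xL^q_T}\le \|K_{j,s}\|_{L^1_x}\|f\|_{L^p_xL^q_T}$. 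Applying this with $f=P_ju$ yields $\|D^s_xP_ju\|_{L^p_xL^q_T}\lesssim 2^{js}\|P_ju\|_{L^p_xL^q_T}$. For the reverse inequality, write $P_ju=D^{-s}_x\tilde P_j\,(D^s_xP_ju)$ — equivalently, convolve $D^s_xP_ju$ against the kernel with symbol $|\xi|^{-s}\tilde\phi(2^{-j}\xi)$, which by the same rescaling has $L^1_x$ norm $\sim 2^{-js}$ — to get $\|P_ju\|_{L^p_xL^q_T}\lesssim 2^{-js}\|D^s_xP_ju\|_{L^p_xL^q_T}$. Combining the two bounds gives the claimed equivalence. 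The case $j=0$ with $P_0=P_{\le 0}$ is the only place needing a tiny separate remark, since $|\xi|^s$ is not smooth at the origin; there one instead uses $\langle D_x\rangle^s$ or simply notes the statement is stated for $j>0$.

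The only genuine subtlety — and the step I would flag as the main obstacle — is justifying the mixed-norm Young/Minkowski inequality cleanly, i.e. that convolution in one variable of a mixed-norm function against an $L^1$ kernel is bounded on $L^p_xL^q_T$ with the $L^1$ kernel norm as the constant; this is standard but worth one line since one must apply Minkowski's integral inequality in the correct order (pull the $L^q_T$ norm inside the $y$-integral first, then use the triangle inequality / Young in $x$). Everything else is bookkeeping: the scaling identity for the kernel and the fact that $|\xi|^s$ times a bump supported away from the origin has Schwartz inverse Fourier transform. I would present this compactly, citing the scalar Bernstein estimates already referenced in the paper for the analogous kernel bounds, and note that the $\langle D_x\rangle^s$ version (needed, e.g., for $j=0$) follows identically since $\langle\xi\rangle^s\tilde\phi(2^{-j}\xi)$ enjoys the same symbol estimates uniformly in $j\ge 0$.
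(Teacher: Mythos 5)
Your argument is correct and matches the paper's proof in all essentials: both write $D^s_xP_ju$ as a spatial convolution of $P_ju$ against the kernel of $D^s_x\tilde P_j$, bound that kernel in $L^1_x$ by $2^{js}$ via rescaling, apply Minkowski/Young in the mixed norm, and obtain the reverse inequality by the same argument with $D_x^{-s}\tilde P_j$. Your extra remarks (the explicit Schwartz-kernel scaling and the $j=0$ caveat) are fine but not needed beyond what the paper records.
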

\begin{proof}
Let $\tilde{P}_j$ have corresponding multiplier $\tilde{\phi}_j$, where, as in the preliminaries on Littlewood-Paley theory, we have $\tilde{\phi}_j(\xi)=\tilde{\phi}(2^{-j}\xi)$. Notice that
$$D^s_x(\tilde{P}_jP_ju)=(D^s_x\mathcal{F}^{-1}\tilde{\phi}_j)\ast P_ju.$$
For each $x$, we have the inequality
$$\|D^s_xP_ju\|_{L^q_T}\leq |D^s_x\mathcal{F}^{-1}\tilde{\phi}_j|\ast \|P_ju\|_{L^q_T}.$$
Hence, applying $L^p_x$ and Young's inequality, we have
$$\|D^s_xP_ju\|_{L^p_xL^q_T}\leq \|D^s_x\mathcal{F}^{-1}\tilde{\phi}_j\|_{L^1_x} \|P_ju\|_{L^p_xL^q_T}\lesssim 2^{js}\|P_ju\|_{L^p_xL^q_T}.$$

On the other hand,
$$2^{js}\|P_ju\|_{L^p_xL^q_T}=2^{js}\|D_x^{-s}D_x^sP_ju\|_{L^p_xL^q_T}\lesssim \|D^s_xP_ju\|_{L^p_xL^q_T}.$$
\end{proof}

\subsection{A useful lemma}
	Finally, we need a H\"older estimate, which we will use to extract all of the $C^{1,2\sigma-1}$-regularity that our nonlinearity offers. We will use this lemma, e.g., when derivatives fall on $|u|^{2\sigma-2}\overline{u}$, or more generally on terms with regularity $C^{0,\alpha}$ for $0<\alpha<1$.
	\\
	
	To set notation, for $\alpha\in (0,1]$ and $1\leq p\leq \infty$ define the H\"older space $\dot{\Lambda}_{\alpha}^p(\mathbb{R})$ by
\begin{equation}
\begin{split}
&\|u\|_{\dot{\Lambda}_{\alpha}^p}:=\sup_{|h|>0}\frac{\|u(\cdot+h)-u(\cdot)\|_{L^p}}{|h|^{\alpha}}.
\end{split}
\end{equation}

This is just the usual homogeneous H\"older space $\dot{C}^{0,\alpha}$ when $p=\infty$.

	\begin{lemma}\label{preHolder}
Suppose that $F\in \dot{C}^{0,\alpha}(\mathbb{C})$. Then for every $0<\beta<\alpha<1$, $p\in [1,\infty]$ with $\alpha p\geq 1$, we have
\begin{equation}\label{lemma1}
\|F(u)\|_{\dot{\Lambda}_{\beta}^p}\lesssim \|F\|_{\dot{C}^{0,\alpha}}\|u\|_{W^{\frac{\beta}{\alpha},p\alpha}}^{\alpha}.
\end{equation}
\end{lemma}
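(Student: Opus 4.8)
The plan is to reduce the estimate to a difference bound on $F(u)$ via the H\"older continuity of $F$, and then to interpolate. First I would fix $h$ with $|h|>0$ and write, using $F\in \dot C^{0,\alpha}$,
\begin{equation*}
\|F(u(\cdot+h))-F(u(\cdot))\|_{L^p}\lesssim \|F\|_{\dot C^{0,\alpha}}\,\bigl\|\,|u(\cdot+h)-u(\cdot)|^{\alpha}\,\bigr\|_{L^p}=\|F\|_{\dot C^{0,\alpha}}\,\|u(\cdot+h)-u(\cdot)\|_{L^{\alpha p}}^{\alpha}.
\end{equation*}
Thus the left-hand side of \eqref{lemma1}, after dividing by $|h|^{\beta}$ and taking the supremum, is controlled by $\|F\|_{\dot C^{0,\alpha}}$ times $\sup_{|h|>0}|h|^{-\beta}\|u(\cdot+h)-u(\cdot)\|_{L^{\alpha p}}^{\alpha}$, i.e. by $\|F\|_{\dot C^{0,\alpha}}\,\|u\|_{\dot\Lambda^{\alpha p}_{\beta/\alpha}}^{\alpha}$. (The hypothesis $\alpha p\ge 1$ is exactly what makes $\alpha p$ an admissible exponent for the H\"older space $\dot\Lambda^{\alpha p}_{\beta/\alpha}$.)

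The remaining point is the embedding $\|u\|_{\dot\Lambda^{\alpha p}_{\beta/\alpha}}\lesssim \|u\|_{W^{\beta/\alpha,\,\alpha p}}$. Set $r=\alpha p\in[1,\infty)$ and $\gamma=\beta/\alpha\in(0,1)$. This is the classical fact that the inhomogeneous (or homogeneous) Sobolev space $W^{\gamma,r}$ controls the $L^r$-modulus of continuity of order $\gamma$; equivalently, $\|u(\cdot+h)-u(\cdot)\|_{L^r}\lesssim |h|^{\gamma}\|D_x^{\gamma}u\|_{L^r}$ for $\gamma\in(0,1)$, $1\le r<\infty$. I would prove it by writing $u(\cdot+h)-u(\cdot)=(m_h\ast u)$ where the Fourier multiplier is $e^{ih\xi}-1=(e^{ih\xi}-1)|\xi|^{-\gamma}\cdot|\xi|^{\gamma}$, and checking that the multiplier $(e^{ih\xi}-1)|\xi|^{-\gamma}$ has an $L^1$ kernel with mass $O(|h|^{\gamma})$ (by scaling $\xi\mapsto \xi/|h|$ this reduces to a single fixed multiplier times $|h|^{\gamma}$, and $(e^{i\xi}-1)|\xi|^{-\gamma}$ is a standard Mikhlin–Hörmander multiplier whose kernel is integrable since $\gamma\in(0,1)$). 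Then $L^r$-boundedness follows from Young's inequality, giving the bound with the homogeneous norm $\|D_x^{\gamma}u\|_{L^r}$, which is dominated by $\|u\|_{W^{\gamma,r}}$; for $r=1$ one instead invokes the integrability of the kernel directly. Combining the two steps yields \eqref{lemma1}.

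The only genuinely delicate point is the endpoint bookkeeping: when $p=\infty$ the exponent $\alpha p=\infty$ and $\dot\Lambda^\infty_{\beta/\alpha}=\dot C^{0,\beta/\alpha}$, and one must use the $r=\infty$ version of the modulus-of-continuity embedding, $\|u(\cdot+h)-u(\cdot)\|_{L^\infty}\lesssim |h|^{\gamma}\|u\|_{\dot C^{0,\gamma}}\lesssim |h|^\gamma\|u\|_{W^{\gamma,\infty}}$, which is immediate from the definition; when $\alpha p=1$ one uses the kernel-integrability argument rather than Young's inequality with a nontrivial exponent. I expect the Fourier-multiplier kernel estimate to be the main (though standard) obstacle, since one needs uniformity in $h$, obtained cleanly by the rescaling $\xi\mapsto\xi/|h|$ that extracts the factor $|h|^{\gamma}$ and leaves a fixed admissible multiplier. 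Everything else is the pointwise H\"older inequality for $F$ followed by Hölder's inequality in the spatial variable.
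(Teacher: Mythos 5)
Your argument is correct and follows essentially the same route as the paper: the pointwise factorization $|F(u(x+h))-F(u(x))|\le \|F\|_{\dot C^{0,\alpha}}|u(x+h)-u(x)|^{\alpha}$ reduces the claim to $\|u\|_{\dot\Lambda^{p\alpha}_{\beta/\alpha}}^{\alpha}$, and then one invokes the embedding $W^{\beta/\alpha,p\alpha}\hookrightarrow \dot\Lambda^{p\alpha}_{\beta/\alpha}$, which the paper simply cites as standard (Tao, Exercise A.21) while you supply a proof via the multiplier $(e^{ih\xi}-1)|\xi|^{-\beta/\alpha}$ and Young's inequality. The extra kernel estimate is fine (the rescaling gives the $|h|^{\beta/\alpha}$ factor and the fixed multiplier has an integrable kernel since $\beta/\alpha\in(0,1)$), so no gap.
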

\begin{proof}
We have
\begin{equation}
\begin{split}
\frac{|F(u(x+h))-F(u(x))|}{|h|^{\beta}}&=\frac{|F(u(x+h))-F(u(x))|}{|u(x+h)-u(x)|^{\alpha}}\left(\frac{|u(x+h)-u(x)|}{|h|^{\frac{\beta}{\alpha}}}\right)^{\alpha}    
\\
&\leq \|F\|_{\dot{C}^{0,\alpha}}\left(\frac{|u(x+h)-u(x)|}{|h|^{\frac{\beta}{\alpha}}}\right)^{\alpha}.
\end{split}    
\end{equation}
Hence, 
\begin{equation}
\begin{split}
\|F(u)\|_{\dot{\Lambda}_{\beta}^p}&\leq \|F\|_{\dot{C}^{0,\alpha}}\sup_{|h|>0}\|\left(\frac{|u(x+h)-u(x)|}{|h|^{\frac{\beta}{\alpha}}}\right)^{\alpha}\|_{L^p}
\\
&\leq \|F\|_{\dot{C}^{0,\alpha}}\|u\|_{\dot{\Lambda}_{\frac{\beta}{\alpha}}^{p\alpha}}^{\alpha}
\\
&\lesssim \|F\|_{\dot{C}^{0,\alpha}}\|u\|_{W^{\frac{\beta}{\alpha},p\alpha}}^{\alpha}
\end{split}    
\end{equation}
where the last line follows from a standard embedding (c.f.~\cite[Exercise A.21]{tao2006nonlinear}). 
\end{proof}
We also have the following very useful corollary of the above lemma which we will use extensively.
\begin{corollary}\label{Holder}
Suppose that $F\in  \dot{C}^{0,\alpha}(\mathbb{C})$ with $F(0)=0$. Then for every $0<\beta<\alpha<1$, $p\in [1,\infty]$ with $\alpha p\geq 1$ and $\epsilon\in (0,\alpha-\beta)$, we have
\begin{equation}
\|F(u)\|_{W^{\beta,p}}\lesssim_{\epsilon} \|F\|_{\dot{C}^{0,\alpha}}\|u\|_{W^{\frac{\beta}{\alpha}+\epsilon,p\alpha}}^{\alpha}.
\end{equation}
\end{corollary}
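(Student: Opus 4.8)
The plan is to deduce Corollary~\ref{Holder} from Lemma~\ref{preHolder} by combining the homogeneous H\"older bound there with a low-frequency bound coming from the vanishing $F(0)=0$, and then interpolating via a standard Besov/Littlewood--Paley characterization of $W^{\beta,p}$. First I would recall that, since $W^{\beta,p}$ is (up to equivalent norms for $0<\beta<1$, $1<p<\infty$) sandwiched between Besov spaces $B^{\beta}_{p,1}\hookrightarrow W^{\beta,p}\hookrightarrow B^{\beta}_{p,\infty}$, it suffices to control $\|P_{\le 0}F(u)\|_{L^p}$ together with $\sup_{j>0}2^{j\beta}\|P_jF(u)\|_{L^p}$ — or, more comfortably, to prove the slightly stronger $\dot\Lambda^p_{\beta'}$ bound of Lemma~\ref{preHolder} for some $\beta<\beta'<\alpha$ and then lose the $\epsilon$ of regularity passing from $\dot\Lambda^p_{\beta'}$ back to $W^{\beta,p}$. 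The point of the auxiliary exponent $\epsilon\in(0,\alpha-\beta)$ in the statement is precisely to give this room: set $\beta':=\beta+\alpha\epsilon$ (still $<\alpha$ after possibly shrinking $\epsilon$), apply Lemma~\ref{preHolder} with $\beta'$ in place of $\beta$, and note $\beta'/\alpha=\beta/\alpha+\epsilon$, which is exactly the Sobolev exponent appearing in the corollary.

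The steps, in order, would be: (1) handle the low-frequency part. Because $F(0)=0$ and $F\in\dot C^{0,\alpha}$, we have the pointwise bound $|F(u(x))|=|F(u(x))-F(0)|\le \|F\|_{\dot C^{0,\alpha}}|u(x)|^\alpha$, hence $\|F(u)\|_{L^p}\le \|F\|_{\dot C^{0,\alpha}}\|u\|_{L^{p\alpha}}^\alpha \lesssim \|F\|_{\dot C^{0,\alpha}}\|u\|_{W^{\beta/\alpha+\epsilon,p\alpha}}^\alpha$, the last step being a trivial $L^{p\alpha}\supset W^{s,p\alpha}$ embedding. This controls $\|P_{\le 0}F(u)\|_{L^p}$ and, more to the point, the full $L^p$ norm of $F(u)$, which is the ``homogeneous-to-inhomogeneous'' upgrade. (2) Handle the homogeneous H\"older seminorm via Lemma~\ref{preHolder} applied at level $\beta'=\beta+\alpha\epsilon$: this gives $\|F(u)\|_{\dot\Lambda^p_{\beta'}}\lesssim \|F\|_{\dot C^{0,\alpha}}\|u\|_{W^{\beta/\alpha+\epsilon,p\alpha}}^\alpha$. (3) Combine: since $\|F(u)\|_{W^{\beta,p}}\sim \|F(u)\|_{L^p}+\|F(u)\|_{\dot W^{\beta,p}}$ and $\dot\Lambda^p_{\beta'}\cap L^p \hookrightarrow \dot W^{\beta,p}$ whenever $\beta<\beta'$ (this is the Besov interpolation $B^{\beta'}_{p,\infty}\cap L^p\hookrightarrow B^{\beta}_{p,1}\hookrightarrow W^{\beta,p}$, or concretely: $\|P_jf\|_{L^p}\lesssim 2^{-j\beta'}\|f\|_{\dot\Lambda^p_{\beta'}}$ for $j>0$ and $\|P_{\le 0}f\|_{L^p}\lesssim\|f\|_{L^p}$, so $\sum_j 2^{j\beta}\|P_jf\|_{L^p}\lesssim \|f\|_{L^p}+\sum_{j>0}2^{-j(\beta'-\beta)}\|f\|_{\dot\Lambda^p_{\beta'}}<\infty$), adding (1) and (2) yields the claim.

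The main obstacle — really the only non-bookkeeping point — is step (3), namely justifying cleanly that a homogeneous fractional-H\"older ($\dot\Lambda^p_\beta$, i.e. $L^p$-modulus-of-continuity) bound plus an $L^p$ bound controls the inhomogeneous $W^{\beta,p}$ norm, with the $\epsilon$-loss of derivative allowing one to absorb the borderline Besov summation $B^{\beta'}_{p,\infty}\hookrightarrow B^{\beta}_{p,1}$. One has to be slightly careful that the constant is uniform and that the endpoint hypotheses of Lemma~\ref{preHolder} ($\alpha p\ge 1$, and $\beta'<\alpha<1$) are genuinely met by the choice $\beta'=\beta+\alpha\epsilon$ with $\epsilon$ small; shrinking $\epsilon$ if necessary keeps $\beta'<\alpha$, and $\alpha p\ge 1$ is inherited directly from the corollary's hypothesis. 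Everything else is the standard equivalence of norms for $W^{\beta,p}$ with $0<\beta<1$, $1<p<\infty$ (Gagliardo seminorm vs.\ Besov vs.\ Bessel-potential), which we invoke without proof as in \cite[Appendix A]{tao2006nonlinear}.
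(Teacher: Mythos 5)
Your proposal is correct and follows essentially the same route as the paper: bound $\|F(u)\|_{L^p}$ pointwise using $F(0)=0$, apply \Cref{preHolder} at the shifted level $\beta+\alpha\epsilon$ (so that $(\beta+\alpha\epsilon)/\alpha=\beta/\alpha+\epsilon$), and conclude via the embedding $\|F(u)\|_{W^{\beta,p}}\lesssim_\epsilon\|F(u)\|_{L^p}+\|F(u)\|_{\dot\Lambda^p_{\beta+\alpha\epsilon}}$, which the paper simply cites while you sketch its Littlewood--Paley proof. (Note also that the hypothesis $\epsilon<\alpha-\beta$ already forces $\beta+\alpha\epsilon<\alpha$ since $\alpha<1$, so no shrinking of $\epsilon$ is needed.)
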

\begin{proof}
This follows from the embedding (c.f.~\cite[Exercise A.21]{tao2006nonlinear}),
\begin{equation}
\|F(u)\|_{W^{\beta,p}}\lesssim_{\epsilon} \|F(u)\|_{L^p}+\|F(u)\|_{\dot{\Lambda}^p_{\beta+\alpha\epsilon}}    
\end{equation}
and \Cref{preHolder} as well as the fact that
\begin{equation}
\|F(u)\|_{L^p}\lesssim \|F\|_{\dot{C}^{0,\alpha}}\|u\|_{L^{p\alpha}}^{\alpha}.    
\end{equation}
\end{proof}
\begin{remark}
It is easy to see that $F(z)=\overline{z}|z|^{2\sigma-2}$ meets the hypothesis of the above corollary (c.f.~\cite[Lemma 2.4]{ginibre1989scattering}). The price to pay when using \Cref{Holder} is that there is a sort of ``loss of regularity" when derivatives fall on $F(u)$ in the sense that a derivative of order $0<s<2\sigma-1$ will be amplified by a factor of $\frac{1}{2\sigma-1}$.
\end{remark}

	\section{Low regularity estimates}
	Now, we proceed with the proof of \Cref{low reg theorem}. By the scaling symmetry $u_{\lambda}(t,x):=\lambda^{\frac{1}{2\sigma}}u(\lambda^2t,\lambda x)$, we see that the $L^2_x$ norm is subcritical with respect to scaling. Hence, we will assume without loss of generality throughout that for some small $0<\epsilon\ll 1$  the initial data satisfies $\|u_0\|_{H_x^s}\leq \epsilon$. We then will obtain local well-posedness on the time interval $[-T,T]$ where $T\lesssim 1$ is fixed. 

	\subsection{Function spaces}
	We now define the spaces where we seek solutions. To begin, we define our baseline Strichartz type space $Y^0_T$ via 
	\begin{equation}
		\|u\|_{Y^0_T}:=\left(\sum_{j>0}\|P_jD_x^{\sigma-1}u\|_{L_T^4L_x^{\infty}}^2\right)^{\frac{1}{2}}+\left(\sum_{j>0}\|P_jD_x^{\sigma-\frac{1}{2}}u\|_{L_x^{\infty}L_T^2}^2\right)^{\frac{1}{2}}+\left(\sum_{j>0}\|P_jD_x^{\sigma-\frac{3}{2}}u\|_{L_x^{2}L_T^{\infty}}^2\right)^{\frac{1}{2}}+\|P_{\leq 0}u\|_{L_x^2L_T^{\infty}}.
	\end{equation}
	Then we define the space $X^0_T$ by:
	\begin{equation}
		\|u\|_{X^0_T}:=\left(\sum_{j>0}\|P_ju\|_{L_T^\infty L^2_x}^2\right)^{\frac{1}{2}}+\|P_{\leq 0}u\|_{L_T^{\infty}L_x^2}+\|u\|_{Y^0_T}.
	\end{equation}
	For higher Sobolev indices, $s\geq 0$, we define the spaces $X^s_T$ and $Y^s_T$ by 
	\begin{equation}
	    	\|u\|_{Y^s_T}:=\|\langle D_x\rangle^su\|_{Y^0_T}, \ \ \ \|u\|_{X^s_T}:=\|\langle D_x\rangle^su\|_{X^0_T}.
	\end{equation}
One should observe that we trivially have $\|u\|_{C([-T,T];H_x^s)}\leq \|u\|_{X^s_T}$. 
\begin{remark}
One might wonder why the above $Y^s_T$ space is not defined in a more standard way, where one replaces $\sigma$ with $1$. Indeed, one can see from the proof of the following estimates that by using this stronger norm, one will incur a loss of $1-\sigma$ derivatives in excess of the $L_T^{\infty}H_x^s$ norm. The function spaces defined above account for this loss.
\end{remark}
Finally, it will be convenient to define the weaker norm $S_T^{s}$ which just involves the purely Strichartz components of the $X_T^{s}$ norm. Namely,
\begin{equation}
\|u\|_{S_T^s}=\|P_{\leq 0}u\|_{L_T^{\infty}L_x^2}+\left(\sum_{j>0}\|P_j\langle D_x\rangle^s u\|_{L_T^{\infty}L_x^2}^2\right)^{\frac{1}{2}}+\left(\sum_{j>0}\|P_j\langle D_x\rangle^{s-1+\sigma}u\|_{L_T^4L_x^{\infty}}^2\right)^{\frac{1}{2}}.
\end{equation}
The behavior of the $S_T^1$ norm will be relevant for continuing a local solution to a global one when $\sigma\in (\frac{\sqrt{3}}{2},1)$ in both the low and high regularity regimes. 
\subsection{$X^s_T$ frequency envelopes}\label{Freq envel section}
It is easy to see that for $s\geq 0$, we have
\begin{equation}\label{Xenv}
\|P_{\leq 0}u\|^2_{X^s_T}+\sum_{j=1}^\infty\|P_ju\|_{X^s_T}^2\sim \|u\|_{X^s_T}^2.
\end{equation}
Hence, for $u\in X^s_T$, we use $b_j$ to denote the $X^s_T$ frequency envelope for $u$ defined by 
\begin{equation}\label{envX}
b_j=2^{-\delta j}+\|u\|_{X^s_T}^{-1}\max_{k\geq 0}2^{-\delta|j-k|}\|P_ku\|_{X^s_T}
\end{equation}
where $\delta$ is some small, but fixed, positive parameter. Similarly, for $v\in H_x^s$, we use $a_j$ to denote the $H_x^s$ frequency envelope for $v$ defined by
\begin{equation}\label{Henv}
a_j=2^{-\delta j}+\|v\|_{H_x^s}^{-1}\max_{k\geq 0}2^{-\delta|j-k|}\|P_kv\|_{H_x^s}.
\end{equation}
Unless otherwise stated, $X^s_T$ and $H^s_x$ frequency envelopes will always be defined by the above formulae.
\begin{remark}
In an identical fashion, one can also define $S_T^s$ frequency envelopes.
\end{remark}
Next, we state a technical lemma which will be useful for tracking the contributions of the rough part of the nonlinearity in \eqref{gDNLS} when derivatives fall on it.
\begin{lemma}(Moser type estimate)
Let $s\in [1,\frac{3}{2}]$, $\sigma\in (\frac{1}{2},1)$, $0<T\lesssim 1$ and let $b_j$ be a $X^s_T$ frequency envelope for $u$. Write $\alpha=s-1+\sigma<2\sigma$. For $j>0$, we have the following Moser type estimate, 
\begin{equation}\label{roughmoser}
\|D_x^{\alpha}P_j|u|^{2\sigma}\|_{L_T^2L_x^{\infty}}\lesssim b_j\|u\|_{S^1_T}^{2\sigma-1}\|u\|_{X^s_T}.
\end{equation}
\begin{proof}
There are two cases to consider. First assume $\alpha>1$. We have 
\begin{equation}
\begin{split}
\|D_x^{\alpha}P_j(|u|^{2\sigma})\|_{L_T^{2}L_x^{\infty}}&\lesssim \|P_jD_x^{\alpha-1}(|u|^{2\sigma-2}\overline{u}u_x)\|_{L_T^{2}L_x^{\infty}}
\\
&\lesssim \|P_jD_x^{\alpha-1}(P_{<j-4}(|u|^{2\sigma-2}\overline{u})u_x)\|_{L_T^{2}L_x^{\infty}}+\|P_jD_x^{\alpha-1}(P_{\geq j-4}(|u|^{2\sigma-2}\overline{u})u_x)\|_{L_T^{2}L_x^{\infty}}.
\end{split}
\end{equation}
For the first term, we have by Bernstein, 
\begin{equation}
\begin{split}
\|P_jD_x^{\alpha-1}(P_{<j-4}(|u|^{2\sigma-2}\overline{u})u_x)\|_{L_T^{2}L_x^{\infty}}&=\|P_jD_x^{\alpha-1}(P_{<j-4}(|u|^{2\sigma-2}\overline{u})\tilde{P_j}u_x)\|_{L_T^{2}L_x^{\infty}}
\\
&\lesssim 2^{j(\alpha-1)}\|u\|_{L_T^{\infty}L_x^{\infty}}^{2\sigma-1}\|\tilde{P}_ju_x\|_{L_T^{2}L_x^{\infty}}
\\
&\lesssim \|u\|_{S^1_T}^{2\sigma-1}\|D_x^{\alpha}\tilde{P}_ju\|_{L_T^{2}L_x^{\infty}}
\\
&\lesssim b_j\|u\|_{S^1_T}^{2\sigma-1}\|u\|_{X^s_T}.
\end{split}
\end{equation}

For the second term, we have for $\delta>0$ small (under the additional assumption that $2^{-\delta j}\lesssim b_j$) \begin{equation}
\begin{split}
\|P_jD_x^{\alpha-1}(P_{\geq j-4}(|u|^{2\sigma-2}\overline{u})u_x)\|_{L_T^{2}L_x^{\infty}}&\lesssim 2^{j(\alpha-1)}\|P_{\geq j-4}(|u|^{2\sigma-2}\overline{u})u_x\|_{L_T^{2}L_x^{\infty}}
\\
& \lesssim 2^{j(\alpha-1)}\|P_{\geq j-4}(|u|^{2\sigma-2}\overline{u})\|_{L^4_T L^\infty_x}\|u_x\|_{L^4_T L^{\infty}_x}
\\
&\lesssim b_j\|D_x^{\alpha-1+\delta}(|u|^{2\sigma-2}\overline{u})\|_{L^4_T L^\infty_x}\|u_x\|_{L^4_T L^\infty_x}
\\
&\lesssim b_j\|D_x^{\alpha-1+\delta}(|u|^{2\sigma-2}\overline{u})\|_{L^4_T L^\infty_x}\|u\|_{X^s_T}.
\end{split}
\end{equation}
It now suffices to show that 
\begin{equation*}
    \|D_x^{\alpha-1+\delta}(|u|^{2\sigma-2}\overline{u})\|_{L^4_T L^\infty_x}\lesssim\|u\|_{S^1_T}^{2\sigma-1}.
\end{equation*}

For this we fix $\epsilon>0$ small and invoke  \Cref{Holder} and the fact that $2\sigma-1<1$,
\begin{equation}\begin{split}
\|D_x^{\alpha-1+\delta}(|u|^{2\sigma-2}\overline{u})\|_{L^4_T L^\infty_x}&\lesssim_T \|\langle D_x\rangle^{\frac{\alpha-1+\delta+\epsilon}{2\sigma-1}}u\|^{2\sigma-1}_{L^4_TL^\infty_x} 
\\
&\lesssim \|u\|_{S^1_T}^{2\sigma-1}
\end{split}
\end{equation}
where in the last line we take $\epsilon,\delta$ small enough and used that $\frac{\alpha-1}{2\sigma-1}<\sigma$ when $s\in [1,\frac{3}{2}]$ and $\sigma\in (\frac{1}{2},1)$.
\\
\\
This handles the case $\alpha>1$. Next, we assume $0<\alpha\leq 1$. For this, we write
\begin{equation}
P_j|u|^{2\sigma}=P_j|P_{<j}u|^{2\sigma}+P_j(|u|^{2\sigma}-|P_{<j}u|^{2\sigma}).
\end{equation}
We have for the first term, 
\begin{equation}
\begin{split}
\|D_x^{\alpha}P_j|P_{<j}u|^{2\sigma}\|_{L_x^{\infty}}&\lesssim 2^{j(\alpha-1)}\|P_j(|P_{<j}u|^{2\sigma-2}\overline{P_{<j}u}P_{<j}u_x)\|_{L_x^{\infty}}
\\
&\lesssim 2^{j(\alpha-1)}\|P_j(P_{<j-4}(|P_{<j}u|^{2\sigma-2}\overline{P_{<j}u})\tilde{P}_ju_x)\|_{L_x^{\infty}}
\\
&+2^{j(\alpha-1)}\|P_j(P_{\geq j-4}(|P_{<j}u|^{2\sigma-2}\overline{P_{<j}u})P_{<j}u_x)\|_{L_x^{\infty}}
\\
&\lesssim \|u\|_{L_x^{\infty}}^{2\sigma-1}\|\tilde{P}_jD_x^{\alpha}u\|_{L_x^{\infty}}+2^{-j\delta}\|D_x^{2\delta}(|P_{<j}u|^{2\sigma-2}\overline{P_{<j}u})\|_{L_x^{\infty}}\|D_x^{\alpha-1-\delta}u_x\|_{L_x^{\infty}}.
\end{split}
\end{equation}
Hence, by taking $\delta$ small enough, using \Cref{Holder}, and the fact that $2^{-j\delta}\lesssim b_j$, we obtain 
\begin{equation}
\|D_x^{\alpha}P_j|P_{<j}u|^{2\sigma}\|_{L_T^2L_x^{\infty}}\lesssim_T b_j\|u\|_{S^1_T}^{2\sigma-1}\|u\|_{X^s_T}.
\end{equation}
Next, we estimate 
\begin{equation}
\begin{split}
\|P_jD_x^{\alpha}(|u|^{2\sigma}-|P_{<j}u|^{2\sigma})\|_{L_T^2L_x^{\infty}}&\lesssim 2^{j\alpha}\|u\|_{L_T^{\infty}L_x^{\infty}}^{2\sigma-1}\sum_{k\geq j}\|P_ku\|_{L_T^{2}L_x^{\infty}}
\\
&\lesssim \|u\|_{S^1_T}^{2\sigma-1}\|u\|_{X^s_T}\sum_{k\geq j}2^{-\alpha|k-j|}b_k
\\
&\lesssim b_j\|u\|_{S^1_T}^{2\sigma-1}\|u\|_{X^s_T}
\end{split}
\end{equation}
where in the last line, we used the slowly varying property of $b_j$. This completes the proof.
\end{proof}
\begin{remark}\label{alternativemoser}
By repeating the proof almost verbatim, and taking $b_j$ instead to be a $S_T^{s}$ frequency envelope for $u$, we can modify the conclusion of the lemma to
\begin{equation}
\|D_x^{\alpha}P_j|u|^{2\sigma}\|_{L_T^2L_x^{\infty}}\lesssim b_j\|u\|_{S_T^1}^{2\sigma-1}\|u\|_{S_T^s}.
\end{equation}
\end{remark}
\end{lemma}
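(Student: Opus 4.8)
The plan is a paraproduct analysis, treating separately the ranges $\alpha>1$ and $0<\alpha\le1$ (both can occur, since $\alpha=s-1+\sigma\in[\sigma,\sigma+\tfrac12)$). The ingredients are: the vector-valued Bernstein inequality; the Leibniz identity $\partial_x|u|^{2\sigma}=2\sigma\RE(|u|^{2\sigma-2}\overline u\,u_x)$ together with a Littlewood--Paley split of the rough coefficient at threshold $j-4$; \Cref{Holder} applied to $F(z)=\overline z|z|^{2\sigma-2}\in\dot C^{0,2\sigma-1}$ to extract its Hölder regularity; the slowly varying property of $b_j$ together with $2^{-\delta j}\lesssim b_j$; and the embeddings $L^4_T\hookrightarrow L^2_T\cap L^{4(2\sigma-1)}_T$ on $[-T,T]$ (valid since $T\lesssim1$ and $2\sigma-1<1$), which convert the $L^2_TL^\infty_x$ norm on the left into the Strichartz component $\|P_j\langle D_x\rangle^sD_x^{\sigma-1}u\|_{L^4_TL^\infty_x}\sim2^{j\alpha}\|P_ju\|_{L^4_TL^\infty_x}$ built into $X^s_T$, which is $\lesssim\|P_ju\|_{X^s_T}\lesssim b_j\|u\|_{X^s_T}$. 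Throughout, $\|u\|_{L^\infty_TL^\infty_x}\lesssim\|u\|_{S^1_T}$ by Sobolev embedding.

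For $\alpha>1$ I would write $D_x^\alpha P_j|u|^{2\sigma}$ as $D_x^{\alpha-1}P_j\partial_x|u|^{2\sigma}$ up to a bounded multiplier, expand $\partial_x|u|^{2\sigma}=2\sigma\RE(|u|^{2\sigma-2}\overline u\,u_x)$, and split $|u|^{2\sigma-2}\overline u=P_{<j-4}(\cdot)+P_{\ge j-4}(\cdot)$. In the low-frequency-coefficient term the $u_x$ factor is forced to frequency $\sim2^j$, so Bernstein and $\|P_{<j-4}(|u|^{2\sigma-2}\overline u)\|_{L^\infty_x}\lesssim\|u\|_{S^1_T}^{2\sigma-1}$ reduce it to $\|u\|_{S^1_T}^{2\sigma-1}\|D_x^\alpha\tilde P_ju\|_{L^2_TL^\infty_x}\lesssim b_j\|u\|_{S^1_T}^{2\sigma-1}\|u\|_{X^s_T}$. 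In the high-frequency-coefficient term, Bernstein extracts $2^{j(\alpha-1)}$, which beats the coefficient's frequency localization up to $2^{-\delta j}\lesssim b_j$; after Hölder in time this leaves $b_j\,\|D_x^{\alpha-1+\delta}(|u|^{2\sigma-2}\overline u)\|_{L^4_TL^\infty_x}\|u_x\|_{L^4_TL^\infty_x}$, where $\|u_x\|_{L^4_TL^\infty_x}\lesssim\|u\|_{X^s_T}$ (the frequency sum converging precisely because $\alpha>1$), and, by \Cref{Holder} (with a small $\epsilon$-loss embedding), $\|D_x^{\alpha-1+\delta}(|u|^{2\sigma-2}\overline u)\|_{L^4_TL^\infty_x}\lesssim\|\langle D_x\rangle^{\frac{\alpha-1+\delta+\epsilon}{2\sigma-1}}u\|_{L^4_TL^\infty_x}^{2\sigma-1}\lesssim\|u\|_{S^1_T}^{2\sigma-1}$, the last step requiring $\tfrac{\alpha-1}{2\sigma-1}<\sigma$. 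I expect this inequality to be the crux: after clearing denominators and using the worst case $\alpha=\sigma+\tfrac12$ it reduces to $2\sigma^2-2\sigma+\tfrac12>0$, i.e. $(\sigma-\tfrac12)^2>0$, which holds for every $\sigma\in(\tfrac12,1)$ with a little room for small $\delta,\epsilon$; this margin is exactly what the $C^{1,2\sigma-1}$-roughness of the nonlinearity consumes via the amplification factor $\tfrac1{2\sigma-1}$ in \Cref{Holder}.

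For $0<\alpha\le1$ I would instead decompose $P_j|u|^{2\sigma}=P_j|P_{<j}u|^{2\sigma}+P_j(|u|^{2\sigma}-|P_{<j}u|^{2\sigma})$. In the first term the coefficient $|P_{<j}u|^{2\sigma-2}\overline{P_{<j}u}$ is smooth; moving one derivative ($D_x^\alpha P_j=2^{j(\alpha-1)}\times$ bounded $\times\,\partial_xP_j$) and again splitting this coefficient at $j-4$, the low-coefficient piece is $\lesssim\|u\|_{L^\infty_x}^{2\sigma-1}\|\tilde P_jD_x^\alpha u\|_{L^\infty_x}$ and the high-coefficient piece is $\lesssim2^{-\delta j}\|D_x^{2\delta}(|P_{<j}u|^{2\sigma-2}\overline{P_{<j}u})\|_{L^\infty_x}\|D_x^{\alpha-1-\delta}u_x\|_{L^\infty_x}$; taking $L^2_T$, using $2^{-\delta j}\lesssim b_j$, \Cref{Holder} for the $D_x^{2\delta}$ factor (with $\delta$ small), and the $X^s_T$/$S^1_T$ bounds from the previous paragraph, both are $\lesssim b_j\|u\|_{S^1_T}^{2\sigma-1}\|u\|_{X^s_T}$. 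For the second term, since $z\mapsto|z|^{2\sigma}$ is $C^{1,2\sigma-1}$, hence locally Lipschitz, $\bigl||u|^{2\sigma}-|P_{<j}u|^{2\sigma}\bigr|\lesssim\|u\|_{L^\infty_x}^{2\sigma-1}|P_{\ge j}u|$, so $\|P_jD_x^\alpha(|u|^{2\sigma}-|P_{<j}u|^{2\sigma})\|_{L^2_TL^\infty_x}\lesssim\|u\|_{S^1_T}^{2\sigma-1}\sum_{k\ge j}2^{j\alpha}\|P_ku\|_{L^2_TL^\infty_x}$, and bounding $2^{j\alpha}\|P_ku\|_{L^2_TL^\infty_x}\lesssim2^{-\alpha(k-j)}\|P_ku\|_{X^s_T}\lesssim2^{-(\alpha-\delta)(k-j)}b_j\|u\|_{X^s_T}$ and summing over $k\ge j$ (using $\alpha>\delta$) closes the estimate.
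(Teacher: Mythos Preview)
Your proposal is correct and follows essentially the same route as the paper's proof: the same case split on $\alpha\gtrless1$, the same paraproduct decomposition of $|u|^{2\sigma-2}\overline u$ at threshold $j-4$ in the first case and the same $P_j|P_{<j}u|^{2\sigma}+P_j(|u|^{2\sigma}-|P_{<j}u|^{2\sigma})$ split in the second, the same appeal to \Cref{Holder} for the rough coefficient, and the same slowly-varying summation. Your explicit verification that $\tfrac{\alpha-1}{2\sigma-1}<\sigma$ reduces to $(\sigma-\tfrac12)^2>0$ and your remark that $\|u_x\|_{L^4_TL^\infty_x}\lesssim\|u\|_{X^s_T}$ needs $\alpha>1$ for the dyadic sum to converge are both correct and slightly more explicit than the paper.
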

\begin{remark}
The $\|u\|_{S_T^1}^{2\sigma-1}$ coefficient in the estimate (\ref{roughmoser}) could be optimized in terms of the parameters $s$ and $\sigma$. We do not pursue this, for the sake of simplicity and also because it does not improve any of the later estimates in an important way.
\end{remark}
	\subsection{Uniform bounds}
	In this subsection, we prove a priori estimates for solutions to \eqref{gDNLS}. First, we prove uniform $X^s_T$ bounds:
	\begin{proposition}\label{Ubounds}
	\label{unif1}
		Let $0<\epsilon\ll 1$, $s\in [1,\frac{3}{2}]$, $\sigma\in (\frac{\sqrt{3}}{2},1)$ and let $u_0\in H_x^s$ with $\|u_0\|_{H_x^s}\leq \epsilon$. Let $T\lesssim 1$. Suppose $u\in X^s_T$ solves the equation, 
		\begin{equation}\label{Lin2}
		 \begin{cases}
		  &(i\partial_t+\partial_x^2)u=i|u|^{2\sigma}\partial_xu,
		  \\
		  &u(0)=u_0.
		 \end{cases}
		\end{equation}
Furthermore, let $a_j$ and $b_j$ be a $H_x^s$ and $X^s_T$ frequency envelope for $u_0$ and $u$ (on the time interval $[0,T]$), respectively, as defined in \Cref{Freq envel section}. Then we have the following $X^s_T$ estimates for $j>0$, 
\\

a) (Frequency localized $X^s_T$ bound)  
\begin{equation}
\begin{split}
\|P_ju\|_{X^s_T}&\lesssim_{\|u\|_{S^1_T}} a_j\|u_0\|_{H^s_x}
				+T^{\frac{1}{2}}b_j(1+\|u\|_{S^1_T}^{4\sigma})\|u\|_{X^s_T}+T^{\frac{1-\sigma}{2}}b_j\|u\|_{X^1_T}^{\sigma}\|u\|_{X^s_T}.
\end{split}
\end{equation}
b) (Uniform $X^s_T$ bound)
		\begin{equation}
			\|u\|_{X^s_T}\lesssim_{\|u\|_{X^1_T}} \|u_0\|_{H^s_x}\leq \epsilon.
		\end{equation}
%Moreover, the lifespan of the solution depends only on the $X^1$ norm of the solution (this latter remark will be relevant for proving global well-posedness).
\end{proposition}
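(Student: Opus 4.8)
The plan is to implement the paradifferential argument with a frequency-adapted partial gauge transformation outlined in the introduction: establish the frequency-localized estimate (a) scale by scale, and then sum using frequency envelopes to obtain (b). For (a), apply $P_j$ to \eqref{Lin2} and split the nonlinearity as
\begin{equation*}
P_j\big(|u|^{2\sigma}\partial_x u\big) = P_{<j-4}|u|^{2\sigma}\,P_j\partial_x u + g_j,
\end{equation*}
where $g_j$ collects the commutator $[P_j,P_{<j-4}|u|^{2\sigma}]\tilde P_j\partial_x u$ (which gains a derivative by \Cref{Commutator leibniz}), the high--high interactions, and the terms in which $|u|^{2\sigma}$ carries the high frequency. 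Using the Moser estimate \eqref{roughmoser} (and \Cref{alternativemoser}), \Cref{Holder}, and Bernstein, one verifies that $g_j$ is perturbative: in the appropriate dual norms of \Cref{Inhom Max} it is bounded by $b_j$ times a small power of $\|u\|_{S^1_T}$ times $\|u\|_{X^s_T}$, with a positive power of $T$ to spare. For the main term, fix a $j$-dependent threshold, write $u = u_l + u_s$ with $u_l$ the part of $u$ bounded below by that threshold, and set
\begin{equation*}
\Phi_j = -\tfrac12\int_{-\infty}^{x}P_{<j-4}|u_l|^{2\sigma}\,dy, \qquad w_j = e^{i\Phi_j}P_j u,
\end{equation*}
so that
\begin{equation*}
(i\partial_t+\partial_x^2)w_j = \big(-\partial_t\Phi_j + i\partial_x^2\Phi_j - (\partial_x\Phi_j)^2\big)w_j + e^{i\Phi_j}g_j + ie^{i\Phi_j}P_{<j-4}|u_s|^{2\sigma}P_j\partial_x u.
\end{equation*}

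Next I would estimate $w_j$ by Duhamel together with \Cref{Hom max} and \Cref{Inhom Max}, measuring it in the $X^s_T$ norm; the key point is that the Strichartz/maximal-function part of $X^s_T$ is the $Y^s_T$ norm, which already incorporates the $1-\sigma$ derivative deficit, and this is what allows the nonlinear terms to close. Since $\Phi_j$ is real, $e^{i\Phi_j}$ is unimodular, and bounds on $\partial_x\Phi_j$ (via Bernstein and $\|u\|_{S^1_T}$) show that conjugating by $e^{i\Phi_j}$ is harmless on these norms, so $\|P_ju\|_{X^s_T}\lesssim \|w_j\|_{X^s_T}$ up to lower order terms. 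The homogeneous evolution contributes $a_j\|u_0\|_{H^s_x}$. The terms $i\partial_x^2\Phi_j w_j$ and $(\partial_x\Phi_j)^2 w_j$ are handled using \eqref{roughmoser} for the derivatives landing on $|u_l|^{2\sigma}$ and Bernstein, producing the $T^{\frac{1}{2}}b_j(1+\|u\|_{S^1_T}^{4\sigma})\|u\|_{X^s_T}$ contribution (the quartic factor coming from the squared gauge term). The delicate term is $-\partial_t\Phi_j\,w_j$: expanding $\partial_t u$ via \eqref{Lin2} isolates a piece $\sim \int_{-\infty}^x P_{<j-4}\big(|u_l|^{2\sigma-2}\overline{u_l}\,\partial_x^2u\big)$; integrating by parts one derivative, the offending (at most) negative power $|u_l|^{2\sigma-2}$ is dominated by a negative power of the threshold, and the choice of threshold is optimized against $2^j$ --- this is exactly what forces the $1-\sigma$ loss while leaving a clean $T^{\frac{1}{2}}$ gain. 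The residual term $P_{<j-4}|u_s|^{2\sigma}P_j\partial_x u$ is estimated by placing $P_j\partial_x u$ in the now-affordable $Y^s_T$-type norm and using the $j$-dependent smallness of $\|u_s\|_{L^\infty}$, which yields the $T^{\frac{1-\sigma}{2}}b_j\|u\|_{X^1_T}^{\sigma}\|u\|_{X^s_T}$ term. Collecting all contributions (and letting the implicit constant absorb the $\|u\|_{S^1_T}$-dependence coming from the energy component) proves (a).

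For (b), I would square (a), sum over $j>0$, add the $P_{\leq 0}$ piece (which has no derivative loss and is estimated directly from \Cref{Hom max}--\Cref{Inhom Max}), and use $\sum_j a_j^2\lesssim 1$ and $\sum_j b_j^2\lesssim 1$ from \eqref{property1}--\eqref{Xenv}, obtaining
\begin{equation*}
\|u\|_{X^s_T}\lesssim_{\|u\|_{S^1_T}} \|u_0\|_{H^s_x} + \Big(T^{\frac{1}{2}}\big(1+\|u\|_{S^1_T}^{4\sigma}\big)+T^{\frac{1-\sigma}{2}}\|u\|_{X^1_T}^{\sigma}\Big)\|u\|_{X^s_T}.
\end{equation*}
For $s=1$ this is self-referential; using $\|u\|_{S^1_T}\le\|u\|_{X^1_T}$, the bound $\|u_0\|_{H^1_x}\le\epsilon$, a continuity/bootstrap argument, and taking $T$ small enough within the allowed range $T\lesssim 1$, the coefficient of $\|u\|_{X^1_T}$ on the right-hand side is made $<\frac{1}{2}$, so it is absorbed and $\|u\|_{X^1_T}\lesssim\epsilon$. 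Feeding this back in for general $s\in[1,\frac{3}{2}]$ gives $\|u\|_{X^s_T}\lesssim_{\|u\|_{X^1_T}}\|u_0\|_{H^s_x}\le\epsilon$.

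The main obstacle is the $-\partial_t\Phi_j$ term: one must substitute the equation for $\partial_t u$, integrate by parts to avoid a full derivative loss, and absorb the resulting negative powers of $u$ into a negative power of the frequency-dependent threshold, then choose that threshold to optimally trade this loss against the derivative loss from the leftover $|u_s|^{2\sigma}P_j\partial_x u$ term. This optimization is what dictates the $1-\sigma$ deficit built into $Y^s_T$, and the real labor is the bookkeeping needed to ensure every error term carries either a positive power of $T$ or a positive power of $\|u\|_{S^1_T}$, so that the absorption step in (b) closes. Justifying the gauge manipulations with $u$ only in $X^s_T\subset C_tH^s_x$, $s\ge 1$, is routine.
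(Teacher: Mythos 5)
Your scheme reproduces the paper's treatment of the Strichartz/maximal-function component: the paradifferential reduction, the frequency-adapted splitting of the coefficient, the partial gauge $\Phi_j$, and the handling of $\partial_t\Phi_j$ by substituting the equation and integrating by parts are all essentially the paper's \Cref{Yestimate}. The genuine gap is that the $L_T^{\infty}H_x^s$ component of $X^s_T$ is never actually estimated. The Duhamel argument for $w_j$ via \Cref{Hom max} and \Cref{Inhom Max} only closes when the nonlinear terms are measured at regularity $s-1+\sigma$; that is exactly why $Y^s_T$ is defined with the built-in $1-\sigma$ deficit, and it means this argument cannot recover the full-strength $L_T^{\infty}H_x^s$ piece of the $X^s_T$ norm. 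Concretely, even your residual term only satisfies $2^{js}\|P_{<j-4}|u_s|^{2\sigma}\partial_x u_j\|_{L_T^1L_x^2}\lesssim T\,2^{j(1-\sigma)}\|u_j\|_{L_T^{\infty}H_x^s}$, which is not controlled by $\|u\|_{X^s_T}$. The paper therefore proves the energy component by a separate, structurally different argument (\Cref{NLEE}): multiply the paradifferential equation by $-i2^{2js}\overline{u_j}$, integrate by parts so that only $\partial_x|u|^{2\sigma}$ (never $\partial_x u_j$) appears in the leading term, and estimate the resulting trilinear expressions using the local smoothing ($L_x^{\infty}L_T^2$) and maximal function ($L_x^2L_T^{\infty}$) parts of the norm together with the Moser/H\"older bounds. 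This is precisely where the hypothesis $\sigma>\frac{\sqrt{3}}{2}$ enters (one needs, e.g., $\frac{3}{4\sigma}-\frac{1}{2}<\sigma-\frac{1}{2}$ and $2+\sigma-2\sigma^2<\sigma+\frac{1}{2}$), and it is the source of the $T^{\frac{1-\sigma}{2}}b_j\|u\|_{X^1_T}^{\sigma}\|u\|_{X^s_T}$ term in part (a). Your proposal never uses $\sigma>\frac{\sqrt{3}}{2}$ and instead attributes that term to the leftover $|u_s|^{2\sigma}P_j\partial_xu$ term, which in the correct accounting contributes only $Tb_j\|u\|_{X^s_T}$ (from $|u_s|^{2\sigma}\lesssim 2^{-j\sigma}$ and Bernstein); both are symptoms that the energy half of the proof is missing rather than merely compressed.

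A smaller point: in (b) you absorb the self-referential terms by shrinking $T$, but the proposition asserts the bound for all $T\lesssim 1$, with $T$ not allowed to depend on $\epsilon$ beyond that. After closing the bootstrap on a short time interval of length independent of $\epsilon$, you still need to iterate the local bound $O(T^{-1})$ times, re-centering at later times (where the $H_x^s$ norm of the data remains $\lesssim\epsilon$ by the step just completed), to cover $T\sim 1$, as the paper does at the end of its proof of part (b).
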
	
		We will also need the following result:
\begin{proposition}\label{linbounds}
    Let $0<\epsilon\ll 1$ and $\sigma$, $T$ and $s$ be as in \Cref{Ubounds}. Suppose $v\in X^0_T$ is a solution to the equation, 
			\begin{equation}\label{lineq} 
		\begin{cases}
			&(i\partial_t+\partial_x^2)v=i|w|^{2\sigma}\partial_xv+g\partial_xav+\overline{g}\partial_xa\overline{v},
			\\
			&v(0)=v_0,
		\end{cases}    
	\end{equation}
		 for some $w\in X^1_T$ solving \eqref{gDNLS} (with possibly different initial data), $g\in Z:=Z_T:=L_x^{\frac{2}{2\sigma-1}}L_T^{\infty}\cap L_T^{\infty}W_x^{\frac{3}{4\sigma}-\frac{1}{2}+\epsilon,\infty}\cap L_T^4W_x^{\frac{3}{2}-\sigma+\epsilon,\infty}$ and $a\in X^1_T$, all with sufficiently small norm $\ll 1$. Then $v$ satisfies the bound
		\begin{equation}
			\|v\|_{X^0_T}\lesssim \|v_0\|_{L^2}.
		\end{equation}
\begin{remark} In practice $g$ will correspond to terms which are of similar regularity to the term $|u|^{2\sigma-1}$. For such terms to lie in $Z$ (specifically the latter two components of this norm), we will need $\sigma>\frac{\sqrt{3}}{2}$. This will be elaborated on later in the proof. 
\end{remark}

	\end{proposition}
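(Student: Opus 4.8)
The strategy is a bootstrap/continuity argument in $X^0_T$ for the linear equation \eqref{lineq}, treating all the right-hand side terms perturbatively except for the leading paradifferential piece $i|w|^{2\sigma}\partial_xv$, which must be handled by the same partial gauge transformation machinery used for \eqref{gDNLS} itself. First I would decompose $v=\sum_{j\geq 0}P_jv$ and, for each dyadic piece, write the paradifferential equation
\begin{equation*}
(i\partial_t+\partial_x^2)P_jv=iP_{<j-4}|w|^{2\sigma}P_j\partial_xv+G_j,
\end{equation*}
where $G_j$ collects: (i) the commutator and high-high contributions of $iP_j(|w|^{2\sigma}\partial_xv)$, which are controlled using Lemma~\ref{Commutator leibniz}, the Moser estimate \eqref{roughmoser} (or its $S^s_T$ variant in Remark~\ref{alternativemoser}), and the smallness of $\|w\|_{X^1_T}$; and (ii) the frequency-localized pieces of the source terms $g\partial_xav+\overline{g}\,\partial_xa\overline{v}$. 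I would then apply, for each $j$, the partial gauge transformation $w_j=e^{i\Phi_j}P_jv$ with $\Phi_j=-\tfrac12\int_{-\infty}^xP_{<j-4}|w_l|^{2\sigma}\,dy$ adapted to the frequency scale $2^j$ — exactly as constructed for \eqref{gDNLS} — so that the conjugated equation for $w_j$ has no derivative loss, at the price of the same $1-\sigma$ loss built into the $Y^0_T$ norm. Applying the homogeneous and inhomogeneous Strichartz/maximal function estimates of \Cref{Hom max,Inhom Max} to $w_j$, undoing the gauge, and then square-summing over $j$ with the slowly varying frequency envelope yields
\begin{equation*}
\|v\|_{X^0_T}\lesssim \|v_0\|_{L^2}+\big(\text{small}\big)\|v\|_{X^0_T}+\|g\partial_xav+\overline{g}\,\partial_xa\overline{v}\|_{N^0_T},
\end{equation*}
where $N^0_T$ is the appropriate dual-type norm matching the inhomogeneous estimates (various $L^{p}_xL^{q}_T$ and $L^1_TL^2_x$ norms).

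The second main step is the estimate of the source terms $g\partial_xav$ and $\overline{g}\,\partial_xa\overline{v}$ in the norm dual to the Strichartz/maximal function estimates. The factor $\partial_xa$ lives (essentially) in the $Y$-type components of $X^1_T$ — that is, in spaces like $L^4_TL^\infty_x$ after $\sigma-1$ derivatives, in $L^\infty_xL^2_T$ after $\sigma-\tfrac12$ derivatives, etc. — and $v$ contributes its $X^0_T$ norm; the multiplier $g$ must absorb the gap between these and the target dual norm. This is precisely why $g$ is assumed to lie in $Z=L_x^{\frac{2}{2\sigma-1}}L_T^{\infty}\cap L_T^{\infty}W_x^{\frac{3}{4\sigma}-\frac12+\epsilon,\infty}\cap L_T^4W_x^{\frac32-\sigma+\epsilon,\infty}$: the three components are tailored to pair, via Hölder in space-time and the fractional Leibniz rules \Cref{Leib2,Leib1}, with the maximal-function component ($L^2_xL^\infty_T$), the $\langle D_x\rangle^{\theta/2}$-smoothing component, and the $L^4_TL^\infty_x$ Strichartz component respectively. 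Here I would carefully track the fractional derivatives: when derivatives need to be distributed across the product $g\,\partial_xa\,v$, I use \Cref{Leib2} to split them, putting low-order derivatives on $g$ (using the $W_x^{s,\infty}$ components of $Z$) and the remainder on the $a$ and $v$ factors, always arranging Hölder exponents so that the time integration produces the gain $T^{\text{something}>0}$ that makes the bootstrap close for $T\lesssim1$.

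The hard part, and the source of the restriction $\sigma>\tfrac{\sqrt3}{2}$ flagged in the remark, is precisely this matching of the latter two components of $Z$ against the smoothing and $L^4_TL^\infty_x$ estimates: the number of derivatives one can afford to put on $g$ is constrained from above by what the inhomogeneous estimates of \Cref{Inhom Max} allow on the $\partial_xav$ side, and from below by the regularity $\tfrac{3}{4\sigma}-\tfrac12$ resp.\ $\tfrac32-\sigma$ that $g$ (morally of the form $|w|^{2\sigma-1}$ or a derivative thereof) actually possesses; compatibility of these two constraints forces a quadratic inequality in $\sigma$ whose relevant root is $\tfrac{\sqrt3}{2}$. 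A secondary technical point is that the gauge transformation for \eqref{lineq} must use $|w|^{2\sigma}$ (the coefficient of $v$) rather than $|v|^{2\sigma}$, so the splitting $w=w_s+w_l$ into small and large parts, and the resulting bounds on negative powers, are inherited directly from the a priori analysis of $w$ as a solution of \eqref{gDNLS}; one must check that the error terms produced by $\partial_t\Phi_j$ are still controlled by $\|w\|_{X^1_T}$ times $\|v\|_{X^0_T}$ with the same loss. Once the source-term estimate and the (standard, since $w,g,a$ are small) gauge error estimates are in place, absorbing the small terms into the left-hand side finishes the proof.
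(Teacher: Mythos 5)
Your overall architecture --- dyadic paradifferential expansion, partial gauge transformation built from $\chi_j(|w|^2)|w|^{2\sigma}$ (correctly using the coefficient $w$ rather than $v$), Strichartz/maximal function estimates for the gauged variable, and pairing the three components of $Z$ against the smoothing, maximal and $L^4_TL^\infty_x$ norms via H\"older and the fractional Leibniz rules --- is exactly how the paper handles the $Y^0_T$ part of the norm (Lemma~\ref{Yestimate2}). The gap is your claim that this single route yields the full $X^0_T$ bound. The gauge/Duhamel argument inherently loses $1-\sigma$ derivatives: the residual term $iP_{<j-4}[\varphi_j(|w|^2)|w|^{2\sigma}]\partial_xv_j$ is only bounded by $\|\varphi_j(|w|^2)|w|^{2\sigma}\|_{L^1_TL^\infty_x}\|\partial_xv_j\|_{L^\infty_TL^2_x}\lesssim T\,2^{j(1-\sigma)}\|v_j\|_{L^\infty_TL^2_x}$, and the $\partial_t\Psi_j$ contributions carry the same $2^{j(1-\sigma)}$ factor. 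This loss is precisely why the $Y^0_T$ norm was defined with a built-in deficit of $1-\sigma$ derivatives; measured against the undamped $\ell^2_j L^\infty_TL^2_x$ component of $X^0_T$, the factor $2^{j(1-\sigma)}$ cannot be absorbed by smallness of $T$ or of the coefficients, so your scheme does not close for that component.

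The paper therefore proves a separate energy estimate for the $L^\infty_TL^2_x$ piece (Lemma~\ref{LEE}): one multiplies the frequency-localized equation by $-i\overline{P_jv}$, integrates, and in the leading term $\int P_{<j-4}|w|^{2\sigma}\,\partial_x|v_j|^2$ integrates the derivative by parts onto $|w|^{2\sigma}$, after which the result is estimated by interpolating between the $L^2_xL^\infty_T$ maximal and $L^\infty_xL^2_T$ smoothing components; this is where the inequality $\frac{3}{4\sigma}-\frac12<\sigma-\frac12$, i.e. $\sigma>\frac{\sqrt3}{2}$, and the bound $\||w|^{2\sigma-1}\|_Z\lesssim\|w\|_{X^1_T}^{2\sigma-1}$ are actually used. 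The source term also needs a dedicated argument inside this energy step: the paper writes $-g\partial_xa=gD_xHa$ and uses the fractional Leibniz rule to place $\frac32-\sigma+\epsilon$ derivatives on $g$ and $\sigma-\frac12-\epsilon$ derivatives on $a$, which is exactly what the last two components of $Z$ are designed for. Your plan gestures at this pairing but attaches it only to the Strichartz step; without the separate energy estimate (and the $gD_xHa$ decomposition within it) the proof of the stated $X^0_T$ bound is incomplete.
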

	\begin{remark}
	\Cref{linbounds} will be useful for establishing difference estimates for solutions in the weaker topology, $X_T^0$. This will allow us to show uniqueness for $X_T^1$ solutions, and to prove a weak Lipschitz type bound for the solution map. 
	\end{remark}
	
We begin with the proof of \Cref{Ubounds}.
	We divide the relevant estimates into two parts. First, we control the $Y^s_T$  component of the norm. Then we do an energy type estimate to control the $L_T^{\infty}H_x^s$ component. For this purpose, we have the following lemmas: 
	\begin{lemma} ($Y^s_T$ estimate)\label{Yestimate} Let $s\in [1,\frac{3}{2}]$, $\sigma\in (\frac{1}{2},1)$ and let $u$, $T$, $a_j$ and $b_j$ be as in \Cref{Ubounds}. Then for $j>0$ we have 
		\begin{equation}
			\begin{split}
				\|P_ju\|_{Y^s_T}&\lesssim_{\|u\|_{S^1_T}} a_j\|u_0\|_{H^s_x}
				+T^{\frac{1}{2}}b_j(1+\|u\|_{S^1_T}^{4\sigma})\|u\|_{X^s_T}.
			\end{split}
		\end{equation}
\end{lemma}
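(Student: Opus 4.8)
The plan is to write the Duhamel formula for a frequency-localized, \emph{partially gauge-transformed} version of \eqref{Lin2}, and then feed the result into the linear Strichartz, smoothing and maximal function estimates of \Cref{Hom max} and \Cref{Inhom Max}. Since this is an a priori estimate, all manipulations below are carried out for the given solution $u\in X^s_T$. The homogeneous contribution is immediate: applying the linear estimates to $e^{it\partial_x^2}\langle D_x\rangle^s P_j u_0$, each of the three components of the $Y^s_T$ norm costs exactly $2^{j(\sigma-1)}\le 1$ derivatives relative to $\|\langle D_x\rangle^s P_j u_0\|_{L^2}\lesssim a_j\|u_0\|_{H^s_x}$ --- which is precisely why the $Y^s_T$ norm is built with $\sigma$ rather than $1$ spatial derivatives --- so this term contributes $a_j\|u_0\|_{H^s_x}$. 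The remaining work is the Duhamel term.

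First I would apply $P_j$ to the equation and paralinearize, writing
\[
(i\partial_t+\partial_x^2)P_j u = i\,P_{<j-4}(|u|^{2\sigma})\,P_j\partial_x u + g_j ,
\]
where $g_j$ collects the remaining paraproduct interactions, each of which carries a frequency $\gtrsim 2^j$ on the $|u|^{2\sigma}$ factor (with \Cref{Commutator leibniz} handling the commutators). Since at least one derivative can then be extracted from $|u|^{2\sigma}$, the estimate \eqref{roughmoser} and its variants, together with \Cref{Holder} and Bernstein estimates, place $g_j$ in the dual norms relevant to each component of $Y^s_T$ (e.g.\ $L^{4/3}_T L^1_x$ for the Strichartz part, $L^1_x L^2_T$ and $L^{p(\theta)}_x L^{q(\theta)}_T$ for the smoothing and maximal-function parts), with a bound of the shape $T^{1/2}b_j(1+\|u\|_{S^1_T}^{2\sigma})\|u\|_{X^s_T}$; \Cref{Inhom Max} then converts this to the desired $Y^s_T$ bound. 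The surviving main term still loses a full derivative on $P_j\partial_x u$, so I next split the real, low-frequency coefficient $P_{<j-4}(|u|^{2\sigma})=P_{<j-4}(|u_l|^{2\sigma})+P_{<j-4}(|u_s|^{2\sigma})$, where $u_l$ is the part of $u$ bounded below pointwise by a small $j$-dependent threshold $\mu_j=2^{-\kappa j}$ and $u_s$ the complementary small part, and gauge away the large part by setting $\Phi_j=-\frac{1}{2}\int_{-\infty}^x P_{<j-4}(|u_l|^{2\sigma})\,dy$ and $w_j=e^{i\Phi_j}P_j u$. This gives
\[
(i\partial_t+\partial_x^2)w_j=\big(-\partial_t\Phi_j+i\partial_x^2\Phi_j-(\partial_x\Phi_j)^2\big)w_j+e^{i\Phi_j}g_j+i\,e^{i\Phi_j}P_{<j-4}(|u_s|^{2\sigma})\,P_j\partial_x u .
\]

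Next I would estimate the right-hand side term by term via \Cref{Inhom Max}. The algebraically worst summand $(\partial_x\Phi_j)^2 w_j$ has coefficient of size $|u|^{4\sigma}$ in $L^\infty_{t,x}$, and bounding $\|u\|_{L^\infty_{t,x}}\lesssim\|u\|_{S^1_T}$ produces exactly the $\|u\|_{S^1_T}^{4\sigma}$ factor in the statement; the term $i\partial_x^2\Phi_j\sim\partial_x P_{<j-4}(|u_l|^{2\sigma})$ is handled by a variant of \eqref{roughmoser} with $|u|$ replaced by $u_l$; the leftover small term $e^{i\Phi_j}P_{<j-4}(|u_s|^{2\sigma})P_j\partial_x u$ loses the derivative on $P_j\partial_x u$ but gains a factor $\mu_j^{2\sigma}$ from the pointwise smallness of $u_s$. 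The $\partial_t\Phi_j$ term is the crux: one substitutes $\partial_t u$ from \eqref{Lin2}, and the resulting $\partial_x^2 u$ contribution must be integrated by parts once to avoid a derivative loss; since $z\mapsto|z|^{2\sigma-2}\bar z$ is not $C^1$ for $\sigma<1$, this would normally produce negative powers of $u$, but on the truncated piece $u_l$ these are genuinely bounded by $\mu_j^{-(2-2\sigma)}$, with $2-2\sigma<1$ precisely because $\sigma>\frac{1}{2}$. The optimization $\kappa=\frac{1}{2}$ equalizes the two competing losses --- $2^{\kappa(2-2\sigma)j}$ from $\partial_t\Phi_j$ against $2^{(1-2\kappa\sigma)j}$ from the $u_s$ term --- at $2^{(1-\sigma)j}$, which is exactly the deficit the $Y^s_T$ norm was designed to absorb. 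The dyadic sum in $j$ then closes using the slowly varying property of $b_j$, and the powers of $T$ come from Hölder's inequality in time.

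The main obstacle, as indicated, is the $\partial_t\Phi_j$ term, where the $C^{1,2\sigma-1}$ roughness of the nonlinearity collides with the need to avoid a derivative loss when substituting the equation for $u_t$: one cannot integrate by parts freely, and the crude alternative reintroduces negative powers of $u$. The resolution --- truncating the gauge coefficient to the part of $u$ bounded below pointwise, at a frequency-dependent scale $\mu_j$ --- is exactly what forces the $Y^s_T$ space to carry a $1-\sigma$ derivative deficit relative to $L^\infty_T H^s_x$, and optimizing $\mu_j$ is the quantitatively delicate step.
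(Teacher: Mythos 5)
Your proposal is correct and follows essentially the same route as the paper: a paradifferential expansion in $j$, a partial gauge transformation whose coefficient keeps only the part of $|u|^{2\sigma}$ where $|u|\gtrsim 2^{-j/2}$, substitution of the equation plus one integration by parts in the $\partial_t\Phi_j$ term with the negative powers of $|u|$ controlled by the cutoff, and the balanced $2^{j(1-\sigma)}$ loss absorbed by the $\sigma$-derivative design of $Y^s_T$, exactly as in the paper (whose optimal threshold is likewise $\kappa=\tfrac12$). The only detail you leave implicit is the transfer estimate between $w_j=e^{i\Phi_j}P_ju$ and $P_ju$ in the mixed-norm spaces, which the paper isolates as \Cref{expestimate}.
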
	
	\begin{lemma}($L_T^{\infty}H_x^s$ estimate)\label{NLEE}
Let $s,\sigma,T, a_j, b_j$ and $u$ be as in $\Cref{Ubounds}$. Then for $j>0$ we have 
\begin{equation}
\begin{split}
\|P_ju\|_{L_T^{\infty}H^s_x}\lesssim a_j\|u_0\|_{H^s_x}+T^{\frac{1-\sigma}{2}}b_j\|u\|_{X^1_T}^{\sigma}\|u\|_{X^s_T}.
\end{split}
\end{equation}
		\end{lemma}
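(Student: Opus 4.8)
\textbf{Proof plan for Lemma \ref{NLEE}.} The strategy is a frequency-localized energy estimate for $P_ju$ in $H^s_x$, closed using the partial gauge transformation described in the introduction. First I would write the equation for $u_j := P_ju$ by applying $P_j$ to \eqref{gDNLS}: schematically
\begin{equation*}
(i\partial_t + \partial_x^2)u_j = iP_{<j-4}(|u|^{2\sigma})P_j\partial_xu + g_j,
\end{equation*}
where, using \Cref{Commutator leibniz} together with the Moser-type estimate \eqref{roughmoser} and the Strichartz/maximal-function norms built into $X^s_T$, the remainder $g_j$ satisfies an acceptable bound, namely $\|g_j\|_{L^1_TL^2_x}$ (or the appropriate dual Strichartz norm) is controlled by $b_j$ times the claimed right-hand side. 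The commutator term $[P_j,|u|^{2\sigma}]\partial_xu$ and the high-high interaction $P_{\geq j-4}(|u|^{2\sigma})P_j\partial_x u$ are the main contributions to $g_j$, and both are handled by pairing one Strichartz norm of $u$ with the Moser estimate for $|u|^{2\sigma}$; the paradifferential low-high piece is the only genuinely dangerous term and must be kept on the left.

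Next I would introduce the frequency-adapted gauge $\Phi_j = -\tfrac12\int_{-\infty}^x P_{<j-4}(|u_l|^{2\sigma})\,dy$, splitting $|u|^{2\sigma}$ into a ``large'' piece $|u_l|^{2\sigma}$ (where $u$ is bounded below on a $j$-dependent scale) and a ``small'' piece $|u_s|^{2\sigma}$, and set $w_j = e^{i\Phi_j}u_j$. As computed in the outline, $w_j$ solves
\begin{equation*}
(i\partial_t + \partial_x^2)w_j = (-\partial_t\Phi_j + i\partial_x^2\Phi_j - (\partial_x\Phi_j)^2)w_j + e^{i\Phi_j}g_j + ie^{i\Phi_j}P_{<j-4}(|u_s|^{2\sigma})P_j\partial_xu.
\end{equation*}
The point is that $\partial_t\Phi_j$ (expanded via the equation, integrating by parts the $\partial_x^2u$ contribution) now only produces negative powers of $u$ on the controlled scale, so it costs at most a $j$-independent power loss, and the residual derivative nonlinearity $P_{<j-4}(|u_s|^{2\sigma})P_j\partial_xu$ is small because $u_s$ is small on that scale. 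Choosing the split optimally, one gets a Strichartz control of $\|P_j\partial_xu\|_{L^1_TL^\infty_x}$-type quantities that loses $1-\sigma$ derivatives — which is exactly why the norms in $Y^s_T$ carry $\sigma$ rather than $1$ derivatives. Then an $L^2_x$ energy estimate for $w_j$ (equivalently for $u_j$, since $e^{i\Phi_j}$ is unitary and $\Phi_j$ is real) gives
\begin{equation*}
\|u_j(t)\|_{L^2_x} \lesssim \|P_ju_0\|_{L^2_x} + \Big\|\text{(RHS terms)}\Big\|_{L^1_TL^2_x},
\end{equation*}
and the worst RHS contribution, after using H\"older in time to extract a $T^{\frac{1-\sigma}{2}}$ factor and the embedding of the maximal-function components of $X^1_T$ into the relevant $L^q_T$ spaces, is bounded by $T^{\frac{1-\sigma}{2}}b_j\|u\|_{X^1_T}^\sigma\|u\|_{X^s_T}$. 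Weighting by $\langle D_x\rangle^s$ and using $2^{sj}\|P_ju\|\sim\|P_j\langle D_x\rangle^su\|$ (Bernstein) upgrades this to the stated $H^s_x$ bound.

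\textbf{Main obstacle.} The delicate part is the gauge construction and the bookkeeping of the $\partial_t\Phi_j$ term: one must verify that after integrating by parts the $|u|^{2\sigma-2}\overline{u}\,i\partial_x^2u$ piece, the resulting expression involves only $|u|^{2\sigma-2}\overline u$ (which is $C^{0,2\sigma-1}$, handled by \Cref{Holder}) paired against first derivatives of $u$, and that all the negative powers of $|u|$ that would naively appear are genuinely confined to the ``large'' part $u_l$ and hence harmless. Quantifying the optimal threshold for the split $u = u_l + u_s$ so that both the gauge-error and the residual nonlinearity close simultaneously — and checking that the resulting loss is exactly $1-\sigma$, so that it is absorbed by the $Y^s_T$ norm rather than exceeding it — is where the real work lies; the rest is routine Strichartz/maximal-function estimation combined with the Moser lemma and the frequency-envelope summation via the slowly varying property of $b_j$.
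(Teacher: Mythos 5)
Your plan imports the partial gauge transformation into the energy estimate, but that is both unnecessary and, as written, fatal to the bound you need. For the $L_T^{\infty}H_x^s$ component the paper does \emph{not} gauge: after the paradifferential expansion one multiplies by $-i2^{2js}\overline{u_j}$ and takes real parts, and the dangerous low--high term $iP_{<j-4}|u|^{2\sigma}\partial_xu_j$ contributes $\tfrac12\int P_{<j-4}|u|^{2\sigma}\,\partial_x|u_j|^2$, which after integration by parts puts the derivative on the coefficient $|u|^{2\sigma}$ — no derivative ever threatens to land on $u_j$, so there is nothing for a gauge to remove. By contrast, if you follow your route and keep the residual term $ie^{i\Phi_j}P_{<j-4}|u_s|^{2\sigma}P_j\partial_xu$ on the right and estimate it in $L_T^1L_x^2$ with the weight $2^{js}$, the best you can do with $|u_s|^{2\sigma}\lesssim 2^{-j\sigma}$ is
\begin{equation*}
2^{js}\big\|P_{<j-4}|u_s|^{2\sigma}\,P_j\partial_xu\big\|_{L_T^1L_x^2}\lesssim T\,2^{j(1-\sigma)}\|u_j\|_{L_T^{\infty}H_x^s},
\end{equation*}
a loss of $1-\sigma$ derivatives \emph{relative to $H^s_x$}. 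That loss is acceptable in the $Y^s_T$ estimate of \Cref{Yestimate} only because the $Y^s_T$ norm was built with $\sigma+s-1$ rather than $s$ derivatives; in the $L_T^{\infty}H_x^s$ bound of \Cref{NLEE} it cannot be absorbed, so your energy estimate does not close. In short, the gauge is the tool for the Strichartz/maximal-function estimate, while the energy estimate is closed by the antisymmetric structure plus integration by parts.

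The second gap is that your proposal never engages with where the claimed right-hand side actually comes from. After the integration by parts, the paper estimates $\int \partial_x(P_{<j-4}|u|^{2\sigma})\,|u_j|^2$ (and the analogous commutator and high--high terms) by splitting $|u_j|^2=|u_j|^{2(1-\sigma)}|u_j|^{2\sigma}$, pairing the maximal-function norm $L_x^2L_T^{\infty}$ of $u$ with the smoothing norm $L_x^{\infty}L_T^2$ of $u_j$ carrying $\tfrac{3}{4\sigma}-\tfrac12$ extra derivatives, and invoking the vector-valued Moser bound (\Cref{Moservec}); this is exactly where the factor $T^{1-\sigma}$ (hence $T^{\frac{1-\sigma}{2}}$ after taking square roots), the coefficient $\|u\|_{X_T^1}^{\sigma}$, and the restriction $\sigma>\tfrac{\sqrt3}{2}$ (needed so that $\tfrac{3}{4\sigma}-\tfrac12<\sigma-\tfrac12$) arise. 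Your sketch attributes the $T^{\frac{1-\sigma}{2}}$ gain to an unspecified H\"older-in-time step inside the gauged equation and never explains why $\sigma>\tfrac{\sqrt3}{2}$ or the smoothing/maximal components of $X^1_T$ are needed, so even setting aside the derivative loss above, the key quantitative step of the lemma is missing.
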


	\begin{proof}
		We begin with the proof of \Cref{Yestimate}. For this purpose, let us apply $P_j$ to (\ref{Lin2}) and write
		\begin{equation}\label{paradif1}
			\begin{split}
				(i\partial_t+\partial_x^2)u_j=iP_{<j-4}|u|^{2\sigma}\partial_xu_j+g_j
			\end{split}
		\end{equation}
		where  
		\begin{equation}
			g_j=iP_j(P_{\geq j-4}|u|^{2\sigma}\partial_xu)+i[P_j,P_{<j-4}|u|^{2\sigma}]\partial_xu.
		\end{equation}
		The term 
		\begin{equation}\label{paradif3}
			iP_{<j-4}|u|^{2\sigma}\partial_xu_j
		\end{equation}
		which corresponds to the worst interactions between $\partial_x u$ and $|u|^{2\sigma}$ is non-perturbative, and can lead to loss of derivatives in the $Y^s_T$ estimates for $u_j$. It is desirable to remove as much of this bad interaction as possible. As mentioned earlier, one might try to remove it entirely with a gauge transformation, but this will not work, because the function $z\mapsto |z|^{2\sigma}$ is not smooth enough. Fortunately, in some sense, formally, the worst terms introduced by a gauge transformation are only poorly behaved when $u$ is small (i.e.~sufficiently close to $0$). On the other hand, if $u$ is sufficiently small (on a scale depending on $j$), then we expect to be able to treat the associated part of the term (\ref{paradif3}) perturbatively. One then expects to be able to remove the other part (in which $u$ is bounded away from zero) with a gauge transformation, and gain some mileage. 
		\\
		\\
		With this strategy in mind, let $\varphi$ be a smooth compactly supported function on $\mathbb{R}$ with $\varphi=1$ on the unit interval and zero outside $(-2,2)$. Likewise, define $\chi=1-\varphi$. We want to tailor these functions to a particular frequency, which we do by defining the rescaled functions $\varphi_j(x)=\varphi(2^jx)$ and $\chi_j(x)=\chi(2^jx)$. Next, we further rewrite (\ref{paradif1}) as the following equation,
		\begin{equation}\label{paradiff2}
			\begin{split}
				(i\partial_t+\partial_x^2)u_j=iP_{<j-4}[\chi_j(|u|^2)|u|^{2\sigma}]\partial_xu_j+iP_{<j-4}[\varphi_j(|u|^2)|u|^{2\sigma}]\partial_xu_j+g_j.
			\end{split}
		\end{equation}
		\begin{remark}
			One might wonder whether one can modify the $2^j$ scale in the definition of $\varphi_j$ to $2^{j\alpha}$ for some $\alpha>0$. It turns out that $\alpha=1$ is the optimal choice, as one can ascertain from repeating the estimates below with this new parameter $\alpha$. This optimization is obtained by balancing the contributions from the terms $I_j^1$ and $I_j^3$ in the below estimates. 
		\end{remark}
		Now, we do a partial gauge transformation to remove  $iP_{<j-4}[\chi_j(|u|^2)|u|^{2\sigma}]\partial_xu_j$, which corresponds to the part of (\ref{paradif3}) for which the coefficient $|u|^{2\sigma}$ is bounded below by $2^{-j\sigma}$. Indeed, define
		\begin{equation}\label{gauge}
			\Phi_j(t,x):=-\frac{1}{2}P_{<j-4}\partial_x^{-1}[\chi_j(|u|^2)|u|^{2\sigma}]
		\end{equation}
		where
		\begin{equation}
			(\partial_x^{-1}f)(x):=\int_{-\infty}^{x}f(y)dy
		\end{equation}
		and then define 
		\begin{equation}
			w_j:=u_je^{i\Phi_j}.
		\end{equation}
Before proceeding, we need the following technical estimate which relates $u_j$ to $w_j$.  
\begin{lemma}\label{expestimate}
Let $S$ refer to any of the four spaces, $L_T^{\infty}L_x^2$, $L_x^{\infty}L_T^2$, $L_x^2L_T^{\infty}$, or $L_T^4L_x^{\infty}$. Let $\beta\in (-1,1)$ and $0<\epsilon\ll 1$. Then for $j>0$, we have
\begin{equation}
\|\langle D_x\rangle^{\beta}u_j\|_{S}\lesssim_{\epsilon} (1+\|u\|_{S^1_T})^{2\sigma}(\|\langle D_x\rangle^{\beta}\tilde{P}_jw_j\|_{S}+\|\langle D_x\rangle^{\beta-\epsilon}w_j\|_{S}).
\end{equation}
\begin{remark}
As a brief remark, the range on $\beta$ accounts for (more than) the greatest range of derivatives allowed in any component of the $X^{1-\sigma}_T$ norm, which will correspond to the situation in which we apply the estimate. Strictly speaking, this is overkill, but it lets us avoid dealing with several individual cases. Also, the $\beta-\epsilon$ factor in the second term in the above estimate is to compensate for terms in which $w_j$ is not frequency localized. In particular, later when applying \Cref{Inhom Max}, the $\epsilon$ will allow us to sum up the individual frequency dyadic contributions of $w_j$.
\end{remark}
\begin{proof}
We have using the fact that $u_j$ is frequency localized to frequency $\sim 2^j$,  
\begin{equation}
\begin{split}
\|\langle D_x\rangle^{\beta} u_j\|_S&=\|\langle D_x\rangle^{\beta}\tilde{P}_j(e^{-i\Phi_j}w_j)\|_S
\\
&\lesssim \|D_x^{\beta}\tilde{P}_j(P_{<j-2}e^{-i\Phi_j}\tilde{P}_jw_j)\|_S+\|D_x^{\beta}\tilde{P}_j(P_{\geq j-2}e^{-i\Phi_j}w_j)\|_S.
\end{split}
\end{equation}
For the first term, we have by the (vector-valued) Bernstein's inequality
\begin{equation}
\|D_x^{\beta}\tilde{P}_j(P_{<j-2}e^{-i\Phi_j}\tilde{P}_jw_j)\|_S\lesssim \|D_x^{\beta}\tilde{P}_jw_j\|_{S}.
\end{equation}
For the second term, we have from Bernstein's inequality (and since $j>0$), 
\begin{equation}
\begin{split}
\|D_x^{\beta}\tilde{P}_j(P_{\geq j-2}e^{-i\Phi_j}w_j)\|_S&\lesssim 2^{j\beta}\|\tilde{P}_j(P_{\geq j-2}e^{-i\Phi_j}w_j)\|_S
\\
&\lesssim 2^{j\beta}\|P_{\geq j-2}e^{-i\Phi_j}\|_{L_T^{\infty}L_x^{\infty}}\|P_{<j+2}w_j\|_S+2^{j\beta}\sum_{k\geq j}\|\tilde{P}_ke^{-i\Phi_j}\|_{L_T^{\infty}L_x^{\infty}}\|\tilde{P}_kw_j\|_{S}
\\
&\lesssim_{\epsilon} \|P_{\geq j-2}D_x^{|\beta|+2\epsilon}e^{-i\Phi_j}\|_{L_T^{\infty}L_x^{\infty}}\|\langle D_x\rangle^{\beta-\epsilon}w_j\|_S
\end{split}
\end{equation}
where $\epsilon>0$ is small enough so that for instance, $|\beta|+2\epsilon<1$. Then we have by Bernstein,
\begin{equation}
\begin{split}
\|P_{\geq j-2}D_x^{|\beta|+2\epsilon}e^{-i\Phi_j}\|_{L_T^{\infty}L_x^{\infty}}\lesssim \|\partial_xP_{\geq j-2}e^{-i\Phi_j}\|_{L_T^{\infty}L_x^{\infty}}&\lesssim \|u\|_{S^1_T}^{2\sigma}.
\end{split}
\end{equation}
Combining the above estimates completes the proof.
\end{proof}
\end{lemma}

		Given \Cref{expestimate}, we are in a position to convert estimates for $w_j$ into estimates for $u_j$.  A direct computation shows that $w_j$ satisfies the following equation:
		\begin{equation}
			\begin{cases}
				(i\partial_t+\partial_x^2)w_j &=ie^{i\Phi_j}P_{<j-4}[\varphi_j(|u|^2)|u|^{2\sigma}]\partial_xu_j+(-\partial_t\Phi_j+i\partial_x^2\Phi_j-(\partial_x\Phi_j)^2)w_j+e^{i\Phi_j}g_j,
				\\
				\ \ \ \ \ \ \ \   w_j(0)&=  e^{i\Phi_j}u_j(0).
			\end{cases}
		\end{equation}
		The goal is to prove a priori estimates for $w_j$ - and hence $u_j$ - in $Y^s_T$. We observe a couple of useful facts. First, by Bernstein, we have $\|u_j\|_{Y^s_T}\lesssim 2^{j(\sigma+s-1)}\|u_j\|_{Y^{1-\sigma}_T}$. Secondly, we obviously have $\|gw_j\|_{L_T^1L_x^2}=\|gu_j\|_{L_T^1L_x^2}$ for measurable functions, $g$. Using these observations, \Cref{expestimate}, the maximal function estimates and the usual Strichartz estimates from Propositions~\ref{Hom max} and \ref{Inhom Max} we have that 
		\begin{equation}
			\begin{split}
				\frac{\|u_j\|_{Y^s_T}}{(1+\|u\|_{S^1_T})^{2\sigma}}&\lesssim \|u_j(0)\|_{H_x^s}+2^{j(\sigma+s-1)}\|P_{<j-4}[\varphi_j(|u|^2)|u|^{2\sigma}]\partial_xu_j\|_{L^1_TL^2_x}+2^{j(\sigma+s-1)}\|g_j\|_{L^1_TL^2_x}
				\\
				&+2^{j(\sigma+s-1)}\|\partial_t\Phi_ju_j\|_{L^1_TL^2_x}+2^{j(\sigma+s-1)}\|\partial_x^2\Phi_ju_j\|_{L^1_TL^2_x}+2^{j(\sigma+s-1)}\|(\partial_x\Phi_j)^2u_j\|_{L^1_TL^2_x}
				\\
				&:=\|u_j(0)\|_{H_x^s}+I_1^j+I_2^j+I_3^j+I_4^j+I_5^j.
			\end{split}
		\end{equation}
We now estimate each of the above terms.
		\\
		\\
		\textbf{Estimate for $I_1^j$}
		\\
		\\
		By Bernstein and the fact that $|u|\lesssim 2^{-\frac{j}{2}}$ on the support of $\varphi_j$,
		\begin{equation}\label{eqn3}
			\begin{split}
				2^{j(\sigma+s-1)}\|P_{<j-4}[\varphi_j(|u|^2)|u|^{2\sigma}]\partial_xu_j\|_{L^1_TL^2_x}&\lesssim 2^{j(\sigma+s-1)}\|\varphi_j(|u|^2)|u|^{2\sigma}\|_{L^1_TL^{\infty}_x}\|\partial_xu_j\|_{L^{\infty}_TL^2_x}
				\\
				&\lesssim T\|u_j\|_{L^{\infty}_TH^s_x}
				\\
				&\lesssim Tb_j\|u\|_{X^s_T}.
			\end{split}
		\end{equation}
		\textbf{Estimate for $I_2^j$}
		\\
		\\
		We have
		\begin{equation}
			g_j=iP_j(P_{\geq j-4}|u|^{2\sigma}\partial_xu)+i[P_j,P_{< j-4}|u|^{2\sigma}]\partial_x\tilde{P}_ju
		\end{equation}
	where $\tilde{P_j}$ is a ``fattened" projection to frequency $\sim 2^j$. By the standard Littlewood-Paley trichotomy, we write
\begin{equation}
\begin{split}
P_j(P_{\geq j-4}|u|^{2\sigma}\partial_xu)&=P_j(\tilde{P}_j|u|^{2\sigma}\partial_x\tilde{P}_{<j}u)+P_j(\tilde{P}_j|u|^{2\sigma}\tilde{P}_j\partial_xu)
\\
&+\sum_{k>j}P_j(\tilde{P}_k|u|^{2\sigma}\tilde{P}_k\partial_xu).
\end{split}
\end{equation}
For the first term, we have by the Moser estimate (\ref{roughmoser}) and Bernstein's inequality,  
	\begin{equation}
	\begin{split}
	2^{j(\sigma+s-1)}\|P_j(\tilde{P}_{j}|u|^{2\sigma}\partial_x\tilde{P}_{<j}u)\|_{L_T^1L_x^2}&\lesssim 2^{j(\sigma+s-1)}\|\tilde{P}_{j}|u|^{2\sigma}\|_{L_T^{1}L_x^\infty}\|\partial_xu\|_{L_T^\infty L_x^{2}}
	\\
	&\lesssim \|\tilde{P}_jD_x^{\sigma+s-1}|u|^{2\sigma}\|_{L_T^1L_x^{\infty}}\|\partial_xu\|_{L_T^\infty L_x^{2}}
	\\
	&\lesssim T^{\frac{1}{2}}b_j\|u\|_{S^1_T}^{2\sigma}\|u\|_{X^s_T}.
	\end{split}
	\end{equation}

 The second term is dealt with similarly. For the third term, we have by Bernstein's inequality
\begin{equation}
\begin{split}
&2^{j(\sigma+s-1)}\|\sum_{k>j}P_j(\tilde{P}_k|u|^{2\sigma}\tilde{P}_k\partial_xu)\|_{L_T^1L_x^2}
\\
&\lesssim T^{\frac{3}{4}}\sum_{k>j}\|\tilde{P}_ku\|_{L_T^4L_x^{\infty}}2^{j(\sigma+s-1)}2^k\|\tilde{P}_k|u|^{2\sigma}\|_{L_T^{\infty}L_x^2}
\\
&\lesssim T^{\frac{3}{4}}\sum_{k>j}2^{(j-k)(\sigma+s-1)}\|D_x^{\sigma+s-1}\tilde{P}_ku\|_{L_T^4L_x^{\infty}}\|\tilde{P}_k\partial_x|u|^{2\sigma}\|_{L_T^{\infty}L_x^2}
\\
&\lesssim T^{\frac{3}{4}}\|u\|_{S^1_T}^{2\sigma}\|u\|_{X^s_T}\sum_{k>j}2^{-(\sigma+s-1)|k-j|}b_k
\\
&\lesssim T^{\frac{3}{4}}b_j\|u\|_{X^s_T}\|u\|_{S^1_T}^{2\sigma}\sum_{k>j}2^{-(\sigma+s-1-\delta)|k-j|}
\\
&\lesssim T^{\frac{3}{4}}b_j\|u\|_{X^s_T}\|u\|_{S^1_T}^{2\sigma}.
\end{split}
\end{equation}
For the commutator term, we have by \Cref{Commutator leibniz}
\begin{equation}
	\begin{split}
2^{j(\sigma+s-1)}[P_j,P_{<j-4}|u|^{2\sigma}]\partial_x\tilde{P}_ju&=2^{j(\sigma+s-2)}L(\partial_xP_{<j-4}|u|^{2\sigma},\tilde{P_j}\partial_xu)
    \end{split}
\end{equation}
for some appropriate translation invariant expression $L$.
\\
\\
This term is easily estimated by  
\begin{equation}
	\begin{split}
2^{j(\sigma+s-2)}\|L(\partial_xP_{<j-4}|u|^{2\sigma},\tilde{P_j}\partial_xu)\|_{L_T^1L_x^2}&\lesssim 2^{j(\sigma+s-2)}\|\partial_xP_{<j-4}|u|^{2\sigma}\|_{L_T^{\infty}L_x^{2}}\|\tilde{P}_j\partial_xu\|_{L_T^{1}L_x^{\infty}}
\\
&\lesssim \|u\|_{L_T^{\infty}L_x^{\infty}}^{2\sigma-1}\|\partial_xu\|_{L_T^{\infty}L_x^2}\|\tilde{P}_jD_x^{\sigma+s-1}u\|_{L_T^{1}L_x^{\infty}}
\\
&\lesssim b_jT^{\frac{3}{4}}\|u\|_{S^1_T}^{2\sigma}\|u\|_{X^s_T}.
    \end{split}
\end{equation} 
Hence, we have
\begin{equation}
I_2^j\lesssim T^{\frac{1}{2}}b_j\|u\|_{X^s_T}\|u\|_{S^1_T}^{2\sigma}.
\end{equation}
		\textbf{Estimate for $I_3^j$}
		\\
		\\
		We expand 
		\begin{equation}
			\begin{split}
				\partial_t\Phi_j=-\frac{1}{2}P_{<j-4}\partial_x^{-1}[2^{j}\chi'(2^j|u|^2)\partial_t|u|^2|u|^{2\sigma}]-\frac{1}{2}P_{<j-4}\partial_x^{-1}[\chi_j(|u|^2)\partial_t|u|^{2\sigma}]=:J_1+J_2.
			\end{split}    
		\end{equation}
		We have 
		\begin{equation}
			\begin{split}\label{eqn4}
				J_1&=-\frac{1}{2}P_{<j-4}\partial_x^{-1}[2^j\chi'(2^j|u|^2)\partial_t|u|^2|u|^{2\sigma}]
				\\
				&=-P_{<j-4}\partial_x^{-1}[2^j\chi'(2^j|u|^2)\RE(\overline{u}u_t)|u|^{2\sigma}]
				\\
				&=-P_{<j-4}\partial_x^{-1}[2^j\chi'(2^j|u|^2)\RE(i\overline{u}u_{xx})|u|^{2\sigma}]-P_{<j-4}\partial_x^{-1}[2^j\chi'(2^{j}|u|^2)\RE(\overline{u}|u|^{2\sigma}u_x)|u|^{2\sigma}]
				\\
				&=-P_{<j-4}\partial_x^{-1}[2^j\chi'(2^j|u|^2)\partial_x\RE(i\overline{u}u_{x})|u|^{2\sigma}]-P_{<j-4}\partial_x^{-1}[2^j\chi'(2^j|u|^2)\RE(\overline{u}|u|^{2\sigma}u_x)|u|^{2\sigma}]
				\\
				&:=K_1+K_2.
			\end{split}
		\end{equation}
		For the first term, $K_1$, in (\ref{eqn4}) we write 
		\begin{equation}\label{eqn5}
			\begin{split}
				-P_{<j-4}\partial_x^{-1}[2^j\chi'(2^j|u|^2)\partial_x\RE(i\overline{u}u_{x})|u|^{2\sigma}]&=-P_{<j-4}[2^j\chi'(2^{j}|u|^2)\RE(i\overline{u}u_{x})|u|^{2\sigma}]
				\\
				&+P_{<j-4}\partial_x^{-1}[2^{2j}\chi''(2^j|u|^2)\partial_x|u|^2\RE(i\overline{u}u_{x})|u|^{2\sigma}]
				\\
				&+P_{<j-4}\partial_x^{-1}[2^j\chi'(2^j|u|^2)\RE(i\overline{u}u_x)\partial_x|u|^{2\sigma}].
			\end{split}
		\end{equation}
		We have for the first term in (\ref{eqn5})
		\begin{equation}
			\begin{split}
				\|P_{<j-4}[2^{j}\chi'(2^j|u|^2)\RE(i\overline{u}u_{x})|u|^{2\sigma}]\|_{L^{\infty}_TL_x^2}&\lesssim 2^{j}\|\chi'(2^{j}|u|^2)\RE(i\overline{u}u_{x})|u|^{2\sigma}\|_{L^{\infty}_TL_x^2}
				\\
				&\lesssim \|u\|_{L^\infty_TL_x^{\infty}}^{2\sigma-1}\|u_x\|_{L^{\infty}_TL_x^2}
			\end{split}
		\end{equation}
		where we used the fact that 
		\begin{equation}
			\|\chi'(2^{j}|u|^2)|u|^{2\sigma+1}\|_{L^\infty_TL_x^{\infty}}=\|\varphi'(2^{j}|u|^2)|u|^{2\sigma+1}\|_{L^\infty_TL_x^{\infty}}\lesssim 2^{-j}\|u\|_{L_T^{\infty}L_x^{\infty}}^{2\sigma-1}.
		\end{equation}
		Now, for the second term in (\ref{eqn5}), we have  
		\begin{equation}
			\begin{split}
				2^{2j}\|P_{<j-4}\partial_x^{-1}[\chi''(2^{j}|u|^2)\partial_x|u|^2\RE(i\overline{u}u_{x})|u|^{2\sigma}]\|_{L_T^{\infty}L_x^\infty}&\lesssim 2^{2j}\|\chi''(2^{j}|u|^2)\partial_x|u|^2\RE(i\overline{u}u_{x})|u|^{2\sigma}\|_{L_T^{\infty}L^1_x}
				\\
				&\lesssim 2^{2j}\|\varphi''(2^{j}|u|^2)\RE(\overline{u}u_x)\RE(i\overline{u}u_{x})|u|^{2\sigma}\|_{L_T^{\infty}L^1_x}
				\\
				&\lesssim 2^{j(1-\sigma)}\|u_x\|_{L_T^{\infty}L^2_x}^2.
			\end{split}
		\end{equation}
		The third term in (\ref{eqn5}) is estimated similarly to the second term. 
		\\
		\\
		Hence, we obtain 
		\begin{equation}
			\begin{split}
				2^{j(\sigma+s-1)}\|K_1u_j\|_{L_T^1L_x^{2}}&\lesssim 2^{j(\sigma+s-1)}2^{j(1-\sigma)}T\|u_x\|_{L^{\infty}_TL_x^2}^2\|u_j\|_{L^{\infty}_TL_x^2}+2^{j(\sigma+s-1)}\|u\|_{L_T^{\infty}L_x^{\infty}}^{2\sigma-1}T^{\frac{3}{4}}\|u_x\|_{L^{\infty}_TL_x^2}\|u_j\|_{L_T^4L_x^{\infty}}
				\\
				&\lesssim T\|u_x\|_{L^{\infty}_TL_x^2}^2\|D_x^su_j\|_{L^{\infty}_TL_x^2}+T^{\frac{3}{4}}\|u\|_{L_T^{\infty}L_x^{\infty}}^{2\sigma-1}\|u_x\|_{L^{\infty}_TL_x^2}\|D_x^{\sigma+s-1}u_j\|_{L_T^4L_x^{\infty}}.
			\end{split}
		\end{equation}
		Next, we estimate $K_2$. We have by Cauchy Schwarz, and Sobolev embedding,
		\begin{equation}
			\begin{split}
				\|P_{<j-4}\partial_x^{-1}[2^{j}\chi'(2^{j}|u|^2)\RE(\overline{u}|u|^{2\sigma}u_x)|u|^{2\sigma}]\|_{L_T^{\infty}L_x^{\infty}}&\lesssim 2^{j}\|\varphi'(2^{j}|u|^2)\RE(\overline{u}|u|^{2\sigma}u_x)|u|^{2\sigma}\|_{L_T^{\infty}L^1_x}
				\\
				&\lesssim 2^{j(\frac{1}{2}-\sigma)}\|u\|_{L_T^{\infty}L_x^{4\sigma}}^{2\sigma}\|u_x\|_{L_T^{\infty}L_x^2}
				\\&\lesssim 2^{j(\frac{1}{2}-\sigma)}\|u\|_{S^1_T}^{2\sigma}\|u_x\|_{L_T^{\infty}L_x^2}
				\\
				&\lesssim \|u\|_{S^1_T}^{2\sigma}\|u_x\|_{L_T^{\infty}L_x^2}
			\end{split}
		\end{equation}
		where we used the fact that $\sigma\geq \frac{1}{2}$.
		\\
		\\
		Hence, we finally obtain the estimate, 
		\begin{equation}
			\begin{split}
				2^{j(\sigma+s-1)}\|u_jJ_1\|_{L_T^1L_x^2}&\lesssim T\|u_x\|_{L^{\infty}_TL_x^2}^2\|D_x^su_j\|_{L^{\infty}_TL_x^2}+T^{\frac{3}{4}}\|u\|_{L_T^{\infty}L_x^{\infty}}^{2\sigma-1}\|u_x\|_{L^{\infty}_TL_x^2}\|D_x^{\sigma+s-1}u_j\|_{L_T^4L_x^{\infty}}
				\\
				&+T\|u\|_{S^1_T}^{2\sigma}\|u_x\|_{L^{\infty}_TL_x^2}\|D_x^{\sigma+s-1}u_j\|_{L^{\infty}_TL^2_x}
				\\
				&\lesssim T^{\frac{3}{4}}(1+\|u\|_{S^1_T}^{4\sigma})\|u_j\|_{X^s_T}.
			\end{split}
		\end{equation}
		Next, we turn to the estimate for $J_2$. We have
		\begin{equation}
			\begin{split}
				J_2&=-\frac{1}{2}P_{<j-4}\partial_x^{-1}[\chi_j(|u|^2)\partial_t|u|^{2\sigma}]
				\\
				&=-\sigma P_{<j-4}\partial_x^{-1}[\chi_j(|u|^2)|u|^{2\sigma-2}\RE(\overline{u}u_t)]
				\\
				&=-\sigma P_{<j-4}\partial_x^{-1}[\chi_j(|u|^2)|u|^{2\sigma-2}\RE(i\overline{u}u_{xx})]-\sigma P_{<j-4}\partial_x^{-1}[\chi_j(|u|^2)|u|^{2\sigma-2}\RE(\overline{u}|u|^{2\sigma}u_x)]:=K_3+K_4.
			\end{split}
		\end{equation}
		For the first term, we have 
	\begin{equation}
			\begin{split}
				K_3&=-\sigma P_{<j-4}[\chi_j(|u|^2)|u|^{2\sigma-2}\RE(i\overline{u}u_{x})]+\sigma P_{<j-4}\partial_x^{-1}[\chi_j(|u|^2)\partial_x|u|^{2\sigma-2}\RE(i\overline{u}u_{x})]
				\\
				&-2^{j}\sigma P_{<j-4}\partial_x^{-1}[\varphi'(2^{j}|u|^2)\partial_x|u|^2|u|^{2\sigma-2}\RE(i\overline{u}u_{x})]
				\\
				&=K_{3,1}+K_{3,2}+K_{3,3}.
			\end{split}
		\end{equation}
		We now must estimate each of the above terms. For the first two terms, we have 
		\begin{equation}
			\|K_{3,1}\|_{L^{\infty}_TL^2_x}\lesssim \|u\|_{L^{\infty}_TL^{\infty}_x}^{2\sigma-1}\|u_x\|_{L^{\infty}_TL^2_x}
		\end{equation}
		and
		\begin{equation}
			\begin{split}
				\|K_{3,2}\|_{L^{\infty}_TL^{\infty}_x}&\lesssim \|\chi_j(|u|^2)\partial_x|u|^{2\sigma-2}\RE(i\overline{u}u_{x})\|_{L_T^{\infty}L^1_x}
				\\
				&\lesssim \|\chi_j(|u|^2)|u|^{2\sigma-4}\RE(\overline{u}u_x)\RE(i\overline{u}u_{x})\|_{L_T^{\infty}L^1_x}
				\\
				&\lesssim 2^{j(1-\sigma)}\|u_x\|_{L_T^{\infty}L^2_x}^2
			\end{split}
		\end{equation}
		where we used the fact that
		\begin{equation}
			\chi_j(|u|^2)|u|^{2\sigma-2}\lesssim 2^{j(1-\sigma)}.
		\end{equation}
		\begin{remark}
			It should be emphasized that the main point of the partial gauge transformation is to be able to estimate the term $K_{3,2}$ above, which involves negative powers of $|u|$.
		\end{remark}
		Now, we turn to the estimate for $K_{3,3}$. We have
		\begin{equation}
			\begin{split}
				\|K_{3,3}\|_{L^{\infty}_TL^{\infty}_x}&\lesssim 2^{j}\|\varphi'(2^{j}|u|^2)|u|^{2\sigma-2}\RE(\overline{u}u_x)\RE(i\overline{u}u_x)\|_{L_T^{\infty}L_x^1}
				\\
				&\lesssim 2^{j(1-\sigma)}\|u_x\|_{L_T^{\infty}L_x^2}^2.
			\end{split}
		\end{equation}
		Hence, we have 
		\begin{equation}
			2^{j(\sigma+s-1)}\|K_3u_j\|_{L^1_TL_x^2}\lesssim T^{\frac{3}{4}}\|u\|_{L^{\infty}_TL^{\infty}_x}^{2\sigma-1}\|u_x\|_{L^{\infty}_TL^2_x}\|D_x^{\sigma+s-1}u_j\|_{L_T^{4}L_x^\infty}+T\|u_x\|_{L_T^{\infty}L_x^2}^2\|D_x^su_j\|_{L^{\infty}_TL_x^2}.
		\end{equation}
		Finally, we estimate $K_4$. We have 
		\begin{equation}
			\begin{split}
				\|K_4\|_{L_T^{\infty}L_x^{\infty}}&\lesssim \|P_{<j-4}\partial_x^{-1}[\chi_j(|u|^2)|u|^{2\sigma-2}\RE(\overline{u}|u|^{2\sigma}u_x)]\|_{L_T^{\infty}L_x^{\infty}}\lesssim \|\chi_j(|u|^2)|u|^{2\sigma-2}\RE(\overline{u}|u|^{2\sigma}u_x)\|_{L_T^{\infty}L_x^{1}}
				\\
				&\lesssim \|u\|_{L_T^{\infty}L_x^{\infty}}^{4\sigma-2}\|u_x\|_{L_T^{\infty}L_x^2}\|u\|_{L_T^{\infty}L_x^2}.
			\end{split}
		\end{equation}
		Hence, combining with the estimate for $K_3$, we obtain  
		\begin{equation}
			\begin{split}
				2^{j(\sigma+s-1)}\|u_jJ_2\|_{L_T^1L_x^2}&\lesssim T^{\frac{3}{4}}\|u\|_{L^{\infty}_TL^{\infty}_x}^{2\sigma-1}\|u_x\|_{L^{\infty}_TL^2_x}\|D_x^{\sigma+s-1}u_j\|_{L_T^4L_x^{\infty}}+T\|u_x\|_{L_T^{\infty}L_x^2}^2\|D_x^su_j\|_{L^{\infty}_TL_x^2}
				\\
				&+T\|u\|_{L_T^\infty L_x^{\infty}}^{4\sigma-2}\|u_x\|_{L_T^{\infty}L_x^{2}}\|u\|_{L_T^{\infty}L_x^2}\|D_x^{\sigma+s-1}u_j\|_{L_T^{\infty}L_x^{2}}  
				\\
				&\lesssim_T T^{\frac{3}{4}}(1+\|u\|_{S^1_T}^{4\sigma})\|u_j\|_{X^s_T}.
			\end{split}
		\end{equation}
		Now combining this with the estimate for $J_1$ finally yields the desired estimate for $I_3^j$. Namely, we have
		\begin{equation}
			\begin{split}
				I_3^j&\lesssim T^{\frac{3}{4}}(1+\|u\|_{S^1_T}^{4\sigma})\|u_j\|_{X^s_T}
				\\
				&\lesssim T^{\frac{3}{4}}b_j(1+\|u\|_{S^1_T}^{4\sigma})\|u\|_{X^s_T}.
			\end{split}
		\end{equation}
		\textbf{Estimate for $I_4^j$}
		\\
		\\
		This term is straightforward to deal with. Indeed, after expanding $\partial_x^2\Phi_j$ we have
		\begin{equation}
			\begin{split}
				\|\partial_x^2\Phi_j\|_{L_T^{\infty}L_x^2}&\lesssim 2^{j}\|\varphi'(2^{j}|u|^2)\RE(\overline{u}u_x)|u|^{2\sigma}\|_{L^{\infty}_TL_x^2}+\|\chi_j(|u|^2)\RE(|u|^{2\sigma-2}\overline{u}u_x)\|_{L^{\infty}_TL_x^2}
				\\
				&\lesssim \|u\|_{L_T^{\infty}L_x^\infty}^{2\sigma-1}\|u_x\|_{L_T^{\infty}L_x^2}.
			\end{split}
		\end{equation}
		Hence, 
		\begin{equation}
		\begin{split}
			I^j_4&\lesssim T^{\frac{3}{4}}\|u\|_{L_T^{\infty}L_x^{\infty}}^{2\sigma-1}\|u_x\|_{L_T^{\infty}L_x^2}\|D_x^{\sigma+s-1}u_j\|_{L_T^4L_x^{\infty}}
			\\
			&\lesssim T^{\frac{3}{4}}\|u\|_{S^1_T}^{2\sigma}\|u_j\|_{X^s_T}
			\\
			&\lesssim T^{\frac{3}{4}}b_j\|u\|_{S^1_T}^{2\sigma}\|u\|_{X^s_T}.
			\end{split}
		\end{equation}
		\textbf{Estimate for $I_5^j$}
		\\
		\\
		The estimate for $I_5^j$ is also straightforward as it doesn't involve any differentiated terms. Indeed, we have 
		\begin{equation}
			\|\partial_x\Phi_j\|_{L_T^{\infty}L_x^\infty}^2\lesssim \|u\|_{L_T^\infty L_x^{\infty}}^{4\sigma}.
		\end{equation}
		Hence, by Sobolev embedding,
		\begin{equation}
		\begin{split}
			I_5^j&\lesssim T\|u\|_{L^{\infty}_TL^{\infty}_x}^{4\sigma}\|D_x^{\sigma+s-1}u_j\|_{L_T^{\infty}L_x^2}
			\\
			&\lesssim T\|u\|_{S^1_T}^{4\sigma}\|u_j\|_{X^s_T}
			\\
			&\lesssim Tb_j\|u\|_{S^1_T}^{4\sigma}\|u\|_{X^s_T}.
			\end{split}
		\end{equation}
		Now, combining all the estimates above  completes the proof of \Cref{Yestimate}. 
\begin{remark}\label{AlternateY}
By taking $b_j$ to instead be a $S_T^s$ frequency envelope for $u$, and repeating the proof almost verbatim with \Cref{alternativemoser} in place of (\ref{roughmoser}), we instead obtain
\begin{equation}\label{strichpure}
			\begin{split}
				\|P_ju\|_{Y^s_T}&\lesssim_{\|u\|_{S^1_T}} a_j\|u_0\|_{H^s_x}
				+T^{\frac{1}{2}}b_j(1+\|u\|_{S^1_T}^{4\sigma})\|u\|_{S^s_T}.
			\end{split}
		\end{equation}
This will be relevant for when we later establish local well-posedness in the high regularity regime $2-\sigma<s<4\sigma$ for the full range of $\frac{1}{2}<\sigma<1$. Specifically, this will be important for establishing a priori bounds in the range $2-\sigma<s\leq\frac{3}{2}$ when Sobolev embedding is not suitable for controlling the term $\|u_x\|_{L_T^4L_x^{\infty}}$. The reason the proof of \eqref{strichpure} is almost identical to the current proof is that we have not yet used the maximal function part of the norm of $X^s_T$; we will begin using this part of the norm in the proof of \Cref{NLEE}.
\end{remark}
\begin{remark}\label{highregremark}
As a second important remark, the estimate (\ref{strichpure}) also holds for $T\lesssim 1$ if the nonlinearity $i|u|^{2\sigma}u_x$ is replaced by the spatially regularized and time-truncated nonlinearity $i\eta P_{<k}|u|^{2\sigma}u_x$, where $k\in\mathbb{N}$ and $\eta=\eta(t)$ is a time-dependent cutoff function supported in $(-2,2)$ and equal to $1$ on $[-1,1]$. This fact won't be relevant for the low regularity construction, but will be important for the high regularity construction in Sections 5 and 6 where the cutoff $\eta$ is needed for estimating (fractional order) time derivatives of a solution $u$ to \eqref{gDNLS}. Since the proof of this estimate is nearly identical to \Cref{Yestimate}, we omit the details. Nevertheless, for the sake of completeness, we state this observation in the following lemma. 
\end{remark}

\begin{lemma}\label{modifiedlemma}
Let $k\in \mathbb{N}$, $\sigma\in (\frac{1}{2},1)$, $s\in [1,\frac{3}{2}]$,  and $T\lesssim 1$. Let $\eta$ be a time-dependent cutoff function supported in $(-2,2)$ with $\eta=1$ on $[-1,1]$. Let $v,w\in S_T^s$ with $\|v\|_{S_T^s}$, $\|w\|_{S_T^s}\lesssim 1$. Assume that $u,v\in S^s_T$ solve the equations 

\begin{equation}
			\begin{cases}
				&(i\partial_t+\partial_x^2)u =i\eta P_{<k}|v|^{2\sigma}\partial_xu,
				\\
				&u(0)=  u_0,
			\end{cases}
		\end{equation}
and
\begin{equation}\label{second eq}
			\begin{cases}
				&(i\partial_t+\partial_x^2)v =i\eta P_{<k}|w|^{2\sigma}\partial_xv,
				\\
				&v(0)=  u_0,
			\end{cases}
		\end{equation}
respectively. Then $u$ satisfies the estimate
\begin{equation}
\|u\|_{Y_T^s}\lesssim \|u_0\|_{H_x^s}+T^{\frac{1}{2}}\|u\|_{S^s_T}.
\end{equation}
\end{lemma}
As mentioned, the proof of \Cref{modifiedlemma} proceeds in a nearly identical fashion to \Cref{Yestimate}, so we omit the details. The main difference is that $\Phi_j$ is replaced by
\begin{equation}
\Phi_j=-\frac{1}{2}\eta(t)P_{<j-4}P_{<k}\partial_x^{-1}[\chi_j(|v|^2)|v|^{2\sigma}].
\end{equation}
The requirement \eqref{second eq} that $v$ solves an additional \eqref{gDNLS} type equation is merely relevant for the $I^j_3$ estimate when time derivatives fall on $\Phi_j$, and hence on $v$. In practice, \Cref{modifiedlemma} will be used in the construction of solutions at high regularity in Sections 5, 6 and 7. 
\\
\\
		Next, we turn to proving \Cref{NLEE}.
		\begin{proof}
	Again, we begin by writing the equation in a paradifferential fashion, 
		\begin{equation}
			i\partial_tu_j+\partial_x^2u_j=iP_{<j-4}|u|^{2\sigma}\partial_xu_j+iP_j(P_{\geq j-4}|u|^{2\sigma}\partial_xu)+i[P_j,P_{<j-4}|u|^{2\sigma}]\partial_xu.
		\end{equation}
A simple energy estimate (i.e.~multiplying by $-i2^{2js}\overline{u_j}$, taking real part and integrating), and Bernstein's inequality gives 
\begin{equation}
\begin{split}
\|u_j\|_{L_T^{\infty}H_x^s}^2&\lesssim \|u_j(0)\|_{H_x^s}^2+2^{2js}\int_{0}^{T}\left\lvert\int_{\mathbb{R}}P_{<j-4}|u|^{2\sigma}\partial_x|u_j|^2\right\rvert+2^{2js}\int_{0}^{T}\left\lvert\int_{\mathbb{R}}\overline{u_j}P_j(P_{\geq j-4}|u|^{2\sigma}\partial_xu)\right\rvert
\\
&+2^{2js}\int_{0}^{T}\left\lvert\int_{\mathbb{R}}\overline{u_j}[P_j,P_{<j-4}|u|^{2\sigma}]\partial_xu\right\rvert
\\
&:=\|u_j(0)\|_{H_x^s}^2+I_1^j+I_2^j+I_3^j.
\end{split}
\end{equation}
		\textbf{Estimate for $I_1^j$}
		\\
		\\
		For the first term, we integrate by parts and estimate using standard interpolation inequalities, Bernstein's inequality, H\"older's inequality and \Cref{Moservec}
		\begin{equation}
			\begin{split}
				2^{2js}\int_{0}^{T}\left\lvert\int_{\mathbb{R}}|u_j|^2P_{<j-4}\partial_x|u|^{2\sigma}\right\rvert&\lesssim 2^{2js}\|P_{<j-4}\partial_x|u|^{2\sigma}|u_j|^{2(1-\sigma)}\|_{L_x^1L_T^{\frac{1}{1-\sigma}}}\|u_j\|_{L_x^{\infty}L_T^2}^{2\sigma}
				\\
				&\lesssim 2^{2js}\|P_{<j-4}\partial_x|u|^{2\sigma}\|_{L_x^{\frac{1}{\sigma}}L_T^{\frac{1}{\epsilon(1-\sigma)}}}\|u_j\|_{L_x^2L_T^{\frac{2}{1-\epsilon}}}^{2(1-\sigma)}\|u_j\|_{L_x^{\infty}L_T^2}^{2\sigma}
				\\
				&\lesssim \|P_{<j-4}(D_x^{\sigma-\frac{1}{2}}|u|^{2\sigma})\|_{L_x^{\frac{1}{\sigma}}L_T^{\frac{1}{\epsilon(1-\sigma)}}}\|D_x^{s-c_1\epsilon}u_j\|_{L_x^2L_T^{\frac{2}{1-\epsilon}}}^{2(1-\sigma)}\|D_x^{s+\frac{3}{4\sigma}-\frac{1}{2}+c_2\epsilon}u_j\|_{L_x^{\infty}L_T^2}^{2\sigma}
				\\
				&\lesssim T^{(1-\sigma)(1-\epsilon)}\|P_{<j-4}(D_x^{\sigma-\frac{1}{2}-\epsilon}|u|^{2\sigma})\|_{L_x^{\frac{1}{\sigma}}L_T^{\frac{1}{\epsilon(1-\sigma)}}}\|u_j\|_{X^s_T}^2
				\\
				&\lesssim T^{1-\sigma}\|u\|_{L_x^{2}L_T^\infty}^{2\sigma-1}\|D_x^{\sigma-\frac{1}{2}-\epsilon}u\|_{L_x^2L_T^{\infty}}\|u_j\|_{X^s_T}^2
				\\
				&\lesssim T^{1-\sigma}\|u\|_{Y^1_T}^{2\sigma}\|u_j\|_{X^s_T}^2
				\\
				&\lesssim T^{1-\sigma}b_j^2\|u\|_{Y^1_T}^{2\sigma}\|u\|_{X^s_T}^2,
			\end{split}
		\end{equation}
			%For line 3 to 4 use log convexity to interpolate L^2_T and L^\infty_T, with most of the weight on L^2_T. Estimate L^2 part directly by X^s, for other part Bernstein and crudely estimate the part with no derivatives with X^s. The remaining factors of 2^j outside has the small interpolation parameter and can be absorbed.
where $c_1,c_2$ are fixed positive constants, and $\epsilon>0$ is sufficiently small. Observe that going from line 3 to line 4 uses the fact that $\sigma> \frac{\sqrt{3}}{2}$ since $s+\frac{3}{4\sigma}-\frac{1}{2}< s+\sigma-\frac{1}{2}$ precisely when $\sigma>\frac{\sqrt{3}}{2}$. 
\\
\\
		\textbf{Estimate for $I_2^j$}
		\\
		\\
		We have by the Littlewood-Paley trichotomy 
		\begin{equation}\label{3.71}
			\begin{split}
				2^{2js}\int_{\mathbb{R}}P_j(P_{\geq j-4}|u|^{2\sigma}\partial_xu)\overline{u_j}&=2^{2js}\int_{\mathbb{R}} \tilde{P}_j(|u|^{2\sigma})\tilde{P}_{<j}\partial_xu\overline{u}_j+2^{2js}\sum_{k>j}\int_{\mathbb{R}} \overline{u_j}P_j(\tilde{P}_k(|u|^{2\sigma})\tilde{P_k}\partial_xu)
			\end{split}
		\end{equation}
	for appropriate ``fattened" Littlewood-Paley projections $\tilde{P_j}$.
		For the first term, using Bernstein's inequality and H\"older's inequality, and that $2^{-\delta j}\lesssim b_j$ we have, 
		\begin{equation}\label{integral1}
			\begin{split}
				2^{2js}\int_{0}^{T}\left\lvert\int_\mathbb{R} \tilde{P}_j(|u|^{2\sigma})\tilde{P}_{<j}\partial_xu\overline{u}_j\right\rvert&\lesssim2^{j(\frac{5}{2}-\sigma+s)}\|\tilde{P_j}|u|^{2\sigma}\|_{L_x^{\frac{2}{2\sigma-1}}L_T^2}\|\tilde{P}_ju\|_{L_x^{\infty}L_T^2}^{2\sigma-1}\|\tilde{P}_ju\|_{L_x^{2}L_T^2}^{2(1-\sigma)}\|\tilde{P}_{<j}D_x^{\sigma+s-\frac{3}{2}}u\|_{L_x^2L_T^{\infty}}
				\\
				&\lesssim T^{1-\sigma}\|D_x^{2+\sigma-2\sigma^2+\delta}\tilde{P}_j(|u|^{2\sigma})\|_{L_x^{\frac{2}{2\sigma-1}}L_T^2}\|\tilde{P}_ju\|_{X^s_T}\|u\|_{X^s_T}
				\\
				&\lesssim T^{1-\sigma}b_j\|D_x^{2+\sigma-2\sigma^2+2\delta}\tilde{P}_j(|u|^{2\sigma})\|_{L_x^{\frac{2}{2\sigma-1}}L_T^2}\|\tilde{P}_ju\|_{X^s_T}\|u\|_{X^s_T}.
			\end{split}
		\end{equation}
Note that the first line follows since $s\in [1,\frac{3}{2}]$. Now, we estimate $\|D_x^{2+\sigma-2\sigma^2+2\delta}\tilde{P}_j(|u|^{2\sigma})\|_{L_x^{\frac{2}{2\sigma-1}}L_T^2}$. For notational convenience, write $2+\sigma-2\sigma^2+2\delta=\alpha$. We employ the Littlewood-Paley  trichotomy and then H\"older's and Bernstein's inequality to obtain
\begin{equation}
\begin{split}
\|D_x^{\alpha}\tilde{P}_j(|u|^{2\sigma})\|_{L_x^{\frac{2}{2\sigma-1}}L_T^2}&\lesssim \|D_x^{\alpha-1}\tilde{P}_j(|u|^{2\sigma-2}\overline{u}u_x)\|_{L_x^{\frac{2}{2\sigma-1}}L_T^2}
\\
&\lesssim \|D_x^{\alpha-1}\tilde{P}_j(\tilde{P}_{<j}(|u|^{2\sigma-2}\overline{u})\tilde{P}_ju_x)\|_{L_x^{\frac{2}{2\sigma-1}}L_T^2}+\|D_x^{\alpha-1}\tilde{P}_j(\tilde{P}_{>j}(|u|^{2\sigma-2}\overline{u})u_x)\|_{L_x^{\frac{2}{2\sigma-1}}L_T^2}
\\
&\lesssim \|u\|_{L_x^{2}L_T^{\infty}}^{2\sigma-1}\|D_x^{\alpha}\tilde{P}_ju\|_{L_x^{\infty}L_T^2}+\|D_x^{\alpha-1}(|u|^{2\sigma-2}\overline{u})\|_{L_T^{\infty}L_x^{\infty}}\|u_x\|_{L_x^{\frac{2}{2\sigma-1}}L_T^2}.
\end{split}
\end{equation}
 Observe that $\|D_x^{\alpha}u\|_{L_x^{\infty}L_T^2}\lesssim \|u\|_{Y_T^1}$ since $\alpha<\sigma+\frac{1}{2}$ when $\sigma>\frac{\sqrt{3}}{2}$. Furthermore, by \Cref{Holder} and Sobolev embedding, we have
\begin{equation}
\|D_x^{\alpha-1}(|u|^{2\sigma-2}\overline{u})\|_{L_T^{\infty}L_x^{\infty}}\lesssim \|\langle D_x\rangle^{\frac{\alpha-1+\epsilon}{2\sigma-1}}u\|_{L_T^{\infty}L_x^{\infty}}^{2\sigma-1}\lesssim \|u\|_{S_T^1}^{2\sigma-1}    
\end{equation}
where the last inequality again follows because $\sigma>\frac{\sqrt{3}}{2}$. Furthermore, by interpolating $\|u_x\|_{L_x^{\frac{2}{2\sigma-1}}L_T^2}$ between $L_x^2L_T^2$ and $L_x^{\infty}L_T^2$, we see that $\|u_x\|_{L_x^{\frac{2}{2\sigma-1}}L_T^2}\lesssim \|u\|_{X_T^1}$. Hence, we can control (\ref{integral1}) by
\begin{equation}
T^{1-\sigma}b_j^2\|u\|_{X_T^1}^{2\sigma}\|u\|_{X_T^s}^2.
\end{equation}
For the other term in \eqref{3.71}, we have
\begin{equation}
\begin{split}
&2^{2js}\int_{0}^{T}\left\lvert\sum_{k>j}\int_{\mathbb{R}} \overline{u_j}P_j(\tilde{P}_k(|u|^{2\sigma})\tilde{P_k}\partial_xu)\right\rvert
\\
&\lesssim 2^{js}T^{(1-\sigma)}\|D_x^su_j\|_{L_T^{\infty}L_x^2}^{2(1-\sigma)}\|D_x^su_j\|_{L_x^{\infty}L_T^2}^{2\sigma-1}\sum_{k>j}2^{k}\|\tilde{P}_k(|u|^{2\sigma})\|_{L_x^{\frac{2}{2\sigma-1}}L_T^2}\|\tilde{P}_ku\|_{L_x^{2}L_T^{\infty}}
\\
&\lesssim 2^{j(s-\frac{1}{2}(1-2\sigma)^2)}T^{(1-\sigma)}\|u_j\|_{X^s_T}\sum_{k>j}2^{k}\|\tilde{P}_k(|u|^{2\sigma})\|_{L_x^{\frac{2}{2\sigma-1}}L_T^2}\|\tilde{P}_ku\|_{L_x^{2}L_T^{\infty}}
\\
&\lesssim 2^{j(s-\frac{1}{2}(1-2\sigma)^2)}T^{(1-\sigma)}\|u_j\|_{X^s_T}\sum_{k>j}2^{k(\frac{3}{2}-\sigma-s+1)}\|\tilde{P}_k(|u|^{2\sigma})\|_{L_x^{\frac{2}{2\sigma-1}}L_T^2}\|\tilde{P}_kD_x^{s+\sigma-\frac{3}{2}}u\|_{L_x^{2}L_T^{\infty}}
\\
&\lesssim T^{(1-\sigma)}\|u_j\|_{X^s_T}\sum_{k>j}2^{(j-k)(s-\frac{1}{2}(1-2\sigma)^2)}\|\tilde{P}_k(D_x^{2+\sigma-2\sigma^2}|u|^{2\sigma})\|_{L_x^{\frac{2}{2\sigma-1}}L_T^2}\|\tilde{P}_kD_x^{s+\sigma-\frac{3}{2}}u\|_{L_x^{2}L_T^{\infty}}
\\
&\lesssim T^{(1-\sigma)}b_j^2\|u\|_{X^s_T}^2\|u\|_{X^1_T}^{2\sigma}\sum_{k>j}2^{(j-k)((s-\frac{1}{2}(1-2\sigma)^2)-\delta)}
\\
&\lesssim T^{(1-\sigma)}b_j^2\|u\|_{X^s_T}^2\|u\|_{X^1_T}^{2\sigma}
\end{split}
\end{equation}
where we estimated $\|\tilde{P}_k(D_x^{2+\sigma-2\sigma^2}|u|^{2\sigma})\|_{L_x^{\frac{2}{2\sigma-1}}L_T^2}$ in essentially the same way as we did with the previous term.
\\
\\
		\textbf{Estimate for $I_3^j$}
	\\
	\\
	We have 
	\begin{equation}
		\begin{split}
			[P_j,P_{<j-4}|u|^{2\sigma}]\partial_xu&=[P_j,P_{<j-4}|u|^{2\sigma}]\partial_x\tilde{P}_ju
			\\
			&=2^{-j}\int_{\mathbb{R}^2}K(y)\partial_xP_{<j-4}|u|^{2\sigma}(x+y_1)\partial_x\tilde{P}_ju(x+y_2)dy
		\end{split}
	\end{equation}
	for some kernel $K\in L^1$ with $\|K\|_{L^1}\lesssim 1$ (with a bound independent of $j$), see \Cref{Commutator leibniz}. Hence,  
	\begin{equation}
		\begin{split}
		\int_{0}^{T}	\left\lvert\int_{\mathbb{R}}\overline{u_j}[P_j,P_{<j-4}|u|^{2\sigma}]\partial_x\tilde{P}_ju\right\rvert&\lesssim 2^{-j}\sup_{y\in\mathbb{R}^2}\int_{0}^{T}\int_{\mathbb{R}}|\partial_xP_{<j-4}|u|^{2\sigma}(x+y_1)||\partial_x\tilde{P}_ju(x+y_2)||u_j|.
		\end{split}
	\end{equation}
	This is estimated analogously to $I_j^1$. Indeed, we obtain by Cauchy Schwarz, Bernstein's inequality and \Cref{Moservec},  
	\begin{equation}
		\begin{split}
			2^{2js}\int_{0}^{T}\left\lvert\int_{\mathbb{R}}\overline{u_j}[P_j,P_{<j-4}|u|^{2\sigma}]\partial_x\tilde{P}_ju\right\rvert&\lesssim 2^{2js}\|\tilde{P}_jD_x^{\frac{3}{4\sigma}-\frac{1}{2}+c_1\epsilon}u\|_{L_x^{\infty}L_T^2}^{2\sigma}\|\tilde{P}_ju\|_{L_T^2L_x^2}^{2(1-\sigma)}\|u\|_{L_x^2L_T^\infty}^{2\sigma-1}\|D_x^{\sigma-\frac{1}{2}-c_2\epsilon}u\|_{L_x^2L_T^{\infty}}
			\\
			&\lesssim T^{(1-\sigma)}\|u\|_{Y^1_T}^{2\sigma}\|\tilde{P}_ju\|_{X^s_T}^2
			\\
			&\lesssim T^{(1-\sigma)}b_j^2\|u\|_{Y^1_T}^{2\sigma}\|u\|_{X^s_T}^2,
		\end{split}
	\end{equation}
where $c_1,c_2$ are positive constants depending on $\sigma,s$. The second line follows from the fact that $\frac{3}{4\sigma}-\frac{1}{2}<\sigma-\frac{1}{2}$ as long as $\sigma>\frac{\sqrt{3}}{2}$. 
\\
\\
Hence, we obtain
\begin{equation}
\begin{split}
\|P_ju\|_{L_T^{\infty}H^s_x}\lesssim a_j\|u_0\|_{H^s_x}+T^{\frac{1-\sigma}{2}}b_j\|u\|_{X^1_T}^{\sigma}\|u\|_{X^s_T},
\end{split}
\end{equation}
thus completing the proof of the $L_T^{\infty}H_x^s$ estimate. 
\end{proof}
\textbf{Proof of \Cref{Ubounds}}
\\
\\
We combine the energy estimate and the $Y^s$ estimate to obtain 
\begin{equation}
\begin{split}
\|P_ju\|_{X^s_T}&\lesssim_{\|u\|_{S^1_T}} a_j\|u_0\|_{H^s_x}
				+T^{\frac{1}{2}}b_j(1+\|u\|_{S^1_T}^{4\sigma})\|u\|_{X^s_T}+T^{\frac{1-\sigma}{2}}b_j\|u\|_{X^1_T}^{\sigma}\|u\|_{X^s_T}.
\end{split}
\end{equation}
This proves part a) of \Cref{Ubounds}.
\\
\\
Now we move to part b). Let us first assume $T\ll 1$ (but independent of $\epsilon$). There are two components to consider. For high frequency, square summing over $j>0$ shows 
\begin{equation}
\begin{split}
\|P_{>0}u\|_{X^s_T}&\lesssim_{\|u\|_{S^1_T}} \|u_0\|_{H^s_x}
				+T^{\frac{1}{2}}(1+\|u\|_{S^1_T}^{4\sigma})\|u\|_{X^s_T}+T^{\frac{1-\sigma}{2}}\|u\|_{X^1_T}^{\sigma}\|u\|_{X^s_T}.
\end{split}
\end{equation}
On the other hand, directly applying the maximal function/Strichartz estimates in \Cref{Hom max} and \Cref{Inhom Max} and Bernstein's inequality to $P_{\leq 0}u$, we easily obtain 
\begin{equation}
\|P_{\leq 0}u\|_{X^s_T}\lesssim \|u_0\|_{L_x^2}+\|P_{\leq 0}(|u|^{2\sigma}u_x)\|_{L_T^1L_x^2}\lesssim \|u_0\|_{L_x^2}+T\|u\|^{2\sigma+1}_{S^1_T}.
\end{equation}
From the above bounds, we see that the $X_T^s$ norm of $u$ converges to the $H_x^1$ norm of the initial data as $T\to 0^+$. Let us now make the bootstrap assumption $\|u\|_{X^1_T} \leq \epsilon^{\frac{1}{2}}$. We then obtain from the above estimates,
\begin{equation}
\|u\|_{X^s_T}\lesssim_{\|u\|_{X^1_T}} \|u_0\|_{H_x^s}\leq \epsilon
\end{equation}
where $T\ll 1$ (but independent of $\epsilon$) and  $1\leq s\leq\frac{3}{2}$. To obtain the estimate for $T\sim 1$, we iterate the above procedure $O(T^{-1})$ many times (after suitable translating the initial data). This proves part b) of \Cref{Ubounds}.
\\
\\
Next, we turn to the proof of \Cref{linbounds}. We proceed in a similar manner to \Cref{Ubounds}, and prove separate estimates for the $Y^0_T$ and $L_T^{\infty}L_x^2$ components of the $X^0_T$ norm. For this purpose, we have the following two lemmas: 
	\begin{lemma} ($Y^0_T$ estimate)\label{Yestimate2} Let $v$, $\sigma$, $T$, $w$, $g$ and $a$ be as in \Cref{linbounds}. Then we have the $Y^0_T$ estimate, 
		\begin{equation}\label{Yestimate3}
			\|v\|_{Y^0_T}\lesssim \|v_0\|_{L_x^2}+T^{\frac{1}{2}}(1+\|w\|_{X^1_T}^{4\sigma})\|v\|_{X^0_T}+T^{1-\sigma}\|g\|_{Z}\|a\|_{X^1_T}\|v\|_{X^0_T}.
		\end{equation}
	\end{lemma}
	\begin{lemma}($L_T^{\infty}L_x^2$ estimate)\label{LEE}
Let $v$, $\sigma$, $T$, $w$, $g$ and $a$ be as in \Cref{linbounds}. Then we have the estimate,
\begin{equation}\label{Yestimate4}
\|P_jv\|_{l_j^2L_T^{\infty}L_x^2}^2\lesssim \|v_0\|_{L_x^2}^2+T^{1-\sigma}\|g\|_Z\|a\|_{X^1_T}\|v\|_{X^0_T}^2+T^{1-\sigma}\|w\|_{X^1_T}^{2\sigma}\|v\|_{X^0_T}^2.
\end{equation}
		\end{lemma}
		We begin with \Cref{Yestimate2}. The proof is almost the same as \Cref{Yestimate} with a couple of small differences. As in (\ref{paradiff2}), we consider a similar paradifferential truncation of (\ref{lineq}), 
		\begin{equation}
			\begin{split}
				(i\partial_t+\partial_x^2)v_j&=iP_{<j-4}(\chi_j(|w|^2)|w|^{2\sigma})\partial_xv_j+iP_{<j-4}(\varphi_j(|w|^2)|w|^{2\sigma})\partial_xv_j+f_j+g_j
			\end{split}
		\end{equation}
		where $\varphi_j$ and $\chi_j$ are as in (\ref{paradiff2}) and 
		\begin{equation}
			f_j:=iP_j(P_{\geq j-4}|w|^{2\sigma}\partial_xv)+i[P_j,P_{<j-4}|w|^{2\sigma}]\partial_xv,
		\end{equation}
		\begin{equation}
			g_j:=2P_j(\partial_xa\RE(gv)).
		\end{equation}
		Analogously to the proof of \Cref{Ubounds}, we define
		\begin{equation}
			\Psi_j(x)=-\frac{1}{2}P_{<j-4}\partial_x^{-1}[\chi_j(|w|^2)|w|^{2\sigma}]
		\end{equation}
		and consider the new variable
		\begin{equation}
			\tilde{v}_j:=v_je^{i\Psi_j}.
		\end{equation}
		By direct computation, $\tilde{v}_j$ solves the equation,
		\begin{equation}
			\begin{cases}
				&(i\partial_t+\partial_x^2)\tilde{v}_j=ie^{i\Psi_j}P_{<j-4}[\varphi_j(|w|^2)|w|^{2\sigma}]\partial_xv_j+(-\partial_t\Psi_j+i\partial_x^2\Psi_j-(\partial_x\Psi_j)^2)\tilde{v}_j
				\\
				&\hspace{20mm}+2e^{i\Psi_j}P_j(\partial_xa\RE(gv))+e^{i\Psi_j}f_j,
				\\
				&\tilde{v}_j(0)=e^{i\Psi_j}v_j(0).
			\end{cases}
		\end{equation}
		Now, \Cref{Hom max}, \Cref{Inhom Max}  and a similar argument to \Cref{Ubounds} yields the estimate   
		\begin{equation}
		\begin{split}
			\|v\|_{Y^0_T}&\lesssim_T \|v_0\|_{L_x^2}+T^{\frac{1}{2}}[1+\|w\|_{X^1_T}]^{4\sigma}\|v\|_{X^0_T}
		\\	
		&+\left(\sum_{j>0}\|\langle D_x\rangle^{\sigma-1} P_j(g\partial_xav)\|_{L_T^1L_x^2}^2\right)^{\frac{1}{2}}.
			\end{split}
		\end{equation}

It remains to control the last term. Indeed, we have by Bernstein and Sobolev embedding,
\begin{equation}
\begin{split}
\|\langle D_x\rangle^{\sigma-1}P_j(g\partial_xav)\|_{L_T^1L_x^2}&\lesssim 2^{j(\sigma-1)}\|P_j(g\partial_xav)\|_{L_T^1L_x^2}
\\
&\lesssim 2^{j(\sigma-1)}\|P_{<j-4}(\partial_xag)\tilde{P}_jv\|_{L_T^1L_x^2}+\|P_j(P_{\geq j-4}(\partial_xag)v)\|_{L_T^1L_x^{\frac{2}{3-2\sigma}}}.
\end{split}
\end{equation}
For the first term, we have by Bernstein's inequality, 
\begin{equation}
\begin{split}
2^{j(\sigma-1)}\|P_{<j-4}(\partial_xag)\tilde{P}_jv\|_{L_T^1L_x^2}&\lesssim T^{\frac{3}{4}}\|\partial_xa\|_{L_T^{\infty}L_x^2}\|g\|_{L_T^{\infty}L_x^{\infty}}\|\tilde{P}_jD_x^{\sigma-1}v\|_{L_T^4L_x^{\infty}}
\\
&\lesssim T^{\frac{3}{4}}\|a\|_{X^1_T}\|g\|_{Z}\|\tilde{P}_jD_x^{\sigma-1}v\|_{L_T^4L_x^{\infty}}.
\end{split}
\end{equation}
For the second term, we have by the usual Littlewood-Paley trichotomy,
\begin{equation}
\begin{split}
\|P_j(P_{\geq j-4}(\partial_xag)v)\|_{L_T^1L_x^{\frac{2}{3-2\sigma}}}&\lesssim \|P_j(\tilde{P}_j(\partial_xag)P_{<j}v)\|_{L_T^1L_x^{\frac{2}{3-2\sigma}}}+\sum_{k\geq j}\|P_j(\tilde{P}_k(\partial_xag)\tilde{P}_kv)\|_{L_T^1L_x^{\frac{2}{3-2\sigma}}}:=K_1^j+K_2^j.    
\end{split}
\end{equation}
To estimate $K_1^j$, we have 
\begin{equation}
\begin{split}
\|P_j(\tilde{P}_j(\partial_xag)P_{<j}v)\|_{L_T^1L_x^{\frac{2}{3-2\sigma}}}&\lesssim \|\tilde{P}_j(\partial_xag)\|_{L_T^{2}L_x^2}\|P_{<j}v\|_{L_T^2L_x^{\frac{1}{1-\sigma}}}
\\
&\lesssim \|D_x^{(1-\sigma+\epsilon)(2\sigma-1)}\tilde{P}_j(g\partial_xa)\|_{L_T^2L_x^2}\|P_{<j}v\|_{L_T^2L_x^2}^{2(1-\sigma)}\|P_{<j}D_x^{\sigma-1-\epsilon}v\|_{L_T^2L_x^{\infty}}^{2\sigma-1}
\\
&\lesssim T^{1-\sigma}\|D_x^{(1-\sigma+\epsilon)(2\sigma-1)}\tilde{P}_j(g\partial_xa)\|_{L_T^2L_x^2}\|v\|_{X^0_T}
\end{split}
\end{equation}
where in the last line we used the fact that by Sobolev embedding,
\begin{equation}
\begin{split}
\|P_{<j}D_x^{\sigma-1-\epsilon}v\|_{L_T^2L_x^{\infty}}&\lesssim \|v\|_{L_T^{\infty}L_x^2}+\|P_{>0}v\|_{X_T^0}\lesssim \|v\|_{X_T^0}
\end{split}
\end{equation}
as well as $\|P_{<j}v\|_{L_T^2L_x^2}\lesssim T^{\frac{1}{2}}\|v\|_{X_T^0}$.
Now, setting $\alpha=(1-\sigma+\epsilon)(2\sigma-1)$, we have by Bernstein's inequality, and a simple application of the Littlewood-Paley trichotomy, 
\begin{equation}
\begin{split}
\|D_x^{\alpha}\tilde{P}_j(g\partial_xa)\|_{L_T^2L_x^2}&\lesssim 2^{-j\epsilon}\|D_x^{\alpha+\epsilon}\tilde{P}_j(g\partial_xa)\|_{L_T^2L_x^2}
\\
&\lesssim 2^{-j\epsilon}\|D_x^{\alpha+\epsilon}\partial_xa\|_{L_x^{\frac{1}{1-\sigma}}L_T^2}\|g\|_{L_x^{\frac{2}{2\sigma-1}}L_T^{\infty}}+2^{-j\epsilon}\|\partial_xa\|_{L_T^{\infty}L_x^2}\|D_x^{\alpha+\epsilon}g\|_{L_{T}^2L_x^{\infty}}.
\end{split}
\end{equation}
Next, by interpolating $\|D^{\alpha+\epsilon}_x\partial_xa\|_{L_x^{\frac{1}{1-\sigma}}L_T^2}$ between $D_x^{\frac{\alpha+2\epsilon}{2\sigma-1}}a$ in $L_x^{\infty}L_T^2$ and $\partial_xa$ in $L_x^2L_T^2$, we see that for $\epsilon$ small enough, $\|D^{\alpha+\epsilon}_x\partial_xa\|_{L_x^{\frac{1}{1-\sigma}}L_T^2}\lesssim \|a\|_{X^1_T}$ as long as $\sigma>\frac{3}{4}$ (because this corresponds to when $\frac{\alpha}{2\sigma-1}<\sigma-\frac{1}{2}$). Furthermore, clearly $\|D_x^{\alpha+\epsilon}g\|_{L_T^2L_x^{\infty}}\lesssim \|g\|_{Z}$. Hence, 
\begin{equation}
\|D_x^{\alpha}\tilde{P}_j(g\partial_xa)\|_{L_T^2L_x^2}\lesssim 2^{-j\epsilon}\|g\|_Z\|a\|_{X_T^1}.
\end{equation}
It is easy to see that a similar analysis works for $K_2^j$. Hence, we ultimately deduce that
\begin{equation}
K_1^j+K_2^j\lesssim 2^{-j\epsilon}T^{1-\sigma}\|g\|_{Z}\|a\|_{X^1_T}\|v\|_{X^0_T}.   
\end{equation}
Square summing now gives 
\begin{equation}
\left(\sum_{j>0}\|\langle D_x\rangle^{\sigma-1}P_j(g\partial_xav)\|_{L_T^1L_x^2}^2\right)^{\frac{1}{2}}\lesssim T^{1-\sigma}\|g\|_{Z}\|a\|_{X^1_T}\|v\|_{X^0_T}.
\end{equation}
	\end{proof}
	Next, we turn to the energy type $L^{\infty}_TL^2_x$ estimate in \Cref{LEE}. First, it is straightforward to verify by a simple energy estimate that  $P_{\leq 0}v$ is controlled in $L_T^{\infty}L_x^2$ by the right hand side of (\ref{Yestimate4}). Hence, let us restrict to controlling $P_{>0}v$.
	\begin{proof} 
		Let $j>0$. Projecting \eqref{lineq} onto frequency $2^j$, multiplying by $-i\overline{P_jv}$, taking real part and integrating from $0$ to $T$ gives
		\begin{equation}
			\begin{split}
				\|P_jv\|_{L_T^{\infty}L_x^2}^2&\lesssim  \|P_jv_0\|_{L_x^2}^2+\int_{0}^{T}\left\lvert\int_{\mathbb{R}}P_j(g\partial_xav)\overline{v_j}+P_j(\overline{g}\partial_xa\overline{v})\overline{v_j}\right\rvert+\int_{0}^{T}\left\lvert\int_{\mathbb{R}}P_j(|w|^{2\sigma}\partial_xv)\overline{v_j}\right\rvert
				\\
	&:=\|P_jv_0\|_{L_x^2}^2+I_1^j+I_2^j.
			\end{split}
		\end{equation}
\textbf{Estimate for $I_1^j$}
		\\
		\\
For simplicity, we show how to deal with the first term,
\begin{equation}
\int_{\mathbb{R}}P_j(g\partial_xav)\overline{v}_j
\end{equation}
as the other term (involving the complex conjugate of $gv$) is essentially identical.
\\
\\
We have by the Littlewood-Paley trichotomy,
		\begin{equation}
			\begin{split}	\int_{\mathbb{R}}P_j(g\partial_xav)\overline{v_j}=\int_{\mathbb{R}}P_j(P_{\geq j-4}(g\partial_xa)v)\overline{v_j}+\int_{\mathbb{R}}\tilde{P}_{< j}(g\partial_xa)\tilde{P}_jv\overline{\tilde{P}_jv}.
			\end{split}    
		\end{equation}
We expand the first term as
\begin{equation}\label{twotermsagain}
P_j(P_{\geq j-4}(g\partial_xa)v)=P_j(\tilde{P}_j(g\partial_xa)\tilde{P}_{<j}v)+\sum_{k\geq j}P_j(\tilde{P}_k(g\partial_xa)\tilde{P}_kv).
\end{equation}
We obtain by Bernstein's inequality, H\"older and a simple application of the Littlewood-Paley trichotomy, 
		\begin{equation}
			\begin{split}	&\int_{0}^{T}\left\lvert\int_{\mathbb{R}}P_j(\tilde{P}_j(g\partial_xa)\tilde{P}_{<j}v)\overline{v_j}\right\rvert
			\\
				&\lesssim \|\tilde{P}_jD_x^{\frac{3}{4\sigma}-\frac{1}{2}+\epsilon}(g\partial_xa)\|_{L_x^{\frac{2}{2\sigma-1}}L_T^2}\|\tilde{P}_jD_x^{\frac{3}{4\sigma}-\frac{1}{2}}v\|_{L_x^{\infty}L_T^2}^{2\sigma-1}\|\tilde{P}_jv\|_{L_x^2L_T^2}^{2(1-\sigma)}\|\tilde{P}_{<j}\langle D_x\rangle^{\sigma-\frac{3}{2}-\epsilon}v\|_{L_x^2L_T^{\infty}}
				\\
				&\lesssim 2^{-j\epsilon}T^{1-\sigma}\|\tilde{P}_jD_x^{\frac{3}{4\sigma}-\frac{1}{2}+2\epsilon}(g\partial_xa)\|_{L_x^{\frac{2}{2\sigma-1}}L_T^2}\|v\|_{X_T^0}^2
				\\
				&\lesssim 2^{-j\epsilon}T^{1-\sigma}(\|D_x^{\frac{3}{4\sigma}-\frac{1}{2}+3\epsilon}g\|_{L_x^{\infty}L_T^{\infty}}\|\partial_xa\|_{L_x^{\frac{2}{2\sigma-1}}L_T^2}+\|g\|_{L_x^{\frac{2}{2\sigma-1}}L_T^{\infty}}\|D_x^{\frac{3}{4\sigma}-\frac{1}{2}+2\epsilon}\partial_xa\|_{L_x^{\infty}L_T^2})\|v\|_{X_T^0}^2
				\\
				&\lesssim 2^{-j\epsilon}T^{1-\sigma}\|g\|_{Z}\|a\|_{X_T^1}\|v\|_{X_T^0}^2
			\end{split}    
		\end{equation}
where in the last line, we used the assumption $\sigma>\frac{\sqrt{3}}{2}$. The second term in (\ref{twotermsagain}) is similarly estimated by $2^{-j\epsilon}T^{1-\sigma}\|g\|_{Z}\|a\|_{X_T^1}\|v\|_{X_T^0}^2$.  Hence,
\begin{equation}
\|P_j(P_{\geq j-4}(g\partial_xa)v)\overline{v_j}\|_{L_T^1L_x^1}\lesssim 2^{-j\epsilon}T^{1-\sigma}\|g\|_Z\|a\|_{X^1_T}\|v\|_{X^0_T}^2.
\end{equation}
		For the remaining term, we have 
		\begin{equation}\label{Leib}
			\begin{split}
				-g\partial_xa&=gD_xHa
			\\	&=D_x^{\frac{3}{2}-\sigma+\epsilon}(gD_x^{\sigma-\frac{1}{2}-\epsilon}Ha)-D_x^{\frac{3}{2}-\sigma+\epsilon}gD_x^{\sigma-\frac{1}{2}-\epsilon}Ha
				\\
				&-D_x^{\frac{3}{2}-\sigma+\epsilon}(gD_x^{\sigma-\frac{1}{2}-\epsilon}Ha)+D_x^{\frac{3}{2}-\sigma+\epsilon}gD_x^{\sigma-\frac{1}{2}-\epsilon}Ha+gD_xHa.
			\end{split}
		\end{equation}
		Now, we estimate each term, thinking of the second line as a single term for which we will apply fractional Leibniz. For the first term in (\ref{Leib}), we have by H\"older and Bernstein inequalities, 
		\begin{equation}
			\begin{split}
				\int_{0}^{T}\left\lvert\int_{\mathbb{R}}\tilde{P}_{<j}D_x^{\frac{3}{2}-\sigma+\epsilon}(gD_x^{\sigma-\frac{1}{2}-\epsilon}Ha)\tilde{P}_j\overline{v}\tilde{P}_jv\right\rvert&\lesssim \|\tilde{P}_{<j}D_x^{\frac{3}{2}-\sigma+\epsilon}(gD_x^{\sigma-\frac{1}{2}-\epsilon}Ha)|\tilde{P}_jv|^{2(1-\sigma)}\|_{L_x^1L_T^{\frac{1}{1-\sigma}}}\|\tilde{P}_jv\|_{L_x^{\infty}L_T^2}^{2\sigma}
				\\
				&\lesssim \|\tilde{P}_{<j}D_x^{\frac{3}{2}-\sigma+\epsilon}(gD_x^{\sigma-\frac{1}{2}-\epsilon}Ha)\|_{L_x^{\frac{1}{\sigma}}L_T^{\infty}}\|\tilde{P}_{j}v\|_{L_x^2L_T^2}^{2(1-\sigma)}\|\tilde{P}_{j}v\|_{L_x^{\infty}L_T^2}^{2\sigma}
				\\
				&\lesssim T^{1-\sigma}\|\tilde{P}_{<j}(gD_x^{\sigma-\frac{1}{2}-\epsilon}Ha)\|_{L_x^{\frac{1}{\sigma}}L_T^{\infty}}\|\tilde{P}_{j}v\|_{X^0_T}^2
				\\
				&\lesssim T^{1-\sigma}\|g\|_{L_x^{\frac{2}{2\sigma-1}}L_T^\infty}\|D_x^{\sigma-\frac{1}{2}-\epsilon}Ha\|_{L_x^2L_T^{\infty}}\|\tilde{P}_jv\|_{X^0_T}^2,
			\end{split}
		\end{equation}
		where going from the second to the third line uses the fact that $\sigma>\frac{\sqrt{3}}{2}$.
		\\
		\\
		Next, we estimate the second term in (\ref{Leib}),
		\begin{equation}
			\begin{split}
				\int_{0}^{T}\left\lvert\int_{\mathbb{R}}\tilde{P}_{<j}(D_x^{\frac{3}{2}-\sigma+\epsilon}gD_x^{\sigma-\frac{1}{2}-\epsilon}Ha)\tilde{P}_j\overline{v}\tilde{P}_jv\right\rvert&\lesssim \|\tilde{P}_jv\|_{L_T^{\infty}L_x^2}^2\|D_x^{\frac{3}{2}-\sigma+\epsilon}g\|_{L_T^2L_x^{\infty}}\|D_x^{\sigma-\frac{1}{2}-\epsilon}Ha\|_{L_T^2L_x^{\infty}}
				\\
				&\lesssim T^{\frac{1}{2}}\|g\|_Z\|a\|_{X^1_T}\|\tilde{P}_jv\|_{L_T^{\infty}L_x^2}^2.
			\end{split}
		\end{equation}
		Using Sobolev embedding and the fractional Leibniz rule, the third term is estimated analogously to the second term. 
		\\
		\\
	Combining the estimates and square summing then shows 
		\begin{equation}
		\|I_1^j\|_{l^1_j(\mathbb{N})}\lesssim T^{1-\sigma}\|g\|_Z\|a\|_{X_T^1}\|v\|_{X_T^0}^2.
		\end{equation}
\textbf{Estimate for $I_2^j$}.
A similar argument to \Cref{NLEE} shows that
\begin{equation}\label{auxest}
\|I_2^j\|_{l_j^1(\mathbb{N})}\lesssim T^{1-\sigma}\||w|^{2\sigma-1}\|_Z\|w\|_{X^1_T}\|v\|_{X^0_T}^2.
\end{equation}
We now use the fact that for $\sigma>\frac{\sqrt{3}}{2}$, we have
\begin{equation}\label{Zest}
\||w|^{2\sigma-1}\|_Z\lesssim \|w\|_{X^1_T}^{2\sigma-1}.
\end{equation}
To see  (\ref{Zest}), first note that the $L_x^{\frac{2}{2\sigma-1}}L_T^{\infty}$ component is controlled by
\begin{equation}
\||w|^{2\sigma-1}\|_{L_x^{\frac{2}{2\sigma-1}}L_T^{\infty}}\lesssim \|w\|_{L_T^2L_x^{\infty}}^{2\sigma-1}\lesssim \|w\|_{X_T^1}^{2\sigma-1}.    
\end{equation}
For the $L_T^{\infty}W_x^{\frac{3}{4\sigma}-\frac{1}{2}+\epsilon,\infty}$ component, we have by \Cref{Holder}, Sobolev embedding, and the fact that $\frac{(\frac{3}{4\sigma}-\frac{1}{2})}{2\sigma-1}<\frac{1}{2}$, 
\begin{equation}
\|D_x^{\frac{3}{4\sigma}-\frac{1}{2}+\epsilon}|w|^{2\sigma-1}\|_{L_T^{\infty}L_x^{\infty}}\lesssim \|w\|_{L_T^{\infty}H_x^1}^{2\sigma-1}\lesssim \|w\|_{X_T^1}^{2\sigma-1}.
\end{equation}
This easily gives
\begin{equation}
\||w|^{2\sigma-1}\|_{L_T^{\infty}W_x^{\frac{3}{4\sigma}-\frac{1}{2}+\epsilon,\infty}}\lesssim \|w\|_{X_T^1}^{2\sigma-1}.    
\end{equation}
Finally, for the $L_T^4W_x^{\frac{3}{2}-\sigma+\epsilon,\infty}$ component, we have by \Cref{Holder} and the fact that $\frac{\frac{3}{2}-\sigma}{2\sigma-1}<\sigma$,
\begin{equation}
\|D_x^{\frac{3}{2}-\sigma+\epsilon}|w|^{2\sigma-1}\|_{L_T^4L_x^{\infty}}\lesssim \|w\|_{L_T^4W_x^{\sigma,\infty}}^{2\sigma-1}\lesssim \|w\|_{X_T^1}^{2\sigma-1}    
\end{equation}
which clearly gives
\begin{equation}
\||w|^{2\sigma-1}\|_{L_T^4W_x^{\frac{3}{2}-\sigma+\epsilon}}\lesssim \|w\|_{X_T^1}^{2\sigma-1}.    
\end{equation}
Combining the above three estimates gives (\ref{Zest}).
\\
\\
Combining (\ref{Zest}) and (\ref{auxest}) gives
\begin{equation}
\|I_2^j\|_{l_j^1(\mathbb{N})}\lesssim T^{1-\sigma}\|w\|_{X^1_T}^{2\sigma}\|v\|_{X^0_T}^2.
\end{equation}
Combining the above estimates for $I_j^1$ and $I_j^2$ completes the proof of \Cref{LEE}. 
\\
\\
		\textbf{Proof of \Cref{linbounds}}.
Now we complete the proof of \Cref{linbounds}.
\begin{proof}
Combining \Cref{Yestimate2} and \Cref{LEE} with an argument similar to what was done in \Cref{Ubounds} gives for $T\sim 1$ and $\|g\|_{Z}, \|w\|_{X^1_T}, \|a\|_{X^1_T}\ll 1$, 
		\begin{equation}
		\begin{split}
			\|v\|_{X^0_T}&\lesssim \|v_0\|_{L_x^2}.
			\end{split}
		\end{equation}
\end{proof}
\section{Well-posedness at low regularity}\label{Section4}
In this section, we aim to prove local well-posedness in $H_x^s$ for $s\in [1,\frac{3}{2}]$ and $\sigma>\frac{\sqrt{3}}{2}$ assuming the conclusion of \Cref{low reg theorem} when $\frac{3}{2}<s<4\sigma$, which will be justified in a later section when we prove high-regularity estimates. Given the estimates established in the previous section, the scheme to prove well-posedness is relatively standard. We essentially follow the approach of \cite{MR2955206}. See also the recent preprint \cite{primer} for a more detailed overview. 
\subsection{Frequency envelope bounds}
\begin{proposition}\label{envbounds}
Let  $\frac{\sqrt{3}}{2}<\sigma<1$ and let $u$ be as in \Cref{Ubounds}. If $a_j$ is an admissible frequency envelope for $u_0$ in $H_x^s$, then $a_j$ is an admissible frequency envelope for $u$ in $X^s_T$.
\end{proposition}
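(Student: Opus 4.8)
The plan is to run a standard continuity/bootstrap argument that upgrades the frequency-localized a priori estimate from \Cref{Ubounds}(a) into a frequency envelope bound. First I would fix an admissible $H^s_x$ frequency envelope $a_j$ for $u_0$, so that $a_0\approx 1$, $a_j$ is slowly varying, $\sum_j a_j^2\lesssim 1$, and $\|P_ju_0\|_{H^s_x}\lesssim a_j\|u_0\|_{H^s_x}$. I would then let $b_j$ be the $X^s_T$ frequency envelope for $u$ defined by \eqref{envX}. The goal is to show $b_j\lesssim a_j$, i.e.\ $\|P_ju\|_{X^s_T}\lesssim a_j\|u_0\|_{H^s_x}$ with an implicit constant depending only on $\|u\|_{X^1_T}$ (which is $\lesssim \epsilon$ by \Cref{Ubounds}(b)).

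The key step is to feed the a priori bound
\[
\|P_ju\|_{X^s_T}\lesssim_{\|u\|_{S^1_T}} a_j\|u_0\|_{H^s_x}+T^{1/2}b_j(1+\|u\|_{S^1_T}^{4\sigma})\|u\|_{X^s_T}+T^{(1-\sigma)/2}b_j\|u\|_{X^1_T}^{\sigma}\|u\|_{X^s_T}
\]
into the definition \eqref{envX} of $b_j$. Concretely, for each $k\ge 0$ apply the above bound to $P_ku$; since $a_j$ is slowly varying one has $2^{-\delta|j-k|}a_k\lesssim a_j$ (after absorbing the $\delta$ into a slightly larger envelope, or using that the defining $a_j$ in \eqref{Henv} is already slowly varying), and the $2^{-\delta|j-k|}$ weights can be summed against the slowly varying $b_k$ to reproduce $b_j$ on the right. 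This yields
\[
b_j\lesssim_{\|u\|_{X^1_T}} a_j + \big(T^{1/2}(1+\|u\|_{S^1_T}^{4\sigma}) + T^{(1-\sigma)/2}\|u\|_{X^1_T}^{\sigma}\big) b_j,
\]
where I have used $\|u\|_{X^s_T}\lesssim_{\|u\|_{X^1_T}}\|u_0\|_{H^s_x}$ and $\|u\|_{S^1_T}\le\|u\|_{X^1_T}\lesssim\epsilon$ to normalize; strictly one should track the $\|u\|_{X^s_T}$ factor and divide through, noting $b_j$ already carries the normalization by $\|u\|_{X^s_T}^{-1}$. Since $T\lesssim 1$ and $\|u\|_{X^1_T}\lesssim\epsilon\ll1$, the coefficient of $b_j$ on the right is $\le \tfrac12$ (shrinking $T$ if necessary, then iterating over $O(T^{-1})$ subintervals as in the proof of \Cref{Ubounds}(b)), so it can be absorbed into the left-hand side, giving $b_j\lesssim a_j$. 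Finally $\sum_j b_j^2\lesssim \sum_j a_j^2\lesssim 1$ and $b_0\approx a_0\approx 1$, and slow variation of $b_j$ follows from slow variation of $a_j$, so $b_j$—hence any envelope dominating it—is admissible for $u$ in $X^s_T$.

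The main obstacle is the bookkeeping with the frequency envelopes: one must be careful that the smallness needed to absorb the $b_j$ term is genuinely available (it is, because $T\lesssim1$ can be taken small and then one iterates, exactly as in \Cref{Ubounds}), and that applying \Cref{Ubounds}(a) to $P_ku$ in place of $u$ is legitimate — this requires knowing $P_ku$ (or rather the relevant frequency-truncated pieces) still interacts with the a priori estimate correctly, which is really just the observation that the estimate in \Cref{Ubounds}(a) was already proved frequency-by-frequency. A secondary technical point is the interplay of the two distinct small parameters $\delta$ (envelope slow-variation) and the exponents $\sigma+s-1$ appearing in the sums; one needs $\delta$ chosen small relative to those so the geometric series converge, which is harmless. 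Modulo these routine points, the proposition is a direct consequence of \Cref{Ubounds} together with the standard argument of \cite{MR2955206}.
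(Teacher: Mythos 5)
Your argument is correct and is essentially the paper's own proof: plug the frequency-localized bound of \Cref{Ubounds}(a) into the definition \eqref{envX} of the envelope $b_j$, use the normalization by $\|u\|_{X^s_T}^{-1}$ together with \Cref{Ubounds}(b) and the smallness of $T$ and $\|u\|_{X^1_T}$ to absorb the $b_j$ terms, and iterate over $O(T^{-1})$ time steps. The only cosmetic difference is your extra worry about "applying the estimate to $P_ku$," which is moot since \Cref{Ubounds}(a) is already stated frequency-by-frequency.
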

Indeed, let $b_j$ be a $X^s_T$ frequency envelope for the solution $u$. Obviously $b_0\lesssim a_0$, so let us consider $j>0$. By \Cref{Ubounds} a), we have 
\begin{equation}
\begin{split}
\|P_ju\|_{X^s_T}&\lesssim_T a_j\|u_0\|_{H^s_x}
				+T^{\frac{1}{2}}b_j(1+\|u\|_{S^1_T}^{4\sigma})\|u\|_{X^s_T}+T^{\frac{1-\sigma}{2}}b_j\|u\|_{X^1_T}^{\sigma}\|u\|_{X^s_T}.
\end{split}
\end{equation}
Hence, by definition we have 
\begin{equation}
\begin{split}
b_j&\lesssim a_j(1 +\|u_0\|_{H_x^s}\|u\|_{X^s_T}^{-1})+T^{\frac{1-\sigma}{2}}b_j\|u\|_{X^1_T}^{\sigma}+T^{\frac{1}{2}}b_j(1+\|u\|_{X^1_T}^{4\sigma}).
\end{split}
\end{equation}
For $T$ small enough, it follows from \Cref{Ubounds} that
\begin{equation}
b_j\lesssim a_j.
\end{equation}
Iterating this procedure $O(T^{-1})$ many times shows that this is true for $T\lesssim 1$. This completes the proof.
\subsection{Existence of $H^s$ solutions}
Now, we construct local $H^s$ solutions to \eqref{gDNLS} for $1\leq s\leq \frac{3}{2}$ as limits of more regular solutions. 
\\
\\
Indeed, let $u_0\in H^s$. Let $u^{(n)}$ be the globally well-posed $C_{loc}(\mathbb{R};H_x^3)$ solution (to be constructed in a later section) to the equation, 
		\begin{equation}
			\begin{cases}
				&(i\partial_t+\partial_x^2)u^{(n)}=i|u^{(n)}|^{2\sigma}\partial_xu^{(n)},
				\\
				&u^{(n)}_0=P_{<n}u_0.
			\end{cases}
		\end{equation}
Let $n>m$. We see that $v^{(m,n)}:=u^{(n)}-u^{(m)}$ satisfies the equation 
		\begin{equation}
			\begin{cases}
				&(i\partial_t+\partial_x^2)v^{(m,n)}=i|u^{(n)}|^{2\sigma}\partial_xv^{(m,n)}+iG^{(n,m)}\partial_xu^{(m)}v^{(m,n)},
				\\
				&v^{(m,n)}(0)=P_{m\leq\cdot<n}u_0,
			\end{cases}
		\end{equation}
	where
	\begin{equation}
	  G^{(n,m)}:=\frac{(|u^{(n)}|^{2\sigma}-|u^{(m)}|^{2\sigma})}{u^{(n)}-u^{(m)}}.
	\end{equation}
Using \Cref{Holder}, Sobolev embedding, the fact that $\sigma>\frac{\sqrt{3}}{2}$ and \Cref{Ubounds}, one easily verifies that $G^{(n,m)}$ satisfies the conditions of \Cref{linbounds} with $\|G^{(n,m)}\|_Z\lesssim_{\|u_0\|_{H_x^s}} 1$ (with the implicit constant independent of $n$ and $m$). One likewise checks using \Cref{Ubounds} that $u^{(n)}$ satisfies $\|u^{(n)}\|_{X_T^1}\lesssim_{\|u_0\|_{H_x^s}} 1$ uniformly in $n$. Hence, by \Cref{linbounds}, we obtain for $T$ small enough (depending on the size of the $H_x^s$ norm of $u_0$), 
\begin{equation}
\|v^{(m,n)}\|_{X^0_T}\lesssim \|P_{m\leq \cdot<n}u_0\|_{L^2_x}.
\end{equation}
Hence, $u^{(n)}$ is Cauchy in $X^0_T$ and thus converges to some $u\in X^0_T$. We show that in fact $u^{(n)}\to u$ in $X_T^s$.
\\
\\
To see this, we let $a_j^n$ and $a_j$ be admissable frequency envelopes for $P_{<n}u_0$ and $u_0$ respectively, in $H_x^s$. Clearly $(a_j^n)\to (a_j)$ in $l_j^2(\mathbb{N}_0)$ as $n\to \infty$. Now let $\epsilon>0$. Then thanks to \Cref{envbounds}, we have 
\begin{equation}
\|P_{>j}u^{(n)}\|_{X_T^s}\lesssim \|(a_j^n)_{N>j}\|_{l^2_N(\mathbb{N})}\|u_0\|_{H_x^s}.
\end{equation}
Hence, for $n\geq n_0(\epsilon)$ large enough, we obtain the bound,
\begin{equation}
\|P_{>j}u^{(n)}\|_{X_T^s}\lesssim (\epsilon+\|(a_j)_{N>j}\|_{l^2_N(\mathbb{N})})\|u_0\|_{H_x^s}
\end{equation}
where the implicit constant is independent of $j$ and $n$. Hence, there is $j=j(\epsilon)$ such that for every $n>n_0$, we have
\begin{equation}
\|P_{>j}u^{(n)}\|_{X_T^s}\lesssim \epsilon.
\end{equation}
On the other hand, since $u^{(n)}$ converges in $X_T^0$, it follows that for $m,n>n_0$ large enough that
\begin{equation}
\|u^{(n)}-u^{(m)}\|_{X_T^s}\lesssim 2^{js}\|u^{(n)}-u^{(m)}\|_{X_T^0}+\|P_{\geq j}u^{(n)}\|_{X_T^s}+\|P_{\geq j}u^{(m)}\|_{X_T^s}\lesssim \epsilon.
\end{equation}
Hence, $u^{(n)}$ is Cauchy in $X_T^s$ and thus converges to $u$. It is clear at this regularity that $u$ solves the equation \eqref{gDNLS} in the sense of distributions. This shows existence. 
		\subsection{Uniqueness and Lipschitz dependence in $X^0$}
		Here, we aim to show that solutions in $X^1_T$ (and thus, also in $X^s_T$ for $s>1$) are unique and that they satisfy a weak Lipschitz type bound in $X_T^0$. For this, consider the difference of two solutions $u^1$ and $u^2$, $v:=u^1-u^2$. We see that $v$ solves the equation,
		\begin{equation}
			\begin{cases}
				&(i\partial_t+\partial_x^2)v=i|u^1|^{2\sigma}\partial_xv+iG\partial_xu^2v,
				\\
				&v(0)=u^1(0)-u^2(0),
			\end{cases}
		\end{equation}
		where 
		\begin{equation}
			G=\frac{|u^1|^{2\sigma}-|u^2|^{2\sigma}}{u^1-u^2}.
		\end{equation}
		We see that  \Cref{linbounds} applies, and we obtain the weak Lipschitz bound
		\begin{equation}
			\|u^1-u^2\|_{X^0_T}\lesssim \|u^1(0)-u^2(0)\|_{L_x^2}.    
		\end{equation}
		In particular, this shows uniqueness.
		\subsection{Continuous dependence in $H^s$}
Here, we aim to show that the solution map is continuous in $H^s$. Specifically, we show that for each $R>0$, there is $T=T(R)>0$ such that the solution map from $\{u_0: \|u_0\|_{H^s}<R\}$ to the corresponding $X^s_T$ space is continuous. By rescaling the data and restricting to small enough time, we may assume without loss of generality that the conditions of \Cref{envbounds} are satisfied.
\\
\\
	Now, let $u^{(n)}_0$ be a sequence in $H_x^s$ converging to $u_0$ in $H_x^s$. Let $a_j$ and $a_j^{(n)}$ be the associated frequency envelopes for $u_0$ and $u_0^{(n)}$ given by (\ref{Henv}).  We have $(a_j^{(n)})\to(a_j)$ in $l^2$. Now, let $\epsilon>0$. Let $N=N(\epsilon)$ be such that $\|a^{(n)}_{j>N}\|_{l_j^2}\lesssim \epsilon$. Using \Cref{envbounds}, we have $\|P_{>N}u^{(n)}\|_{X^s_T}\lesssim \epsilon$ for all $n$. On the other hand, using the Lipschitz dependence at low frequency, we have
	\begin{equation}
		\|P_{<N}(u^{(n)}-u)\|_{X^s_T}\lesssim 2^{sN}\|u^{(n)}_0-u_0\|_{L^2}.
	\end{equation}
Now, for $n(N)$ large enough, we have
\begin{equation}
\|P_{<N}u^{(n)}-P_{<N}u\|_{X^s_T}\lesssim \epsilon.
\end{equation}
Hence, for such $n$, we have 
\begin{equation}
\|u^{(n)}-u\|_{X^s_T}\lesssim \|P_{<N}(u^{(n)}-u)\|_{X^s_T}+\|P_{\geq N}u^{(n)}\|_{X^s_T}+\|P_{\geq N}u\|_{X^s_T}\lesssim \epsilon.
\end{equation}
It follows that
\begin{equation}
\limsup_{n\to\infty}\|u^{(n)}-u\|_{X^s_T}\lesssim \epsilon.
\end{equation}
Taking $\epsilon\to 0$ then yields
\begin{equation}
\lim_{n\to\infty}\|u^{(n)}-u\|_{X^s_T}=0
\end{equation}
as desired. This completes the proof of continuous dependence and also concludes the local well-posedness portion of the proof of \Cref{low reg theorem} when $s\leq \frac{3}{2}$.
	\end{proof}
	\subsection{Further discussion of the proofs}
	We now provide a brief discussion on how one can, in principle, go below the $H_x^1$ well-posedness threshold, as well as justify some of the choices made in the proof. 
	\\
	\\
		 It is instructive to discuss a version of this gauge transformation method which was successfully implemented in Tao's article \cite{tao2004global} which established local well-posedness of the Benjamin-Ono equation,
 \begin{equation}
 \begin{cases}
&u_t+Hu_{xx}=uu_x,
\\
&u(0)=u_0,
 \end{cases}
 \end{equation}
 in $H_x^1$. The idea in Tao's paper was to do a type of gauge transformation by defining essentially,
 \begin{equation}
w=P_{+hi}(e^{-iF})
 \end{equation}
 where $F(t,x)$ is a suitable spatial primitive of $u(t,x)$ and $P_{+hi}$ is a projection onto large positive frequencies. Then one proves a priori $H_x^2$ estimates for $w$ (which can be translated into $H_x^1$ estimates for $u$). While the coefficient $u$ in the nonlinearity in Benjamin-Ono is only of linear order (and so one might at first na\"ively suspect that this equation behaves similarly to \eqref{gDNLS} when $\sigma=\frac{1}{2}$), the spatial primitive $F$ still essentially solves a linear Schr\"odinger equation (up to a perturbative error). A refinement of this gauge transformation idea appeared in \cite{ifrim2017well} in which $L_x^2$ well-posedness (among other results) for Benjamin-Ono was proven. Loosely speaking, in this latter paper, the authors performed a gauge transformation on each frequency scale to remove the leading order paradifferential part of the nonlinearity and then performed a quadratic normal form correction to remove the milder terms in the nonlinearity. Our so-called partial gauge transformation is more analogous to what was done in that paper. Specifically, the analogue of $F$ in our proof is essentially the family of functions $\Phi_j$ as defined in (\ref{gauge}), which in addition to the frequency localization scale, takes into account the pointwise size of $u$ relative to the frequency scale. However, in our case, there is no obvious cancellation arising in the term $(i\partial_t\Phi_j+\partial_x^2\Phi_j)$, which forces us to estimate each term $\partial_t\Phi_j$ and $\partial_x^2\Phi_j$ separately. This is one of the major sources for the losses in our low regularity estimates.
 \\
 \\
This issue actually also adds technical difficulty when trying to lower the local well-posedness threshold below $H_x^1$. For instance, when estimating $\partial_t\Phi_j$ in \Cref{Ubounds}, there are expressions essentially of the form
\begin{equation}
P_{<j}\partial_x^{-1}(gv_1v_2)    
\end{equation}
that we bound in  $L_T^{1}L_x^{\infty}$, where $g$ is some bounded function and $v_1$ and $v_2$ are linear expressions in $u_x$ or $\overline{u_x}$. Unfortunately, in these expressions, it doesn't seem that typically the output frequency of the product $gv_1v_2$ is comparable to the frequencies of the individual terms $v_1$ and $v_2$, and so the $\partial_x^{-1}$ can't be ``distributed" amongst these factors to obtain expressions with lower order derivatives in place of $u_x$. One workaround to this issue could be to place any factors of $u_x$ arising in such an expression in an appropriate maximal function/smoothing space as in \Cref{Inhom Max}. Proceeding this way will likely lead to losses worse than the $1-\sigma$ derivatives already observed in the current low regularity estimates. However, this should work in principle to lower the well-posedness threshold below $H_x^1$ when $\sigma$ is close to $1$. We decided not to do this for the sake of simplicity, as our preliminary calculations suggested that the dependence of the well-posedness threshold on $\sigma$ would be rather complicated when $s<1$, at least without introducing some new tools.
%Sum nonlinearities |u|^pu+|u|^qu and their solitons. These other results on "general nonlinearities"
\section{High regularity estimates}
In this section, we aim to prove a priori $H_x^{2s}$-type bounds for a global solution $u$ to a family of regularizations of \eqref{gDNLS},
\begin{equation}\label{gdnlsreg1}
\begin{cases}
&i\partial_tu+\partial_x^2u=i\eta P_{<k}|v|^{2\sigma}u_x,
\\
&u(0)=P_{<k}u_0,
\end{cases}
\end{equation}
where $k\in\mathbb{N}$, $v\in C^2(\mathbb{R};H_x^{\infty})$, $2s$ is in the range $2-\sigma<2s<4\sigma$, $\eta$ is a suitable time-dependent cutoff function which is equal to $1$ on the unit time interval $[-1,1]$ and supported within $(-2,2)$, and $u_0\in H_{x}^{2s}$ has sufficiently small norm. The key difficulty here is to obtain estimates independent of the regularization parameter $k$. As mentioned earlier, this is somewhat subtle because the nonlinearity is too rough to directly obtain an energy estimate by simply applying $D_x^{2s}$ to the equation. Our overarching idea, morally, is to instead obtain suitable estimates for time derivatives, $D_t^su$, of order $s<2\sigma$ for solutions to (\ref{gdnlsreg1}). This is one of the key technical reasons for truncating the nonlinearity with the time-dependent cutoff $\eta$ and working with global in-time solutions to (\ref{gdnlsreg1}). For small enough data, one expects to be able to construct a solution $u$ to this equation on the time interval $[-2,2]$, and then extend it to a global solution using the fact that $u$ should solve the linear Schr\"odinger equation for $|t|>2$. The idea of truncating the nonlinearity with a time-dependent cutoff in order to obtain global in time solutions (to facilitate use of Fourier analysis in the time variable) is not a new idea. See for instance, \cite{MR1209299} and \cite{MR1215780}. 
\\
\\
Before outlining our strategy in more detail, we give an overview of the functional setting and relevant notation for this problem. 
\subsection{Function spaces and notation} Here, we fix some basic notation and describe the function spaces used in our construction of solutions at high regularity. 
\\
\\
We will use $S_k$, $S_{<k}$ and $S_{\geq k}$ to denote the temporal variants of the spatial Littlewood-Paley projections $P_k, P_{<k}$ and $P_{\geq k}$ as defined in \Cref{LWP}. We write $\phi(2^{-j}\xi)$ to denote the spatial Fourier multiplier for $P_j$ and $\psi(2^{-k}\tau)$ to denote the temporal Fourier multiplier for $S_k$.
\\
\\
We will also need to sometimes distinguish between a compact time interval and the whole space in our estimates. For this purpose, let us denote for a Banach space $X$, $L^p_tX:=L^p(\mathbb{R}; X)$ (that is, we use a lowercase $t$ to emphasize when the underlying time interval is $\mathbb{R}$). For $T>0$, we  use $L_T^pX:= L^p([-T,T];X)$ when we want to emphasize that the time interval is compact.
\\
\\
Next, for the range of $2s\in (2-\sigma,4\sigma)$ we are considering, the smoothing and maximal function type norms from the low regularity estimates are not needed. We modify our function spaces accordingly and only use standard $L_x^2$ based Sobolev spaces and standard Strichartz spaces (see below). Since both spatial and temporal regularity will be relevant in our analysis, we make the convention from here on that a real number $s$ will correspond to the Sobolev regularity of a function in the time variable. In light of the scaling of the linear Schr\"odinger equation, it is natural to use $2s$ to denote the corresponding spatial regularity. With this in mind, for $s\geq 0$ and $T>0$, we denote the relevant Strichartz type space by $\mathcal{S}_T^{2s}:=L^4_TW_x^{2s,\infty}\cap L_T^{\infty}H_x^{2s}.$ We also define the energy type space $\mathcal{X}_T^{2s}$ by the norm,
\begin{equation}
\|u\|_{\mathcal{X}_T^{2s}}:=\|P_{\leq 0}u\|_{L_T^{\infty}H_x^{2s}}+\left(\sum_{j>0}\|P_ju\|_{L_T^{\infty}H_x^{2s}}^2\right)^{\frac{1}{2}}.
\end{equation}
Clearly this controls the $C([-T,T];H_x^{2s})$ norm.  The reason we opt for this slightly stronger norm (as opposed to just $\|u\|_{L_T^{\infty}H_x^{2s}}$) is because it will be slightly more convenient for proving frequency envelope bounds. Furthermore, we have the trivial embedding 
\begin{equation}
X_T^{2s}\subseteq \mathcal{X}_T^{2s}.
\end{equation}

Finally, since estimates for time derivatives will play a key role in our analysis, it will also be convenient to introduce the auxiliary norm
\begin{equation}
\|u\|_{Z^{s}_{p,q}}:=\|\langle D_t\rangle^su\|_{L_t^pL_x^q}+\|\langle D_x\rangle^{2s}u\|_{L_t^pL_x^q}.
\end{equation}
When $q=2$, we will simply abbreviate this by $Z_p^s$.
\\
\\
The reader should keep in mind that although we will often time-localize $u$ (or the nonlinearity) to be compactly supported in time, some mild care must be taken in the estimates when nonlocal operators such as $D_t^s$ are involved. This is especially relevant when comparing $L_tX$ and $L_TX$ type norms. 
\subsection{A frequency localized $H_x^{2s}$ bound}
The key result for this section is the following frequency localized $H_x^{2s}$ a priori bound for
(\ref{gdnlsreg1}). 
\begin{proposition} \label{keyestimate1}
Let $2-\sigma<2s<4\sigma$, $T=2$ and $u_0\in H_x^{2s}$. Suppose that $u\in C^2(\mathbb{R};H_x^{\infty})$ solves (\ref{gdnlsreg1}). Furthermore, let $a_j$ be a $H_x^{2s}$ frequency envelope for $u_0$ and let $b_j^1$ and $b_j^2$ be $\mathcal{X}_T^{2s}$ frequency envelopes for $u$ and $v$, respectively. Let $b_j:=\max\{b_j^1,b_j^2\}$. Furthermore, let $0<\epsilon\ll 1$ and assume that for each $0<\delta\ll 1$
\begin{equation}\label{vassumption}
\|v\|_{\mathcal{S}_T^{1+\delta}}+\|(i\partial_t+\partial_x^2)v\|_{Z_{\infty}^{s-1+\delta}\cap \mathcal{S}_T^{\delta}}\lesssim_{\delta} \epsilon.    
\end{equation}
Then $P_ju$ satisfies the estimate,
\begin{equation}\label{keybound}
\|P_ju\|_{\mathcal{X}_T^{2s}}^2\lesssim a_j^2\|u_0\|_{H_x^{2s}}^2+b_j^2\epsilon^{2\sigma}(\|u\|_{\mathcal{X}_T^{2s}}^2+\|u\|_{\mathcal{S}_T^{1}}^2)+b_j^2\epsilon^{2\sigma-1}\|u\|_{\mathcal{S}_T^1}\|u\|_{\mathcal{X}_T^{2s}}\|v\|_{\mathcal{X}_T^{2s}}+b_j^2\epsilon^{4\sigma-2}\|u\|_{\mathcal{S}_T^{1}}^2\|v\|_{\mathcal{X}_T^{2s}}^2.
\end{equation}
Furthermore, by square summing, we also have
\begin{equation}
\|u\|_{\mathcal{X}_T^{2s}}^2\lesssim \|u_0\|_{H_x^{2s}}^2+\epsilon^{2\sigma}(\|u\|_{\mathcal{X}_T^{2s}}^2+\|u\|_{\mathcal{S}_T^{1}}^2)+\epsilon^{2\sigma-1}\|u\|_{\mathcal{S}_T^1}\|u\|_{\mathcal{X}_T^{2s}}\|v\|_{\mathcal{X}_T^{2s}}+\epsilon^{4\sigma-2}\|u\|_{\mathcal{S}_T^{1}}^2\|v\|_{\mathcal{X}_T^{2s}}^2.    
\end{equation}
\end{proposition}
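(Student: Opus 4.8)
The plan is to run an energy estimate on a paradifferential
truncation of \eqref{gdnlsreg1}, but with the crucial twist of working with
\emph{time} derivatives $\langle D_t\rangle^s$ (rather than $D_x^{2s}$) in order to
only differentiate the rough coefficient $|v|^{2\sigma}$ once. First I would
apply $P_j$ to \eqref{gdnlsreg1} and write $P_ju=u_j$, obtaining
\[
(i\partial_t+\partial_x^2)u_j
 = i\eta P_{<j-4}(P_{<k}|v|^{2\sigma})\,\partial_x u_j + g_j,
\]
where $g_j$ collects the high$\times$high and commutator contributions of the
coefficient, exactly as in \eqref{paradif1}. The leading paradifferential term is
handled by a partial gauge transformation $w_j = u_j e^{i\Phi_j}$ with
$\Phi_j = -\tfrac12\eta\, P_{<j-4}P_{<k}\partial_x^{-1}[\chi_j(|v|^2)|v|^{2\sigma}]$
as in \Cref{modifiedlemma}; this removes the part of the nonlinearity where $|v|$
is bounded below by $2^{-j\sigma}$ and turns the rest into a perturbative
$\varphi_j$-localized piece, plus the standard curvature terms
$(-\partial_t\Phi_j + i\partial_x^2\Phi_j - (\partial_x\Phi_j)^2)w_j$.

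Next I would close a genuinely energy-type estimate. Because the cutoff $\eta$ makes
everything global in $t$, I can Fourier-localize in time. The core idea is a
\emph{modulation decomposition}: split $u_j = S_{\text{low mod}}u_j +
S_{\text{high mod}}u_j$ according to whether the space-time Fourier support lies
near $\tau=-\xi^2$ or far from it. On the high-modulation piece, the symbol
$\tau+\xi^2$ is elliptic, so $\langle D_t\rangle^s$ and $\langle D_x\rangle^{2s}$ of
that piece are both controlled by $\langle\,\cdot\,\rangle^{s-1}$ of the nonlinearity
$(i\partial_t+\partial_x^2)u$, which by \eqref{gdnlsreg1} is
$\eta P_{<k}|v|^{2\sigma}u_x$ — a lower-order term; the hypothesis
\eqref{vassumption} on $\|(i\partial_t+\partial_x^2)v\|_{Z_\infty^{s-1+\delta}}$ is
exactly what feeds such elliptic bounds for the pieces that get differentiated onto
$v$. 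On the low-modulation piece one genuinely has
$\|\langle D_t\rangle^s u_j\|_{L_x^2}\sim\|\langle D_x\rangle^{2s}u_j\|_{L_x^2}$ up to
acceptable errors, so it suffices to run the $L_x^2$ energy estimate for
$\langle D_t\rangle^s u_j$ (equivalently for $\langle D_t\rangle^s w_j$): multiply the
$w_j$-equation by $\overline{\langle D_t\rangle^{2s}w_j}$, integrate in space-time,
and take real parts. Each resulting term is then estimated by Hölder in space-time,
Bernstein, the fractional Leibniz rules (\Cref{Leib2}, \Cref{Leib1}), the vector-valued
Moser estimate \Cref{Moservec}, and the Hölder/Besov machinery of \Cref{Holder} to
extract the $C^{1,2\sigma-1}$ regularity of $z\mapsto|z|^{2\sigma}$ when a time
derivative lands on the coefficient — this is precisely where the $s<2\sigma$
(i.e.\ $2s<4\sigma$) constraint is used, since we can afford at most
$2\sigma-1$ further derivatives after the first. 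The $\partial_t\Phi_j$ term is expanded
using the equation for $v$ (replacing $v_t$ by $i v_{xx} + \eta P_{<k}|w|^{2\sigma}v_x$
up to the source), exactly as the $I_3^j$ analysis in \Cref{Yestimate}, and the negative
powers of $|v|$ that appear are harmless since $\chi_j(|v|^2)|v|^{2\sigma-2}\lesssim
2^{j(1-\sigma)}$ on the relevant set. Collecting the contributions produces the
frequency-localized bound \eqref{keybound}, with the four error groups corresponding
respectively to: the initial data; the terms where all derivatives stay on $u$
(giving $\epsilon^{2\sigma}(\|u\|_{\mathcal{X}_T^{2s}}^2+\|u\|_{\mathcal{S}_T^1}^2)$);
the mixed terms where one high derivative hits $v$ through the coefficient
($\epsilon^{2\sigma-1}\|u\|_{\mathcal{S}_T^1}\|u\|_{\mathcal{X}_T^{2s}}\|v\|_{\mathcal{X}_T^{2s}}$);
and the $\partial_t\Phi_j$ terms where two derivatives effectively land on $v$
($\epsilon^{4\sigma-2}\|u\|_{\mathcal{S}_T^1}^2\|v\|_{\mathcal{X}_T^{2s}}^2$). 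The
$\|u\|_{\mathcal{S}_T^1}$ factors are supplied by \Cref{modifiedlemma} applied with
$s=1$, which controls $\|u_x\|_{L_T^4L_x^\infty}$ (note $2-\sigma<2s$ makes $\mathcal
S_T^1$ bounds available via the low-regularity estimates), and the $b_j$'s track all
Littlewood--Paley sums through the slowly varying property of the frequency envelopes.

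Finally, the square-summed estimate follows by summing \eqref{keybound} over $j>0$,
adding the trivial $P_{\le 0}$ contribution (handled directly by
\Cref{Hom max}, \Cref{Inhom Max} and Bernstein since that piece sees no derivative loss),
and using $\sum_j a_j^2\lesssim 1$, $\sum_j b_j^2\lesssim 1$ together with
$\sum_j b_j^2\|\,\cdot\,\|^2\sim\|\,\cdot\,\|^2$ from \eqref{Xenv}-type identities.

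\textbf{Main obstacle.} The hard part will be making the modulation decomposition
interact cleanly with the partial gauge transformation and the time cutoff $\eta$:
multiplying by $e^{i\Phi_j}$ and by $\eta(t)$ both spread the temporal Fourier support,
so the claimed equivalence $\|\langle D_t\rangle^s u_j\|\sim\|\langle D_x\rangle^{2s}u_j\|$
on the low-modulation set, and the ellipticity gain on the high-modulation set, must be
proved with error terms that are genuinely lower order — this requires carefully quantifying
how $\Phi_j$ and $\eta$ perturb modulation, and is where most of the technical weight of the
section will lie. A secondary subtlety is that $\langle D_t\rangle^s$ is nonlocal in time, so
switching between $L_t^p X$ and $L_T^p X$ norms (as flagged in the function-space discussion)
must be done with explicit extension/truncation arguments rather than by restriction.
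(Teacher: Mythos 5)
Your high-level architecture matches the paper's: a space--time (modulation) splitting, an elliptic bound in the high-modulation region in terms of $\langle D_t\rangle^{s-1}$ of the nonlinearity (this is \Cref{spacetimeconv} and \Cref{cor to spacetimeconv}), an energy estimate for the low-modulation piece in which time rather than space derivatives hit the rough coefficient, and the constraint $s<2\sigma$ entering exactly there. However, there are two genuine gaps. First, you graft the partial gauge transformation $w_j=e^{i\Phi_j}u_j$ onto this scheme and propose to multiply the $w_j$-equation by $\overline{\langle D_t\rangle^{2s}w_j}$. This is both unnecessary and unresolved: in this proposition the hypothesis \eqref{vassumption} already gives $\|\partial_x|v|^{2\sigma}\|_{L_T^1L_x^\infty}\lesssim \|v\|_{\mathcal{S}_T^1}^{2\sigma}\lesssim\epsilon^{2\sigma}$, so the leading low-high paradifferential term can be handled by a plain integration by parts in the $L_x^2$ energy identity (the paper's term $A_1$ in \Cref{Etime1}); no gauge is needed, and the gauge is precisely what makes your plan problematic, since $e^{i\Phi_j}$ is time-dependent, does not commute with $\langle D_t\rangle^s$ or with the temporal projections, and its $s$-fold time differentiation reintroduces rough compositions with negative powers of $|v|$ -- the very difficulty you flag as the ``main obstacle'' and leave unaddressed. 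The paper instead localizes first with $\tilde S_{2j}P_j$ and uses the multiplier $-i2^{4js}\overline{\tilde S_{2j}P_ju}$, so that $D_t^s\sim 2^{2js}\sim D_x^{2s}$ on the low-modulation block by Bernstein, and no conjugation is ever performed.

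Second, and more seriously, the real crux is the term in which the coefficient $|v|^{2\sigma}$ sits at high \emph{spatial} frequency $\sim 2^j$ against $\partial_x P_{<j}u$ (the paper's $A_5$). To bound it one must estimate $\|\tilde P_j\tilde S_{2j}D_t^{s}|v|^{2\sigma}\|_{L_t^2L_x^2}$ with $s$ possibly in $[1,2\sigma)$, i.e.\ \emph{more than one} time derivative of a composition that is only $C^{1,2\sigma-1}$. Your proposal invokes \Cref{Moservec} and \Cref{Holder} for this, but \Cref{Holder} only applies to fractional derivatives of order strictly below the H\"older exponent ($<2\sigma-1<1$), so it cannot produce $D_t^s|v|^{2\sigma}$ for $s\ge 1$. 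The paper needs a dedicated low-modulation Moser estimate (\Cref{timemoser2}, proved via \Cref{timemoserest}): peel off one full time derivative by the chain rule, substitute $\partial_t v$ from the equation, and then re-use the high-modulation ellipticity and interpolation to handle $D_t^{s-1}$ of the resulting product -- none of which appears in your outline. Related omissions: the bound \eqref{keybound} requires relating the ``initial data'' of the time-localized piece $(\tilde S_{2j}P_ju)(0)$ back to $P_ju_0$ (done in the paper through \Cref{spacetimeconv} again), and the source term $\eta P_{<k}|v|^{2\sigma}u_x$ must itself be estimated in $Z_\infty^{s-1+\delta}$, which needs the bootstrap of \Cref{nonlinest} in the exponent $s-1\to s-\tfrac32\to s-2$; neither step is present in your plan. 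As written, the proposal would not close without these ingredients.
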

\begin{remark}
Crucially, it should be noted that the implied constant in the bound above does not depend on the regularization parameter $k$.
\end{remark}
\begin{remark}
The reader should carefully observe the restriction $T=2$ and not $T\leq 2$ in \Cref{keyestimate1}. This is because $\eta$ is localized in time to a unit scale. More work is required to show that we have suitable bounds for $T\leq 2$. This will be studied further in \Cref{Sec6}. 
\end{remark}
Next, we give a brief outline for how we will obtain such an estimate. As mentioned above, to minimize the number of derivatives which fall on the rough part of the inhomogeneous term, $|v|^{2\sigma}$, we will prove what is essentially an energy type estimate for $D_t^su$ instead of $D_x^{2s}u$ and use the bounds for $D_t^su$ to estimate $D_x^{2s}u$. This is consistent with the scaling symmetry of \eqref{gDNLS}. There is one technical caveat however. Namely, one expects to be able to convert estimates for $D_t^su$ to estimates for $D_x^{2s}u$ when the time frequency $\tau$ of a solution $u$ to (\ref{gdnlsreg1}) is close to $-\xi ^2$ where $\xi$ is the spatial frequency (i.e. in the so-called low modulation region). However, this is not guaranteed due to the presence of the inhomogeneous term in the equation. Therefore, we need a suitable way of controlling $D_x^{2s}u$ for the portion of $u$ which has space-time Fourier support far away from the characteristic hypersurface $\tau=-\xi^2$. In other words, we also need an estimate for $u$ in the so-called high modulation region.
\\
\\
With this in mind, we split our analysis into two parts. First, we prove an elliptic type estimate in the high modulation region for solutions to $(\ref{gdnlsreg1})$ which will allow us to suitably control $D_x^{2s}u$ in terms of the portion of $D_x^{2s}u$ localized near the characteristic hypersurface, as well as a lower order term stemming from the nonlinearity. To control $D_x^{2s}u$ in the low modulation region, we essentially obtain an energy type estimate for $D_t^su$ (the benefit being that we only have to differentiate the nonlinearity $s$ times in the time variable as opposed to $2s$ times in the spatial variable). When $u$ is localized near the characteristic hypersurface, this is precisely the regime in which we expect to be able to suitably control $D_x^{2s}u$ by $D_t^su$. \Cref{keyestimate1} will then follow from combining the low and high modulation analysis.
\subsection{The high modulation estimate}
We begin with the high modulation estimate, \Cref{spacetimeconv}. This will be useful for estimating the portion of a (time-localized) solution to (\ref{gdnlsreg1}) which has space-time Fourier support away from the characteristic hypersurface. This can also be thought of as an elliptic space-time estimate.
\begin{lemma}\label{spacetimeconv}
Let $u_0\in H_x^{\infty}$ and suppose $u\in C^1(\mathbb{R};H_x^{\infty})$ solves the equation,
\begin{equation}\label{gdnls5}
\begin{cases}
&(i\partial_t+\partial_x^2)u=f,
\\
&u(0)=u_0.
\end{cases}
\end{equation}
Let $0\leq s\leq 1$, $j,k>0$, $p,q\in [1,\infty]$ and suppose $|k-2j|>4$. Then $P_jS_ku$ satisfies the estimate, 
\begin{equation}
\|P_jS_k\langle D_x\rangle^{2s}u\|_{L_t^{p}L_x^q}+\|P_jS_k\langle D_t\rangle^{s}u\|_{L_t^{p}L_x^q}\lesssim \|\tilde{P}_j\tilde{S}_k\langle D_t\rangle^{s-1}f\|_{L_t^{p}L_x^q}.
\end{equation}
The result also holds for $k=0$, when $S_0$ is replaced by $S_{\leq 0}$. 
\end{lemma}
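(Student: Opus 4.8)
The plan is to reduce \Cref{spacetimeconv} to a uniform $L^1$ bound for an explicit family of space-time Fourier multipliers. On the region where $|k-2j|>4$ the symbol $(-\tau-\xi^2)^{-1}$ of the formal inverse of $i\partial_t+\partial_x^2$ becomes, after frequency localization, a harmless bump, so inverting the equation there loses nothing; the only real work is checking that this bump is adapted to the box $\{|\xi|\sim 2^j,\ |\tau|\sim 2^k\}$.

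Concretely, I would first take the space-time Fourier transform of \eqref{gdnls5}. With the convention under which the characteristic surface is $\tau=-\xi^2$, the equation reads $(-\tau-\xi^2)\widehat u=\widehat f$ in $\mathcal{S}'(\mathbb{R}^2_{t,x})$ (this makes sense because $u$, being bounded in $t$ with values in $H^{\infty}_x$, is tempered). On $\operatorname{supp}\big(\phi(2^{-j}\xi)\psi(2^{-k}\tau)\big)$ we have $|\xi|\sim 2^j$ and $|\tau|\sim 2^k$, and the hypothesis $|k-2j|>4$ forces $|\tau+\xi^2|\sim\max(2^k,2^{2j})$; in particular $-\tau-\xi^2$ is bounded away from $0$ there. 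Since $\phi(2^{-j}\cdot)\psi(2^{-k}\cdot)$ is supported where $\tilde\phi(2^{-j}\cdot)=\tilde\psi(2^{-k}\cdot)=1$, multiplying the distributional identity by smooth, compactly supported multipliers gives
\begin{equation*}
P_jS_k\langle D_x\rangle^{2s}u=\big(\mathcal{F}^{-1}a_{j,k}\big)\ast g,\qquad P_jS_k\langle D_t\rangle^{s}u=\big(\mathcal{F}^{-1}b_{j,k}\big)\ast g,
\end{equation*}
where $\ast$ is convolution in $(t,x)$, $g:=\tilde P_j\tilde S_k\langle D_t\rangle^{s-1}f$, and
\begin{equation*}
a_{j,k}(\tau,\xi)=\frac{\langle\xi\rangle^{2s}\,\phi(2^{-j}\xi)\,\psi(2^{-k}\tau)}{(-\tau-\xi^2)\,\langle\tau\rangle^{s-1}},\qquad b_{j,k}(\tau,\xi)=\frac{\langle\tau\rangle\,\phi(2^{-j}\xi)\,\psi(2^{-k}\tau)}{-\tau-\xi^2}.
\end{equation*}
Next I would establish the symbol bounds $|\partial_\xi^{\alpha}\partial_\tau^{\beta}a_{j,k}|+|\partial_\xi^{\alpha}\partial_\tau^{\beta}b_{j,k}|\lesssim_{\alpha,\beta}2^{-\alpha j}2^{-\beta k}$, uniformly in $j,k$: the factors $\langle\xi\rangle^{2s}\phi(2^{-j}\xi)$, $\langle\tau\rangle^{s-1}\psi(2^{-k}\tau)$, $\langle\tau\rangle\psi(2^{-k}\tau)$ are symbols of $L^{\infty}$ size $2^{2js}$, $2^{k(s-1)}$, $2^{k}$ adapted to the respective dyadic scales (here $0\le s\le1$ is used), while on the box $|\partial_\xi^{\alpha}\partial_\tau^{\beta}(\tau+\xi^2)^{-1}|\lesssim 2^{-\alpha j}2^{-\beta k}\max(2^k,2^{2j})^{-1}$; the Leibniz rule then gives the claim as well as $\sup|a_{j,k}|,\sup|b_{j,k}|\lesssim 1$ (in fact with a gain $2^{-\min(s,1-s)|k-2j|}$, which is not needed). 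After rescaling $(\tau,\xi)\mapsto(2^{-k}\tau,2^{-j}\xi)$ these are symbols living in a fixed annulus with uniformly bounded $C^N$ norms, so a standard integration-by-parts argument yields $\|\mathcal{F}^{-1}a_{j,k}\|_{L^1_{t,x}}+\|\mathcal{F}^{-1}b_{j,k}\|_{L^1_{t,x}}\lesssim 1$ uniformly. I would then conclude by Young's inequality for convolution in mixed-norm spaces, $\|F\ast G\|_{L^p_tL^q_x}\le\|F\|_{L^1_tL^1_x}\|G\|_{L^p_tL^q_x}$, applied to each of the two pieces with $G=g$, and add the results. The case $k=0$ is handled identically, replacing $\psi(2^{-k}\tau)$ by the temporal analogue $\phi_0(\tau)$ of $P_{\le0}$; then $|\tau|\lesssim1$ while $|k-2j|>4$ forces $2^{2j}\gg1$, so again $|\tau+\xi^2|\sim 2^{2j}$ on the relevant region.

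The main (and essentially the only) obstacle is the uniform $L^1$ bound on the kernels $\mathcal{F}^{-1}a_{j,k}$ and $\mathcal{F}^{-1}b_{j,k}$, that is, checking that dividing by $-\tau-\xi^2$ away from a neighbourhood of the characteristic surface does not destroy the adaptedness of the symbols to the $2^j\times 2^k$ frequency box; this is exactly the point where the quantitative separation $|k-2j|>4$ (rather than merely $k\neq 2j$) is used, since it is what lets one bound $|\tau+\xi^2|$ from below by $\max(2^k,2^{2j})$. Everything else---taking the Fourier transform, the bookkeeping with the fattened projections $\tilde P_j,\tilde S_k$, and the application of Young's inequality---is routine.
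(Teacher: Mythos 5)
Your argument is correct and is essentially the paper's proof: both invert the symbol $-(\tau+\xi^2)^{-1}$ on the frequency box where $|k-2j|>4$ forces $|\tau+\xi^2|\sim\max(2^k,2^{2j})$, establish a uniform $L^1$ bound on the resulting kernel by rescaling and symbol/decay estimates, and conclude with Young's inequality in mixed norms. The only cosmetic difference is that you absorb the factor $\langle\tau\rangle^{1-s}$ into the multiplier so the kernel bound is $O(1)$, whereas the paper keeps $f$ on the right and proves the kernel bound $2^{-k(1-s)}$, converting to $\langle D_t\rangle^{s-1}f$ via temporal Bernstein.
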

\begin{proof}
We prove the estimate for $\langle D_x\rangle^{2s}u$. The estimate for $\langle D_t\rangle^{s}u$ is similar. Notice that
\begin{equation}
\begin{split}
[\mathcal{F}_{t,x}(\langle D_x\rangle^{2s}S_kP_ju)](\tau,\xi)&=\langle \xi\rangle^{2s}\psi(2^{-k}\tau)\phi(2^{-j}\xi)[\mathcal{F}_{t,x}(\tilde{S}_k\tilde{P}_ju)](\tau,\xi)
\\
&=-\frac{\langle\xi\rangle^{2s}}{\tau+\xi^2}\psi(2^{-k}\tau)\phi(2^{-j}\xi)[\mathcal{F}_{t,x}\tilde{S}_k\tilde{P}_j(i\partial_t+\partial_x^2)u](\tau,\xi).
\end{split}
\end{equation}
Hence, by Young's inequality and (\ref{gdnls5}), we have (using that $\psi(2^{-k}\tau)\phi(2^{-j}\xi)$ is supported away from $\tau+\xi^2=0$),
\begin{equation}
\begin{split}
\|\langle D_x\rangle^{2s}S_kP_ju\|_{L_t^{p}L_x^q}&\lesssim \|\mathcal{F}_{t,x}^{-1}[\frac{\langle\xi\rangle^{2s}}{\tau+\xi^2}\psi(2^{-k}\tau)\phi(2^{-j}\xi)]\|_{L_t^1L_x^1}\|(i\partial_t+\partial_x^2)\tilde{S}_k\tilde{P}_ju\|_{L_t^{p}L_x^q} 
\\
&\lesssim \|\mathcal{F}_{t,x}^{-1}[\frac{\langle\xi\rangle^{2s}}{\tau+\xi^2}\psi(2^{-k}\tau)\phi(2^{-j}\xi)]\|_{L_t^1L_x^1}\|\tilde{S}_k\tilde{P}_jf\|_{L_t^{p}L_x^q}.
\end{split}
\end{equation}
It remains then to show that
\begin{equation}
\|\mathcal{F}_{t,x}^{-1}[\frac{\langle\xi\rangle^{2s}}{\tau+\xi^2}\psi(2^{-k}\tau)\phi(2^{-j}\xi)]\|_{L_t^1L_x^1}\lesssim 2^{-k(1-s)}.
\end{equation}
A simple change of variables shows that
\begin{equation}
\begin{split}
\|\mathcal{F}_{t,x}^{-1}[\frac{\langle\xi\rangle^{2s}}{\tau+\xi^2}\psi(2^{-k}\tau)\phi(2^{-j}\xi)]\|_{L_t^1L_x^1}&=\|\mathcal{F}_{t,x}^{-1}[\frac{\langle2^j\xi\rangle^{2s}}{2^k\tau+2^{2j}\xi^2}\psi(\tau)\phi(\xi)]\|_{L_t^1L_x^1}.
\end{split}
\end{equation}
Then we have
\begin{equation}
\frac{\langle 2^j\xi\rangle^{2s}}{2^k\tau+2^{2j}\xi^2}\psi(\tau)\phi(\xi)=2^{k(s-1)}\frac{(2^{-k}+2^{2j-k}\xi^2)^{s}}{\tau+2^{2j-k}\xi^2}\psi(\tau)\phi(\xi):=2^{k(s-1)}F_{j,k}(\tau,\xi).
\end{equation}
It is easy to see that for multi-indices $0\leq |\alpha|\leq 3$,
\begin{equation}
|\partial^{\alpha}_{\tau,\xi}F_{j,k}|\lesssim 1
\end{equation}
so that (since $\phi\psi$ is supported on $[-2,2]\times [-2,2]$)
\begin{equation}
\|\partial^{\alpha}_{\tau,\xi}F_{j,k}\|_{L_{\tau,\xi}^1}\lesssim 1
\end{equation}
with bound independent of $j$ and $k$. It follows that
\begin{equation}
\|\mathcal{F}_{t,x}^{-1}[\frac{\langle 2^j\xi\rangle^{2s}}{2^k\tau+2^{2j}\xi^2}\psi(\tau)\phi(\xi)]\|_{L_t^1L_x^1}\lesssim 2^{k(s-1)}\|(1+|x|+|t|)^{-3}\|_{L_t^1L_x^1}\lesssim 2^{k(s-1)}
\end{equation}
which is what we wanted to show. The case for $\langle D_t\rangle^su$ is similar.
\end{proof}
From this lemma, we obtain a very useful corollary which will allow us to control derivatives of $u$ in the high modulation region with convenience and reduce matters to proving a suitable low modulation bound.
\begin{corollary}\label{cor to spacetimeconv}
Let $u\in C^2(\mathbb{R};H_x^{\infty})$, and let the notation be as in \Cref{spacetimeconv}. Then for every $\delta>0$ and $j>0$, we have
\\
\\
a) If $0\leq s<1$,
\begin{equation}
\|P_j\langle D_x\rangle^{2s}u\|_{L_t^{p}L_x^q}+\|P_j\langle D_t\rangle^{s}u\|_{L_t^{p}L_x^q}\lesssim_{\delta} \|\tilde{S}_{2j}P_j\langle D_x\rangle^{2s}u\|_{L_t^{p}L_x^q}+\|\tilde{P}_j\langle D_t\rangle^{s-1+\delta}f\|_{L^p_tL^q_x}
\end{equation}
%\begin{equation}
%\|P_jD_x^{2s}u\|_{L_t^{p}L_x^q}+\|P_jD_t^{s}u\|_{L_t^{p}L_x^q}\lesssim_{\delta} \|\tilde{S}_{2j}P_jD_x^{2s}u\|_{L_t^{p}L_x^q}+\|\tilde{P}_jf\|_{Z_{p,q}^{s-1+\delta}}
%\end{equation}
and
\\
\\
b) If $1\leq s<2\sigma$, 
\begin{equation}
\|P_j\langle D_x\rangle^{2s}u\|_{L_t^{p}L_x^2}+\|P_j\partial_t\langle D_t\rangle^{s-1}u\|_{L_t^{p}L_x^2}\lesssim_{\delta} \|\tilde{S}_{2j}P_j\langle D_x\rangle^{2s}u\|_{L_t^{p}L_x^2}+\|\tilde{P}_jf\|_{Z^{s-1+\delta}_{p,2}}
\end{equation}
where $\tilde{S}_{2j}=S_{[2j-4,2j+4]}$. 
\end{corollary}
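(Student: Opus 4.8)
The plan is to decompose $u$ (at fixed spatial frequency $2^j$) into its low-modulation piece $\tilde S_{2j}P_j u$ and its high-modulation piece $(I-\tilde S_{2j})P_j u = \sum_{|k-2j|>4} S_k P_j u$ together with the low temporal frequency tail $S_{\le 2j-4}P_j u$, and to handle the second piece with \Cref{spacetimeconv}. First I would write, for part a),
\begin{equation*}
\|P_j\langle D_x\rangle^{2s}u\|_{L^p_tL^q_x}\le \|\tilde S_{2j}P_j\langle D_x\rangle^{2s}u\|_{L^p_tL^q_x}+\sum_{k\,:\,|k-2j|>4}\|S_kP_j\langle D_x\rangle^{2s}u\|_{L^p_tL^q_x}+\|S_{\le 2j-4}P_j\langle D_x\rangle^{2s}u\|_{L^p_tL^q_x},
\end{equation*}
and similarly for $\langle D_t\rangle^s u$. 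For each $k$ with $|k-2j|>4$, \Cref{spacetimeconv} (with $f = (i\partial_t+\partial_x^2)u$) gives the bound $\lesssim \|\tilde P_j\tilde S_k\langle D_t\rangle^{s-1}f\|_{L^p_tL^q_x}$, and a careful reading of its proof shows the implied constant carries a gain $2^{-|k-2j|(1-s)}$ (this is exactly the factor $2^{-k(1-s)}$ after rescaling $\tau$ by $2^k$ and $\xi^2$ by $2^{2j}$, with $k\sim 2j$ on the relevant support). Inserting $\langle D_t\rangle^{\delta}$ to absorb the dyadic sum in $k$, one replaces $\langle D_t\rangle^{s-1}$ by $\langle D_t\rangle^{s-1+\delta}$ and sums the geometric series $\sum_k 2^{-|k-2j|(1-s-\delta)}\lesssim_\delta 1$ (valid since $s<1$), yielding $\lesssim_\delta \|\tilde P_j\langle D_t\rangle^{s-1+\delta}f\|_{L^p_tL^q_x}$. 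The low temporal frequency tail $S_{\le 2j-4}P_j u$ has $\tau+\xi^2\sim 2^{2j}$ on its support (since $|\xi|\sim 2^j$ forces $\xi^2\sim 2^{2j}$ while $|\tau|\lesssim 2^{2j-4}$), so it is handled by the identical symbol estimate — it is just the ``$k=0$'' endpoint case already noted in \Cref{spacetimeconv}, producing the same type of bound with $s-1+\delta$ (here even $s-1$) derivatives on $f$; this is subsumed by the same right-hand side.

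For part b), where $1\le s<2\sigma$, the point is that $\langle D_x\rangle^{2s}$ and $\langle D_t\rangle^s$ are no longer bounded by $\langle D_t\rangle^{s-1}f$ directly via a single application of \Cref{spacetimeconv} (which requires $0\le s\le 1$), so instead I would apply \Cref{spacetimeconv} with the exponent $1$ in place of $s$ to the function $\langle D_t\rangle^{s-1}u$ — equivalently, write $\langle D_x\rangle^{2s}u = \langle D_x\rangle^{2}(\langle D_t\rangle^{s-1}u)$ up to commuting Fourier multipliers, which is harmless since everything is a Fourier multiplier. More cleanly: on the high-modulation region $|k-2j|>4$ one has the symbol identity
\begin{equation*}
\frac{\langle\xi\rangle^{2s}}{\tau+\xi^2}\,\psi(2^{-k}\tau)\phi(2^{-j}\xi) = \frac{\langle\xi\rangle^{2}}{\tau+\xi^2}\,\langle\tau\rangle^{s-1}\cdot \frac{\langle\xi\rangle^{2s-2}}{\langle\tau\rangle^{s-1}}\psi(2^{-k}\tau)\phi(2^{-j}\xi),
\end{equation*}
and since $\langle\xi\rangle^{2s-2}\langle\tau\rangle^{1-s}\sim 1$ on the support (as $\tau\sim\xi^2$ there up to the modulation gap), one reduces to the $s=1$ case of the kernel estimate applied to $\langle D_t\rangle^{s-1}f$. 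Tracking the $L_t^pL_x^2$ norms and the extra $\langle D_t\rangle^\delta$ needed to sum the $k$-series, the high-modulation contribution is controlled by $\|\tilde P_j\langle D_t\rangle^{s-1+\delta}\langle\ldots\rangle f\|$; unwinding which derivative lands where, and noting we must also estimate $\partial_t\langle D_t\rangle^{s-1}u$ (which is the same analysis with $\tau/(\tau+\xi^2)$ in place of $\xi^2/(\tau+\xi^2)$, and $|\tau/(\tau+\xi^2)|\lesssim 1$ on the high-modulation support since $|\tau|\lesssim 2^{2j}$ there), one arrives at the right-hand side $\|\tilde P_j f\|_{Z^{s-1+\delta}_{p,2}}$, whose two components $\|\langle D_t\rangle^{s-1+\delta}\cdot\|_{L^p_tL^2_x}$ and $\|\langle D_x\rangle^{2(s-1+\delta)}\cdot\|_{L^p_tL^2_x}$ exactly account for the two ways the lost derivatives can be distributed between $\tau$ and $\xi$.

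The main obstacle I anticipate is bookkeeping rather than conceptual: one must check that the gain $2^{-|k-2j|(1-s)}$ in \Cref{spacetimeconv} is genuinely uniform and that, after inserting $\langle D_t\rangle^{\delta}$, the geometric series in $k$ still converges — this forces $\delta < 1-s$ in part a), which is fine since $s<1$, but in part b) the roles of $\langle D_x\rangle$ and $\langle D_t\rangle$ must be balanced against each other on the modulation-localized support, and one has to be slightly careful that commuting $\langle D_t\rangle^{s-1}$ past $P_j$ and the time cutoff implicit in $u\in C^2(\mathbb R;H^\infty_x)$ does not introduce uncontrolled error (it does not, because these are all exact Fourier multipliers in $(t,x)$ and $u$ and $f$ are Schwartz enough in $t$ for the manipulations to be legitimate). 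A secondary point is the low-temporal-frequency tail $S_{\le 2j-4}P_j u$: one should note explicitly that on its Fourier support $|\tau+\xi^2|\gtrsim 2^{2j}$, so it behaves like the $k=0$ case and contributes no new term beyond what is already on the right-hand side.
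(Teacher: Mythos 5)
Your part a) follows essentially the paper's route: isolate the low-modulation piece, apply \Cref{spacetimeconv} dyadically in the temporal frequency $k$ over $|k-2j|>4$ (with $S_{\le 0}$ as the endpoint case), and pay an extra $\langle D_t\rangle^{\delta}$ on $f$ to make the sum in $k$ converge. Two remarks. First, your bookkeeping of the per-$k$ gain is off — for $k>2j$ the gain relative to $\|\tilde S_k\tilde P_j\langle D_t\rangle^{s-1}f\|$ is $2^{-|k-2j|s}$, not $2^{-|k-2j|(1-s)}$ — but this is harmless, since $\|\tilde S_k\tilde P_j\langle D_t\rangle^{s-1}f\|\lesssim 2^{-k\delta}\|\tilde P_j\langle D_t\rangle^{s-1+\delta}f\|$ already gives a convergent series. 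Second, and more substantively, one term of your decomposition is never closed: the low-modulation contribution to the time-derivative norm, $\|\tilde S_{2j}P_j\langle D_t\rangle^{s}u\|_{L^p_tL^q_x}$, does not appear on the stated right-hand side. You need the Bernstein-type equivalence $\|\langle D_t\rangle^{s}\tilde S_{2j}P_ju\|\sim\|\langle D_x\rangle^{2s}\tilde S_{2j}P_ju\|$ (valid because $|\tau|\sim 2^{2j}$ and $|\xi|\sim 2^{j}$ on that Fourier support); this is precisely the step the paper's proof invokes, and without it part a) does not close.

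Part b) is where you genuinely depart from the paper, and your pivotal claim is false: on the high-modulation region $\tau$ is \emph{not} comparable to $\xi^2$ — that is the defining property of high modulation — so $\langle\xi\rangle^{2s-2}\langle\tau\rangle^{1-s}$ is not $\sim 1$ there. In the regime $2^k\ll 2^{2j}$ with $s>1$ this factor is of size $2^{(2j-k)(s-1)}$, unbounded in $j$, so the advertised reduction to the $s=1$ kernel estimate with $\langle D_t\rangle^{s-1}$ landing on $f$ fails exactly where it matters: when $|\tau|\ll\xi^2$, the lost derivatives must be counted in $\xi$, i.e.\ one needs the spatial component $\|\langle D_x\rangle^{2(s-1+\delta)}\tilde P_jf\|_{L^p_tL^2_x}$ of the $Z^{s-1+\delta}_{p,2}$ norm, which your argument as written never produces (your final sentence acknowledges the two components are needed, but the mechanism in the middle does not generate them). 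The direct repair is to split the high-modulation region into $2^k\gtrsim 2^{2j}$ and $2^k\lesssim 2^{2j}$ and bound the symbol $\langle\xi\rangle^{2s}/|\tau+\xi^2|$ by $\langle\tau\rangle^{s-1}$, respectively $\langle\xi\rangle^{2(s-1)}$, rerunning the rescaled kernel estimate in each regime. The paper's own route is cleaner and avoids redoing any symbol analysis: apply part a) to $\partial_tu$ with exponent $s-1\in[0,1)$ (whose forcing is $\partial_tf$, and $\langle D_t\rangle^{s-2+\delta}\partial_tf$ is controlled by $\langle D_t\rangle^{s-1+\delta}f$), then use $\partial_tD_x^{2s-2}P_ju=i\partial_x^2D_x^{2s-2}P_ju-iD_x^{2s-2}P_jf$ to convert back to $\langle D_x\rangle^{2s}u$; it is exactly this last substitution that produces the spatial component of the $Z$ norm.
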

\begin{proof}
For a), this follows from the Bernstein type estimate $\|D_x^{2s}\tilde{S}_{2j}P_ju\|_{L_t^{p}L_x^q}\sim \|D_t^{s}\tilde{S}_{2j}P_ju\|_{L_t^{p}L_x^q}$ and from \Cref{spacetimeconv} by summing over $k>0, |k-2j|>4$ (which is where the requirement of having $\delta>0$ comes in to play). Then b) follows from part a) with $u$ replaced by $\partial_tu$ and $s$ replaced by $s-1$, and then by expanding $\partial_tD_x^{2s-2}P_ju=i\partial_x^2D_x^{2s-2}P_ju-iD_x^{2s-2}P_jf$.
\end{proof}
\begin{remark}
We remark that in part b), if $f$ takes the form of $f=i\eta P_{<k}|u|^{2\sigma}u_x$ as in (\ref{gdnlsreg1}) then if $\delta$ is sufficiently small, we expect to be able to control the last term on the right as long as $2s-2<2\sigma$ which is satisfied automatically, because $2s<4\sigma<2\sigma+2$ in the range $\frac{1}{2}<\sigma<1$. If we were looking at the case $\sigma>1,$ this would present a new limiting threshold for which we expect to obtain estimates for $u$, c.f.~\cite{uchizono2012well}.
\end{remark}
In light of the above remark, one should observe at this point that the high modulation estimate above essentially reduces proving \Cref{keyestimate1} to obtaining an estimate for the $L_T^{\infty}H_x^{2s}$ norm of a solution $u$ to (\ref{gdnlsreg1}) in the low modulation region, as well as controlling an essentially perturbative source term stemming from the nonlinearity in (\ref{gdnlsreg1}). With this in mind, we now turn to the low modulation estimate, which is essentially the heart of the matter.
\subsection{Low modulation estimates}
Next we prove suitable bounds for the $L_T^{\infty}H_x^{2s}$ norm of a solution $u$ to (\ref{gdnlsreg1}) in the low modulation region. Specifically, we prove the following energy type bound to control the portion of $u$ which is localized near the characteristic hypersurface.
\begin{lemma}\label{Etime1}
Let $u_0\in H_x^{2s}$ and suppose that $u\in C^2(\mathbb{R};H_x^{\infty})$ solves (\ref{gdnlsreg1}). Let $T=2$, $2-\sigma<2s<4\sigma$, $a_j$ be an admissible $H_x^{2s}$ frequency envelope for $u_0$, and $b_j^1$, $b_j^2$ be  $\mathcal{X}_T^{2s}$ frequency envelopes for $u$ and $v$, respectively. Take $b_j:=\max\{b_j^1,b_j^2\}$. Let $0<\epsilon\ll 1$ and suppose $v$ satisfies the estimates,
\begin{equation}
\|v\|_{\mathcal{S}_T^{1+\delta}}+\|(i\partial_t+\partial_x^2)v\|_{Z_{\infty}^{s-1+\delta}\cap\mathcal{S}_T^{\delta}}\lesssim_{\delta}\epsilon    
\end{equation}
for each $0<\delta\ll 1$. Then for every $j\geq 0$,  we have 
\begin{equation}
\begin{split}
\|\tilde{S}_{2j}P_jD_x^{2s}u\|_{L_T^{\infty}L_x^2}^2&\lesssim_{\delta} a_j^2\|u_0\|_{H_x^{2s}}^2+b_j^2\epsilon^{2\sigma}(\|u\|_{\mathcal{X}_T^{2s}}^2+\|u\|_{\mathcal{S}_T^{1}}^2)+b_j^2\epsilon^{2\sigma-1}\|u\|_{\mathcal{S}_T^1}\|u\|_{\mathcal{X}_T^{2s}}\|v\|_{\mathcal{X}_T^{2s}}
\\
&+b_j^2\epsilon^{4\sigma-2}\|u\|_{\mathcal{S}_T^{1}}^2\|v\|_{\mathcal{X}_T^{2s}}^2.
\end{split}
\end{equation}
\end{lemma}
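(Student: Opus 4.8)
The plan is to work entirely in the low-modulation region, i.e.\ to estimate $\tilde S_{2j}P_jD_x^{2s}u$ in $L_T^\infty L_x^2$, and to convert spatial derivatives into time derivatives using the fact that on the support of $\tilde S_{2j}P_j$ we have $|\tau|\sim\xi^2\sim 2^{2j}$, so that $\|\tilde S_{2j}P_jD_x^{2s}u\|_{L_T^\infty L_x^2}\sim\|\tilde S_{2j}P_jD_t^su\|_{L_T^\infty L_x^2}$ (with the usual caveat that $D_t^s$ is nonlocal, which is why $\eta$ localizes the nonlinearity and we work with global-in-time solutions; cf.\ the remarks preceding the statement). Thus it suffices to run an energy estimate for $D_t^s u$, where the point is that one only differentiates the rough coefficient $|v|^{2\sigma}$ a total of $s<2\sigma$ times, keeping us within the $C^{1,2\sigma-1}$ H\"older budget.

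First I would apply $\tilde S_{2j}P_j\langle D_t\rangle^s$ (or a frequency-envelope-compatible variant, following the paradifferential setup of \Cref{Yestimate} and \Cref{NLEE}) to \eqref{gdnlsreg1}, getting
\[
(i\partial_t+\partial_x^2)\big(\tilde S_{2j}P_j\langle D_t\rangle^s u\big)=\tilde S_{2j}P_j\langle D_t\rangle^s\big(i\eta P_{<k}|v|^{2\sigma}u_x\big),
\]
then pair with $-i\overline{\tilde S_{2j}P_j\langle D_t\rangle^s u}$, take real part, and integrate in time over $[-T,T]$. The $\partial_x^2$ term drops out and one is left with an initial-data contribution $a_j^2\|u_0\|_{H_x^{2s}}^2$ (here I use $T=2$ and that $u(0)=P_{<k}u_0$, together with \Cref{cor to spacetimeconv} to pass between $D_t^s u(0)$ and $D_x^{2s}u(0)$) plus a spacetime integral of the form $\int_{-T}^T\!\!\int_{\mathbb R}\langle D_t\rangle^s\big(\eta P_{<k}|v|^{2\sigma}u_x\big)\cdot\overline{\tilde S_{2j}P_j\langle D_t\rangle^s u}$. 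The main work is to distribute the $s$ time-derivatives via a Leibniz/commutator expansion: the leading paradifferential piece $\eta P_{<k}|v|^{2\sigma}\cdot\partial_x\langle D_t\rangle^s u_j$ is handled exactly as the term \eqref{paradif3} in \Cref{Yestimate} --- namely by the partial gauge transformation $w_j=u_je^{i\Phi_j}$ with $\Phi_j$ as in \Cref{modifiedlemma} (now with the time-cutoff $\eta$), after which the dangerous $\partial_t\Phi_j$ term involves only negative powers of $|v|$ bounded by $2^{j(1-\sigma)}$ and is controlled because the relevant derivative count stays below $2\sigma$ when $2s<4\sigma$; the subleading pieces --- where some time derivatives fall on $|v|^{2\sigma}$, or the high-high and commutator interactions of $P_j$ with $P_{<k}|v|^{2\sigma}$ --- are estimated using the Moser-type bounds (the time-analogue of \eqref{roughmoser} and \Cref{alternativemoser}), \Cref{Holder}, the fractional Leibniz rules \Cref{Leib2}, \Cref{Leib1}, and the hypothesis \eqref{vassumption} on $v$ to supply the $\epsilon$ powers and the $\|v\|_{\mathcal X_T^{2s}}$, $\|u\|_{\mathcal S_T^1}$ factors. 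Each term must be tracked against the frequency envelope $b_j=\max\{b_j^1,b_j^2\}$ using the slowly varying property, so that after square-summing the $j$-sum converges; the four terms on the right of the claimed inequality correspond respectively to: the data; the terms with $\le 1$ derivative on $|v|^{2\sigma}$ (yielding $\epsilon^{2\sigma}$); the mixed terms where derivatives split between $u$ and $|v|^{2\sigma}$ (yielding $\epsilon^{2\sigma-1}\|u\|_{\mathcal S_T^1}\|u\|_{\mathcal X_T^{2s}}\|v\|_{\mathcal X_T^{2s}}$); and the terms where $|v|^{2\sigma}$ carries the most regularity, i.e.\ roughly two factors of $\langle D_x\rangle^{\alpha}(|v|^{2\sigma-2}\bar v)$-type quantities via the gauge term $\partial_t\Phi_j$ (yielding $\epsilon^{4\sigma-2}\|u\|_{\mathcal S_T^1}^2\|v\|_{\mathcal X_T^{2s}}^2$).

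The hard part will be the bookkeeping in the $\partial_t\Phi_j$ estimate: just as in \Cref{Yestimate}, expanding $\partial_t\Phi_j$ using the equation for $v$ produces a term with $\RE(i\bar v v_{xx})$ which, after integration by parts in $x$, generates $\partial_x(|v|^{2\sigma-2}\bar v)$ and hence negative powers of $|v|$ --- this is precisely what the cutoff $\chi_j(|v|^2)$ in $\Phi_j$ is designed to absorb, giving the factor $2^{j(1-\sigma)}$ which must be balanced against the $2^{j(1-\sigma)}$ deficit already built into working with $D_t^s\sim D_x^{2s}$ at frequency $2^j$; verifying that the resulting power of $2^j$ is nonpositive forces $2s<4\sigma$, and this is the threshold in the statement. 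I would also need the lower bound $2s>2-\sigma$ to control $\|u_x\|_{L_T^1L_x^\infty}$ (equivalently $\|v\|_{\mathcal S_T^{1+\delta}}$-type norms) by the ambient $\mathcal S_T^1$ norm via the low-regularity Strichartz estimates --- this is exactly the mechanism identified in the introduction. A subsidiary technical point is handling the nonlocality of $\langle D_t\rangle^s$ against the compactly-supported-in-time $\eta$; this is where the $Z^{s-1+\delta}_\infty$ component of \eqref{vassumption} and the $L_tX$ vs.\ $L_TX$ distinction matter, and I would deal with it exactly as flagged in the functional-setting subsection, using that $\eta\equiv 1$ on $[-1,1]$ together with \Cref{cor to spacetimeconv} b) to convert back to $\langle D_x\rangle^{2s}$ at the end. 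Combining the high-modulation bound \Cref{cor to spacetimeconv} (which reduces the full $L_T^\infty H_x^{2s}$ norm to the low-modulation piece plus a perturbative $Z^{s-1+\delta}$ term controlled by the nonlinearity) with the energy estimate just described then yields \Cref{keyestimate1}.
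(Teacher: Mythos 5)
Your overall architecture is right at the coarsest level (reduce to the low modulation region, run an energy estimate so that only $s<2\sigma$ "time derivatives" ever hit the rough coefficient, decompose paradifferentially, track frequency envelopes), but the mechanism you propose for the main paradifferential term does not match what is actually needed, and the place you locate the difficulty is not where it lies. In the paper's proof no gauge transformation appears at all: after applying $\tilde S_{2j}P_j$ and splitting the nonlinearity into five space--time paraproduct pieces, the leading low-high term $S_{<2j-8}(i\eta P_{<j-4}|\tilde v|^{2\sigma}_{<k})\,\partial_x \tilde S_{2j}P_j u$ is killed by a single integration by parts in the $L_x^2$ pairing, at the cost of $\|\partial_x|\tilde v|^{2\sigma}_{<k}\|_{L_T^1L_x^\infty}\lesssim\|v\|_{\mathcal S_T^1}^{2\sigma}\lesssim\epsilon^{2\sigma}$ -- this is exactly why the hypothesis \eqref{vassumption} is there, and it is why the partial gauge $\Phi_j$ (whose purpose in Section 3 was to avoid derivative loss in \emph{Strichartz/maximal function} estimates, where one cannot integrate by parts) is unnecessary here. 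Pursuing your route would in fact create problems: controlling $\partial_t\Phi_j\, u_j$ required the $L_x^2L_T^\infty$ and $L_x^\infty L_T^2$ components of the $X_T^s$ norm, which are not part of the high-regularity spaces $\mathcal X_T^{2s},\mathcal S_T^{2s}$ used in this lemma, and your claim that the threshold $2s<4\sigma$ emerges from balancing the $2^{j(1-\sigma)}$ factor in the $\partial_t\Phi_j$ estimate is not the actual constraint.

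The genuine crux, which your sketch passes over in one clause ("the time-analogue of \eqref{roughmoser}"), is the term where the coefficient $|\tilde v|^{2\sigma}_{<k}$ sits at \emph{high spatial frequency} compared to $u_x$ (the term $A_5$ in the paper). There one must arrange that $D_t^s$, not $D_x^{2s}$, lands on $|\tilde v|^{2\sigma}$, and then prove a low-modulation Moser bound of the form $\|\tilde P_j\tilde S_{2j}D_t^s|\tilde v|^{2\sigma}\|_{L_t^2L_x^2}\lesssim b_j\epsilon^{2\sigma-1}(\epsilon+\|v\|_{\mathcal X_T^{2s}})$ (\Cref{timemoser2}, proved via \Cref{timemoserest}). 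That estimate is where the restriction $s<2\sigma$, i.e.\ $2s<4\sigma$, is actually enforced -- it is the $C^{1,2\sigma-1}$ H\"older budget in the time variable, implemented by writing $\partial_t|v|^{2\sigma}=2\sigma\RE(|v|^{2\sigma-2}\bar v\,\partial_t v)$, substituting the equation for $v$, and using \Cref{Holder}, interpolation, and the high-modulation (elliptic) estimate \Cref{cor to spacetimeconv}; none of this is supplied or even identified as necessary in your sketch. Similarly, the temporal commutator and high temporal frequency interactions ($A_2$, $A_3$) require bounding $D_t^{\frac12+}(\eta P_{<j-4}|\tilde v|^{2\sigma}_{<k})$ in $L_t^2L_x^{1/\epsilon_0}$, and the initial-data term $(\tilde S_{2j}P_ju)(0)$ must be compared with $P_ju_0$ through \Cref{spacetimeconv} together with the nonlinear $Z^{s-1+\delta}_\infty$ bound of \Cref{nonlinest} -- steps you mention only obliquely. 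As written, the proposal would not close: the central lemma it relies on is missing, and the mechanism it substitutes (gauge transformation plus frequency balancing) neither produces the stated right-hand side nor explains the $4\sigma$ threshold.
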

\begin{remark}
As a brief but important remark, it should be noted that for $\alpha\geq 0$ there is no need to distinguish between $\|u\|_{L_t^{\infty}H_x^{\alpha}}$ and $\|u\|_{L_T^{\infty}H_x^{\alpha}}$. This is because outside of $[-2,2]$, $u$ solves a linear Schr\"odinger equation, and so the $H_x^{\alpha}$ norms are constant on both $(-\infty,-2]$ and $[2,\infty)$.
\end{remark}
It will also be convenient to introduce the notation $\tilde{v}:=\tilde{\eta}v$ where $\tilde{\eta}$ is a time-dependent cutoff supported in $(-2,2)$ which is equal to $1$ on the support of $\eta$. For notational convenience, we also write $|v|^{2\sigma}_{<k}$ to denote $P_{<k}|v|^{2\sigma}$. Now, we begin with the proof of the energy type bound in \Cref{Etime1}. 
\begin{proof}
 Note that we can write $\eta|v|^{2\sigma}_{<k}=\eta|\tilde{v}|^{2\sigma}_{<k}$. Next, we apply $\tilde{S}_{2j}P_j:=S_{[2j-4,2j+4]}P_j$ to the equation and see that $\tilde{S}_{2j}P_ju$ solves the equation,
\begin{equation}\label{localized}
(i\partial_t+\partial_x^2)\tilde{S}_{2j}P_ju=i\tilde{S}_{2j}P_j(\eta|\tilde{v}|^{2\sigma}_{<k}u_x),
\end{equation}
with initial data $(\tilde{S}_{2j}P_ju)(0)$. Next, we do a paradifferential expansion of the ``nonlinear" term $i\tilde{S}_{2j}P_j(\eta|\tilde{v}|^{2\sigma}_{<k}u_x)$,  in both the space and time variable, which splits this term into five interactions. Indeed, first by commuting the spatial projection $P_j$, we have 
\begin{equation}
\begin{split}
\tilde{S}_{2j}P_j(i\eta|\tilde{v}|^{2\sigma}_{<k}u_x)&=\tilde{S}_{2j}(i\eta P_{<j-4}|\tilde{v}|_{<k}^{2\sigma}\partial_xP_ju)+\tilde{S}_{2j}(i\eta [P_j,P_{<j-4}|\tilde{v}|^{2\sigma}_{<k}]\partial_xu)+\tilde{S}_{2j}P_j(i\eta P_{\geq j-4}|\tilde{v}|^{2\sigma}_{<k}\partial_xu).
\end{split}
\end{equation}
Then by commuting the temporal projection $\tilde{S}_{2j}$ in the first term, we obtain 
\begin{equation}
\begin{split}
\tilde{S}_{2j}P_j(i\eta|\tilde{v}|^{2\sigma}_{<k}u_x)&=S_{<2j-8}(i\eta P_{<j-4}|\tilde{v}|_{<k}^{2\sigma})\partial_xP_j\tilde{S}_{2j}u+[\tilde{S}_{2j},S_{<2j-8}(i\eta P_{<j-4}|\tilde{v}|_{<k}^{2\sigma})]\partial_xP_ju
\\
&+\tilde{S}_{2j}(S_{\geq 2j-8}(i\eta P_{<j-4}|\tilde{v}|_{<k}^{2\sigma})\partial_xP_ju)+\tilde{S}_{2j}(i\eta [P_j,P_{<j-4}|\tilde{v}|^{2\sigma}_{<k}]\partial_xu)
\\
&+i\tilde{S}_{2j}P_j(\eta P_{\geq j-4}|\tilde{v}|^{2\sigma}_{<k}\partial_xu).
\end{split}
\end{equation}
We label these terms in the order they appear above as $A_1,\dots,A_5$.
\\
\\
We make a brief remark about each of the above interactions before proceeding with the estimates. The first term, $A_1$, which corresponds to the  low-high interaction (in spatial frequency) between the coefficient $i\eta |\tilde{v}|^{2\sigma}_{<k}$ and $\partial_x u$ reacts well to a standard energy type estimate for $P_j\tilde{S}_{2j}u$ since the single spatial derivative $\partial_x$ on $P_j\tilde{S}_{2j}u$ can be integrated by parts onto the coefficient $i\eta |\tilde{v}|^{2\sigma}_{<k}$. The terms $A_2, A_3$ and $A_4$ are expected to be treated perturbatively. These in a very loose sense correspond to more balanced frequency interactions for which (space or time) derivatives can be distributed somewhat evenly between the terms $\partial_xu$ and $i\eta |\tilde{v}|^{2\sigma}_{<k}$. The most serious issue comes from $A_5$, which is the situation in which the coefficient $i\eta |\tilde{v}|^{2\sigma}_{<k}$ is at high spatial frequency compared to $\partial_xu$. Some care must be taken here to ensure that this term is not ``differentiated" $2s$ times in the spatial variable, but instead ``differentiated" at most only $s$ times in the time variable. 
\\
\\
Now, we continue with the proof. We begin with a standard energy type estimate. Indeed, multiplying (\ref{localized}) by $-i2^{4js}\overline{\tilde{S}_{2j}P_ju}$, taking real part and integrating over $\mathbb{R}$ in the spatial variable and from $0$ to $\overline{T}$ with $|\overline{T}|\leq 2$ gives,  
\begin{equation}
\begin{split}
\|D_x^{2s}\tilde{S}_{2j}P_ju\|_{L_T^{\infty}L_x^2}^2&\lesssim 2^{4js}\|(\tilde{S}_{2j}P_ju)(0)\|_{L_x^2}^2+\sum_{k=1}^{5}I_j^k
\end{split}    
\end{equation}
where 
\begin{equation}
I_j^k:=2^{4js}\int_{-T}^{T}\big\lvert\RE\int_{\mathbb{R}}-iA_k\overline{\tilde{S}_{2j}P_ju}\big\rvert.
\end{equation}
Now, we estimate each term. We need to deal with both the initial data term $2^{4js}\|(\tilde{S}_{2j}P_ju)(0)\|_{L_x^2}^2$ and the $I_j^k$ terms for $k=1,...,5$. First we deal with the latter terms.
\\
\\
\textbf{Estimate for $I_j^1$}
\\
\\
We integrate by parts and use Bernstein's inequality to obtain 
\begin{equation}
\begin{split}
I_j^1&=2^{4js}\int_{-T}^{T}\big\lvert\RE\int_{\mathbb{R}} S_{<2j-8}(\eta P_{<j-4}|\tilde{v}|_{<k}^{2\sigma})\partial_x\tilde{S}_{2j}P_ju\overline{\tilde{S}_{2j}P_ju}\big\rvert
\\
&\lesssim 2^{4js}\int_{-T}^{T}\big\lvert\RE\int_{\mathbb{R}} S_{<2j-8}\partial_x(\eta P_{<j-4}|\tilde{v}|_{<k}^{2\sigma})|\tilde{S}_{2j}P_ju|^2\big\rvert
\\
&\lesssim 2^{4js}\|v\|_{L_T^{\infty}L_x^{\infty}}^{2\sigma-1}\|v_x\|_{L_T^1L_x^{\infty}}\|P_ju\|_{L_T^{\infty}L_x^2}^2
\\
&\lesssim \|v\|_{\mathcal{S}_T^1}^{2\sigma}\|D_x^{2s}P_ju\|_{L_T^{\infty}L_x^2}^2
\\
&\lesssim b_j^2\|v\|_{\mathcal{S}_T^1}^{2\sigma}\|u\|_{\mathcal{X}_T^{2s}}^2
\\
&\lesssim b_j^2\epsilon^{2\sigma}\|u\|_{\mathcal{X}_T^{2s}}^2.
\end{split}
\end{equation}
\textbf{Estimate for $I_j^2$}
\\
\\
As mentioned above, this term can be treated perturbatively. For simplicity, we denote $g:=i\eta P_{<j-4}|\tilde{v}|_{<k}^{2\sigma}$. Then \Cref{Commutator leibniz} gives
\begin{equation}
[\tilde{S}_{2j},S_{<2j-8}(i\eta P_{<j-4}|\tilde{v}|_{<k}^{2\sigma})]\partial_xP_ju=2^{-2j}\int_{\mathbb{R}^2}K(s)[\partial_tS_{<2j-8}g](t+s_1,x)[\partial_xP_ju](t+s_2,x)ds
\end{equation}
for some $K\in L^1(\mathbb{R}^2)$. H\"older's inequality, Minkowski's inequality, Bernstein's inequality and the fact that $\|P_ju\|_{L_t^{\infty}L_x^2}=\|P_ju\|_{L_T^{\infty}L_x^2}$ then gives 
\begin{equation}
\begin{split}
I_j^2&\lesssim 2^{-2j}2^{4js}\int_{\mathbb{R}^2}|K(s)|\int_{-T}^{T}\int_{\mathbb{R}}|[\partial_tS_{<2j-8}g](t+s_1,x)[\partial_xP_ju](t+s_2,x)||(\tilde{S}_{2j}P_ju)(t,x)|dxdtds
\\
&\lesssim 2^{-j}2^{4js}\|\partial_tS_{<2j-8}g\|_{L_t^2L_x^{\infty}}\|P_ju\|_{L_T^{\infty}L_x^2}^2
\\
&\lesssim 2^{(\epsilon_0-1)j}2^{4js}\|\partial_tS_{<2j-8}g\|_{L_t^2L_x^{\frac{1}{\epsilon_0}}}\|P_ju\|_{L_T^{\infty}L_x^2}^2
\\
&\lesssim \| D_t^{\frac{1}{2}+\frac{\epsilon_0}{2}}(\eta P_{<j-4}|\tilde{v}|_{<k}^{2\sigma})\|_{L_t^2L_x^{\frac{1}{\epsilon_0}}}\|P_jD_x^{2s}u\|_{L_T^{\infty}L_x^2}^2,
\end{split}
\end{equation}
where $\epsilon_0<\delta$ is some small positive constant. From the fractional Leibniz rule and then the vector valued Moser bound \Cref{Moservec}, Sobolev embedding and then \Cref{cor to spacetimeconv}, we obtain 
\begin{equation} 
\begin{split}
\|D_t^{\frac{1}{2}+\frac{\epsilon_0}{2}}(\eta P_{<j-4}|\tilde{v}|_{<k}^{2\sigma})\|_{L_t^2L_x^{\frac{1}{\epsilon_0}}}&\lesssim \|\tilde{v}\|_{\mathcal{S}_T^1}^{2\sigma}+\|\tilde{v}\|_{L_t^{\infty}L_x^{\infty}}^{2\sigma-1}\|D_t^{\frac{1}{2}+\frac{\epsilon_0}{2}}\tilde{v}\|_{L_t^{4}L_x^{\frac{1}{\epsilon_0}}}
\\
&\lesssim \epsilon^{2\sigma}.
\end{split}
\end{equation}
Hence, 
\begin{equation}
I_j^2\lesssim b_j^2\epsilon^{2\sigma}\|u\|_{\mathcal{X}_T^{2s}}^2.
\end{equation}
\textbf{Estimate for $I_j^3$}
\\
\\
This term can also be dealt with perturbatively. Indeed, we can use H\"older and then Bernstein's inequality to shift a factor of $D_t^{\frac{1}{2}}$ onto the rough part of the nonlinearity, 
\begin{equation}
\begin{split}
I_j^3&\lesssim 2^{4js}\|\tilde{S}_{2j}(S_{\geq 2j-8}(\eta P_{<j-4}|\tilde{v}|_{<k}^{2\sigma})\partial_xP_ju)\|_{L_t^2L_x^2}\|P_ju\|_{L_T^{2}L_x^2}
\\
&\lesssim 2^j\|S_{\geq 2j-8}(\eta P_{<j-4}|\tilde{v}|_{<k}^{2\sigma})\|_{L_t^2L_x^{\infty}}\|P_jD_x^{2s}u\|_{L_T^{\infty}L_x^2}^2
\\
&\lesssim \|S_{\geq 2j-8}D_t^{\frac{1}{2}}(\eta P_{<j-4}|\tilde{v}|_{<k}^{2\sigma})\|_{L_t^2L_x^{\infty}}\|P_jD_x^{2s}u\|_{L_T^{\infty}L_x^2}^2
\\
&\lesssim b_j^2\|D_t^{\frac{1}{2}+\frac{\epsilon_0}{2}}(\eta P_{<j-4}|\tilde{v}|_{<k}^{2\sigma})\|_{L_t^2L_x^{\frac{1}{\epsilon_0}}}\|u\|_{\mathcal{X}_T^{2s}}^2.
\end{split}
\end{equation}
By a similar argument to the estimate for $I_j^2$, we then obtain,
\begin{equation}
I_j^3\lesssim b_j^2\epsilon^{2\sigma}\|u\|_{\mathcal{X}_T^{2s}}^2.
\end{equation}
\textbf{Estimate for $I_j^4$}
\\
\\
This term is also straightforward to deal with directly. The estimate is somewhat analogous to $I_j^2$. We have by \Cref{Commutator leibniz},
\begin{equation}
[P_j,P_{<j-4}|\tilde{v}|^{2\sigma}_{<k}]\partial_xu=2^{-j}\int_{\mathbb{R}^2}K(y)[P_{<j-4}\partial_x|\tilde{v}|_{<k}^{2\sigma}](x+y_1)[\tilde{P}_j\partial_xu](x+y_2)dy
\end{equation}
for some integrable kernel $K\in L^1(\mathbb{R}^2)$. Hence, by Minkowski's inequality, H\"older's inequality and Bernstein's inequality, 
\begin{equation}
\begin{split}
I_j^4&\lesssim \|\partial_x|\tilde{v}|_{<k}^{2\sigma}\|_{L_T^1L_x^{\infty}}\|D_x^{2s}\tilde{P}_ju\|_{L_T^{\infty}L_x^2}^2
\\
&\lesssim b_j^2\epsilon^{2\sigma}\|u\|_{\mathcal{X}_T^{2s}}^2.
\end{split}
\end{equation}
\textbf{Estimate for $I_j^5$}
\\
\\
As remarked on earlier, this is the most troublesome term to deal with since the rough coefficient $|\tilde{v}|^{2\sigma}_{<k}$ is at high spatial frequency. To deal with this, first write $w=\eta u$. We expand using the Littlewood-Paley trichotomy,
\begin{equation}\label{twoterms2}
\tilde{S}_{2j}P_j(\eta P_{\geq j-4}|\tilde{v}|^{2\sigma}_{<k}\partial_xu)=\sum_{m\geq j}\tilde{S}_{2j}P_j(\tilde{P}_m|\tilde{v}|^{2\sigma}_{<k}\partial_x\tilde{P}_mw)+\tilde{S}_{2j}P_j(\tilde{P}_j|\tilde{v}|^{2\sigma}_{<k}\partial_x\tilde{P}_{<j}w).
\end{equation}
The first term above, where the frequency interactions between $\partial_x w$ and $|\tilde{v}|_{<k}^{2\sigma}$ are balanced, is relatively straightforward to estimate. Indeed,
\begin{equation}\label{balancedhighreg}
\begin{split}
2^{4js}\int_{-T}^{T}\lvert\int_{\mathbb{R}}\overline{\tilde{S}_{2j}P_ju}\sum_{m\geq j}\tilde{S}_{2j}P_j(\tilde{P}_m|\tilde{v}|^{2\sigma}_{<k}\partial_x\tilde{P}_mw)\rvert&\lesssim \|D_x^{2s}\tilde{S}_{2j}P_ju\|_{L_T^{\infty}L_x^2}2^{2js}\|\sum_{m\geq j}\tilde{S}_{2j}P_j(\tilde{P}_m|\tilde{v}|^{2\sigma}_{<k}\partial_x\tilde{P}_mw)\|_{L_T^1L_x^2} 
\\
&\lesssim \|D_x^{2s}\tilde{S}_{2j}P_ju\|_{L_T^{\infty}L_x^2}\sum_{m\geq j}2^{2js}\|\tilde{P}_m|\tilde{v}|^{2\sigma}_{<k}\partial_x\tilde{P}_mw\|_{L_t^1L_x^2} 
\\
&\lesssim b_j\|u\|_{\mathcal{X}_T^{2s}}\sum_{m\geq j}2^{2js}\|\tilde{P}_m|\tilde{v}|^{2\sigma}_{<k}\partial_x\tilde{P}_mw\|_{L_t^1L_x^2}
\\
&\lesssim b_j\|\partial_x|\tilde{v}|^{2\sigma}_{<k}\|_{L_T^1L_x^{\infty}}\|u\|_{\mathcal{X}_T^{2s}}\sum_{m\geq j}2^{2(j-m)s}\|\tilde{P}_mD_x^{2s}u\|_{L_T^{\infty}L_x^2} 
\\
&\lesssim b_j\epsilon^{2\sigma}\|u\|_{\mathcal{X}_T^{2s}}^2\sum_{m\geq j}2^{2(j-m)s}b_m
\\
&\lesssim b_j^2\epsilon^{2\sigma}\|u\|_{\mathcal{X}_T^{2s}}^2 
\end{split}
\end{equation}
where in the last line we used the slowly varying property of $b_j$.
\\
\\
For the second term in (\ref{twoterms2}), we distribute the temporal projection to obtain 
\begin{equation}\label{twoterms4}
\begin{split}
\tilde{S}_{2j}P_j(\tilde{P}_j|\tilde{v}|^{2\sigma}_{<k}\partial_x\tilde{P}_{<j}w)=\tilde{S}_{2j}P_j(\tilde{P}_j|\tilde{v}|^{2\sigma}_{<k}\partial_x\tilde{P}_{<j}S_{\geq 2j-8}w)+\tilde{S}_{2j}P_j(\tilde{P}_j\tilde{S}_{2j}|\tilde{v}|_{<k}^{2\sigma}\partial_xP_{<j}S_{<2j-8}w).
\end{split}
\end{equation}
For the first term in (\ref{twoterms4}), we use Bernstein's inequality and then  \Cref{cor to spacetimeconv}, which yields
\begin{equation}
\begin{split}
2^{2js}\|\tilde{S}_{2j}P_j(\tilde{P}_j|\tilde{v}|^{2\sigma}_{<k}\partial_x\tilde{P}_{<j}S_{\geq 2j-8}w)\|_{L_T^1L_x^2}&\lesssim 2^{-j\epsilon_0}\|D_x^{1+\epsilon_0}|v|^{2\sigma}\|_{L_T^1L_x^{\infty}}\|S_{\geq 2j-8}D_t^{s}w\|_{L_t^{\infty}L_x^2}
\\
&\lesssim 2^{-j\epsilon_0}\|D_x^{1+\epsilon_0}|v|^{2\sigma}\|_{L_T^1L_x^{\infty}}\|P_{\leq 0}S_{\geq 2j-8}D_t^{s}w\|_{L_t^{\infty}L_x^2}
\\
&+2^{-j\epsilon_0}\|D_x^{1+\epsilon_0}|v|^{2\sigma}\|_{L_T^1L_x^{\infty}}\left(\sum_{m>0}\|P_{m}D_t^{s}w\|_{L_t^{\infty}L_x^2}^2\right)^{\frac{1}{2}}
\\
&\lesssim 2^{-j\epsilon_0}\|D_x^{1+\epsilon_0}|v|^{2\sigma}\|_{L_T^1L_x^{\infty}}(\|u\|_{\mathcal{X}_T^{2s}}+\|g\|_{Z_{\infty}^{s-1+\delta}})
\end{split}
\end{equation}
where $g:=(i\partial_t+\partial_x^2)w$ and $0<\epsilon_0\ll \delta$ is some small positive constant. If $\epsilon_0$ is small enough, then \Cref{Holder} gives $\|D_x^{1+\epsilon_0}|v|^{2\sigma}\|_{L_T^1L_x^{\infty}}\lesssim \|v\|_{\mathcal{S}_T^{1+\delta}}^{2\sigma}\lesssim\epsilon^{2\sigma}$. Then finally by taking $2^{-j\epsilon_0}\lesssim b_j$, it follows that
\begin{equation}
\begin{split}
2^{2js}\|\tilde{S}_{2j}P_j(\tilde{P}_j|\tilde{v}|^{2\sigma}_{<k}\partial_x\tilde{P}_{<j}S_{\geq 2j-8}w)\|_{L_T^1L_x^2}&\lesssim b_j\epsilon^{2\sigma}(\|u\|_{\mathcal{X}_T^{2s}}+\|g\|_{Z_{\infty}^{s-1+\delta}}).
\end{split}
\end{equation}
Now we look at controlling the second term in (\ref{twoterms4}). We use Bernstein's inequality and the fact that $w=\eta u$ is time-localized to obtain
\begin{equation}\label{derivativonrough}
\begin{split}
2^{2js}\|\tilde{S}_{2j}P_j(\tilde{P}_j\tilde{S}_{2j}|\tilde{v}|_{<k}^{2\sigma}\partial_xP_{<j}S_{<2j-8}w)\|_{L_T^1L_x^2}&\lesssim 2^{2js}\|\tilde{P}_j\tilde{S}_{2j}|\tilde{v}|_{<k}^{2\sigma}\partial_xP_{<j}S_{<2j-8}w\|_{L_t^1L_x^2}
\\
&\lesssim \|u\|_{\mathcal{S}_T^1}\|D_t^s\tilde{P}_j\tilde{S}_{2j}|\tilde{v}|^{2\sigma}\|_{L_t^2L_x^2}.
\end{split}    
\end{equation}
Here we crucially ensured that the time derivative $D_t^s$, rather than the spatial derivative $D_x^{2s}$ fell on the rough part of the nonlinearity.
\\
\\
To control $\|D_t^s\tilde{P}_j\tilde{S}_{2j}|\tilde{v}|^{2\sigma}\|_{L_t^2L_x^2}$ we will need the following low modulation Moser type estimate.
\begin{lemma}\label{timemoser2}
Given the conditions of \Cref{Etime1}, the following estimate holds: 
\begin{equation}
\|\tilde{P}_j\tilde{S}_{2j}D_t^s|\tilde{v}|^{2\sigma}\|_{L_t^2L_x^2}\lesssim b_j\epsilon^{2\sigma-1}(\epsilon+\|v\|_{\mathcal{X}_T^{2s}}).
\end{equation}
\end{lemma}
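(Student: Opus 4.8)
\textbf{Proof plan for Lemma \ref{timemoser2}.}

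The plan is to reduce the estimate for $D_t^s|\tilde v|^{2\sigma}$ to a combination of the $L^\infty_tL^\infty_x$ bound on $\tilde v$ (of size $\lesssim\epsilon$) and a bound on a single time derivative of $\tilde v$ (of fractional order), exploiting that $\alpha:=s<2\sigma$ so that the roughness of $z\mapsto|z|^{2\sigma}$ is never fully exhausted. First I would observe that $\tilde\eta\equiv1$ on the support of $\eta$ and $\tilde v=\tilde\eta v$, so $|\tilde v|^{2\sigma}=\tilde\eta^{2\sigma}|v|^{2\sigma}$ is compactly supported in time; hence $L_t$ and $L_T$ norms are interchangeable and all Sobolev embeddings in $t$ are lossless. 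The key structural point is the same one used in the estimates for $I_j^2$ and $I_j^3$ above: because of the temporal frequency localization $\tilde S_{2j}$, a factor of $D_t^{s}$ acting on a function localized at time-frequency $\sim 2^{2j}$ is comparable (up to $b_j$, after inserting $2^{-j\epsilon_0}\lesssim b_j$ for small $\epsilon_0$) to $D_t^{s'}$ for a slightly larger exponent; but here the cleaner route is to keep $D_t^s$ and apply the fractional chain/Leibniz rule directly.

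The main step is then: when $s\le1$, apply the vector-valued Moser estimate (\Cref{Moservec}) together with the fact that $|v|^{2\sigma}=F(v)$ with $F\in C^{1}$ away from $0$ and, near $0$, $F'(z)=O(|z|^{2\sigma-1})$, to write
\begin{equation}
\|\tilde P_j\tilde S_{2j}D_t^s|\tilde v|^{2\sigma}\|_{L_t^2L_x^2}\lesssim \|D_t^s|\tilde v|^{2\sigma}\|_{L_t^2L_x^2}\lesssim \|\,|\tilde v|^{2\sigma-1}\|_{L_t^\infty L_x^\infty}\|D_t^s\tilde v\|_{L_t^2L_x^2}+(\text{lower order}),
\end{equation}
and then bound $\|\,|\tilde v|^{2\sigma-1}\|_{L_t^\infty L_x^\infty}\lesssim\|v\|_{\mathcal S_T^1}^{2\sigma-1}\lesssim\epsilon^{2\sigma-1}$ by the assumption \eqref{vassumption}, while $\|D_t^s\tilde v\|_{L_t^2L_x^2}$ is estimated by interpolating $D_t^s$ between $D_t^0$ (controlled by $\|v\|_{\mathcal X_T^{2s}}$ via $L_t^\infty H_x^{2s}\hookrightarrow L_t^2L_x^2$ on the compact time support, plus the contribution $\epsilon$ from the temporal regularity of $\tilde v$) and $D_t^1\tilde v=\partial_t(\tilde\eta v)=\tilde\eta'v+\tilde\eta\partial_tv$, where $\partial_tv=i\partial_x^2v-i(i\partial_t+\partial_x^2)v$ is controlled in $L_t^\infty L_x^2$ by $\|v\|_{\mathcal X_T^{2}}$ plus $\|(i\partial_t+\partial_x^2)v\|_{\mathcal S_T^\delta}\lesssim\epsilon$ — this is exactly where $2s<4\sigma\le 2\sigma+2$ keeps us below $H_x^2$ so no extra regularity of $v$ beyond $\mathcal X_T^{2s}$ is needed. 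For the range $1<s<2\sigma$, I would instead apply \Cref{Moservec} to $\partial_t|\tilde v|^{2\sigma}=2\sigma\,\mathrm{Re}(|\tilde v|^{2\sigma-2}\overline{\tilde v}\,\partial_t\tilde v)$ and put $s-1$ fractional time derivatives via a second Leibniz step, using \Cref{Holder} on the rough factor $|\tilde v|^{2\sigma-2}\overline{\tilde v}\in \dot C^{0,2\sigma-1}$ with a loss of $\tfrac1{2\sigma-1}$ as in the remark after \Cref{Holder}; the condition $s-1<2\sigma-1$ (i.e. $s<2\sigma$) is what makes this admissible. In either case the frequency localization is then restored by Bernstein together with $2^{-j\epsilon_0}\lesssim b_j$ and the slowly-varying property of $b_j$, producing the factor $b_j$.

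The hard part will be making the interpolation $\|D_t^s\tilde v\|_{L_t^2L_x^2}\lesssim \epsilon^{\,0}\cdot(\epsilon+\|v\|_{\mathcal X_T^{2s}})$ bookkeeping genuinely clean rather than lossy: one must be careful that the cutoff $\tilde\eta$ is non-local under $D_t^s$, so $D_t^s(\tilde\eta v)$ is not simply $\tilde\eta D_t^s v$, and the commutator $[D_t^s,\tilde\eta]$ must be shown to be bounded on $L_t^2L_x^2$ with the right-hand side absorbed into the $\epsilon\|v\|_{\mathcal S_T^1}$-type terms — this is where the assumption \eqref{vassumption} on $(i\partial_t+\partial_x^2)v$ in $Z_\infty^{s-1+\delta}$ (and not merely $\mathcal S_T^\delta$) is used, to control $D_t^{s}$ of $v$ itself rather than just one integer derivative. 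A secondary technical nuisance is keeping the exponent of $\epsilon$ exactly $2\sigma-1$ (and not $2\sigma$): this forces us to peel off precisely one factor of $|v|^{2\sigma-1}$ in $L^\infty$ and no more, which is why the $s>1$ case must go through $\partial_t|\tilde v|^{2\sigma}$ rather than a direct application of \Cref{Moservec} with $F=|\cdot|^{2\sigma}$.
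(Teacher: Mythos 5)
There is a genuine gap, and it sits exactly where you flag "the hard part": your reduction to a bound on the unlocalized quantity $\|D_t^s\tilde v\|_{L_t^2L_x^2}$ cannot close. First, the moment you discard $\tilde P_j\tilde S_{2j}$ in the first inequality, the factor $b_j$ is unrecoverable: your proposed recovery via "Bernstein and $2^{-j\epsilon_0}\lesssim b_j$" needs spare regularity, but $D_t^s$ already corresponds, under the parabolic scaling, to the full $2s$ spatial derivatives that $\mathcal X_T^{2s}$ provides, so there is no $\epsilon_0$ of room at top order. Second, the interpolation of $\|D_t^s\tilde v\|_{L_t^2L_x^2}$ between $D_t^0$ and $D_t^1\tilde v$, with $\partial_t v$ replaced by $i\partial_x^2 v$ through the equation, requires $\partial_x^2v\in L_t^\infty L_x^2$, i.e. $\mathcal X_T^{2}$ control of $v$; this is not implied by the hypotheses when $2s<2$ (which is allowed, since $2s>2-\sigma$ only), and in any case interpolation of the estimate yields a bound of the form $\|v\|_{H_x^2}^{s}\|v\|_{L_x^2}^{1-s}$, which is \emph{not} dominated by $\|v\|_{H_x^{2s}}$ — the inequality goes the other way. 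Your parenthetical that "$2s<4\sigma\le 2\sigma+2$ keeps us below $H_x^2$" does not rescue this; it is precisely the regime $2s<2$ where the endpoint $D_t^1$ is too expensive.

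The paper avoids both problems by never letting go of the localization: \Cref{timemoser2} is deduced from \Cref{timemoserest}, whose part (a) splits $|u|^{2\sigma}=|P_{<j}u|^{2\sigma}+(|u|^{2\sigma}-|P_{<j}u|^{2\sigma})$ and interpolates between $D_t^0$ and $D_t^1$ \emph{only on the localized piece} $\tilde S_{2j}P_j|P_{<j}u|^{2\sigma}$. There the time derivative produced by interpolation is expanded via the equation into $P_{<j}u_{xx}$ plus $P_{<j}f$, and because the spatial frequency is $\lesssim 2^j$ the two spatial derivatives cost only $2^{2j(1-s)}$ beyond $D_x^{2s}$, which is exactly repaid by temporal Bernstein at modulation $\sim 2^{2j}$ when undoing the interpolation; the $b_j$ comes from the Littlewood--Paley trichotomy forcing the top-order derivatives onto $\tilde P_ju$ (and the high-frequency remainder is handled by a fundamental-theorem-of-calculus expansion in $\theta$, again producing $P_{\ge j}u$ and hence $b_j$). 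For $1\le s<2\sigma$ the paper likewise does not interpolate but commutes $\tilde S_{2j}$, uses the equation to trade $\partial_t u$ for $\partial_x^2u+f$, and invokes the high-modulation elliptic estimate \Cref{cor to spacetimeconv} — this, rather than a commutator $[D_t^s,\tilde\eta]$ bound, is where the $Z_\infty^{s-1+\delta}$ hypothesis on $(i\partial_t+\partial_x^2)v$ enters. Your outline shares the broad strategy (peel off one factor $|v|^{2\sigma-1}$ in $L^\infty$, treat $s>1$ through $\partial_t|\tilde v|^{2\sigma}$ with \Cref{Holder} on the $C^{0,2\sigma-1}$ factor), but without the localized interpolation and the modulation splitting the central step fails as stated.
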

We will postpone the proof of this technical lemma until the end of the section. 
\\
\\
Combining \Cref{timemoser2} and the estimate (\ref{balancedhighreg}) allows us to estimate $I_j^5$ by
\begin{equation}
I_j^5\lesssim b_j^2\epsilon^{2\sigma}(\|u\|_{\mathcal{X}_{T}^{2s}}^2+\|u\|_{\mathcal{S}_T^{1}}^2+\|g\|_{Z_{\infty}^{s-1+\delta}}^2)+b_j^2\epsilon^{2\sigma-1}\|u\|_{\mathcal{S}_T^1}\|u\|_{\mathcal{X}_T^{2s}}\|v\|_{\mathcal{X}_T^{2s}}.
\end{equation}
Finally, combining the estimates for $I_j^1,\dots,I_j^5$ now yields
\begin{equation}\label{Etime2}
\begin{split}
    \|D_x^{2s}\tilde{S}_{2j}P_ju\|_{L_T^{\infty}L_x^2}^2&\lesssim 2^{4js}\|(\tilde{S}_{2j}P_ju)(0)\|_{L_x^2}^2+b_j^2\epsilon^{2\sigma}(\|u\|_{\mathcal{X}_{T}^{2s}}^2+\|u\|_{S_T^1}^2+\|g\|_{Z_{\infty}^{s-1+\delta}}^2)
    \\
    &+b_j^2\epsilon^{2\sigma-1}\|u\|_{\mathcal{S}_T^1}\|u\|_{\mathcal{X}_T^{2s}}\|v\|_{\mathcal{X}_T^{2s}}.
\end{split}
\end{equation}
Next, we need to control $(\tilde{S}_{2j}P_ju)(0)$ in terms of $P_ju_0$. To accomplish this, we use the high modulation estimate \Cref{spacetimeconv}. Namely,
\begin{equation}\label{initestimate}
\begin{split}
2^{2js}\|(\tilde{S}_{2j}P_ju)(0)\|_{L_x^2}&\lesssim \|D_x^{2s}P_ju_0\|_{L_x^2}+\|(1-\tilde{S}_{2j})P_jD_x^{2s}u\|_{L_t^{\infty}L_x^2}
\\
&\lesssim \|D_x^{2s}P_ju_0\|_{L_x^2}+\|S_{\leq 0}P_jD_x^{2s}u\|_{L_t^{\infty}L_x^2}+\sum_{m>0, |m-2j|>4}\|P_jS_mD_x^{2s}u\|_{L_t^{\infty}L_x^2}
\\
&\lesssim \|D_x^{2s}P_ju_0\|_{L_x^2}+\|\langle D_t\rangle^{s-1+\delta}\tilde{P}_j(\eta|\tilde{v}|_{<k}^{2\sigma}u_x)\|_{L_t^{\infty}L_x^2}.
\end{split}
\end{equation}
In light of (\ref{Etime2}) and (\ref{initestimate}), to complete the proof of \Cref{Etime1} it remains to estimate the latter term on the right hand side of (\ref{initestimate}) as well as $\|g\|_{Z_{\infty}^{s-1+\delta}}$. This is done in the following lemma. 
\begin{lemma}\label{nonlinest} Let $s,\sigma, T,u_0,u,a_j$ and $b_j$ be as in \Cref{keyestimate1}. Let $v$ also be as in \Cref{keyestimate1}, but with \eqref{vassumption} replaced by the weaker assumption that for all $0<\epsilon\ll 1$ and  $0<\delta\ll 1$,
\begin{equation}\label{vassumption2}
\|v\|_{\mathcal{S}_T^{1+\delta}}+\|(i\partial_t+\partial_x^2)v\|_{Z_{\infty}^{s-\frac{3}{2}+\delta}}\lesssim_{\delta} \epsilon.    
\end{equation}
Then we have
\begin{equation}\label{finalest1}
 \|\tilde{P}_j(\eta|\tilde{v}|_{<k}^{2\sigma}u_x)\|_{Z_{\infty}^{s-1+\delta}}\lesssim b_j\epsilon^{2\sigma}(\|u\|_{\mathcal{S}_T^1}+\|u\|_{\mathcal{X}_T^{2s-1+c\delta}})+b_j\epsilon^{2\sigma-1}\|u\|_{\mathcal{S}_T^1}\|v\|_{\mathcal{X}_T^{2s-1+c\delta}}  
\end{equation}
and
\begin{equation}\label{finalest2}
\|(i\partial_t+\partial_x^2)w\|_{Z_{\infty}^{s-1+\delta}}:=\|g\|_{Z_{\infty}^{s-1+\delta}}\lesssim \|u\|_{\mathcal{X}_T^{2s-1+c\delta}}+\|u\|_{\mathcal{S}_T^1}+\|u\|_{\mathcal{S}_T^1}\|v\|_{\mathcal{X}_T^{2s-1+c\delta}},
\end{equation}
for some constant $c>0$.
\end{lemma}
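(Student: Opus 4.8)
\textbf{Proof proposal for \Cref{nonlinest}.}
The plan is to estimate the two quantities $\|\tilde P_j(\eta|\tilde v|_{<k}^{2\sigma}u_x)\|_{Z_\infty^{s-1+\delta}}$ and $\|g\|_{Z_\infty^{s-1+\delta}}$ by expanding the product $\eta|\tilde v|_{<k}^{2\sigma}u_x$ in a Littlewood--Paley trichotomy in the \emph{spatial} variable and, within each piece, using the definition of $Z_\infty^{s-1+\delta}$ to split the derivative into a spatial part $\langle D_x\rangle^{2s-2+2\delta}$ and a temporal part $\langle D_t\rangle^{s-1+\delta}$. For the spatial derivatives, the estimates are essentially those already carried out in the proof of \Cref{NLEE} and \Cref{Etime1}: in the low-high piece $P_{<j-4}|v|^{2\sigma}\,\partial_x\tilde P_j u$ one distributes at most one derivative onto the smooth factor and the remaining $\sim 2s-2$ onto $u$, controlled by $\|u\|_{\mathcal X_T^{2s-1+c\delta}}$ (recalling $2s-2+2\delta < 2s-1$), with the coefficient $\|\partial_x|v|^{2\sigma}\|_{L_T^1L_x^\infty}\lesssim \|v\|_{\mathcal S_T^{1+\delta}}^{2\sigma}\lesssim\epsilon^{2\sigma}$ coming from \Cref{Holder}; in the balanced/high-low piece $P_{\geq j-4}|v|^{2\sigma}\partial_x u$ one uses the Moser estimate \eqref{roughmoser} (or rather its proof, since here $\alpha=2s-2+2\delta<2\sigma$) together with the slowly varying property of $b_j$ to sum the dyadic pieces, exactly as in the $I_j^5$ estimate. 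The factor $\|u\|_{\mathcal S_T^1}$ appears when the rough coefficient is placed at high frequency and $u_x$ is measured in an $L^\infty$-type norm. The $\eta$ cutoff is harmless for spatial derivatives.

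The genuinely new part is the temporal derivative $\langle D_t\rangle^{s-1+\delta}$, and this is where I expect the main obstacle to lie. One applies a fractional Leibniz rule in $t$ to $\langle D_t\rangle^{s-1+\delta}(\eta|\tilde v|_{<k}^{2\sigma}u_x)$, distributing the $s-1+\delta$ time derivatives either onto the rough factor $\eta|\tilde v|_{<k}^{2\sigma}$ or onto $\eta u_x$. When the derivatives fall on $u_x$: write $\partial_t u = i\partial_x^2 u - i\eta P_{<k}|v|^{2\sigma}u_x$ from \eqref{gdnlsreg1} to trade one time derivative for two space derivatives (modulo a lower-order nonlinear term), which is exactly the scaling-consistent conversion used throughout Section 5 and is why the bound is in terms of $\mathcal X_T^{2s-1+c\delta}$ and $\mathcal S_T^1$; one must be careful that the nonlinear remainder is genuinely perturbative, using $2s-2 < 2\sigma$ and $\sigma\in(\tfrac12,1)$. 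When the time derivatives fall on $|\tilde v|^{2\sigma}$: since $|z|^{2\sigma}$ is only $C^{1,2\sigma-1}$, we can afford at most $\sim 2\sigma$ time derivatives, and $s-1+\delta < 2\sigma$ in the range $2-\sigma<2s<4\sigma$ once $\delta$ is small; here one uses \Cref{Moservec} in the time variable followed by \Cref{Holder} and the assumption \eqref{vassumption2}, namely $\|(i\partial_t+\partial_x^2)v\|_{Z_\infty^{s-\frac32+\delta}}\lesssim\epsilon$ (the loss of $\tfrac12$ derivative relative to the conclusion is what makes this lemma's hypothesis weaker than \eqref{vassumption}), to convert time derivatives of $v$ into controlled quantities. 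This produces the $\epsilon^{2\sigma-1}\|u\|_{\mathcal S_T^1}\|v\|_{\mathcal X_T^{2s-1+c\delta}}$ term. One subtlety: because $D_t^{s-1+\delta}$ is nonlocal, the time cutoff $\eta$ interacts with it, so one should commute $\langle D_t\rangle^{s-1+\delta}$ past $\eta$ using a commutator estimate (the symbol of $\eta$ is Schwartz, so this costs nothing and in fact gains decay), or simply absorb the smooth $\eta$-factor into a fractional Leibniz step as above.

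For the estimate \eqref{finalest2} on $g=(i\partial_t+\partial_x^2)w$ with $w=\eta u$, one expands $g = \eta(i\partial_t+\partial_x^2)u + (i\partial_t\eta)u + 2(\partial_x\eta)\partial_x u\cdot 0$ (the last term vanishes since $\eta$ depends only on $t$), i.e. $g = i\eta^2 P_{<k}|v|^{2\sigma}u_x + (i\partial_t\eta)u$ after substituting \eqref{gdnlsreg1}; wait --- more precisely $g = \eta\,i\eta P_{<k}|v|^{2\sigma}u_x + (\partial_t\eta)(iu)$, so $\|g\|_{Z_\infty^{s-1+\delta}}$ is bounded by the already-estimated quantity \eqref{finalest1} (without the $\epsilon^{2\sigma}$ smallness, since we only need a bound, not smallness --- this accounts for why \eqref{finalest2} has no $\epsilon$ powers) plus $\|(\partial_t\eta)u\|_{Z_\infty^{s-1+\delta}}$, which is controlled by $\|u\|_{\mathcal X_T^{2s-1+c\delta}}$ since $\partial_t\eta$ is a fixed Schwartz function in $t$ (again handling $\langle D_t\rangle^{s-1+\delta}$ via a commutator with the smooth cutoff, or via the time-Bernstein-type bound relating $D_t^{s-1+\delta}$ and $D_x^{2s-2+2\delta}$ on the low-modulation part and \Cref{spacetimeconv} on the high-modulation part). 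Assembling the pieces, with $c$ chosen large enough to absorb all the $\delta$-losses coming from \Cref{Holder} and the various $2^{-j\epsilon_0}\lesssim b_j$ steps, gives both \eqref{finalest1} and \eqref{finalest2}. The main technical care-point throughout is bookkeeping the $\delta$'s so that $2s-2+c\delta$ stays below $2s-1$ and $s-1+\delta$ stays below $2\sigma$, which is possible precisely because $2s<4\sigma$ and $\sigma>\tfrac12$.
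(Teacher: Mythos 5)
Your overall plan — estimate the two components of the $Z_{\infty}^{s-1+\delta}$ norm separately, push spatial derivatives through the product as in \Cref{NLEE}/\Cref{Etime1}, and when time derivatives land on $|\tilde v|^{2\sigma}$ use a Moser/H\"older argument in $t$ together with the weakened hypothesis \eqref{vassumption2} — is the same skeleton as the paper's proof, and your reading of why the hypothesis only needs $Z_\infty^{s-\frac32+\delta}$ control of $(i\partial_t+\partial_x^2)v$ is correct. But there is a genuine gap at the one step where the real work lies: the terms in which the time derivatives fall on $u_x$ (and, in \eqref{finalest2}, on $\partial_t\eta\, u$). You propose to handle these by substituting $\partial_t u = i\partial_x^2 u - i\eta P_{<k}|v|^{2\sigma}u_x$ and declaring the nonlinear remainder ``genuinely perturbative, using $2s-2<2\sigma$''. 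Two problems. First, the order $s-1+\delta$ is fractional and in the admissible range $2-\sigma<2s<4\sigma$ it is frequently below $1$, so you cannot peel off an integer $\partial_t$ to invoke the equation; the paper instead splits the product at temporal frequency $2^{2j}$ (using the spatial localization $\tilde P_j$ so that low temporal frequencies convert to $2^{-2j\delta}\langle D_x\rangle^{2s-2+c\delta}$, producing the $b_j$ gain), and treats the high temporal frequencies by a paraproduct in $t$ plus \Cref{cor to spacetimeconv}. Second, and more importantly, whichever conversion you use — the equation or the high-modulation estimate — the remainder is not lower order in any obvious sense: it is exactly the quantity being estimated, $\|\eta|v|^{2\sigma}_{<k}u_x\|_{Z_\infty^{\,\cdot}}$, with the time regularity lowered by $\tfrac12$ (via \Cref{cor to spacetimeconv}) or by $1$ (via the equation). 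The condition $2s-2<2\sigma$ you cite controls the Moser step on $|v|^{2\sigma}$, not this self-referential term.

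The paper closes the loop with an explicit downward induction that your proposal is missing: it first proves the bound \eqref{closebound} with the additional term $\|\eta|v|^{2\sigma}_{<k}u_x\|_{Z_\infty^{s-\frac32+c\delta}}$ on the right-hand side, then applies the same estimate at orders $s-\frac32$ and $s-2$, and uses the numerology $s<2$ (equivalently $2s<4\sigma<4$) so that $Z_\infty^{s-2+c\delta}$ is of non-positive order and hence trivially bounded by $\epsilon^{2\sigma}\|u\|_{\mathcal X_T^{2s-1+c\delta}}$; the same $s<2$ mechanism is what controls $\|\partial_t\eta\,u\|_{Z_\infty^{s-1+\delta}}$ in \eqref{finalest2}. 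To repair your argument you would need to add this iteration (or an equivalent bootstrap in the time-derivative order) and identify $s<2$, not $2s-2<2\sigma$, as the condition that terminates it; as written, the claim that the remainder is perturbative is unsupported and the proof does not close.
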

\begin{remark}
The reader may wonder why we estimate the full $Z_{\infty}^{s-1+\delta}$ norm in the above lemma. Although the argument up until this point only requires us to estimate the component of the $Z_{\infty}^{s-1+\delta}$ norm involving the time derivative, we will need to also estimate the component involving spatial derivatives in the next section when we establish well-posedness for the full equation in $\mathcal{X}_T^{2s}$. 
\end{remark}
\begin{proof}
We begin with (\ref{finalest1}). For the purpose of not having to track all the factors of $\delta$ that appear throughout the proof, we will denote by $c>0$ some positive constant which is allowed to grow from line to line. First we study the component of the $Z_{\infty}^{s-1+\delta}$ norm which involves the time derivative. By considering separately temporal frequencies larger than $2^{2j}$ and smaller than $2^{2j}$, we obtain (using the vector valued Bernstein inequality),
\begin{equation}
\begin{split}
\|\tilde{P}_j\langle D_t\rangle^{s-1+\delta}(\eta|\tilde{v}|_{<k}^{2\sigma}u_x)\|_{L_t^{\infty}L_x^2}&\lesssim 2^{-2j\delta}\|\tilde{P}_j\langle D_x\rangle^{2s-2+c\delta}(\eta|\tilde{v}|_{<k}^{2\sigma}u_x)\|_{L_t^{\infty}L_x^2}
\\
&+2^{-2j\delta}\|\tilde{P}_jS_{> 2j}\langle D_t\rangle^{s-1+c\delta}(\eta|\tilde{v}|_{<k}^{2\sigma}u_x)\|_{L_t^{\infty}L_x^2}.
\end{split}
\end{equation}
Hence,
\begin{equation}\label{twoestimates}
\begin{split}
\|\tilde{P}_j(\eta|\tilde{v}|_{<k}^{2\sigma}u_x)\|_{Z_{\infty}^{s-1+\delta}}&\lesssim 2^{-2j\delta}\|\tilde{P}_j\langle D_x\rangle^{2s-2+c\delta}(\eta|\tilde{v}|_{<k}^{2\sigma}u_x)\|_{L_t^{\infty}L_x^2}
\\
&+2^{-2j\delta}\|\tilde{P}_jS_{> 2j}\langle D_t\rangle^{s-1+c\delta}(\eta|\tilde{v}|_{<k}^{2\sigma}u_x)\|_{L_t^{\infty}L_x^2}.   
\end{split}
\end{equation}
We now look at the first term in (\ref{twoestimates}).
 The bound 
 \begin{equation}\label{spacebound}
 2^{-2j\delta}\|\tilde{P}_j\langle D_x\rangle^{2s-2+c\delta}(\eta|\tilde{v}|_{<k}^{2\sigma}u_x)\|_{L_t^{\infty}L_x^2}\lesssim b_j\epsilon^{2\sigma}\|u\|_{\mathcal{X}_T^{2s-1+c\delta}}+b_j\epsilon^{2\sigma-1}\|u\|_{\mathcal{S}_T^1}\|v\|_{\mathcal{X}_T^{2s-1+c\delta}}  
 \end{equation}
 is a straightforward consequence of $2^{-2j\delta}\lesssim b_j$ and the fractional Leibniz rule if $2s-2<1$. If $2s-2\geq 1$, then for the homogeneous component, we have 
\begin{equation}
\begin{split}
\|D_x^{2s-2+c\delta}(i\eta P_{<k}|v|^{2\sigma}u_x)\|_{L_t^{\infty}L_x^2}&\lesssim \|D_x^{2s-3+c\delta}(i\eta P_{<k}|v|^{2\sigma}u_{xx})\|_{L_t^{\infty}L_x^2}
\\
&+\|\eta D_x^{2s-3+c\delta}(\RE P_{<k}(|v|^{2\sigma-2}\overline{v}v_{x})u_x)\|_{L_t^{\infty}L_x^2}.
\end{split}
\end{equation}
By the fractional Leibniz rule and Sobolev embedding, clearly the first term above can be controlled by $\epsilon^{2\sigma}\|u\|_{\mathcal{X}_T^{2s-1+c\delta}}$. Using the fact that $2s-3<2\sigma-1$ and applying the fractional Leibniz rule, \Cref{Holder} (when $D_x^{2s-3+c\delta}$ falls on $|v|^{2\sigma-2}\overline{v}$) and interpolation, we  can control the second term by
\begin{equation}
\epsilon^{2\sigma}\|u\|_{\mathcal{X}_T^{2s-1+c\delta}}+\epsilon^{2\sigma-1}\|u\|_{\mathcal{S}_T^1}\|v\|_{\mathcal{X}_T^{2s-1+c\delta}}    
\end{equation}
to obtain the desired bound (\ref{spacebound}).
\\
\\
Now, to estimate the second term on the right hand side of (\ref{twoestimates}), we use that $2^{-2j\delta}\lesssim b_j$ and estimate
\begin{equation}\label{twoestimates2}
\begin{split}
2^{-2j\delta}\|\tilde{P}_jS_{>2j}\langle D_t\rangle^{s-1+c\delta}(\eta|\tilde{v}|_{<k}^{2\sigma}u_x)\|_{L_t^{\infty}L_x^2}&\lesssim b_j\|\tilde{P}_jS_{>2j}\langle D_t\rangle^{s-1+c\delta}(\eta|\tilde{v}|_{<k}^{2\sigma}u_x)\|_{L_t^{\infty}L_x^2}  
\\
&\lesssim b_j\sum_{m\geq 2j}\|\tilde{P}_jS_{m}\langle D_t\rangle^{s-1+c\delta}(\eta|\tilde{v}|_{<k}^{2\sigma}u_x)\|_{L_t^{\infty}L_x^2}
\\
&\lesssim b_j\sum_{m\geq 2j}\|\tilde{P}_jS_{m}\langle D_t\rangle^{s-1+c\delta}(S_{<m-4}(\eta|\tilde{v}|_{<k}^{2\sigma})\tilde{S}_{m}u_x)\|_{L_t^{\infty}L_x^2}
\\
&+b_j\sum_{m\geq 2j}\|\tilde{P}_jS_{m}\langle D_t\rangle^{s-1+c\delta}(S_{\geq m-4}(\eta|\tilde{v}|_{<k}^{2\sigma})u_x)\|_{L_t^{\infty}L_x^2}.
\end{split}
\end{equation}
For the first term in (\ref{twoestimates2}), we have by  Bernstein's inequality,
\begin{equation}\label{timebound}
\begin{split}
&b_j\sum_{m\geq 2j}\|\tilde{P}_jS_{m}\langle D_t\rangle^{s-1+c\delta}(S_{<m-4}(\eta|\tilde{v}|_{<k}^{2\sigma})\tilde{S}_{m}u_x)\|_{L_t^{\infty}L_x^2} 
\\
&\lesssim b_j\sum_{m\geq 2j}\|\tilde{P}_jS_{m}\langle D_t\rangle^{s-\frac{1}{2}+c\delta}(S_{<m-4}(\eta|\tilde{v}|_{<k}^{2\sigma})\tilde{S}_{m}u)\|_{L_t^{\infty}L_x^2}
\\
&+b_j\sum_{m\geq 2j}\|\tilde{P}_jS_{m}\langle D_t\rangle^{s-1+c\delta}(S_{<m-4}(\eta\partial_x|\tilde{v}|_{<k}^{2\sigma})\tilde{S}_{m}u)\|_{L_t^{\infty}L_x^2}.
\\
\end{split}
\end{equation}
Using Bernstein's inequality and \Cref{cor to spacetimeconv}, we may control the first term by
\begin{equation}
\begin{split}
b_j\sum_{m\geq 2j}\|\tilde{P}_jS_{m}\langle D_t\rangle^{s-\frac{1}{2}+c\delta}(S_{<m-4}(\eta|\tilde{v}|_{<k}^{2\sigma})\tilde{S}_{m}u)\|_{L_t^{\infty}L_x^2}&\lesssim b_j\|v\|_{\mathcal{S}_T^{1}}^{2\sigma}\|\langle D_t\rangle^{s-\frac{1}{2}+c\delta}u\|_{L_t^{\infty}L_x^2}   
\\
&\lesssim b_j\epsilon^{2\sigma}\|\langle D_t\rangle^{s-\frac{1}{2}+c\delta}u\|_{L_t^{\infty}L_x^2}
\\
&\lesssim b_j\epsilon^{2\sigma}(\|u\|_{\mathcal{X}_T^{2s-1+c\delta}}+\|\eta|v|^{2\sigma}_{<k}u_x\|_{Z_{\infty}^{s-\frac{3}{2}+c\delta}}).
\end{split}
\end{equation}
For the second term in (\ref{timebound}), we obtain also 
\begin{equation}
\begin{split}
b_j\sum_{m\geq 2j}\|\tilde{P}_jS_{m}\langle D_t\rangle^{s-1+c\delta}(S_{<m-4}(\eta\partial_x|\tilde{v}|_{<k}^{2\sigma})\tilde{S}_{m}u)\|_{L_t^{\infty}L_x^2}&\lesssim b_j\||\tilde{v}|^{2\sigma}\|_{L_t^{\infty}H_x^1}\|\langle D_t\rangle^{s-1+c\delta}u\|_{L_t^{\infty}L_x^{\infty}}
\\
&\lesssim b_j\||\tilde{v}|^{2\sigma}\|_{L_t^{\infty}H_x^1}\|\langle D_t\rangle^{s-1+c\delta}\langle D_x\rangle^{\frac{1}{2}+\delta}u\|_{L_t^{\infty}L_x^{2}}
\\
&\lesssim b_j\epsilon^{2\sigma}(\|\langle D_t\rangle^{s-\frac{1}{2}+c\delta}u\|_{L_t^{\infty}L_x^2}+\|\langle D_x\rangle^{s-\frac{1}{2}+c\delta}u\|_{L_t^{\infty}L_x^2})
\\
&\lesssim b_j\epsilon^{2\sigma}(\|u\|_{\mathcal{X}_T^{2s-1+c\delta}}+\|\eta|v|^{2\sigma}_{<k}u_x\|_{Z_{\infty}^{s-\frac{3}{2}+c\delta}}).
\end{split}
\end{equation}
 For the second term in (\ref{twoestimates2}), we obtain
\begin{equation}
\begin{split}
b_j\sum_{m\geq 2j}\|\tilde{P}_jS_{m}\langle D_t\rangle^{s-1+c\delta}(S_{\geq m-4}(\eta|\tilde{v}|_{<k}^{2\sigma})u_x)\|_{L_t^{\infty}L_x^2}&\lesssim b_j\|u_x\|_{L_T^{\infty}L_x^2}\|\langle D_t\rangle^{s-1+c\delta}(\eta|\tilde{v}|_{<k}^{2\sigma})\|_{L_t^{\infty}L_x^{\infty}}
\\
&\lesssim b_j \|u\|_{\mathcal{S}_T^1}\|\langle D_t\rangle^{s-1+c\delta}(\eta|\tilde{v}|_{<k}^{2\sigma})\|_{L_t^{\infty}L_x^{\infty}}.
\end{split}
\end{equation}
We have by Sobolev embedding, the fractional Leibniz rule and the Moser bound \Cref{scalarMoser},
\begin{equation}
\begin{split}
\|\langle D_t\rangle^{s-1+c\delta}(\eta|\tilde{v}|_{<k}^{2\sigma})\|_{L_t^{\infty}L_x^{\infty}}&\lesssim \|\langle D_t\rangle^{s-1+c\delta}(\eta|\tilde{v}|_{<k}^{2\sigma})\|_{L_x^{\infty}L_t^{\frac{1}{\delta}}}
\\
&\lesssim \||\tilde{v}|^{2\sigma}\|_{L_x^{\infty}L_t^{\frac{2}{\delta}}}+\|\langle D_t\rangle^{s-1+c\delta}|\tilde{v}|_{<k}^{2\sigma}\|_{L_x^{\infty}L_t^{\frac{2}{\delta}}}
\\
&\lesssim \||\tilde{v}|^{2\sigma}\|_{L_t^{\frac{2}{\delta}}L_x^{\infty}}+\||\tilde{v}|^{2\sigma-1}\|_{L_x^{\infty}L_t^{\frac{4}{\delta}}}\|\langle D_t\rangle^{s-1+c\delta}\tilde{v}\|_{L_x^{\infty}L_t^{\frac{4}{\delta}}}
\\
&\lesssim \epsilon^{2\sigma}+\||\tilde{v}|^{2\sigma-1}\|_{L_t^{\frac{4}{\delta}}L_x^{\infty}}\|\langle D_t\rangle^{s-1+c\delta}\tilde{v}\|_{L_t^{\frac{4}{\delta}}L_x^{\infty}}
\\
&\lesssim \epsilon^{2\sigma}+\|v\|_{\mathcal{S}_T^1}^{2\sigma-1}\|\langle D_t\rangle^{s-1+c\delta}\tilde{v}\|_{L_t^{\frac{4}{\delta}}L_x^{\infty}}.
\end{split}
\end{equation}
Now, notice that by \Cref{cor to spacetimeconv},
\begin{equation}
    \begin{split}
        \|\langle D_t\rangle^{s-1+c\delta}\tilde{v}\|_{L_t^{\frac{4}{\delta}}L_x^{\infty}}&\lesssim \sum_{j\geq 0}\|\langle D_t\rangle^{s-1+c\delta}P_j\tilde{v}\|_{L_t^{\frac{4}{\delta}}L_x^{\infty}}
        \\
       & \lesssim \sum_{j\geq 0}\|\langle D_t\rangle^{s-1+c\delta}\langle D_x\rangle^{\frac{1}{2}}P_j\tilde{v}\|_{L_t^{\frac{4}{\delta}}L_x^2}
       \\
       & \lesssim\sum_{j\geq 0}\|\langle D_x\rangle^{s-\frac{1}{2}+c\delta}P_j\tilde{v}\|_{L_t^{\frac{4}{\delta}}L_x^2}+\sum_{j\geq 0}\|\langle D_t\rangle^{s-\frac{1}{2}+c\delta}P_j\tilde{v}\|_{L_t^{\frac{4}{\delta}}L_x^2}
       \\
       &\lesssim \|\tilde{v}\|_{\mathcal{X}_T^{2s-1+c\delta}}+\sum_{j\geq 0}\left(\|\langle D_x\rangle^{2s-1+c\delta}P_j\tilde{v}\|_{L_t^{\frac{4}{\delta}}L_x^2}+\|\tilde{P}_j\langle D_t\rangle^{s-\frac{3}{2}+c\delta}(i\partial_t+\partial_x^2)\tilde{v}\|_{L_t^{\frac{4}{\delta}}L_x^2}\right)
       \\
       & \lesssim \|v\|_{\mathcal{X}_T^{2s-1+c\delta}}+\sum_{j\geq 0}\|\tilde{P}_j\langle D_t\rangle^{s-\frac{3}{2}+c\delta}(i\partial_t+\partial_x^2)\tilde{v}\|_{L_t^{\frac{4}{\delta}}L_x^2}.
    \end{split}
\end{equation}
To control the latter term above, there are two cases. If $s-\frac{3}{2}<0$, then this term can be easily controlled by $\epsilon$ by commuting $(i\partial_t+\partial_x^2)$ with $\tilde{\eta}$ and applying H\"older's inequality. If $s-\frac{3}{2}\geq 0$, then we have (after possibly enlarging $c\delta$)
\begin{equation}\label{commutet}
\begin{split}
\sum_{j\geq 0}\|\tilde{P}_j\langle D_t\rangle^{s-\frac{3}{2}+c\delta}(i\partial_t+\partial_x^2)\tilde{v}\|_{L_t^{\frac{4}{\delta}}L_x^2}&\lesssim \|\langle D_t\rangle^{s-\frac{3}{2}+c\delta}(\eta(i\partial_t+\partial_x^2)v)\|_{L_t^{\frac{4}{\delta}}L_x^{2}}+\|\langle D_t\rangle^{s-\frac{3}{2}+c\delta}(\partial_t\tilde{\eta} v)\|_{L_t^{\frac{4}{\delta}}L_x^{2}}
\\
&\lesssim \sum_{k\geq 0}\|\langle D_t\rangle^{s-\frac{3}{2}+c\delta}S_k(\eta(i\partial_t+\partial_x^2)v)\|_{L_t^{\frac{4}{\delta}}L_x^{2}}+\|\langle D_t\rangle^{s-\frac{3}{2}+c\delta}S_k(\partial_t\tilde{\eta} v)\|_{L_t^{\frac{4}{\delta}}L_x^{2}}.
\end{split}    
\end{equation}
By doing a paraproduct expansion of $S_k(\eta(i\partial_t+\partial_x^2)v)=S_k(S_{<k-4}\eta(i\partial_t+\partial_x^2)\tilde{S}_kv)+S_k(S_{\geq k-4}\eta(i\partial_t+\partial_x^2)v)$, using Bernstein and H\"older's inequality, summing over $k$, and possibly enlarging the factor of $c\delta$, we obtain
\begin{equation}
\sum_{k\geq 0}\|\langle D_t\rangle^{s-\frac{3}{2}+c\delta}S_k(\eta(i\partial_t+\partial_x^2)v)\|_{L_t^{\frac{4}{\delta}}L_x^{2}}\lesssim \|(i\partial_t+\partial_x^2)v\|_{Z_{\infty}^{s-\frac{3}{2}+c\delta}}\lesssim\epsilon.
\end{equation}
A similar argument involving a paraproduct expansion of $S_k(\partial_t\eta v)$ can  be used to show
\begin{equation}
\|\langle D_t\rangle^{s-\frac{3}{2}+c\delta}S_k(\partial_t\eta v)\|_{L_t^{\frac{4}{\delta}}L_x^{2}}\lesssim\epsilon.
\end{equation}
 Therefore, the second term in (\ref{twoestimates2}) can be controlled by
\begin{equation}
b_j\epsilon^{2\sigma}\|u\|_{\mathcal{S}_T^{1}}+b_j\epsilon^{2\sigma-1}\|u\|_{\mathcal{S}_T^1}\|v\|_{\mathcal{X}_T^{2s-1+c\delta}}.    
\end{equation}
Combining this and (\ref{timebound}) with (\ref{spacebound}) yields the estimate,
\begin{equation}\label{closebound}
\|\tilde{P}_j(\eta|\tilde{v}|_{<k}^{2\sigma}u_x)\|_{Z_{\infty}^{s-1+\delta}}\lesssim b_j\epsilon^{2\sigma}(\|u\|_{\mathcal{X}_T^{2s-1+c\delta}}+\|u\|_{\mathcal{S}_T^1}+\|\eta|v|^{2\sigma}_{<k}u_x\|_{Z_{\infty}^{s-\frac{3}{2}+c\delta}})+b_j\epsilon^{2\sigma-1}\|u\|_{\mathcal{S}_T^1}\|v\|_{\mathcal{X}_T^{2s-1+c\delta}}.
\end{equation}
By square summing (\ref{closebound}) and applying (\ref{closebound}) with $s-1$ replaced by $s-\frac{3}{2}$, we obtain
\begin{equation}
\|\eta|v|^{2\sigma}_{<k}u_x\|_{Z_{\infty}^{s-\frac{3}{2}+c\delta}}\lesssim \epsilon^{2\sigma}(\|u\|_{\mathcal{X}_T^{2s-1+c\delta}}+\|u\|_{\mathcal{S}_T^1}+\|\eta|v|^{2\sigma}_{<k}u_x\|_{Z_{\infty}^{s-2+c\delta}})+\epsilon^{2\sigma-1}\|u\|_{\mathcal{S}_T^1}\|v\|_{\mathcal{X}_T^{2s-1+c\delta}} 
\end{equation}
and since $s<2$, it follows that if $\delta$ is small enough, then 
\begin{equation}
\|\eta|v|^{2\sigma}_{<k}u_x\|_{Z_{\infty}^{s-2+c\delta}}\lesssim \epsilon^{2\sigma}\|u\|_{\mathcal{X}_T^{2s-1+c\delta}}.   
\end{equation}
Therefore, the bound
\begin{equation}\label{keybound2}
\|\tilde{P}_j(\eta|\tilde{v}|_{<k}^{2\sigma}u_x)\|_{Z_{\infty}^{s-1+\delta}}\lesssim b_j\epsilon^{2\sigma}(\|u\|_{\mathcal{X}_T^{2s-1+c\delta}}+\|u\|_{\mathcal{S}_T^1})+b_j\epsilon^{2\sigma-1}\|u\|_{\mathcal{S}_T^1}\|v\|_{\mathcal{X}_T^{2s-1+c\delta}}    
\end{equation}
follows.
\\
\\
For the estimate (\ref{finalest2}), we have
\begin{equation}\label{twoterms7}
\|g\|_{Z_{\infty}^{s-1+\delta}}\lesssim \|\partial_t\eta  u\|_{Z_{\infty}^{s-1+\delta}}+\|\eta P_{<k}|v|^{2\sigma}u_x\|_{Z_{\infty}^{s-1+\delta}}.    
\end{equation}
The first term above is controlled using \Cref{cor to spacetimeconv} by
\begin{equation}
\begin{split}
\|\partial_t\eta u\|_{Z_{\infty}^{s-1+\delta}}&\lesssim \|u\|_{\mathcal{X}_T^{2s-1+c\delta}}+ \|\langle D_t\rangle^{s-1+\delta}(\partial_t\eta u)\|_{L_t^{\infty}L_x^2} 
\\
&\lesssim \|u\|_{\mathcal{X}_T^{2s-1+c\delta}}+ \|\partial_t^2\eta u\|_{Z_{\infty}^{s-2+c\delta}}+\|\partial_t\eta (\eta P_{<k}|v|^{2\sigma}u_x)\|_{Z_{\infty}^{s-2+c\delta}}
\\
&\lesssim \|u\|_{\mathcal{X}_T^{2s-1+c\delta}}
\end{split}
\end{equation}
where in the last line, we used that $s<2$. The second term in (\ref{twoterms7}) can be estimated by square summing (\ref{keybound2}). This completes the proof of \Cref{nonlinest}.
\end{proof}
Finally, we complete the proof of \Cref{Etime1}. This simply follows by combining \Cref{nonlinest} with the estimates (\ref{Etime2}) and (\ref{initestimate}).
\end{proof}
\subsection{Proof of \Cref{keyestimate1}}
Finally, we prove the main estimate of the section, \Cref{keyestimate1}. 
\begin{proof}
Let $0<\delta\ll 1$. From \Cref{cor to spacetimeconv}, we have
\begin{equation}
\|P_ju\|_{L_T^{\infty}H_x^{2s}}^2\lesssim_{\delta}\|\tilde{S}_{2j}P_ju\|_{L_T^{\infty}H_x^{2s}}^2+\|\tilde{P}_j(\eta|\tilde{v}|^{2\sigma}_{<k}u_x)\|_{Z_{\infty}^{s-1+\delta}}^2.
\end{equation}
By \Cref{Etime1}, we have
\begin{equation}
\begin{split}
\|\tilde{S}_{2j}P_ju\|_{L_T^{\infty}H_x^{2s}}^2&\lesssim_{\delta} a_j^2\|u_0\|_{H_x^{2s}}^2+b_j^2\epsilon^{2\sigma}(\|u\|_{\mathcal{X}_T^{2s}}^2+\|u\|_{\mathcal{S}_T^{1}}^2)+b_j^2\epsilon^{2\sigma-1}\|u\|_{\mathcal{S}_T^1}\|u\|_{\mathcal{X}_T^{2s}}\|v\|_{\mathcal{X}_T^{2s}}
\\
&+b_j^2\epsilon^{4\sigma-2}\|u\|_{\mathcal{S}_T^{1}}^2\|v\|_{\mathcal{X}_T^{2s}}^2. 
\end{split}
\end{equation}
Furthermore, by \Cref{nonlinest}, we have  
\begin{equation}
\begin{split}
\|\tilde{P}_j(\eta|\tilde{v}|^{2\sigma}_{<k}u_x)\|_{Z_{\infty}^{s-1+\delta}}^2&\lesssim b_j^2\epsilon^{4\sigma}(\|u\|_{\mathcal{X}_T^{2s}}^2+\|u\|_{\mathcal{S}_T^1}^2)+b_j^2\epsilon^{4\sigma-2}\|u\|_{\mathcal{S}_T^{1}}^2\|v\|_{\mathcal{X}_T^{2s}}^2
\\
&\lesssim b_j^2\epsilon^{2\sigma}(\|u\|_{\mathcal{X}_T^{2s}}^2+\|u\|_{\mathcal{S}_T^1}^2)+b_j^2\epsilon^{4\sigma-2}\|u\|_{\mathcal{S}_T^{1}}^2\|v\|_{\mathcal{X}_T^{2s}}^2
.  
\end{split}
\end{equation}
This completes the proof.
\end{proof}
\subsection{Proof of \Cref{timemoser2}}
It remains to prove the technical estimate \Cref{timemoser2}.
This will follow from the slightly more general estimate:
\begin{lemma}\label{timemoserest}
Let $T=2$, $\frac{1}{2}<\sigma<1$ and $u$ be a $C^2(\mathbb{R};H_x^{\infty})$ solution to the inhomogeneous Schr\"odinger equation,
\begin{equation}\label{inhomsc}
(i\partial_t+\partial_x^2)u=f
\end{equation}
 supported in the time interval $[-2,2]$. Furthermore, let $b_j$ be an admissible $\mathcal{X}_T^{2s}$ frequency envelope for $u$ (here we don't assume that the formula is necessarily given explicitly by (\ref{env1})).  Then for $j>0$ we have, 
\\
\\
a) If $0<s<1$, then
\begin{equation}
\|\tilde{P}_j\tilde{S}_{2j}D_t^s(|u|^{2\sigma})\|_{L_t^2L_x^2}\lesssim b_j\|u\|_{\mathcal{S}_T^1}^{2\sigma-1}(\|u\|_{\mathcal{X}_T^{2s}}+\|f\|_{\mathcal{S}_T^0}).
\end{equation}
\\
\\
b) If $1\leq s<2\sigma$ and $0<\delta\ll 1$, then 
\begin{equation}
\|\tilde{P}_j\tilde{S}_{2j}D_t^s(|u|^{2\sigma})\|_{L_t^2L_x^2}\lesssim_{\delta} b_j\Lambda (\|u\|_{\mathcal{X}_T^{2s}}+\|f\|_{\mathcal{S}_T^0}+\|f\|_{Z_{\infty}^{s-1+\delta}})
\end{equation}
where 
\begin{equation}
\Lambda:=(\|u\|_{\mathcal{S}_T^{1+\delta}}+\|f\|_{\mathcal{S}_T^{\delta}})^{2\sigma-1}\Lambda_0
\end{equation}
and $\Lambda_0$ is some polynomial in $\|u\|_{\mathcal{S}_T^{1+\delta}}+\|f\|_{\mathcal{S}_T^{\delta}}$.
\end{lemma}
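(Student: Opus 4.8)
The plan is to prove \Cref{timemoserest} in two steps corresponding to parts (a) and (b), reducing part (b) to a fractional Leibniz manipulation that brings the time derivative count down to the range where part (a) (or a Hölder-type estimate via \Cref{Holder}) applies to the rough factor. Throughout, the key structural point is that on the Fourier support of $\tilde S_{2j}$, the temporal frequency $\tau\sim 2^{2j}$ is comparable to $\xi^2$ for $\xi\sim 2^j$, so a time derivative of order $s$ costs $2^{2js}$, which is exactly the size of $2s$ spatial derivatives; this is what allows us to convert between $D_t^s$ and $D_x^{2s}$ at the cost of the high-modulation error controlled by \Cref{spacetimeconv} and \Cref{cor to spacetimeconv}.

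For part (a), $0<s<1$: I would first write $|u|^{2\sigma}=F(u)$ with $F(z)=|z|^{2\sigma}$, and expand $\partial_t F(u)$ using the chain rule as $\sigma|u|^{2\sigma-2}\,\RE(\bar u\,u_t)$. Using the equation \eqref{inhomsc}, $u_t=i u_{xx}-if$, so $\partial_t F(u)=\sigma|u|^{2\sigma-2}\RE(i\bar u u_{xx})-\sigma|u|^{2\sigma-2}\RE(i\bar u f)$; the first term can be written as $\partial_x$ of $\sigma|u|^{2\sigma-2}\RE(i\bar u u_x)$ minus a term where the $x$-derivative hits $|u|^{2\sigma-2}\bar u$. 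But rather than taking a full time derivative, since $0<s<1$ I would instead use the vector-valued Moser estimate \Cref{Moservec} directly in the form $\|D_t^s F(u)\|_{L_t^2 L_x^2}\lesssim \|F'(u)\|_{L_t^{p_1}L_x^{q_1}}\|D_t^s u\|_{L_t^{p_2}L_x^{q_2}}$ with a Hölder split that puts $2\sigma-1$ powers of $u$ in a Strichartz-type norm controlled by $\|u\|_{\mathcal{S}_T^1}$ and one factor $D_t^s u$ in an $L_t^\infty L_x^2$-type norm. For the latter I invoke \Cref{cor to spacetimeconv} a): $\|P_m\langle D_t\rangle^s u\|_{L_t^p L_x^q}\lesssim \|\tilde S_{2m}P_m\langle D_x\rangle^{2s}u\|+\|\tilde P_m\langle D_t\rangle^{s-1+\delta}f\|$, and here $2s<2$ so $\langle D_x\rangle^{2s}u$ is controlled by $\|u\|_{\mathcal{X}_T^{2s}}$ and the error by $\|f\|_{\mathcal{S}_T^0}$ (after absorbing $\delta$). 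The frequency-envelope bookkeeping comes in by first inserting the paraproduct decomposition $\tilde P_j\tilde S_{2j}(F(u))$ and noting the output at frequency $2^j$ forces at least one input near frequency $2^j$, producing the factor $b_j$; the balanced and high-high pieces are summed using the slowly varying property of $b_j$ exactly as in the estimate (\ref{balancedhighreg}).

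For part (b), $1\le s<2\sigma$: here $s$ can exceed $1$, so a single application of Moser is not enough because $D_t^s u$ for $s>1$ is not directly controlled. The idea is to first peel off one time derivative using the equation. Write $D_t^s F(u)=D_t^{s-1}\partial_t F(u)=D_t^{s-1}\big(\sigma|u|^{2\sigma-2}\RE(\bar u u_t)\big)$ and substitute $u_t=iu_{xx}-if$. This yields two families of terms: one of the form $D_t^{s-1}\big(|u|^{2\sigma-2}\RE(i\bar u u_{xx})\big)$ and one of the form $D_t^{s-1}\big(|u|^{2\sigma-2}\RE(i\bar u f)\big)$. For the first, integrate the $x$-derivative by parts at the level of the product (writing $\RE(i\bar u u_{xx})=\partial_x\RE(i\bar u u_x)$ up to the term $\RE(i\bar u_x u_x)=0$), so that morally $D_t^{s-1}\partial_x\big(|u|^{2\sigma-2}\RE(i\bar u u_x)\big)$ minus $D_t^{s-1}\big(\partial_x(|u|^{2\sigma-2}\bar u)\cdot\RE(i u_x)\big)$; on the support of $\tilde P_j\tilde S_{2j}$ the extra $\partial_x$ is harmless (it is $\sim 2^j$) and effectively trades for half a time derivative, so we are left needing $D_t^{s-1}$ (with $s-1<1$) of an expression that is a product of a $C^{0,2\sigma-1}$-type factor $|u|^{2\sigma-2}\bar u$ and a factor linear in $u_x$. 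Now apply the fractional Leibniz rule \Cref{Leib2} and the Hölder estimate \Cref{Holder} to distribute $D_t^{s-1}$: when it lands on $|u|^{2\sigma-2}\bar u$ it costs a factor $\tfrac{1}{2\sigma-1}$ in the exponent (as noted in the remark after \Cref{Holder}), which is exactly why the hypothesis $s<2\sigma$, equivalently $\tfrac{s-1}{2\sigma-1}<1$, is needed; the resulting $u$-norms are Strichartz norms controlled by $\|u\|_{\mathcal{S}_T^{1+\delta}}$, giving the factor $(\|u\|_{\mathcal{S}_T^{1+\delta}}+\|f\|_{\mathcal{S}_T^\delta})^{2\sigma-1}\Lambda_0=\Lambda$. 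When $D_t^{s-1}$ lands on $u_x$ instead, one uses \Cref{cor to spacetimeconv} b) (with $u$ replaced by $\partial_x u$, carefully, or more cleanly its vector-valued analogue) to reduce $\langle D_t\rangle^{s-1}u_x$ to $\langle D_x\rangle^{2s-1}u$ plus an $f$-error measured in $Z_\infty^{s-1+\delta}$ — here $2s-1<2s$ so this is inside $\|u\|_{\mathcal{X}_T^{2s}}$. The second family, involving $f$, is handled identically: $D_t^{s-1}$ of $|u|^{2\sigma-2}\bar u f$ splits by Leibniz, with the $f$ part giving $\|f\|_{\mathcal{S}_T^0}$ or $\|f\|_{Z_\infty^{s-1+\delta}}$ depending on whether the derivative lands on it, times Strichartz powers of $u$. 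The frequency envelope factor $b_j$ appears, as before, from the paraproduct localization $\tilde P_j\tilde S_{2j}$ combined with the slowly varying summation.

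The main obstacle I anticipate is twofold: first, keeping the bookkeeping of the small parameters $\delta$ and $c\delta$ consistent while repeatedly invoking \Cref{cor to spacetimeconv}, since each application of that corollary costs an $\epsilon_0$ or $\delta$ of regularity and these must all be absorbed without crossing the critical thresholds $2s<4\sigma$ and $s<2\sigma$ (this is routine but error-prone). Second, and more genuinely delicate, is the step in part (b) where $D_t^{s-1}$ falls on the rough factor $|u|^{2\sigma-2}\bar u$: one must verify that the Hölder exponent after applying \Cref{Holder} — namely that $\langle D_x\rangle^{(s-1+c\delta)/(2\sigma-1)+\epsilon}$ or the time-analogue of $u$ is controlled by $\|u\|_{\mathcal{S}_T^{1+\delta}}$ — and this requires $\tfrac{s-1}{2\sigma-1}<1$ together with the right Strichartz/Sobolev embedding in the mixed space-time norm; this is precisely where the upper threshold $s<2\sigma$ is saturated, and it is the crux of why $4\sigma$ (rather than $2\sigma+2$) is the high-regularity ceiling. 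Everything else — the balanced frequency interactions, the $L^1$-kernel bookkeeping for commutators via \Cref{Commutator leibniz}, and the slowly-varying summation of $b_j$ — follows the template already used in the proof of \Cref{Etime1}.
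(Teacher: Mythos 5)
There is a genuine gap, and it sits at the heart of your part (b). After substituting $u_t=iu_{xx}-if$ you integrate by parts, writing $|u|^{2\sigma-2}\RE(i\bar u u_{xx})=\partial_x\bigl(|u|^{2\sigma-2}\RE(i\bar u u_x)\bigr)-\partial_x\bigl(|u|^{2\sigma-2}\bigr)\RE(i\bar u u_x)$. The error term forces a \emph{spatial} derivative onto $|u|^{2\sigma-2}$ (equivalently onto $|u|^{2\sigma-2}\bar u$), which for $\sigma<1$ is only $C^{0,2\sigma-1}$ and not $C^1$: pointwise the error is of size $|u|^{2\sigma-2}|u_x|^2$, a genuinely negative power of $|u|$. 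No norm on the right-hand side of the lemma ($\mathcal{S}_T$, $\mathcal{X}_T^{2s}$, $Z_\infty$ norms of $u$ and $f$) controls negative powers of $|u|$; indeed for $\tfrac12<\sigma\le\tfrac34$ this expression need not even lie in $L_x^2$ for a smooth $u$ with a simple zero. This is precisely the obstruction the introduction flags ("this will inevitably introduce negative powers of $u$"), and at low regularity it is only tamed by the partial gauge with the cutoff $\chi_j(|u|^2)$, a device unavailable in this lemma. The paper's proof is organized exactly so that no spatial derivative ever falls on $|u|^{2\sigma-2}\bar u$: one writes $D_t^s=D_t^{s-1}\partial_t$, uses the chain rule, and then performs a \emph{temporal} paraproduct. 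In the piece where the rough factor sits at temporal frequency $<2^{2j}$ it is never differentiated at all — the full $D_t^{s-1}\partial_t$ lands on $\tilde S_{2j}u$ and is converted to $D_x^{2s}u$ plus an $f$-error via \Cref{spacetimeconv}; in the piece where it sits at temporal frequency $\gtrsim 2^{2j}$ one first gains $2^{-j\delta}\lesssim b_j$, then substitutes the equation and applies only fractional \emph{time} derivatives of order $<2\sigma-1$ (after the $1/(2\sigma-1)$ amplification of \Cref{Holder}) to the rough factor, with the two Hölder-pairing subcases $s\le\sigma+\tfrac12$ and $s>\sigma+\tfrac12$. Your Leibniz distribution of $D_t^{s-1}$ after the integration by parts cannot be repaired without reorganizing it along these lines.

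Part (a) also has a gap, though a more fixable one: the claim that "output at frequency $2^j$ forces at least one input near frequency $2^j$" is a multilinear heuristic that fails for the non-polynomial nonlinearity $|u|^{2\sigma}$ — $\tilde P_j|P_{<j}u|^{2\sigma}$ is nonzero even though every "input" is at frequency $<2^j$, and a direct fractional chain rule in time ($\|D_t^sF(u)\|\lesssim\|F'(u)\|\,\|D_t^su\|$) destroys the spatial localization needed for the envelope factor $b_j$. The paper instead splits $|u|^{2\sigma}=|P_{<j}u|^{2\sigma}+(|u|^{2\sigma}-|P_{<j}u|^{2\sigma})$, obtaining $b_j$ in the second piece from an explicit $P_{\ge j}u$ via the fundamental theorem of calculus, and in the first piece by interpolating in time between zero and one whole time derivative, substituting the equation, and letting the output frequency be carried either by $\tilde P_ju_{xx}$ (giving $b_j$) or by the high-frequency tail of $F(P_{<j}u)$ estimated via \Cref{Holder} with a $2^{-\delta j}\lesssim b_j$ gain. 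Your use of \Cref{cor to spacetimeconv} to trade $D_t^su$ for $D_x^{2s}u$ plus $f$-errors, and your identification of the threshold $s<2\sigma$ with $\tfrac{s-1}{2\sigma-1}<1$, do match the paper's mechanism; but as written the proposal neither produces the frequency-localized bound of part (a) nor survives the negative-power term in part (b).
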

\begin{remark}
We only prove the above estimate  for $\tilde{P}_j\tilde{S}_{2j}D_t^s(|u|^{2\sigma})$ in $L_t^2L_x^2$. Although the estimate is almost certainly true for a suitable range of $p\geq 1$ in $L_t^pL_x^2$, we do not pursue this here, so as not to further complicate the argument (specifically, the proof of b)).
\end{remark}
\begin{remark}
We do not claim that the factors of $\|f\|_{\mathcal{S}_T^0}$, $\|f\|_{Z_{\infty}^{s-1+\delta}}$ and $\Lambda$ that appear in the estimate are in any way optimal (in fact, in many instances in the below estimates, they arise in relatively crude ways). We opted not to carefully optimize the inequality because it will not affect the range of $s$ for which \Cref{Etime1} holds, and also  because the current form of \Cref{timemoserest} can be more easily applied to establish \Cref{keyestimate1}.
\end{remark}
\begin{proof}
a) For notational convenience, we will sometimes write $F(z)=|z|^{2\sigma-2}\overline{z}$ and $P_{<j}u=u_{<j}$. Now, for each $j>0$, we write 
\begin{equation}\label{Moser1}
\begin{split}
D_t^s\tilde{S}_{2j}P_j|u|^{2\sigma}&=D_t^s\tilde{S}_{2j}P_j|P_{<j}u|^{2\sigma}-D_t^s\tilde{S}_{2j}P_j(|P_{<j}u|^{2\sigma}-|u|^{2\sigma})
\\
&=D_t^s\tilde{S}_{2j}P_j|P_{<j}u|^{2\sigma}+2\sigma\RE\int_{0}^{1}P_j\tilde{S}_{2j}D_t^s(F(y(\theta))P_{\geq j}u)d\theta
\end{split}
\end{equation}
where 
\begin{equation}
y(\theta):=\theta u+(1-\theta)P_{<j}u.
\end{equation}
For the first term, interpolating gives
\begin{equation}\label{Moserlow}
\begin{split}
\|D_t^s\tilde{S}_{2j}P_j|P_{<j}u|^{2\sigma}\|_{L_t^{2}L_x^2}&\lesssim \|\tilde{S}_{2j}P_j|P_{<j}u|^{2\sigma}\|_{L_t^{2}L_x^2}^{1-s}\|\tilde{S}_{2j}P_j(F(u_{<j})P_{<j}u_t)\|_{L_t^{2}L_x^2}^{s}.
\end{split}
\end{equation}
By expanding $u_t$ in the second factor, we obtain
\begin{equation}\label{twoterms}
\begin{split}
\|\tilde{S}_{2j}P_j(F(u_{<j})P_{<j}u_t)\|_{L_t^2L_x^2}&\lesssim \|\tilde{S}_{2j}P_j(F(u_{<j})P_{<j}u_{xx})\|_{L_t^2L_x^2}
\\
&+\|\tilde{S}_{2j}P_j(F(u_{<j})P_{<j}f)\|_{L_t^2L_x^2}.
\end{split}
\end{equation}
We expand the first term in (\ref{twoterms}) using the Littlewood-Paley trichotomy. Then Bernstein's inequality and \Cref{Holder} yields %{\color{magenta} Minor comment. In the third line, I think you may want to place $\|D_x^{\delta}F(u_{<j})\|_{L_t^{\infty}L_x^{\infty}}$ in $L^p_t$ instead of $L^\infty_t$, so that it lines up with the fact that $\mathcal{S}_T^s$ has an $L^4_tW_x^{s,\infty}$. Aligning this with line 4 seems to suggest that $p=4/(2\sigma-1)$ would be the best choice.}

\begin{equation}\label{LWT}
\begin{split}
&\|\tilde{S}_{2j}P_j(F(u_{<j})P_{<j}u_{xx})\|_{L_t^2L_x^2}
\\
&\lesssim \|P_{<j-4}F(u_{<j})\tilde{P}_ju_{xx}\|_{L_t^2L_x^2}+\|P_{\geq j-4}F(u_{<j})P_{<j}u_{xx}\|_{L_t^2L_x^2}
\\
&\lesssim b_j2^{2j(1-s)}\|u\|_{\mathcal{S}_T^1}^{2\sigma-1}\|u\|_{\mathcal{X}_T^{2s}}+2^{2j(1-s)}2^{-\delta j}\|D_x^{\delta}F(u_{<j})\|_{L_t^{\infty}L_x^{\infty}}\|D_x^{2s}u\|_{L_t^{\infty}L_x^2}
\\
&\lesssim b_j2^{2j(1-s)}\|u\|_{\mathcal{S}_T^1}^{2\sigma-1}\|u\|_{\mathcal{X}_T^{2s}}.
\end{split}
\end{equation}
For the second term in (\ref{twoterms}), we obtain (by taking $2^{2j(s-1)}\lesssim b_j$) 
\begin{equation}
\begin{split}
\|\tilde{S}_{2j}P_j(F(u_{<j})P_{<j}f)\|_{L_t^2L_x^2}&\lesssim b_j2^{2j(1-s)}\|u\|_{\mathcal{S}_T^1}^{2\sigma-1}\|f\|_{L_t^{\infty}L_x^2}
\end{split}
\end{equation}
and so by Bernstein, the estimate (\ref{Moserlow}) becomes
\begin{equation}
\begin{split}
\|D_t^s\tilde{S}_{2j}P_j|P_{<j}u|^{2\sigma}\|_{L_t^{2}L_x^2}&\lesssim 2^{2js(1-s)}[b_j\|u\|_{\mathcal{S}_T^1}^{2\sigma-1}\|u\|_{\mathcal{X}_T^{2s}}+b_j\|u\|_{\mathcal{S}_T^1}^{2\sigma-1}\|f\|_{L_t^{\infty}L_x^2}]^s\|\tilde{S}_{2j}P_j|P_{<j}u|^{2\sigma}\|_{L_t^{2}L_x^2}^{1-s}
\\
&\lesssim [b_j\|u\|_{\mathcal{S}_T^1}^{2\sigma-1}\|u\|_{\mathcal{X}_T^{2s}}+b_j\|u\|_{\mathcal{S}_T^1}^{2\sigma-1}\|f\|_{L_t^{\infty}L_x^2}]^s\|D_t^s\tilde{S}_{2j}P_j|P_{<j}u|^{2\sigma}\|_{L_t^{2}L_x^2}^{1-s}.
\end{split}
\end{equation}
Hence,
\begin{equation}
\begin{split}
\|D_t^s\tilde{S}_{2j}P_j|P_{<j}u|^{2\sigma}\|_{L_t^{2}L_x^2}&\lesssim b_j\|u\|_{\mathcal{S}_T^1}^{2\sigma-1}\|u\|_{\mathcal{X}_T^{2s}}+b_j\|u\|_{\mathcal{S}_T^1}^{2\sigma-1}\|f\|_{L_t^{\infty}L_x^2}.
\end{split}
\end{equation}
For the second term in (\ref{Moser1}), using that $2^{2js}\|P_{\geq j}u\|_{L_t^2L_x^2}\lesssim \|D_x^{2s}P_{\geq j}u\|_{L_t^2L_x^2}$ and \Cref{Holder} leads to the estimate,  
\begin{equation}
\begin{split}
&\|P_j\tilde{S}_{2j}D_t^s(F(y(\theta))P_{\geq j}u)\|_{L_t^2L_x^2}
\\
&\lesssim 2^{2js}\|P_j(F(y(\theta))P_{\geq j}u)\|_{L_t^2L_x^2}
\\
&\lesssim b_j\|\langle D_x\rangle^{\delta}F(y(\theta))\|_{L_t^{\infty}L_x^{\infty}}\|u\|_{\mathcal{X}_T^{2s}}
\\
&\lesssim b_j\|u\|_{\mathcal{S}_T^1}^{2\sigma-1}\|u\|_{\mathcal{X}_T^{2s}}.
\end{split}
\end{equation}
Hence, by Minkowski's inequality,
\begin{equation}
2\sigma\|\RE\int_{0}^{1}P_j\tilde{S}_{2j}D_t^s(F(y(\theta))P_{\geq j}u)d\theta\|_{L_t^2L_x^2}\lesssim b_j\|u\|_{\mathcal{S}_T^1}^{2\sigma-1}\|u\|_{\mathcal{X}_T^{2s}}.
\end{equation}
Combining everything shows that
\begin{equation}
\|D_t^s\tilde{S}_{2j}P_j|u|^{2\sigma}\|_{L_t^{2}L_x^2}\lesssim b_j\|u\|_{\mathcal{S}_T^1}^{2\sigma-1}\|u\|_{\mathcal{X}_T^{2s}}+b_j\|u\|_{\mathcal{S}_T^1}^{2\sigma-1}\|f\|_{L_t^{\infty}L_x^2}.
\end{equation}
This proves part a).
\\
\\
Next, we prove part b). By commuting through the temporal projection, we obtain
\begin{equation}\label{twoterms3}
\begin{split}
\|\tilde{S}_{2j}P_jD_t^s(|u|^{2\sigma})\|_{L_t^2L_x^2}&\lesssim \|\tilde{S}_{2j}D_t^{s-1}(\tilde{S}_{<2j}(|u|^{2\sigma-2}\overline{u})\partial_t\tilde{S}_{2j}u)\|_{L_t^2L_x^2}
\\
&+\|\tilde{S}_{2j}D_t^{s-1}(\tilde{S}_{\geq 2j}(|u|^{2\sigma-2}\overline{u})\partial_tu)\|_{L_t^2L_x^2}.
\end{split}
\end{equation}
The first term in (\ref{twoterms3}) can be estimated by Bernstein's inequality to obtain 
\begin{equation}
\begin{split}
\|\tilde{S}_{2j}D_t^{s-1}(\tilde{S}_{<2j}(|u|^{2\sigma-2}\overline{u})\partial_t\tilde{S}_{2j}u)\|_{L_t^2L_x^2}&\lesssim \|u\|^{2\sigma-1}_{\mathcal{S}_T^1}\|D_t^{s-1}\partial_t\tilde{S}_{2j}u\|_{L_t^2L_x^2}.
\end{split}
\end{equation}
Then writing 
\begin{equation}
\|D_t^{s-1}\partial_t\tilde{S}_{2j}u\|_{L_t^2L_x^2}\sim \|D_t^{s-1}\partial_t\tilde{S}_{2j}P_{\leq 0}u\|_{L_t^2L_x^2}+\left(\sum_{k>0}\|D_t^{s-1}\partial_t\tilde{S}_{2j}P_ku\|_{L_t^2L_x^2}^2\right)^{\frac{1}{2}}
\end{equation}
and requiring $b_j\geq 2^{-j\delta}$, applying \Cref{spacetimeconv} and then square summing over $k$ yields
\begin{equation}
\begin{split}
\|D_t^{s-1}\partial_t\tilde{S}_{2j}u\|_{L_t^2L_x^2}&\lesssim \|D_x^{2s}\tilde{S}_{2j}P_ju\|_{L_t^2L_x^2}+2^{-j\delta}\|\langle D_t\rangle^{s-1+\delta}\tilde{S}_{2j}f\|_{L_t^2L_x^2}
\\
&\lesssim b_j\|u\|_{\mathcal{X}_T^{2s}}+b_j\|f\|_{Z_{\infty}^{s-1+\delta}}.
\end{split}
\end{equation}
To estimate the second term in (\ref{twoterms3}), we have two cases:
\\
\\
If $1\leq s\leq\sigma+\frac{1}{2}$, we obtain from the equation, 
\begin{equation}\label{moreterms}
\begin{split}
&\|\tilde{S}_{2j}D_t^{s-1}(\tilde{S}_{\geq 2j}(|u|^{2\sigma-2}\overline{u})\partial_tu)\|_{L_t^2L_x^2}
\\
&\lesssim \|\tilde{S}_{2j}D_t^{s-1}(\tilde{S}_{\geq 2j}(|u|^{2\sigma-2}\overline{u})\partial_x^2u)\|_{L_t^2L_x^2}+\|\tilde{S}_{2j}D_t^{s-1}(\tilde{S}_{\geq 2j}(|u|^{2\sigma-2}\overline{u})f)\|_{L_t^2L_x^2}.
\end{split}
\end{equation}
By H\"older and Bernstein's inequality, Sobolev embedding and \Cref{Holder}, the first term can be estimated by
\begin{equation}
\begin{split}
\|\tilde{S}_{2j}D_t^{s-1}(\tilde{S}_{\geq 2j}(|u|^{2\sigma-2}\overline{u})\partial_x^2u)\|_{L_t^2L_x^2}&\lesssim 2^{-j\delta}\|D_t^{s-1+\frac{\delta}{2}}(|u|^{2\sigma-2}\overline{u})\|_{L_t^{\frac{1}{s-1}}L_x^{\frac{1}{s-1}}}\|\partial_x^2u\|_{L_t^{\infty}L_x^{\frac{2}{3-2s}}}
\\
&\lesssim 2^{-j\delta}\|\langle D_t\rangle^{\frac{s-1+\delta}{2\sigma-1}}u\|_{L_t^{\frac{2\sigma-1}{s-1}}L_x^{\frac{2\sigma-1}{s-1}}}^{2\sigma-1}\|\partial_x^2u\|_{L_t^{\infty}L_x^{\frac{2}{3-2s}}}
\\
&\lesssim 2^{-j\delta}\|\langle D_t\rangle^{\frac{s-1+\delta}{2\sigma-1}}u\|_{L_t^{\frac{2\sigma-1}{s-1}}L_x^{\frac{2\sigma-1}{s-1}}}^{2\sigma-1}\|u\|_{L_t^{\infty}H_x^{s+1}}.
\end{split}
\end{equation}
Applying \Cref{cor to spacetimeconv} gives 
\begin{equation}
\|\langle D_t\rangle^{\frac{s-1+\delta}{2\sigma-1}}u\|_{L_t^{\frac{2\sigma-1}{s-1}}L_x^{\frac{2\sigma-1}{s-1}}}^{2\sigma-1}\lesssim \|\langle D_x\rangle^{\frac{2s-2+4\delta}{2\sigma-1}}u\|_{L_t^{\frac{2\sigma-1}{s-1}}L_x^{\frac{2\sigma-1}{s-1}}}^{2\sigma-1}+\|\langle D_t\rangle^{\frac{s-1+2\delta}{2\sigma-1}-1}f\|_{L_t^{\frac{2\sigma-1}{s-1}}L_x^{\frac{2\sigma-1}{s-1}}}^{2\sigma-1}.
\end{equation}
Since $\frac{2s-2}{2\sigma-1}\leq 1-(\frac{1}{2}-\frac{s-1}{2\sigma-1})$, when $s\leq \sigma+\frac{1}{2}$, we have by Sobolev embedding in the spatial variable,
\begin{equation} \label{interpolation}
\|\langle D_x\rangle^{\frac{2s-2+4\delta}{2\sigma-1}}u\|_{L_t^{\frac{2\sigma-1}{s-1}}L_x^{\frac{2\sigma-1}{s-1}}}^{2\sigma-1}\|u\|_{L_t^{\infty}H_x^{s+1}}\lesssim \|u\|_{\mathcal{S}_T^{1+c\delta}}^{2\sigma-1}\|u\|_{L_t^{\infty}H_x^{2s}}
\end{equation}
for some fixed constant $c>0$.
\\
\\
 Next, applying Sobolev embedding in the time variable, and using the inequality $\|g\|_{L_x^pL_t^q}\lesssim \|g\|_{L_t^qL_x^p}$ when $p\geq q$, we also obtain 
\begin{equation}
\begin{split}
\|\langle D_t\rangle^{\frac{s-1+2\delta}{2\sigma-1}-1}f\|_{L_t^{\frac{2\sigma-1}{s-1}}L_x^{\frac{2\sigma-1}{s-1}}}&\lesssim \|f\|_{L_x^{\frac{2\sigma-1}{s-1}}L_t^2}\lesssim \|f\|_{L_t^2L_x^{\frac{2\sigma-1}{s-1}}}
\\
&\lesssim \|f\|_{\mathcal{S}_T^0}
\end{split}
\end{equation}
and so, the first term in (\ref{moreterms}) can be controlled by (after possibly relabelling $\delta$),
\begin{equation}
2^{-j\delta}(\|u\|_{\mathcal{S}_T^{1+c\delta}}+\|f\|_{\mathcal{S}_T^0})^{2\sigma-1}\|u\|_{L_t^{\infty}H_x^{2s}}\lesssim b_j\Lambda\|u\|_{L_t^{\infty}H_x^{2s}}\lesssim b_j\Lambda\|u\|_{\mathcal{X}_T^{2s}}.
\end{equation}
For the second term in (\ref{moreterms}), we simply have by Bernstein, and \Cref{Holder} and \Cref{cor to spacetimeconv},
\begin{equation}
\begin{split}
\|\tilde{S}_{2j}D_t^{s-1}(\tilde{S}_{\geq 2j}(|u|^{2\sigma-2}\overline{u})f)\|_{L_t^2L_x^2}&\lesssim 2^{-j\delta}\|D_t^{s-1+\frac{\delta}{2}}(|u|^{2\sigma-2}\overline{u})\|_{L_t^{\frac{4}{2\sigma-1}}L_x^{\frac{4}{2\sigma-1}}}\|f\|_{L_t^4L_x^{\frac{4}{3-2\sigma}}}
\\
&\lesssim 2^{-j\delta}\|\langle D_t\rangle^{\frac{1}{2}+\frac{\delta}{2}}u\|_{L_t^4L_x^4}^{2\sigma-1}\|f\|_{\mathcal{S}_T^0}
\\
&\lesssim 2^{-j\delta}(\|u\|_{\mathcal{S}_T^{1+\delta}}^{2\sigma-1}+\|f\|_{\mathcal{S}_T^0}^{2\sigma-1})\|f\|_{\mathcal{S}_T^0}
\\
&\lesssim b_j\Lambda\|f\|_{\mathcal{S}_T^0}.
\end{split}
\end{equation}
This handles the case $1\leq s\leq \sigma+\frac{1}{2}$.
\\
\\
Next, suppose $\sigma+\frac{1}{2}< s<2\sigma$. By Bernstein's inequality, 
\begin{equation}
\begin{split}
\|\tilde{S}_{2j}D_t^{s-1}(\tilde{S}_{\geq 2j}(|u|^{2\sigma-2}\overline{u})\partial_tu)\|_{L_t^2L_x^2}&\lesssim 2^{-j\delta}\|D_t^{s-1+\frac{\delta}{2}}(|u|^{2\sigma-2}\overline{u})\|_{L_t^{\frac{2}{2\sigma-1}}L_x^{\frac{2}{2\sigma-1}}}\|\partial_tu\|_{L_t^{\frac{1}{1-\sigma}}L_x^{\frac{1}{1-\sigma}}}
\\
&\lesssim b_j\|D_t^{s-1+\frac{\delta}{2}}(|u|^{2\sigma-2}\overline{u})\|_{L_t^{\frac{2}{2\sigma-1}}L_x^{\frac{2}{2\sigma-1}}}\|\partial_tu\|_{L_t^{\frac{1}{1-\sigma}}L_x^{\frac{1}{1-\sigma}}}.
\end{split}
\end{equation}
Using \Cref{Holder} and then \Cref{cor to spacetimeconv}, we estimate, 
\begin{equation}
\begin{split}
\|D_t^{s-1+\frac{\delta}{2}}(|u|^{2\sigma-2}\overline{u})\|_{L_t^{\frac{2}{2\sigma-1}}L_x^{\frac{2}{2\sigma-1}}}&\lesssim \|\langle D_t\rangle^{\frac{s-1+\delta}{2\sigma-1}}u\|_{L_t^2L_x^2}^{2\sigma-1}
\\
&\lesssim \|P_{\leq 0}\langle D_t\rangle^{\frac{s-1+\delta}{2\sigma-1}}u\|_{L_t^2L_x^2}^{2\sigma-1}+\left(\sum_{j>0}\|\langle D_t\rangle^{\frac{s-1+\delta}{2\sigma-1}}P_ju\|_{L_t^2L_x^2}^2\right)^{\frac{1}{2}(2\sigma-1)}
\\
&\lesssim \|\langle D_x\rangle^{\frac{2s-2+2\delta}{2\sigma-1}}u\|_{L_t^2L_x^2}^{2\sigma-1}+\|f\|_{\mathcal{S}_T^{0}}^{2\sigma-1}.
\end{split}
\end{equation}
Furthermore, we have by Sobolev embedding and the equation, 
\begin{equation}
\begin{split}
\|\partial_tu\|_{L_t^{\frac{1}{1-\sigma}}L_x^{\frac{1}{1-\sigma}}}&\lesssim \|\langle D_x\rangle^{\sigma+\frac{3}{2}}u\|_{L_t^{\infty}L_x^2}+\|\langle D_x\rangle ^{\sigma-\frac{1}{2}}f\|_{L_t^{\infty}L_x^{2}}.
\end{split}
\end{equation}
Hence, we obtain
\begin{equation}
\begin{split}
&b_j\|D_t^{s-1+\frac{\delta}{2}}(|u|^{2\sigma-2}\overline{u})\|_{L_t^{\frac{2}{2\sigma-1}}L_x^{\frac{2}{2\sigma-1}}}\|\partial_tu\|_{L_t^{\frac{1}{1-\sigma}}L_x^{\frac{1}{1-\sigma}}}
\\
&\lesssim b_j(\|\langle D_x\rangle^{\sigma+\frac{3}{2}}u\|_{L_t^{\infty}L_x^2}+\|\langle D_x\rangle ^{\sigma-\frac{1}{2}}f\|_{L_t^{\infty}L_x^{2}})\|\langle D_x\rangle^{\frac{2s-2+2\delta}{2\sigma-1}}u\|_{L_t^2L_x^2}^{2\sigma-1}+\Lambda b_j(\|u\|_{L_t^{\infty}H_x^{2s}}+\|f\|_{Z_{\infty}^{s-1+\delta}}).
\end{split}
\end{equation}
To control the first term, interpolating each factor between $L_t^{\infty}H_x^{2s}$ and $L_t^{\infty}H_x^1$ shows that
\begin{equation}
\|\langle D_x\rangle^{\frac{2s-2+2\delta}{2\sigma-1}}u\|_{L_t^2L_x^2}^{2\sigma-1}\|\langle D_x\rangle^{\sigma+\frac{3}{2}}u\|_{L_t^{\infty}L_x^2}\lesssim \|u\|_{\mathcal{S}_T^{1+\delta}}^{2\sigma-1}\|u\|_{L_t^{\infty}H_x^{2s}}.
\end{equation}
For the second term, interpolating  the $\langle D_x\rangle^{\frac{2s-2+2\delta}{2\sigma-1}}u$ factor between $L_t^{\infty}H_x^1$ and $L_t^{\infty}H_x^{2s}$ and the $\langle D_x\rangle^{\sigma-\frac{1}{2}}f$ factor between $L_t^{\infty}L_x^2$ and $L_t^{\infty}H_x^{2s-2+\delta}$ and using that $s>\sigma+\frac{1}{2}$ leads to
\begin{equation}
\|\langle D_x\rangle^{\frac{2s-2+2\delta}{2\sigma-1}}u\|_{L_t^2L_x^2}^{2\sigma-1}\|\langle D_x\rangle ^{\sigma-\frac{1}{2}}f\|_{L_t^{\infty}L_x^{2}}\lesssim \Lambda (\|u\|_{L_t^{\infty}H_x^{2s}}+\|f\|_{Z_{\infty}^{s-1+\delta}}).
\end{equation}
Now, collecting all of the estimates and using that $\|u\|_{L_t^{\infty}H_x^{2s}}\lesssim \|u\|_{\mathcal{X}_T^{2s}}$ completes the proof. 
\end{proof}
Finally, we use \Cref{timemoserest} to establish \Cref{timemoser2}.
\begin{proof}
First, it is straightforward to verify that $b_j^2$ is a $\mathcal{X}_T^{2s}$ frequency envelope for $\tilde{v}$ in the sense that $b_j^2$ satisfies property (\ref{property1}) and is slowly varying. Next, we expand 
\begin{equation}
(i\partial_t+\partial_x^2)\tilde{v}=i\partial_t\tilde{\eta}v+\tilde{\eta}(i\partial_t+\partial_x^2)v:=f.
\end{equation}
Using an argument similar to what was done to estimate (\ref{commutet}) and applying \Cref{cor to spacetimeconv}, it is straightforward to verify $\|f\|_{\mathcal{S}_T^{\delta}}+\|f\|_{Z_{\infty}^{s-1+\delta}}\lesssim \epsilon+\|v\|_{\mathcal{X}_T^{2s}}$, and so the conclusion immediately follows from \Cref{timemoserest}.
\end{proof}
\section{Well-posedness at high regularity}\label{Sec6}
In this section, we aim to prove \Cref{high reg theorem}. We begin by studying a suitable regularized equation.
\subsection{Well-posedness of a regularized equation}

 Since there is an apparent limit to the possible regularity of solutions to \eqref{gDNLS}, we construct $H_x^{2s}$ solutions as limits of smooth solutions to an appropriate regularized approximate equation. Like in the previous section $\eta$ will denote a time-dependent cutoff with $\eta=1$ on $[-1,1]$ with support in $(-2,2)$. To construct the requisite solutions, we need the following lemma: 
\begin{lemma}\label{reglem}
Let $2-\sigma<2s<4\sigma$. Let $2s\geq\alpha>\max\{2-\sigma,2s-1\}$. Then there is an $\epsilon>0$ such that for every $u_0\in H_x^{2s}$ with $\|u_0\|_{H_x^{\alpha}}\leq \epsilon$ and for all $j>0$, the regularized equation
\begin{equation}\label{regular}
\begin{cases}
&(i\partial_t+\partial_x^2)u=i \eta P_{<j}|u|^{2\sigma}\partial_xu,
\\
&u(0)=P_{<j}u_0,
\end{cases}
\end{equation}
 admits a global solution $u\in C^2(\mathbb{R};H^{\infty}_x)$. Moreover, we have the following bounds for $T=2$,
 \begin{equation}\label{Strichartzbound}
 \begin{split}
&\|u\|_{\mathcal{X}_T^{\alpha}\cap\mathcal{S}_T^{1+\delta}}\lesssim\epsilon,
\\
&\|(i\partial_t+\partial_x^2)u\|_{\mathcal{S}_T^{\delta}\cap Z_{\infty}^{s-1+\delta}}\lesssim \epsilon,
\end{split}
 \end{equation}
 where the implicit constant in the above inequality is independent of the parameter $j$ and where $0<\delta\ll 1$ is any small positive constant.
\end{lemma}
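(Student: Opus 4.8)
\textbf{Proof proposal for \Cref{reglem}.}

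The plan is to construct the solution $u$ to the regularized equation \eqref{regular} by a standard Picard iteration in a suitable high-regularity space, exploiting the fact that the frequency truncation $P_{<j}$ makes the nonlinearity $i\eta P_{<j}|u|^{2\sigma}\partial_x u$ a locally Lipschitz map on $H_x^m$ for any $m$ (indeed, once the inner frequencies are truncated, $|u|^{2\sigma}$ is effectively a smooth function of a bandlimited function, and $\partial_x$ costs only $2^j$). First I would fix $m$ large (say $m = \lceil 2s\rceil + 10$) and run the contraction mapping argument in $C([-T_0,T_0];H_x^m)$ for a small time $T_0 = T_0(j,\|u_0\|_{H_x^m})$; this produces a local-in-time solution. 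Because the nonlinearity is $\eta$-truncated and all estimates for the $H_x^m$ norm only lose powers of $2^j$ and Sobolev norms of $u$, the solution extends to all of $[-2,2]$ as long as the $H_x^m$ norm does not blow up, and one gets this from a Gronwall argument on $\frac{d}{dt}\|u\|_{H_x^m}^2$ using the truncated nonlinearity. For $|t| > 2$ the equation is simply the free Schr\"odinger equation (since $\eta \equiv 0$ there), so $u$ extends globally and lies in $C^2(\mathbb{R};H_x^\infty)$ by persistence of regularity (the initial data $P_{<j}u_0$ is in $H_x^\infty$, and bandlimited-in-the-inner-argument nonlinearities propagate all Sobolev regularity); the $C^2$ in time comes from differentiating the equation once in $t$ and using that the right-hand side is $C^1$ in $t$ with values in $H_x^\infty$.

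The substantive content of the lemma is the \emph{uniform-in-$j$} bounds \eqref{Strichartzbound}, and this is where I would invoke the work already done in Section 5. The idea is a continuity/bootstrap argument: assume $\|u\|_{\mathcal{X}_T^{2s}} \le \epsilon^{1/2}$ (and likewise the $\mathcal{S}_T^{1+\delta}$ and $Z_\infty^{s-1+\delta}$ quantities are $\le \epsilon^{1/2}$) on a maximal subinterval, then improve this to $\lesssim \epsilon$. To close the bootstrap I would apply \Cref{keyestimate1} with $v = u$: the hypothesis \eqref{vassumption} on $v$ is exactly the bootstrap assumption (after absorbing $\|(i\partial_t+\partial_x^2)u\|_{Z_\infty^{s-1+\delta}\cap\mathcal{S}_T^\delta}$, which by \Cref{nonlinest} is itself controlled by $\|u\|_{\mathcal{X}_T^{2s-1+c\delta}} + \|u\|_{\mathcal{S}_T^1}$ plus small terms). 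This yields
\begin{equation}
\|u\|_{\mathcal{X}_T^{2s}}^2 \lesssim \|u_0\|_{H_x^{2s}}^2 + \epsilon^{2\sigma-1}(\|u\|_{\mathcal{X}_T^{2s}}^2 + \|u\|_{\mathcal{S}_T^1}^2),
\end{equation}
and since $2\sigma - 1 > 0$ the nonlinear terms are absorbed for $\epsilon$ small. For the $\mathcal{S}_T^{1+\delta}$ norm (and the low-regularity $\mathcal{S}_T$-type bound at regularity $\alpha \le \tfrac{3}{2}$), I would invoke \Cref{modifiedlemma} / \Cref{AlternateY}, whose hypotheses were specifically arranged (via the cutoff $\eta$ and the auxiliary equation \eqref{second eq}) to apply to the truncated equation; these give $\|u\|_{Y_T^{1+\delta}} \lesssim \|u_0\|_{H_x^{1+\delta}} + T^{1/2}\|u\|_{\mathcal{S}_T^{1+\delta}}$, again closing for $T$ fixed and $\epsilon$ small. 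The bound for $\|(i\partial_t+\partial_x^2)u\|_{\mathcal{S}_T^\delta \cap Z_\infty^{s-1+\delta}}$ then follows by feeding the already-obtained bounds into \Cref{nonlinest} (for the $Z_\infty$ component) and a direct fractional-Leibniz / Moser estimate (\Cref{Moservec}, \Cref{Holder}) for the $\mathcal{S}_T^\delta$ component. The requirement $\alpha > \max\{2-\sigma, 2s-1\}$ enters precisely so that the low-frequency Strichartz norm $\|u_x\|_{L_T^4 L_x^\infty}$ can be controlled: $\alpha > 2-\sigma$ lets the low-regularity estimates of Section 3 (Remark \ref{AlternateY}) control it when $\alpha \le \tfrac32$, and $\alpha > 2s-1$ is what makes the $Z_\infty^{s-1+\delta}$ and modulation estimates of Section 5 close with $c\delta$ room to spare.

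The main obstacle, as usual in such arguments, is \emph{not} the existence of $u$ (which is soft, given the truncation) but verifying that the hypotheses of \Cref{keyestimate1} and \Cref{nonlinest} are genuinely self-consistent under the bootstrap — in particular that the term $\|(i\partial_t+\partial_x^2)u\|_{Z_\infty^{s-1+\delta}}$ appearing in \eqref{vassumption} can be estimated in terms of quantities strictly weaker than $\|u\|_{\mathcal{X}_T^{2s}}$ (namely $\|u\|_{\mathcal{X}_T^{2s-1+c\delta}}$), so that there is no circularity; this is exactly what \Cref{nonlinest} provides, but one must be careful that the $c\delta$ losses compound across the finitely many applications without exceeding the gap $2s - (2-\sigma)$ and $2s - (2s-1) = 1$. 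A secondary technical point is the interplay between $L_t^p X$ and $L_T^p X$ norms when nonlocal operators $\langle D_t\rangle^s$ act on the $\eta$-truncated solution: one must repeatedly use (as in the remark preceding \Cref{Etime1}) that $u$ solves the free equation outside $[-2,2]$, so its spatial Sobolev norms are constant there and the time-global norms are comparable to the time-local ones up to harmless commutators with $\tilde\eta$, estimated via \Cref{Holder} and \Cref{cor to spacetimeconv}. Once these bookkeeping issues are handled, iterating the fixed-time estimate $O(1)$ times (here $T = 2$ is already $O(1)$, so no iteration is even needed) yields \eqref{Strichartzbound} with constants independent of $j$.
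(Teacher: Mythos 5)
Your high-level architecture is the right one (existence of the regularized flow is the soft part; the uniform-in-$j$ bounds come from \Cref{keyestimate1}, \Cref{nonlinest} and the modified low-regularity estimate \Cref{modifiedlemma}), but both halves contain genuine gaps. First, the existence step: your claim that $P_{<j}$ makes $i\eta P_{<j}|u|^{2\sigma}\partial_x u$ locally Lipschitz on $H_x^m$ is false. The truncation sits only on the coefficient $|u|^{2\sigma}$, not on the differentiated factor $\partial_x u$, and the solution does not stay bandlimited (the data $P_{<j}u_0$ is, but the product of a low-frequency coefficient with $\partial_x u$ generates higher frequencies at each step), so the nonlinearity maps $H_x^{m+1}\to H_x^m$ and a direct Picard contraction in any fixed $H_x^m$ loses a full derivative; there is no $2^j$ bound for $\partial_x$ here. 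This is precisely why the paper proceeds quasilinearly: it first solves the coefficient-frozen linear equation \eqref{approx} via an additional truncation $P_{\leq n}$ of the entire nonlinearity, contraction in the bandlimited subspace, and an energy-estimate passage $n\to\infty$ (where the derivative loss is removed by integrating by parts off the real coefficient, \Cref{linearapproximation}), and then runs the iteration \eqref{iteration} in which $\partial_x u^{(n+1)}$ appears linearly with coefficient frozen at $u^{(n)}$. Your Duhamel iteration with the full nonlinearity does not close: both the uniform bounds and the difference estimates for the iterates involve one more derivative than is being propagated.

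Second, the uniform bounds: your continuity/bootstrap argument ``on a maximal subinterval'' requires the key estimate to be available for all $T\leq 2$, but \Cref{keyestimate1} (and the underlying \Cref{Etime1}, \Cref{nonlinest}) are proved only at the fixed time $T=2$ — the paper flags this explicitly in a remark — because the cutoff $\eta$, the extension by the free flow outside $[-2,2]$, and the global-in-time modulation analysis are all tied to that unit scale; restricting to an intermediate interval destroys the structure the proof uses. The paper sidesteps this by making the smallness come from the data rather than from shortness of time: it proves \eqref{Strichartzbound} for the iterates by induction on $n$, applying \Cref{keyestimate1} with $v=u^{(n)}$ (whose hypotheses \eqref{vassumption} hold by the inductive hypothesis, not by a bootstrap assumption on the unknown itself), absorbing $\epsilon^{2\sigma}\|u^{(n+1)}\|_{\mathcal{X}_T^{\alpha}}^2$, and then passes the bounds to the limit after proving $L_x^2$-contraction of the iterates and interpolating against the ($j$-dependent) $H_x^k$ energy bounds to get convergence in $C([-2,2];H_x^{\infty})$ and the $C^2$ regularity. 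If you want to keep a bootstrap formulation with $v=u$, you would first have to extend the Section 5 estimates to variable $T\leq 2$, which is exactly the ``more work'' the paper defers; as written, your argument invokes them where they are not available. Your remaining points (using \Cref{nonlinest} to break the apparent circularity in $\|(i\partial_t+\partial_x^2)u\|_{Z_{\infty}^{s-1+\delta}}$, the role of $\alpha>\max\{2-\sigma,2s-1\}$, and the $L_t^p$ versus $L_T^p$ bookkeeping) are consistent with the paper's proof.
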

\begin{remark}
 The smallness assumption on the $H_x^{\alpha}$ norm of $u_0$ will turn out to be inconsequential (by $L_x^2$ subcriticality for \eqref{gDNLS}). This assumption is made for convenience to guarantee (\ref{Strichartzbound}). 
\end{remark}
Let us now construct solutions to (\ref{regular}). The first step is to construct solutions to an appropriate linear equation. For this, we have the following lemma.
\begin{lemma}\label{linearapproximation}
Let $\eta=\eta(t)$ be a smooth time-dependent cutoff with $\eta=1$ on $[-1,1]$ and with support in $(-2,2)$. Let $T>0$ and $v\in L_T^{2\sigma}L_x^{\infty}$. Let $u_0\in H_x^{2s}$. Then for each $j>0$, there exists a unique solution $w\in C([-T,T];H_x^{\infty})$ solving the equation
\begin{equation}\label{approx}
\begin{cases}
&\partial_tw=i\partial_x^2w+\eta P_{<j}|v|^{2\sigma}\partial_xw,
\\
&w(0)=P_{< j}u_0.
\end{cases}
\end{equation}
\end{lemma}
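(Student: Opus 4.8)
The plan is to solve the linear equation \eqref{approx} by a standard fixed-point / energy-method argument, exploiting that $\eta P_{<j}|v|^{2\sigma}$ is a smooth, bounded, compactly-in-time-supported coefficient (once $j$ is fixed), so that \eqref{approx} is a linear Schr\"odinger equation with a smooth first-order perturbation. First I would rewrite \eqref{approx} in integral (Duhamel) form
\begin{equation*}
w(t)=e^{it\partial_x^2}P_{<j}u_0+\int_0^t e^{i(t-s)\partial_x^2}\big(\eta(s) P_{<j}|v(s)|^{2\sigma}\partial_xw(s)\big)\,ds,
\end{equation*}
and set up a contraction in $C([-T_0,T_0];H_x^N)$ for each fixed $N\in\mathbb{N}$ (in particular $N$ large), on a short time interval $[-T_0,T_0]$. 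The key point is that the map $w\mapsto \eta P_{<j}|v|^{2\sigma}\partial_xw$ gains no derivatives in a problematic way once we are willing to lose a factor $2^j$: since $P_{<j}|v|^{2\sigma}$ is frequency-localized, $\partial_x(P_{<j}|v|^{2\sigma})$ is controlled by $2^j\||v|^{2\sigma}\|_{L^\infty}\lesssim 2^j\|v\|_{L_T^{2\sigma}L_x^\infty}^{2\sigma}$ (after integrating in time against the $\eta$ cutoff), and $\||v|^{2\sigma}\|_{L_x^\infty}$ itself is controlled the same way. Thus a direct $H_x^N$ energy estimate — multiply the equation by $\langle D_x\rangle^{2N}\overline w$, take real parts, integrate by parts to move the single derivative $\partial_x$ off $w$ onto the smooth coefficient — yields
\begin{equation*}
\frac{d}{dt}\|w\|_{H_x^N}^2\lesssim 2^{j}\,C(\|v\|_{L_T^{2\sigma}L_x^\infty})\,\|w\|_{H_x^N}^2,
\end{equation*}
and Gronwall gives an a priori bound on any bounded time interval; here the implied constants depend on $j$ and on $N$, but that is harmless since $j$ is fixed in this lemma. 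This a priori bound, combined with the contraction estimate (which is of the same form, applied to the difference of two iterates), produces a unique solution in $C([-T_0,T_0];H_x^N)$ first on a short interval and then, by iterating with the Gronwall bound, on all of $[-T,T]$; uniqueness in $C([-T,T];H_x^\infty)$ follows from the same difference estimate.

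Next I would upgrade regularity in $t$: since $w\in C([-T,T];H_x^N)$ for every $N$, the equation $\partial_t w=i\partial_x^2 w+\eta P_{<j}|v|^{2\sigma}\partial_x w$ expresses $\partial_t w$ as a continuous $H_x^{N-2}$-valued function (using that $v\in L_T^{2\sigma}L_x^\infty$ controls the coefficient and that $P_{<j}$ smooths), so $w\in C^1([-T,T];H_x^{N-2})$ for every $N$, hence $w\in C^1([-T,T];H_x^\infty)$; this is the regularity asserted in the statement, though in fact one can bootstrap once more (using $\partial_t v$ if available, or simply the smoothness of the coefficient in $x$) to see $w$ is as smooth in $t$ as needed downstream. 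Since the lemma only claims $w\in C([-T,T];H_x^\infty)$, it suffices to record $w\in C([-T,T];H_x^N)$ for all $N$ and intersect.

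The main (and only real) obstacle is a mild one: making sure the single spatial derivative $\partial_x$ acting on $w$ in the nonlinear term does not cause a genuine loss of derivatives in the energy estimate. This is resolved exactly as indicated — integrate by parts so that $\partial_x$ lands on the smooth, frequency-truncated coefficient $\eta P_{<j}|v|^{2\sigma}$, and use $\|\partial_x P_{<j}|v|^{2\sigma}\|_{L_x^\infty}\lesssim 2^{j}\|v\|_{L_x^\infty}^{2\sigma}$ by Bernstein together with $|\partial_x(|v|^{2\sigma})|$ only appearing after the truncation $P_{<j}$, so no genuine derivative of $v$ is needed (indeed the hypothesis only supplies $v\in L_T^{2\sigma}L_x^\infty$). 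A secondary bookkeeping point is the time cutoff $\eta$: because $\eta$ is smooth and supported in $(-2,2)$, the coefficient is globally bounded in time and vanishes for $|t|>2$, so the Gronwall argument closes on $[-T,T]$ for any $T$ with a constant depending only on $j$, $N$, and $\|v\|_{L_T^{2\sigma}L_x^\infty}$. No further tools are needed beyond the linear Strichartz/energy estimates for $e^{it\partial_x^2}$ already recalled in \Cref{Hom max} and standard Bernstein inequalities.
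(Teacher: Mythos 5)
There is a genuine gap in the existence step. Your fixed-point scheme does not close: the Duhamel map
\begin{equation*}
w\mapsto \int_0^t e^{i(t-s)\partial_x^2}\bigl(\eta(s)\,P_{<j}|v(s)|^{2\sigma}\,\partial_xw(s)\bigr)\,ds
\end{equation*}
is \emph{not} bounded from $C([-T_0,T_0];H_x^N)$ to itself, because the derivative $\partial_x$ falls on $w$, not on the coefficient. The frequency truncation $P_{<j}$ localizes only the coefficient $|v|^{2\sigma}$; the output $P_{<j}|v|^{2\sigma}\,\partial_x w$ lives at the (arbitrarily high) frequencies of $w$, so $\|\eta P_{<j}|v|^{2\sigma}\partial_x w\|_{H^N_x}$ is controlled only by $\|w\|_{H^{N+1}_x}$, with no small factor to absorb the loss no matter how small $T_0$ is. The integration-by-parts trick that moves $\partial_x$ onto the smooth coefficient is available only in the energy identity for an actual solution (or for the difference of two solutions, which is why your uniqueness argument is fine); it is not available inside the Duhamel formula, nor for the Picard iterates $w_{k+1}$, whose source term involves $\partial_x w_k$ rather than $\partial_x w_{k+1}$. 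So neither the boundedness of the iterates in $H^N$ nor the contraction estimate you invoke can be justified as stated, and the a priori Gronwall bound you correctly derive applies only to solutions whose existence has not yet been established.

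The paper resolves exactly this point by inserting an additional Galerkin-type regularization: for each $n>j$ it truncates the whole first-order term to $\eta P_{\leq n}\bigl(P_{<j}|v|^{2\sigma}\partial_x w^{(n)}\bigr)$ and runs the contraction in the closed subspace of $C([-T,T];L_x^2)$ of functions with spatial Fourier support in $[-2^n,2^n]$, where $\partial_x$ costs only a harmless factor $2^n$. It then proves energy bounds of the form $\|w^{(n)}\|_{C([-T,T];H_x^k)}\lesssim \exp\bigl(2^{j(k+1)}\|v\|_{L_T^{2\sigma}L_x^\infty}^{2\sigma}\bigr)\|P_{<j}u_0\|_{H_x^k}$ \emph{uniformly in} $n$ (this is where your integration-by-parts/Bernstein/Gronwall computation belongs and is correct), shows the sequence is Cauchy in $C([-T,T];L_x^2)$ by an $L^2$ difference estimate, and upgrades the convergence to $C([-T,T];H_x^\infty)$ by interpolation, obtaining a solution of \eqref{approx} in the limit. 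To repair your write-up you would need to add some such approximation step (frequency truncation, parabolic regularization, or a duality argument based on the adjoint energy estimate); your energy estimate and uniqueness argument can then be kept essentially as written.
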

\begin{proof}
First, observe that for each $n>j$ a simple (iterated) application of the contraction mapping theorem in the closed subspace of $C([-T,T];L_x^2)$ consisting of functions whose spatial Fourier transform is supported on $[-2^n,2^n]$ gives rise to a solution $w^{(n)}\in C([-T,T];H_x^{\infty})$ to the following regularized linear equation,
\begin{equation}\label{approx3}
\begin{cases}
&\partial_tw^{(n)}=i\partial_x^2w^{(n)}+\eta P_{\leq n}(P_{<j}|v|^{2\sigma}\partial_xw^{(n)}),
\\
&w^{(n)}(0)=P_{<j}u_0.
\end{cases}
\end{equation}
We show that the sequence $w^{(n)}$ converges as $n\to\infty$ to some $w\in C([-T,T];H_x^{\infty})$ which solves (\ref{approx}). This follows in two stages, but is standard. First, for each integer $k\geq 0$, a standard energy estimate and Bernstein's inequality shows that $w^{(n)}$ satisfies the bound
\begin{equation}\label{uniformboundsapprox}
\|w^{(n)}\|_{C([-T,T];H_x^{k})}\lesssim \exp(2^{j(k+1)}\|v\|_{L_T^{2\sigma}L_x^{\infty}}^{2\sigma})\|P_{<j}u_0\|_{H_x^{k}}   
\end{equation}
where importantly, the bound is independent of $n$ (but can depend on $j$). Furthermore, a simple energy estimate in $L_x^2$ for differences of solutions $w^{(n)}-w^{(m)}$ to (\ref{approx3}) shows that the sequence $w^{(n)}$ is Cauchy in $C([-T,T];L_x^2)$ and thus converges to some $w\in C([-T,T];L_x^2)$. Interpolating against (\ref{uniformboundsapprox}) shows that in fact $w^{(n)}$ converges to some $w$ in $C([-T,T];H_x^{\infty})$ and that $w$ solves (\ref{approx}) in the sense of distributions, and furthermore that $w$ satisfies the bound (\ref{uniformboundsapprox}) for each $k\geq 0$.
\end{proof}
The next step in the proof of \Cref{reglem} is to construct the corresponding $C^2(\mathbb{R};H_x^{\infty})$ solution to (\ref{reglem}). For this purpose, consider the following iteration scheme,
\begin{equation}\label{iteration}
\begin{cases}
&(i\partial_t+\partial_x^2)u^{(n+1)}=i\eta P_{<j}|u^{(n)}|^{2\sigma}\partial_xu^{(n+1)},
\\
&u^{(n+1)}(0)=P_{<j}u_0,
\end{cases}
\end{equation}
with the initialization $u^{(0)}=0$. Thanks to \Cref{linearapproximation} it follows that for each $n$, there is a solution $u^{(n+1)}\in C([-2,2];H_x^{\infty})$ to the above equation. In particular, $u^{(n+1)}$ can be extended globally in time because for $|t|>2$, $u^{(n+1)}$ solves the linear Schr\"odinger equation.
\\
\\
Next, we have the following lemma  concerning the convergence of this iteration scheme, from which \Cref{reglem} is immediate. 
\begin{lemma}
Let $2-\sigma<2s<4\sigma$. Let $2s\geq\alpha>\max\{2-\sigma,2s-1\}$. Let $u_0\in H_x^{2s}$ and let $u^{(n+1)}$ be the corresponding $C(\mathbb{R};H_x^{\infty})$ solution to (\ref{iteration}). Then there is $\epsilon>0$ independent of $j$ such that if $\|u_0\|_{H_x^{\alpha}}\leq \epsilon$, then $u^{(n)}$ converges to some $u\in C(\mathbb{R};H_x^{\infty})$ solving (\ref{regular}). Furthermore, we have $u\in C^2(\mathbb{R};H_x^{\infty})$ and the bounds
\begin{equation}\label{strich2}
\begin{split}
&\|u\|_{\mathcal{X}_T^{\alpha}\cap\mathcal{S}_T^{1+\delta}}\lesssim\epsilon,
\\
&\|(i\partial_t+\partial_x^2)u\|_{\mathcal{S}_T^{\delta}\cap Z_{\infty}^{s-1+\delta}}\lesssim \epsilon.    
\end{split}    
\end{equation}
\end{lemma}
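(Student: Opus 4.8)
The plan is to set up a Picard-type iteration for the regularized equation \eqref{iteration} and establish uniform (in $n$ and in the regularization parameter $j$) bounds in the norms appearing in \eqref{strich2}, then upgrade weak convergence to strong convergence in $C(\mathbb{R};H_x^\infty)$, and finally bootstrap regularity in time to obtain $u\in C^2(\mathbb{R};H_x^\infty)$. First I would record the ``base case'' estimates: each $u^{(n+1)}$ is a genuine $C([-2,2];H_x^\infty)$ solution of a \emph{linear} Schr\"odinger equation with smooth coefficient $i\eta P_{<j}|u^{(n)}|^{2\sigma}$ by \Cref{linearapproximation}, extended globally because it solves the free equation for $|t|>2$. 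The smallness hypothesis $\|u_0\|_{H_x^\alpha}\le\epsilon$ is the key leverage: since $\alpha>2-\sigma$, \Cref{low reg theorem}'s low regularity estimates (more precisely \Cref{Ubounds} and \Cref{modifiedlemma}, which as noted in \Cref{highregremark} apply verbatim to the time-truncated regularized nonlinearity $i\eta P_{<j}|v|^{2\sigma}\partial_x u$) give a uniform-in-$j$ a priori bound $\|u^{(n+1)}\|_{X_T^\alpha}\lesssim\|u_0\|_{H_x^\alpha}\le\epsilon$ and, via \Cref{AlternateY}, the purely Strichartz bound $\|u^{(n+1)}\|_{\mathcal S_T^{1+\delta}}\lesssim\epsilon$ after noting $1+\delta\le\alpha$ for $\delta$ small (here one uses $\alpha>2-\sigma>1$). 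The $\mathcal S_T^\delta$ bound on $(i\partial_t+\partial_x^2)u^{(n+1)}$ follows from the equation and \Cref{Holder} applied to $|u^{(n)}|^{2\sigma}$, and the $Z_\infty^{s-1+\delta}$ bound on $(i\partial_t+\partial_x^2)u^{(n+1)}=i\eta P_{<j}|u^{(n)}|^{2\sigma}\partial_x u^{(n+1)}$ is exactly what \Cref{nonlinest} supplies, provided we have propagated $\|u^{(n)}\|_{\mathcal X_T^{2s}}\lesssim\epsilon$ along the iteration.

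The central step is therefore to run the a priori estimate of \Cref{keyestimate1} along the iteration: taking $v=u^{(n)}$ and $u=u^{(n+1)}$ in \eqref{gdnlsreg1} (with $k$ there being our $j$), hypothesis \eqref{vassumption} for $v=u^{(n)}$ is exactly the collection of bounds \eqref{strich2} we are inductively assuming, so \Cref{keyestimate1} yields
\begin{equation}
\|u^{(n+1)}\|_{\mathcal X_T^{2s}}^2\lesssim \|u_0\|_{H_x^{2s}}^2+\epsilon^{2\sigma}\big(\|u^{(n+1)}\|_{\mathcal X_T^{2s}}^2+\epsilon^2\big)+\epsilon^{2\sigma-1}\epsilon\,\|u^{(n+1)}\|_{\mathcal X_T^{2s}}\|u^{(n)}\|_{\mathcal X_T^{2s}}+\epsilon^{4\sigma-2}\epsilon^2\|u^{(n)}\|_{\mathcal X_T^{2s}}^2,
\end{equation}
where we have used $\|u^{(n+1)}\|_{\mathcal S_T^1}\lesssim\|u^{(n+1)}\|_{\mathcal S_T^{1+\delta}}\lesssim\epsilon$. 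For $\epsilon$ small the $\epsilon^{2\sigma}\|u^{(n+1)}\|_{\mathcal X_T^{2s}}^2$ term is absorbed on the left, and a simple induction (say with the ansatz $\|u^{(n)}\|_{\mathcal X_T^{2s}}\le C\|u_0\|_{H_x^{2s}}$ for a fixed large $C$, using Young's inequality on the cross term) closes the bootstrap with a constant independent of $n$ and $j$. Note $\|u_0\|_{H_x^{2s}}$ itself need not be small — only $\|u_0\|_{H_x^\alpha}$ is — which is why this works as a \emph{linear-in-the-top-norm} estimate; this is the usual high-regularity persistence mechanism. One technical point to be careful about: \Cref{keyestimate1} requires $u\in C^2(\mathbb R;H_x^\infty)$, which each iterate $u^{(n+1)}$ satisfies since differentiating \eqref{iteration} in $t$ and using $H_x^\infty$ regularity gives $\partial_t u^{(n+1)}, \partial_t^2 u^{(n+1)}\in C(\mathbb R;H_x^\infty)$ after commuting the cutoff $\eta$; this also explains why the iteration is phrased with the cutoff baked in.

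Once the uniform bounds are in hand, convergence is routine but must be organized carefully. The difference $d^{(n)}:=u^{(n+1)}-u^{(n)}$ satisfies
\begin{equation}
(i\partial_t+\partial_x^2)d^{(n)}=i\eta P_{<j}|u^{(n)}|^{2\sigma}\partial_x d^{(n)}+i\eta P_{<j}\big(|u^{(n)}|^{2\sigma}-|u^{(n-1)}|^{2\sigma}\big)\partial_x u^{(n)},
\end{equation}
and a low-regularity difference estimate in $X_T^0$ in the spirit of \Cref{linbounds} — writing $|u^{(n)}|^{2\sigma}-|u^{(n-1)}|^{2\sigma}=G^{(n)}d^{(n-1)}$ with $G^{(n)}\in Z$ controlled by \Cref{Holder} using $\alpha>2-\sigma$ and $\sigma>\frac12$ (we are now in the full range $\sigma\in(\frac12,1)$, but $Z$-membership here needs only the weaker condition associated with $\alpha>2-\sigma$, not $\sigma>\frac{\sqrt3}{2}$; this should be checked against the definition of $Z$) — gives $\|d^{(n)}\|_{X_T^0}\lesssim \tfrac12\|d^{(n-1)}\|_{X_T^0}$ for $T$ small, hence on all of $[-2,2]$ by iterating over $O(1)$ subintervals and then globally since everything solves the free equation outside $[-2,2]$. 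Summing the geometric series, $u^{(n)}\to u$ in $X_T^0$; interpolating the difference against the uniform $\mathcal X_T^{2s}$ bounds and the $\mathcal S_T^{1+\delta}$ bounds (exactly the frequency-envelope argument used in \S\ref{Section4} for the low-regularity existence proof) upgrades this to convergence in $C(\mathbb R;H_x^{2s'})$ for every $s'<s$, and then another pass using the uniform $H_x^k$ bounds from \eqref{uniformboundsapprox} (which hold for every $k$, with $j$-dependent but $n$-independent constants) gives convergence in $C(\mathbb R;H_x^\infty)$. The limit $u$ solves \eqref{regular}; passing the uniform bounds to the limit gives \eqref{strich2}; and $u\in C^2(\mathbb R;H_x^\infty)$ follows from the equation by differentiating in $t$ as above. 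The main obstacle I expect is not any single estimate but the bookkeeping in the bootstrap: one must verify that the hypotheses of \Cref{keyestimate1} and \Cref{nonlinest} (the bundle \eqref{vassumption}/\eqref{vassumption2}) are reproduced by the iteration at the \emph{same} $\epsilon$, and in particular that the $\mathcal S_T^{1+\delta}$ and $Z_\infty^{s-1+\delta}$ pieces — which involve $\delta$-loss — don't degrade from one step to the next; this is handled by noting all these norms are controlled (uniformly in $j$) by $\|u^{(n)}\|_{\mathcal X_T^{2s}}+\|u^{(n)}\|_{\mathcal X_T^\alpha}$ via \Cref{nonlinest}, \Cref{Holder}, and the already-established $\alpha$-level bounds, so the induction is self-contained.
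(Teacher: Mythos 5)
Your overall architecture matches the paper's: uniform-in-$n$ and in-$j$ bounds for the iterates via \Cref{modifiedlemma} (giving $\|u^{(n+1)}\|_{\mathcal{X}_T^{\alpha}\cap\mathcal{S}_T^{1+\delta}}\lesssim\|u^{(n+1)}\|_{\mathcal{X}_T^{\alpha}}$), \Cref{keyestimate1} and \Cref{nonlinest} for the bootstrap, a contraction estimate for differences, interpolation against the $j$-dependent $H_x^k$ bounds to upgrade convergence to $C(\mathbb{R};H_x^{\infty})$, and time-differentiation of the equation for $C^2$. However, one step of your bootstrap does not close as written. You assert that the $Z_{\infty}^{s-1+\delta}$ bound on $(i\partial_t+\partial_x^2)u^{(n+1)}$ is what \Cref{nonlinest} supplies ``provided we have propagated $\|u^{(n)}\|_{\mathcal X_T^{2s}}\lesssim\epsilon$.'' That smallness is not available: only $\|u_0\|_{H_x^{\alpha}}\leq\epsilon$ is assumed small, and your own induction gives only $\|u^{(n)}\|_{\mathcal X_T^{2s}}\leq C\|u_0\|_{H_x^{2s}}$, which need not be small. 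If the hypothesis \eqref{vassumption} at step $n+1$ were really fed by the $\mathcal X_T^{2s}$ norm, its right-hand side would degrade to $\epsilon^{2\sigma}\|u_0\|_{H_x^{2s}}$ and the induction would not reproduce the same $\epsilon$. The correct mechanism --- and the reason the hypothesis $\alpha>2s-1$ is in the statement --- is that \Cref{nonlinest} at regularity $s-1+\delta$ only involves the $\mathcal{X}_T^{2s-1+c\delta}$ norms of $u^{(n)}$ and $u^{(n+1)}$, and since $2s-1+c\delta\leq\alpha$ for $\delta$ small, these are controlled by the $\epsilon$-small $\mathcal{X}_T^{\alpha}$ bound, with no reference to the $\mathcal X_T^{2s}$ norm at all. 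With that correction the $\epsilon$-level bundle \eqref{strich2} propagates; this is exactly how the paper closes the induction, running \Cref{keyestimate1} at level $\alpha$ rather than $2s$ (your additional propagation of the $2s$-level bound is legitimate but is deferred in the paper to \Cref{reglem1}).

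Second, your convergence step leans on an $X_T^0$ difference estimate ``in the spirit of \Cref{linbounds}'' with the quotient $G^{(n)}$ placed in $Z$. You flag the right worry yourself: membership of such H\"older-type quantities in $Z$ (its $L_T^{\infty}W_x^{\frac{3}{4\sigma}-\frac{1}{2}+\epsilon,\infty}$ and $L_T^4W_x^{\frac{3}{2}-\sigma+\epsilon,\infty}$ components) is precisely where $\sigma>\frac{\sqrt{3}}{2}$ enters, whereas this lemma must hold for the full range $\sigma\in(\frac{1}{2},1)$. The $Z$-norm machinery is unnecessary here: since the iterates are uniformly bounded in $\mathcal S_T^{1}$, a plain $L_T^{\infty}L_x^2$ energy estimate for $u^{(n+1)}-u^{(n)}$ (the paper's route, cf.\ (\ref{diffestimates})) already yields the contraction factor $\frac{1}{2}$ with constants independent of $j$ and with no constraint beyond $\sigma>\frac{1}{2}$. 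With these two repairs, the remainder of your argument (interpolation to $H_x^{\infty}$, passing the uniform bounds to the limit, and $C^2$ regularity in time) goes through as in the paper.
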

\begin{proof}
We begin by showing that $u^{(n+1)}$ satisfies the bounds
\begin{equation}\label{iteratebound}
\begin{split}
&\|u^{(n+1)}\|_{\mathcal{X}_T^{\alpha}\cap\mathcal{S}_T^{1+\delta}}\lesssim \epsilon,
\\
&\|(i\partial_t+\partial_x^2)u^{(n+1)}\|_{\mathcal{S}_T^{\delta}\cap Z_{\infty}^{s-1+\delta}}\lesssim \epsilon,
\end{split}
\end{equation}
for $T=2$ uniformly in $n$. Given the initialization $u^{(0)}=0$, we may make the inductive hypothesis that (\ref{iteratebound}) holds with $n+1$ replaced by $n$. Now, we prove the above two bounds for $u^{(n+1)}$.
\\
\\
We begin by showing $\|u^{(n+1)}\|_{\mathcal{X}_T^{\alpha}\cap\mathcal{S}_T^{1+\delta}}\lesssim\epsilon$. Indeed, it follows from the modification of the low regularity bounds outlined in \Cref{modifiedlemma} that for $2s\geq\alpha>2-\sigma$,
\begin{equation}\label{S2X}
\|u^{(n+1)}\|_{\mathcal{X}_T^{\alpha}\cap\mathcal{S}_T^{1+\delta}}\lesssim \|u^{(n+1)}\|_{\mathcal{X}_T^{\alpha}}.
\end{equation}
Then \Cref{keyestimate1} and the inductive hypothesis gives
\begin{equation}
\begin{split}
\|u^{(n+1)}\|_{\mathcal{X}_T^{\alpha}}^2&\lesssim \|u_0\|_{H_x^{\alpha}}^2+\epsilon^{2\sigma}(\|u^{(n+1)}\|_{\mathcal{X}_T^{\alpha}}^2+\|u^{(n+1)}\|_{\mathcal{S}_T^{1}}^2)+\epsilon^{2\sigma-1}\|u^{(n+1)}\|_{\mathcal{S}_T^1}\|u^{(n+1)}\|_{\mathcal{X}_T^{\alpha}}\|u^{(n)}\|_{\mathcal{X}_T^{\alpha}}
\\
&+\epsilon^{4\sigma-2}\|u^{(n+1)}\|_{\mathcal{S}_T^{1}}^2\|u^{(n)}\|_{\mathcal{X}_T^{\alpha}}^2, 
\end{split}
\end{equation}
and so,
\begin{equation}
\|u^{(n+1)}\|_{\mathcal{X}_T^{\alpha}}^2\lesssim \|u_0\|_{H_x^{\alpha}}^2+\epsilon^{2\sigma}\|u^{(n+1)}\|_{\mathcal{X}_T^{\alpha}}^2.
\end{equation}
From this, we deduce
\begin{equation}\label{Xbound}
\|u^{(n+1)}\|_{\mathcal{X}_T^{\alpha}}\lesssim\epsilon.    
\end{equation}
Next, we aim to verify the bound, 
\begin{equation}
\|(i\partial_t+\partial_x^2)u^{(n+1)}\|_{\mathcal{S}_T^{\delta}\cap Z_{\infty}^{s-1+\delta}}\lesssim \epsilon.  
\end{equation}
For this, we use the equation,
\begin{equation}
(i\partial_t+\partial_x^2)u^{(n+1)}=i\eta P_{<j}|u^{(n)}|^{2\sigma}\partial_xu^{(n+1)}.
\end{equation}
From \Cref{nonlinest} and (\ref{S2X}), we have
\begin{equation}
\|i\eta P_{<j}|u^{(n)}|^{2\sigma}\partial_xu^{(n+1)}\|_{\mathcal{S}_T^{\delta}\cap Z_{\infty}^{s-1+\delta}}\lesssim \epsilon^{2\sigma}\|u^{(n+1)}\|_{\mathcal{X}_T^{\alpha}}\lesssim \epsilon.
\end{equation}
This verifies the uniform in $n$ bound (\ref{iteratebound}).
\\
\\
Next, we show that that $u^{(n)}$ converges to $u\in C(\mathbb{R};L_x^2)$. Clearly it suffices to show (by the localization properties of $\eta$) that $u^{(n)}$ converges to $u\in C([-2,2];L_x^2)$.
\\
\\
We begin by estimating the $L_x^2$ norm of $u^{(n+1)}(t)-u^{(n)}(t)$ for $|t|\leq 2$. Indeed, we see that $u^{(n+1)}-u^{(n)}$ satisfies the equation,
\begin{equation}
\begin{cases}
&(i\partial_t+\partial_x^2)(u^{(n+1)}-u^{(n)})=i\eta P_{<j}|u^{(n)}|^{2\sigma}\partial_x(u^{(n+1)}-u^{(n)})+i\eta P_{<j}(|u^{(n)}|^{2\sigma}-|u^{(n-1)}|^{2\sigma})\partial_xu^{(n)},
\\
&(u^{(n+1)}-u^{(n)})(0)=0.
\end{cases}
\end{equation}
A simple energy estimate shows that for each $-2\leq T\leq 2$
\begin{equation}\label{diffestimates}
\begin{split}
\|u^{(n+1)}-u^{(n)}\|_{L_T^{\infty}L_x^2}^2&\lesssim \|u^{(n)}\|_{S_T^1}\|u^{(n)}\|_{L_T^{\infty}L_x^{\infty}}^{2\sigma-1}\|u^{(n+1)}-u^{(n)}\|_{L_T^{\infty}L_x^2}^2
\\
&+\|u^{(n)}\|_{S_T^1}(\|u^{(n)}\|_{L_T^{\infty}L_x^{\infty}}^{2\sigma-1}+\|u^{(n-1)}\|_{L_T^{\infty}L_x^{\infty}}^{2\sigma-1})\|u^{(n)}-u^{(n-1)}\|_{L_T^{\infty}L_x^2}\|u^{(n+1)}-u^{(n)}\|_{L_T^{\infty}L_x^2}   
\end{split}
\end{equation}
where all the implicit constants are independent of $j$. Using (\ref{iteratebound}) and Cauchy Schwarz, we obtain
\begin{equation}
\begin{split}
\|u^{(n+1)}-u^{(n)}\|_{L_T^{\infty}L_x^2}^2&\leq \frac{1}{4}\|u^{(n+1)}-u^{(n)}\|_{L_T^{\infty}L_x^2}^2+\frac{1}{4}\|u^{(n)}-u^{(n-1)}\|_{L_T^{\infty}L_x^2}^2. 
\end{split}
\end{equation}
From this, one obtains
\begin{equation}
\begin{split}
\|u^{(n+1)}-u^{(n)}\|_{L_T^{\infty}L_x^2}^2&\leq \frac{1}{2}\|u^{(n)}-u^{(n-1)}\|_{L_T^{\infty}L_x^2}^2. 
\end{split}
\end{equation}
Hence, we see that $u^{(n)}$ converges to $u$ in $C([-2,2];L_x^2)$. By a simple energy estimate, and Bernstein's inequality, it is straightforward to verify that for each integer $k\geq 0$, we have the uniform (in $n$) bound
\begin{equation}\label{unifenerg}
\|u^{(n+1)}\|_{C([-2,2];H_x^{k})}\lesssim \exp(2^{j(k+1)}\|u^{(n)}\|_{L_T^{2\sigma}L_x^{\infty}}^{2\sigma})\|P_{<j}u_0\|_{H_x^{k}}\lesssim_j\|u_0\|_{H_x^{2s}}.   
\end{equation}    
Hence, by interpolating against (\ref{unifenerg}), we see that $u^{(n)}$  converges to $u$ in $C([-2,2];H_x^{\infty})$. By differentiating the equation in time, we find $u\in C^2([-2,2];H_x^{\infty})$. 
\\
\\
 It remains to show \eqref{strich2}. Since $u^{(n)}\to u$ in $C([-2,2];H_x^{\infty})$, the $\mathcal{X}_T^{\alpha}\cap\mathcal{S}_T^{1+\delta}$ bound follows immediately from (\ref{iteratebound}). For the remaining estimate, we may clearly control
\begin{equation}
(i\partial_t+\partial_x^2)u=i\eta P_{<j}|u|^{2\sigma}\partial_xu    
\end{equation}
in $\mathcal{S}_T^{\delta}\cap Z_{\infty}^{s-1+\delta}$ by (after possibly slightly enlarging $\delta$)
\begin{equation}
\|u\|_{\mathcal{X}_T^{\alpha}\cap\mathcal{S}_T^{1+\delta}}^{2\sigma}+\|i\eta P_{<j}|u|^{2\sigma}\partial_xu\|_{Z_{\infty}^{s-1+\delta}}\lesssim \epsilon+\|iP_{<j}\eta|u|^{2\sigma}\partial_xu\|_{Z_{\infty}^{s-1+\delta}}.    
\end{equation}
From \Cref{nonlinest}, we have 
\begin{equation}\label{auxbound2}
\|i\eta P_{<j}|u|^{2\sigma}\partial_xu\|_{Z_{\infty}^{s-\frac{3}{2}+\delta}}\lesssim \epsilon.   
\end{equation}
Then applying \Cref{nonlinest} again, using (\ref{auxbound2}) then gives
\begin{equation}
\|i\eta P_{<j}|u|^{2\sigma}\partial_xu\|_{Z_{\infty}^{s-1+\delta}}\lesssim \epsilon.   
\end{equation}
This completes the proof. 
\end{proof}
\begin{remark}
Note that at this point, we haven't said anything about the behavior of (\ref{regular}) as $j\to \infty$. For this, we will again need the uniform bounds from \Cref{keyestimate1}.
\end{remark}
\subsection{Well-posedness for the full equation}
In this section, we prove the local well-posedness of \eqref{gDNLS} in $H_x^{2s}$ for $2-\sigma<2s<4\sigma$. 
\\
\\
Indeed, let $u_0\in H_x^{2s}$ and let $2-\sigma<\alpha\leq 2s$. By rescaling (recalling the problem is $L_x^2$ subcritical), we may assume without loss of generality that $\|u_0\|_{H_x^{\alpha}}\leq\epsilon$ for some $\epsilon>0$ sufficiently small, and construct the corresponding $H_x^{2s}$ solution on the time interval $[-1,1]$. For $2-\sigma<2s\leq \frac{3}{2}$, we construct the solution in the Strichartz type space $\mathcal{X}_T^{2s}\cap \mathcal{S}_T^{1+\delta}$, where $0<\delta \ll 1$ is any sufficiently small positive constant. When $s>\frac{3}{2}$, the extra $\mathcal{S}_T^{1+\delta}$ component is, of course, redundant, thanks to Sobolev embedding. 
\\
\\
We will realize $H_x^{2s}$ well-posed solutions as (restrictions to the interval $[-1,1]$ of) limits of smooth solutions to the regularized equation (\ref{regular}). To establish this, we have the following lemma.
\begin{lemma}\label{reglem1}
Let $2-\sigma<2s<4\sigma$. Let $2s\geq\alpha>\max\{2-\sigma,2s-1\}$. Then there is an $\epsilon>0$ such that for every $u_0\in H_x^{2s}$ with $\|u_0\|_{H_x^{\alpha}}\leq \epsilon$, the time-truncated equation,
\begin{equation}\label{regular1}
\begin{cases}
&(i\partial_t+\partial_x^2)u=i \eta |u|^{2\sigma}\partial_xu,
\\
&u(0)=u_0,
\end{cases}
\end{equation}
 admits a global solution $u\in C^2(\mathbb{R};H^{\infty}_x)$. Moreover, we have the following bounds for $T=2$,
 \begin{equation}\label{Strichartzbound1}
 \begin{split}
&\|u\|_{\mathcal{X}_T^{\alpha}\cap\mathcal{S}_T^{1+\delta}}\lesssim\epsilon,
\\
&\|(i\partial_t+\partial_x^2)u\|_{\mathcal{S}_T^{\delta}\cap Z_{\infty}^{s-1+\delta}}\lesssim \epsilon,
\end{split}
 \end{equation}
 and also
 \begin{equation}\label{highubound}
\|u\|_{\mathcal{X}_T^{2s}}^2\lesssim \frac{1}{1-C\epsilon^{2\sigma}}\|u_0\|_{H_x^{2s}}^2,   
\end{equation}
 where $C>0$ is some universal constant.
\end{lemma}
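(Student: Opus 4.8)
The plan is to realize the solution to \eqref{regular1} as the $j\to\infty$ limit of the solutions $u_j\in C^2(\mathbb{R};H^\infty_x)$ of the doubly regularized equation \eqref{regular} furnished by \Cref{reglem}. Those solutions already satisfy, uniformly in $j$, the bounds $\|u_j\|_{\mathcal{X}_T^{\alpha}\cap\mathcal{S}_T^{1+\delta}}\lesssim\epsilon$ and $\|(i\partial_t+\partial_x^2)u_j\|_{\mathcal{S}_T^{\delta}\cap Z_\infty^{s-1+\delta}}\lesssim\epsilon$ of \eqref{Strichartzbound}, so the whole of \eqref{Strichartzbound1} will follow once convergence is known in a strong enough topology; the only genuinely new quantitative ingredient is the $H^{2s}_x$ bound \eqref{highubound}, and the only real analytic work is to produce the limit.

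For \eqref{highubound} I would apply \Cref{keyestimate1} to $u_j$ itself, i.e.\ with the choice $v=u=u_j$ and regularization parameter $k=j$: this is legitimate since \eqref{regular} is exactly \eqref{gdnlsreg1} in that case and the hypothesis \eqref{vassumption} on $v=u_j$ is precisely \eqref{Strichartzbound}. The conclusion gives $\|u_j\|_{\mathcal{X}_T^{2s}}^2\lesssim\|u_0\|_{H_x^{2s}}^2+\epsilon^{2\sigma}(\|u_j\|_{\mathcal{X}_T^{2s}}^2+\|u_j\|_{\mathcal{S}_T^{1}}^2)+\epsilon^{2\sigma-1}\|u_j\|_{\mathcal{S}_T^{1}}\|u_j\|_{\mathcal{X}_T^{2s}}^2+\epsilon^{4\sigma-2}\|u_j\|_{\mathcal{S}_T^{1}}^2\|u_j\|_{\mathcal{X}_T^{2s}}^2$. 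Using the small-data Strichartz bound $\|u_j\|_{\mathcal{S}_T^{1}}\lesssim\epsilon$ from \eqref{Strichartzbound} together with the low-regularity a priori bound $\|u_j\|_{\mathcal{S}_T^{1}}\lesssim\|u_0\|_{H_x^{1}}\le\|u_0\|_{H_x^{2s}}$ (valid for the regularized flow by \Cref{modifiedlemma} and \Cref{Ubounds}, since $1\le 2s$), every error term is of the form $C\epsilon^{2\sigma}\|u_j\|_{\mathcal{X}_T^{2s}}^2$ or $C\epsilon^{2\sigma}\|u_0\|_{H_x^{2s}}^2$; absorbing the former for $\epsilon$ small yields $\|u_j\|_{\mathcal{X}_T^{2s}}^2\lesssim(1-C\epsilon^{2\sigma})^{-1}\|u_0\|_{H_x^{2s}}^2$ uniformly in $j$, which is \eqref{highubound}, and this bound survives passage to the limit.

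To pass to the limit I would run the stability argument of Section~\ref{Section4}. For $j>k$ the difference $w_{jk}:=u_j-u_k$ solves an equation of the form \eqref{lineq} with background $u_j$ (solving a $P_{<j}$-truncated \eqref{gDNLS}), potential $g=G_{jk}:=(|u_j|^{2\sigma}-|u_k|^{2\sigma})/(u_j-u_k)$ and $a=u_k$, plus the genuine source term $i\eta(P_{<j}-P_{<k})|u_k|^{2\sigma}\,\partial_xu_k$. By \Cref{Holder}, Sobolev embedding and $\sigma>\tfrac{\sqrt{3}}{2}$ one gets $\|G_{jk}\|_Z\lesssim_{\|u_0\|_{H^\alpha_x}}1$ uniformly, while Bernstein together with \Cref{Holder} gives $\|(P_{<j}-P_{<k})|u_k|^{2\sigma}\|_{L_T^\infty L_x^\infty}\lesssim 2^{-k\theta}\||u_k|^{2\sigma}\|_{L_T^\infty W_x^{\theta,\infty}}\lesssim 2^{-k\theta}\epsilon^{2\sigma}$ for a small $\theta>0$, so the source is $O(2^{-k\theta}\epsilon^{2\sigma+1})$ in $L_T^1L_x^2$. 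Feeding this into (the proof of) \Cref{linbounds} yields $\|w_{jk}\|_{X_T^0}\lesssim\|P_{k\le\cdot<j}u_0\|_{L^2_x}+2^{-k\theta}\epsilon^{2\sigma+1}\to0$ as $k\to\infty$, so $(u_j)$ is Cauchy in $X_T^0$, hence in $C([-2,2];L^2_x)$, hence (by the support of $\eta$) in $C(\mathbb{R};L^2_x)$. Finally the frequency-envelope argument from the existence part of Section~\ref{Section4}, now with $\mathcal{X}_T^{2s}$ in place of $X_T^s$ and with \Cref{keyestimate1} and \Cref{envbounds} supplying the high-frequency tail bound $\|P_{>N}u_j\|_{\mathcal{X}_T^{2s}}\lesssim\|(a_n)_{n>N}\|_{\ell^2}\|u_0\|_{H_x^{2s}}$, upgrades this to convergence in $\mathcal{X}_T^{2s}\cap\mathcal{S}_T^{1+\delta}$. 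The limit $u$ then inherits \eqref{Strichartzbound1} and \eqref{highubound}, solves \eqref{regular1} distributionally (using $u_j\to u$ in $C(\mathbb{R};L^2_x)$, continuity of $z\mapsto|z|^{2\sigma}$, and $\partial_xu_j\to\partial_xu$ in $C(\mathbb{R};H^{2s-1}_x)$), extends globally since it solves the free Schr\"odinger equation for $|t|>2$, is $C^2$ after differentiating the equation once in time, and obeys $\|(i\partial_t+\partial_x^2)u\|_{\mathcal{S}_T^{\delta}\cap Z_\infty^{s-1+\delta}}\lesssim\epsilon$ by \Cref{nonlinest}.

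The main obstacle is the stability step: one must (i) check that \Cref{linbounds} genuinely tolerates a background solution $u_j$ that solves only the $P_{<j}$-truncated equation rather than \eqref{gDNLS} proper — revisiting its proof to confirm the truncation is always harmless — and (ii) verify that the extra source term created by the mismatch of the two frequency cutoffs, which has no analogue in the pure difference-of-solutions computation of Section~\ref{Section4}, really does vanish as $k\to\infty$. Everything else is bookkeeping with the a priori estimates already established.
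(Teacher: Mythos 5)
Your overall architecture (take the $P_{<j}$-regularized solutions from \Cref{reglem}, prove a uniform $\mathcal{X}_T^{2s}$ bound via \Cref{keyestimate1} with $v=u$, show the sequence is Cauchy in a weak norm, then upgrade with frequency envelopes) is the same as the paper's, and your treatment of \eqref{highubound} is essentially correct. The genuine problem is the mechanism you choose for the Cauchy/difference step. You route it through the low-regularity stability machinery of Sections 3--4: \Cref{linbounds}, with the difference quotient $G_{jk}$ placed in the space $Z$, and \Cref{envbounds} for the tail bound. As you yourself note, $\|G_{jk}\|_Z\lesssim 1$ (and indeed the hypotheses and proof of \Cref{linbounds}, via estimates like \eqref{Zest}) require $\sigma>\frac{\sqrt{3}}{2}$. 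But \Cref{reglem1} must hold for the full range $\frac12<\sigma<1$: it is the engine behind \Cref{high reg theorem}, and the whole point of the high-regularity sections (cf.\ \Cref{AlternateY}) is to avoid the maximal-function spaces and the $\sqrt{3}/2$ restriction. So your argument, as written, proves the lemma only for $\sigma>\frac{\sqrt{3}}{2}$, which is a real gap relative to the statement. On top of that you would still have to verify point (i) that you flag — that \Cref{linbounds} tolerates a background solving only the time-truncated, frequency-truncated equation, since its $\partial_t\Psi_j$ estimate uses the equation satisfied by the background.

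The paper sidesteps all of this with a much lighter tool: for $v^{(k,j)}=u^{(k)}-u^{(j)}$ it performs a plain $L_x^2$ energy estimate (multiply by $-i\overline{v^{(k,j)}}$, take real parts, integrate), whose only inputs are the uniform bounds $\|u^{(j)}\|_{\mathcal{S}_T^{1+\delta}}\lesssim\epsilon$ from \Cref{reglem}; the cutoff-mismatch source is bounded by $\|P_{j\le\cdot<k}|u^{(j)}|^{2\sigma}\|_{L_T^{\infty}L_x^2}\lesssim 2^{-j}\|u^{(j)}\|_{\mathcal{S}_T^1}^{2\sigma}$, so no $Z$-norm, no $X_T^0$ spaces, and no constraint beyond $\sigma>\frac12$ enter. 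Similarly, for the upgrade to $\mathcal{X}_T^{2s}$ convergence the paper does not invoke \Cref{envbounds} but reruns the envelope argument directly from the frequency-localized bound in \Cref{keyestimate1}. If you replace your stability step by this simple energy estimate (and derive the envelope property from \Cref{keyestimate1} rather than \Cref{envbounds}), the rest of your proposal goes through and matches the intended proof.
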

\begin{proof}
If $\epsilon$ is small enough, thanks to \Cref{reglem}, for each $j>0$, there is a smooth solution $u^{(j)}\in C^2(\mathbb{R};H_x^{\infty})$ to the equation,
\begin{equation}
\begin{cases}
&(i\partial_t+\partial_x^2)u^{(j)}=i\eta P_{<j}|u^{(j)}|^{2\sigma}u^{(j)}_x,
\\
&u^{(j)}(0)=P_{<j}u_0,
\end{cases}
\end{equation}
satisfying
\begin{equation}
\|u^{(j)}\|_{\mathcal{X}_T^{\alpha}\cap\mathcal{S}_T^{1+\delta}}+\|(i\partial_t+\partial_x^2)u^{(j)}\|_{\mathcal{S}_T^{\delta}\cap Z_{\infty}^{s-1+\delta}}\lesssim \epsilon
\end{equation}
uniformly in $j$.
Now, define for $k>j$, $v^{(k,j)}:=u^{(k)}-u^{(j)}$. Then $v^{(k,j)}$ satisfies the equation, 
\begin{equation}
\begin{cases}
&(i\partial_t+\partial_x^2)v^{(k,j)}=i\eta P_{<k}|u^{(k)}|^{2\sigma}\partial_xv^{(k,j)}+i\eta P_{<k}(|u^{(k)}|^{2\sigma}-|u^{(j)}|^{2\sigma})\partial_xu^{(j)}+i\eta P_{j\leq \cdot<k}|u^{(j)}|^{2\sigma}\partial_xu^{(j)},
\\
&v^{(k,j)}(0)=P_{j\leq \cdot<k}u_0.
\end{cases}
\end{equation}
Multiplying by $-i\overline{v^{(k,j)}}$ taking real part and integrating over $\mathbb{R}$ and from $0$ to $t$ with $|t|\leq T$ leads to the simple energy estimate 
\begin{equation}
\begin{split}
\|v^{(k,j)}\|_{L_T^{\infty}L_x^2}^2&\lesssim \|P_{j\leq \cdot<k}u_0\|_{L_x^2}^2+(\|u^{(j)}\|_{\mathcal{S}_T^1}^{2\sigma-1}+\|u^{(k)}\|_{\mathcal{S}_T^1}^{2\sigma-1})\|u^{(j)}\|_{\mathcal{S}_T^1}\|v^{(k,j)}\|_{L_T^{\infty}L_x^2}^2
\\
&+\|u^{(k)}\|_{\mathcal{S}_T^1}^{2\sigma}\|v^{(k,j)}\|_{L_T^{\infty}L_x^2}^2+\|P_{j\leq \cdot<k}|u^{(j)}|^{2\sigma}\|_{L_T^{\infty}L_x^{2}}\|u^{(j)}\|_{\mathcal{S}_T^1}\|v^{(k,j)}\|_{L_T^{\infty}L_x^2}.
\end{split}
\end{equation}
Using the uniform in $j$ bound
\begin{equation}\label{Strich2}
\|u^{(j)}\|_{\mathcal{S}_T^{1+\delta}}\lesssim \epsilon  
\end{equation}
from \Cref{reglem} and Cauchy Schwarz gives
\begin{equation}\label{diffest}
\|v^{(k,j)}\|_{L_T^{\infty}L_x^2}^2\lesssim \|P_{j\leq \cdot<k}u_0\|_{L_x^2}^2+\|P_{j\leq \cdot<k}|u^{(j)}|^{2\sigma}\|_{L_T^{\infty}L_x^{2}}^2\|u^{(j)}\|_{\mathcal{S}_T^1}^2.
\end{equation}
Furthermore,
\begin{equation}
\begin{split}
\|P_{j\leq\cdot< k}|u^{(j)}|^{2\sigma}\|_{L_T^{\infty}L_x^2}\lesssim 2^{-j}\|u^{(j)}\|_{\mathcal{S}_T^1}^{2\sigma}.
\end{split}
\end{equation}
Hence, the right hand side of (\ref{diffest}) goes to zero as $j,k\to\infty$. Therefore, $u^{(j)}$ converges to some $u$ in $C([-2,2];L_x^2)$. On the other hand, thanks to the uniform (in $k$) bounds from the energy estimate \Cref{keyestimate1}, we obtain
\begin{equation}\label{highenvbd}
\begin{split}
\|P_ju^{(k)}\|_{\mathcal{X}_T^{2s}}^2&\lesssim a_j^2\|u_0\|_{H_x^{2s}}^2+[b_j^{(k)}]^2\epsilon^{2\sigma}\|u^{(k)}\|_{\mathcal{X}_T^{2s}}^2,
\end{split}
\end{equation}
where $b_j^{(k)}$ is a $\mathcal{X}_T^{2s}$ frequency envelope for $u^{(k)}$. Using that $\|u^{(k)}\|_{S_T^{1+\delta}}\lesssim\epsilon$, an argument similar to the low regularity well-posedness shows that for $\epsilon$ small enough, $a_j$ is a $\mathcal{X}_T^{2s}$ frequency envelope for $u^{(k)}$. Analogously to the low regularity argument, this can be used to show that $u^{(k)}\to u$ in $\mathcal{X}_T^{2s}$ and that $a_j$ is a $\mathcal{X}_T^{2s}$ frequency envelope for $u$ and that $u$ solves the time truncated equation,
\begin{equation}\label{timetrunc}
\begin{cases}
&i\partial_tu+u_{xx}=i\eta|u|^{2\sigma}u_x,
\\
&u(0)=u_0,
\end{cases}    
\end{equation}
in the sense of distributions. 
Moreover, by square summing over $j$ and passing to the limit in (\ref{highenvbd}), we obtain the uniform bound 
\begin{equation}
\|u\|_{\mathcal{X}_T^{2s}}^2\lesssim \frac{1}{1-C\epsilon^{2\sigma}}\|u_0\|_{H_x^{2s}}^2.    
\end{equation}
\end{proof}
Next, we establish local well-posedness for the full equation \eqref{gDNLS}.
\\
\\
For existence, we may rescale (using the $L_x^2$ subcriticality of the equation) to assume $u_0\in H_x^{2s}$ has sufficiently small data. Then we may construct a $\mathcal{X}_T^{2s}$ solution to $\eqref{gDNLS}$ on the time interval $[-1,1]$ by applying \Cref{reglem1} and restricting to $|t|\leq 1$.
\\
\\
For uniqueness, we consider the difference of two $H_x^{2s}$ solutions $u_1,u_2$ to \eqref{gDNLS} and obtain, by a standard energy estimate, the weak Lipschitz bound,
\begin{equation}\label{lip}
\|u_1-u_2\|_{L_T^{\infty}L_x^2}\lesssim_{\|u_1\|_{\mathcal{S}_T^1},\|u_2\|_{\mathcal{S}_T^1}} \|u_1(0)-u_2(0)\|_{L_x^2}.    
\end{equation}
for $T>0$. Among other things, this shows uniqueness in $C([-1,1];H_x^{2s})\cap \mathcal{S}_T^{1}$. 
\\
\\
For continuous dependence, again assume without loss of generality that $u_0$ has sufficiently small $H_x^{2s}$ norm. To show continuous dependence for the full equation \eqref{gDNLS}, it clearly suffices (by restricting to $T\leq 1$) to show that the data to solution map $u_0\in H_x^{2s}\mapsto u\in \mathcal{X}_{T=2}^{2s}\cap\mathcal{S}_{T=2}^{1+\delta}$ for the time-truncated equation (\ref{timetrunc}) is continuous. For this, let $u_0^n\in H_x^{2s}$ be a sequence of initial data converging to some $u_0$ in $H_x^{2s}$. Let $u^n$ and $u$ denote the corresponding $\mathcal{X}_{T=2}^{2s}\cap\mathcal{S}_{T=2}^{1+\delta}$ solutions to the time-truncated equation (\ref{timetrunc}), respectively. From the frequency envelope bound (\ref{highenvbd}) and an argument almost identical to the proof of continuous dependence at low regularity, it follows that 
\begin{equation}
\lim_{n\to\infty}\|u^n-u\|_{\mathcal{X}_{T=2}^{2s}\cap\mathcal{S}_{T=2}^{1+\delta}}=0.
\end{equation}
We omit the details. This finally completes the proof of \Cref{high reg theorem}. 
\section{Global well-posedness} 
Here, we complete the proof of \Cref{low reg theorem}. That is, we show that for $\frac{\sqrt{3}}{2}<\sigma<1$ and $1\leq 2s<4\sigma$,  \eqref{gDNLS} is globally well-posed in $H_x^{2s}$. The proof of local well-posedness in $H_x^{2s}$ for $1\leq 2s\leq\frac{3}{2}$ and $\sigma>\frac{\sqrt{3}}{2}$ established in \Cref{Section4} relied on having global well-posedness when $\frac{3}{2}<2s<4\sigma$, so we establish this first. Ultimately, global well-posedness will follow from the conservation laws, which we use in the next lemma to establish uniform control of the $H^1_x$ norm of solutions:
		\begin{lemma}($H_x^1$ norm remains bounded)\label{lifespan1}
Let $u_0\in H_x^{2s}$, $1\leq 2s<4\sigma$ and $\frac{\sqrt{3}}{2}<\sigma<1$. Let $T>0$ be sufficiently small. If $2s\leq \frac{3}{2}$, suppose that there is a corresponding well-posed solution $u\in X_T^{2s}$ to \eqref{gDNLS}. Likewise, if $4\sigma>2s>\frac{3}{2}$, let $u\in \mathcal{X}_T^{2s}$ be the corresponding well-posed solution to \eqref{gDNLS}. Then for $0\leq |t|\leq T$, we have
\begin{equation}
\|u(t)\|_{H_x^1}\lesssim_{\|u_0\|_{H_x^1}} 1
\end{equation}
where the implied constant depends only on the size of $\|u_0\|_{H_x^1}$. In particular, the $H_x^1$ norm of $u$ cannot blow up in finite time.
\end{lemma}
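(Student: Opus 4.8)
The plan is to use the conserved quantities $M$, $P$, and $E$ together with a Gagliardo--Nirenberg inequality. First I would note that the mass $M(u)=\frac12\|u\|_{L_x^2}^2$ is conserved, so $\|u(t)\|_{L_x^2}=\|u_0\|_{L_x^2}$ for all $t$ in the lifespan. The goal is therefore to control $\|u_x(t)\|_{L_x^2}^2$, which by the definition of the energy amounts to controlling the potential term
\begin{equation*}
\frac{1}{2(\sigma+1)}\RE\int_{\mathbb{R}}i|u|^{2\sigma}\overline{u}u_x\,dx.
\end{equation*}
Since $\sigma<1$, this is a subcritical nonlinearity, and the plan is to estimate this term by $\|u\|_{L_x^\infty}^{2\sigma}\|u\|_{L_x^2}\|u_x\|_{L_x^2}$ (or more efficiently by an $L^{2\sigma+2}$-type bound), then apply Gagliardo--Nirenberg: $\|u\|_{L_x^{p}}\lesssim \|u_x\|_{L_x^2}^{\theta}\|u\|_{L_x^2}^{1-\theta}$ with the exponent $\theta$ strictly less than what would make the term scale like $\|u_x\|_{L_x^2}^2$. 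Concretely, $\RE\int i|u|^{2\sigma}\overline u u_x\lesssim \|u\|_{L_x^{4\sigma+2}}^{2\sigma+1}\lesssim \|u_x\|_{L_x^2}^{\sigma}\|u_0\|_{L_x^2}^{\sigma+1}$ after using mass conservation, and since $\sigma<1<2$ this is absorbed into $\frac14\|u_x\|_{L_x^2}^2$ by Young's inequality, up to a constant depending only on $\|u_0\|_{L_x^2}$.

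The key steps, in order, would be: (i) record that $M(u(t))=M(u_0)$ and $E(u(t))=E(u_0)$ hold for the constructed solutions — this requires a brief justification that the conservation laws, which are easily verified for smooth solutions, persist for the $X_T^{2s}$ (resp. $\mathcal{X}_T^{2s}$) solutions, either by the continuous dependence/approximation scheme already established in Sections~\ref{Section4} and \ref{Sec6} or by directly checking that the solutions have enough regularity ($s\ge\frac12$, in fact $2s\ge 1$, so $u\in C([-T,T];H_x^1)$ and the integrals make sense and are differentiable in $t$); (ii) bound the potential energy term using H\"older and Gagliardo--Nirenberg interpolation together with mass conservation, producing a bound of the form $C(\|u_0\|_{L_x^2})\|u_x\|_{L_x^2}^{\sigma}$ with $\sigma<2$; (iii) apply Young's inequality to absorb this into $\frac12\|u_x\|_{L_x^2}^2$, yielding
\begin{equation*}
\tfrac14\|u_x(t)\|_{L_x^2}^2\le E(u_0)+C(\|u_0\|_{L_x^2}),
\end{equation*}
hence $\|u_x(t)\|_{L_x^2}^2\lesssim E(u_0)+C(\|u_0\|_{L_x^2})\lesssim_{\|u_0\|_{H_x^1}}1$ since $|E(u_0)|\lesssim_{\|u_0\|_{H_x^1}}1$ by the same interpolation estimate applied to the data; (iv) combine with mass conservation to conclude $\|u(t)\|_{H_x^1}\lesssim_{\|u_0\|_{H_x^1}}1$ uniformly on $[-T,T]$, and note this bound is independent of $T$, so it cannot blow up in finite time.

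The main obstacle is step~(i): making sure the conservation laws are actually \emph{valid} for the low-regularity ($H^1$, i.e. $2s=1$) solutions rather than just formally. At $2s=1$ the solution lives in $C([-T,T];H_x^1)$ and is the limit (in $X_T^s$, hence in $C([-T,T];H_x^1)$) of smooth solutions $u^{(n)}$ to \eqref{gDNLS} with truncated data $P_{<n}u_0$, as constructed in Section~\ref{Section4}; for those smooth solutions $M$ and $E$ are exactly conserved by direct computation, and since $M$ and $E$ are continuous functionals on $H_x^1$ (again by Gagliardo--Nirenberg, the potential term being continuous in the $H^1$ topology), passing to the limit gives conservation for $u$. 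I would spell this out carefully since it is the one place where ``well-posed solution'' needs to be tied to the approximation procedure. Everything else is routine subcritical energy bookkeeping. Finally I would remark that once the $H_x^1$ bound is in hand, combining it with the local theory (whose existence time depends only on the $H_x^{2s}$ norm, via the $L_x^2$-subcritical scaling and the a priori bounds of \Cref{Ubounds} and \Cref{reglem1}) and a standard continuation argument upgrades local to global well-posedness in $H_x^{2s}$ for the full range $1\le 2s<4\sigma$ — but that continuation step is carried out after this lemma, not within it.
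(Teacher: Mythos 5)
Your proof is correct and takes essentially the same route as the paper: conservation of mass and energy, a Gagliardo--Nirenberg/interpolation bound on the potential term, and absorption into the kinetic energy using the $L^2$-subcriticality $\sigma<1$ (the paper simply asserts that the constructed $X_T^{2s}$/$\mathcal{X}_T^{2s}$ solutions conserve $M$ and $E$, which your approximation argument fills in). The only slip is that your displayed H\"older estimate drops a factor of $\|u_x\|_{L_x^2}$, so the power to be absorbed by Young is $\sigma+1<2$ rather than $\sigma$, which changes nothing in the conclusion.
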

\begin{remark}
There is one small technical caveat to be aware of. Namely, in \Cref{lifespan1}, it is assumed  for $1\leq 2s\leq \frac{3}{2}$  that the equation \eqref{gDNLS} is locally well-posed $X_T^{2s}$. As mentioned above, this will follow from the results proven in \Cref{Section4} once we have established global well-posedness in the range $\frac{3}{2}<2s<4\sigma$ (where we already have local well-posedness from Section 6).
\end{remark}
\begin{proof}
Recall that we have the conserved mass and energy, respectively
\begin{equation}
M(u):=\frac{1}{2}\int_{\mathbb{R}}|u|^2dx=M(u_0),
\end{equation}
\begin{equation}
E(u):=\frac{1}{2}\int_{\mathbb{R}}|u_x|^2dx+\frac{1}{2(1+\sigma)}\RE\int_{\mathbb{R}}i|u|^{2\sigma}\overline{u}u_xdx=E(u_0).
\end{equation}
 It is also straightforward to verify that any well-posed solution in $\mathcal{X}_T^{2s}$ (when $\frac{3}{2}<2s<4\sigma$) or $X_T^{2s}$ (when $1\leq 2s\leq\frac{3}{2})$ satisfies these conservation laws. By interpolation, we have the following lower bound for the energy (where $C$ is some constant that may change from line to line) 
\begin{equation}
\begin{split}
E(u)&\geq \frac{1}{2}\|u_x\|_{L_x^2}^2-C\|u\|_{L_x^{4\sigma+2}}^{2\sigma+1}\|u_x\|_{L_x^2}
\\
&\geq \frac{1}{4}\|u_x\|_{L_x^2}^2-C\|u\|_{L_x^2}^{\frac{1+\sigma}{1-\sigma}}
\\
&\geq \frac{1}{4}\|u_x\|_{L_x^2}^2-CM(u)^{\frac{1+\sigma}{2(1-\sigma)}}.
\end{split}
\end{equation}
Hence, for $0\leq |t|\leq T$, we have
\begin{equation}
\|u(t)\|_{H_x^1}^2\lesssim E(u_0)+M(u_0)+M(u_0)^{\frac{1+\sigma}{2(1-\sigma)}}\lesssim_{\|u_0\|_{H_x^1}} 1.
\end{equation}
\end{proof}
\begin{corollary}\label{lifespan2}
Let $u_0\in H_x^{2s}$, $0<T^*<\infty$, $\frac{3}{2}<2s<4\sigma$ and $\frac{\sqrt{3}}{2}<\sigma<1$. Suppose that for each $T<T^*$, there is a corresponding well-posed solution $u\in\mathcal{X}_T^{2s}$ with initial data $u_0$. Then for each $0<\delta\ll 1$, we have
\begin{equation}
\limsup_{T\nearrow T^*}\|u\|_{\mathcal{S}_T^{1+\delta}\cap X_T^{2-\sigma+2\delta}}<\infty.
\end{equation}
In particular, the $\mathcal{S}_T^{1+\delta}\cap X_T^{2-\sigma+2\delta}$ norm of a solution cannot blow up in finite time. 
\end{corollary}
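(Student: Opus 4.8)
\textbf{Proof plan for \Cref{lifespan2}.} The plan is to run a standard continuation argument, bootstrapping the a priori estimates from Sections 3, 5 and 6, with the key input being that the $H_x^1$ norm cannot blow up in finite time, which was just established in \Cref{lifespan1}. First I would fix $0<\delta\ll 1$ and suppose, for contradiction, that $\|u\|_{\mathcal{S}_T^{1+\delta}\cap X_T^{2-\sigma+2\delta}}\to\infty$ as $T\nearrow T^*$. By \Cref{lifespan1}, we know $\sup_{|t|<T^*}\|u(t)\|_{H_x^1}\le M<\infty$, so the obstruction must come from accumulating the $\mathcal{S}_T^{1+\delta}\cap X_T^{2-\sigma+2\delta}$ norm, not from the energy space norm blowing up. The strategy is to chop $[0,T^*)$ into finitely many subintervals on each of which the local theory of Sections 3--6 applies with a uniform lifespan depending only on $M$.

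The main step is a \emph{uniform local existence time}. By the scaling symmetry $u\mapsto u_\lambda$, which is subcritical in $L_x^2$, a solution with $\|u(t_0)\|_{H_x^1}\le M$ at some time $t_0$ can be rescaled so that its initial data has $H_x^{2-\sigma+2\delta}$ (or $H^1$) norm $\le\epsilon$ for the small $\epsilon$ appearing in \Cref{Ubounds} and \Cref{reglem1}; more precisely, one rescales down to make the data small in the relevant subcritical norm, at the cost of shrinking the time interval by a factor depending only on $M$ and $\lambda$. Thus there is $\tau=\tau(M)>0$ such that from any initial time $t_0<T^*$ the solution extends to $[t_0,t_0+\tau]$ with $\|u\|_{\mathcal{S}^{1+\delta}_{[t_0,t_0+\tau]}\cap X^{2-\sigma+2\delta}_{[t_0,t_0+\tau]}}\lesssim_M 1$, using \Cref{Ubounds} b) in the range $1\le s\le\frac32$ (for the $X_T^{2-\sigma+2\delta}$ part, noting $2-\sigma+2\delta\le\frac32$ once $\delta$ is small and $\sigma>\frac12$) together with \Cref{AlternateY} / \Cref{keyestimate1} for the $\mathcal{S}^{1+\delta}$ part. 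Since $\tau$ is independent of $t_0$, covering $[0,T^*)$ by $O(T^*/\tau)$ such intervals and summing the (square-summable, by \Cref{envbounds}) contributions gives a \emph{finite} bound for $\|u\|_{\mathcal{S}_{T}^{1+\delta}\cap X_T^{2-\sigma+2\delta}}$ as $T\nearrow T^*$, contradicting the assumption.

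I expect the main obstacle to be the bookkeeping in the rescaling: one must check that the subcritical norm used to trigger the small-data theory ($H^1$, or $H^{2-\sigma+2\delta}$ interpolated against $H^1$ and the conserved mass) is genuinely controlled by $M$ after rescaling, and that the function spaces $X^s_T$, $\mathcal{X}^{2s}_T$, $\mathcal{S}^{1+\delta}_T$ behave well under both rescaling and the concatenation of solutions across adjacent subintervals (in particular that the Strichartz and maximal-function components, which are global-in-frequency square sums, add up across the finitely many pieces without loss). A secondary technical point is that in the range $\frac32<2s<4\sigma$ one works with $\mathcal{X}^{2s}_T$ rather than $X^{2s}_T$, and the high-regularity a priori bound \Cref{keyestimate1} is stated with the time-truncated nonlinearity on $T=2$; one reduces to that setting exactly as in \Cref{reglem1} and \Cref{Sec6}, so no new estimate is needed — only the observation that the implied constants there depend only on the (rescaled, hence $\le\epsilon$) data norm, and therefore only on $M$.
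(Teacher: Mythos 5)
Your proposal is correct and follows essentially the same route as the paper's proof: the conservation-law $H^1_x$ bound from \Cref{lifespan1} fixes a uniform rescaling/translation and hence a uniform time step, and iterating \Cref{Ubounds} over the finitely many resulting subintervals propagates first the $X_T^1$ bound and then the $X_T^{2-\sigma+2\delta}$ bound up to $T^*$ (the per-interval constant depends only on the $H^1_x$ size, while the $H^{2-\sigma+2\delta}_x$ norm of the data enters linearly and so stays finite after $O(T^*/\tau)$ steps). The one simplification in the paper is that the $\mathcal{S}_T^{1+\delta}$ control is immediate from the embedding $\|u\|_{\mathcal{S}_T^{1+\delta}}\le\|u\|_{X_T^{2-\sigma+2\delta}}$ (since $\sigma-1+(2-\sigma+2\delta)=1+2\delta>1+\delta$), so no appeal to \Cref{keyestimate1} or the high-regularity machinery is needed for that component.
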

\begin{proof}
\Cref{lifespan1} shows that for all $0<T<T^*$, the norm $\|u\|_{L_T^{\infty}H_x^1}$ is bounded by a constant depending on the initial data $\|u_0\|_{H_x^1}$. Therefore, iterating (after appropriately translating and rescaling the initial data) \Cref{Ubounds} shows that
\begin{equation}\label{X1life}
\limsup_{T\nearrow T^*}\|u\|_{X_T^1}\lesssim_{\|u_0\|_{H_x^1}} 1.  
\end{equation}
By virtue of (\ref{X1life}) and iterating \Cref{Ubounds}, we find that
\begin{equation}
\limsup_{T\nearrow T^*}\|u\|_{X_T^{2-\sigma+2\delta}}<\infty.   
\end{equation}
It follows that
\begin{equation}
\limsup_{T\nearrow T^*}\|u\|_{\mathcal{S}_T^{1+\delta}}\leq \limsup_{T\nearrow T^*}\|u\|_{X_T^{2-\sigma+2\delta}}<\infty.    
\end{equation}

\end{proof}
Next, we use \Cref{lifespan2} and \Cref{reglem1} to establish global well-posedness in the high regularity regime $\frac{3}{2}<2s<4\sigma$. Indeed, for $u_0\in H_x^{2s}$ let $T^*>0$ be the maximal time for which there is a corresponding well-posed solution $u\in\mathcal{X}_T^{2s}$ for each $T<T^*$. If $T^*=\infty$, then we are done. We can therefore assume for the sake of contradiction that $T^*<\infty$. Then we have
\begin{equation}
\limsup_{T\nearrow T^*}\|u\|_{\mathcal{X}_T^{2s}}=\infty. 
\end{equation}
 We show that this is impossible. By rescaling and translation, we may without loss of generality take $T^*=1$.
\\
\\
We begin with the case $\frac{3}{2}<2s<2$. Set $\alpha=2-\sigma+2\delta$ where $\delta$ is some small positive constant.
\\
\\
Let $0<\epsilon\ll 1$. Define now the rescaled solution $u_{\lambda}(t,x)=\lambda^{\frac{1}{2\sigma}}u(\lambda^2t,\lambda x)$ to \eqref{gDNLS}, where $\lambda$ satisfies $k:=\lambda^{-2}\in\mathbb{N}$ and where $\lambda$ is small enough so that for each $T<\lambda^{-2}$, 
\begin{equation}\label{strichartzcontrol}
\|u_{\lambda}\|_{L_{T<\lambda^{-2}}^{\infty}H_x^{\alpha}}\lesssim \lambda^{\frac{1}{2\sigma}-\frac{1}{2}}\|u\|_{{L_{T<1}^{\infty}H_x^{\alpha}}}\lesssim \epsilon. 
\end{equation}
 By assumption $u_{\lambda}$ is a $\mathcal{X}_T^{2s}$ solution to \eqref{gDNLS} for $T<\lambda^{-2}$ with
 \begin{equation}\label{rescaledblowup}
 \limsup_{T\nearrow \lambda^{-2}}\|u_{\lambda}\|_{\mathcal{X}_T^{2s}}=\infty.
 \end{equation}
 Now, we iterate \Cref{reglem1}. We consider the initial value problem for each natural number $n<k$, 
 \begin{equation}
\begin{cases}
&(i\partial_t+\partial_x^2)w_n=i\eta |w_n|^{2\sigma}\partial_xw_n,
\\
&w_n(0)=u_{\lambda}(n).
\end{cases}     
 \end{equation}
 By \Cref{reglem1} by taking $\alpha=2-\sigma+2\delta$, and (\ref{strichartzcontrol}) there is a global solution $w\in C(\mathbb{R};H_x^{2s})$ to the above equation satisfying 
 \begin{equation}
 \|w_n\|_{\mathcal{X}_{T=2}^{2s}}^2\lesssim \frac{1}{1-C\epsilon^{2\sigma}}\|u_{\lambda}(n)\|_{H_x^{2s}}^2   
 \end{equation}
 from which we deduce (by restricting $w$ to times in $[-1,1]$), 
 \begin{equation}
\|u_{\lambda}(n+\cdot)\|_{\mathcal{X}_{T=1}^{2s}}^2\lesssim \frac{1}{1-C\epsilon^{2\sigma}}\|u_{\lambda}(n)\|_{H_x^{2s}}^2.
 \end{equation}
 Iterating this $k$ times gives the bound
 \begin{equation}
\|u_{\lambda}\|_{\mathcal{X}_{T<\lambda^{-2}}^{2s}}^2\lesssim \left(\frac{1}{1-C\epsilon^{2\sigma}}\right)^{k}\|u_{\lambda}(0)\|_{H_x^{2s}}^2.         
 \end{equation}
This contradicts (\ref{rescaledblowup}). Therefore $T^*=\infty$  and the $\mathcal{X}_T^{2s}$ norm cannot blow up in finite time when $\frac{3}{2}<2s<2$. 
\\
\\
Next, we proceed with the case $2\leq 2s<4\sigma$. If $2\leq 2s<3$, then if we assume a maximal time of existence $T^*<\infty$ for a $\mathcal{X}_T^{2s}$ solution, then the previous case shows that for $\delta>0$ sufficiently small,
\begin{equation}
\limsup_{T\nearrow T*}\|u\|_{\mathcal{X}_T^{2s-1+\delta}}<\infty. 
\end{equation}
Replacing $\alpha$ in the previous case with $\max\{2s-1+\delta,2-\sigma+2\delta\}$ and repeating the proof verbatim shows once again that $T^*=\infty$. Iterating once more  shows that in the case $3\leq 2s<4\sigma$, we also  have the same conclusion. Thus, \eqref{gDNLS} is globally well-posed in $H_x^{2s}$ when $\frac{3}{2}<2s<4\sigma$.
\\
\\
We finally turn to the last case. Namely, we show that $\eqref{gDNLS}$ is globally well-posed when $1\leq 2s\leq \frac{3}{2}$. 
\\
\\
Indeed, at this point, we know from Section 4 and the previous two cases that we have a locally well-posed $X_T^{2s}$ solution. Iterating the low regularity bounds \Cref{Ubounds} and using \Cref{lifespan1} shows that such a solution can be continued for all time. This finally completes the proof of \Cref{low reg theorem}.
\bibliographystyle{plain}
\bibliography{refs.bib}

\end{document}